\documentclass[table]{amsart}

\makeatletter
\newcommand{\mylabel}[2]{#2\def\@currentlabel{#2}\label{#1}}
\makeatother
\usepackage{amssymb,stmaryrd,mathrsfs}
\usepackage{comment}
\usepackage{float}
\usepackage{longtable} 
\usepackage{pdflscape} 
\usepackage{orcidlink}
\usepackage{booktabs}
\usepackage{hyperref}
\usepackage{mathbbol}
\usepackage{mathtools}
\definecolor{vegasgold}{rgb}{0.77, 0.7, 0.35}
\definecolor{darkgoldenrod}{rgb}{0.72, 0.53, 0.04}
\definecolor{gold(metallic)}{rgb}{0.83, 0.69, 0.22}
\hypersetup{
 colorlinks=true,
 linkcolor=darkgoldenrod,
 filecolor=brown,      
 urlcolor=gold(metallic),
 citecolor=darkgoldenrod,
 pdftitle={Diophantine approximation and the subspace theorem},
 }

\usepackage[all,cmtip]{xy}

\usepackage[margin=1.2 5in]{geometry}

\usepackage{tikz}
\usetikzlibrary{shapes.geometric}
\tikzset{every loop/.style={min distance=10mm,looseness=10}}

\DeclareFontFamily{U}{wncy}{}
\DeclareFontShape{U}{wncy}{m}{n}{<->wncyr10}{}
\DeclareSymbolFont{mcy}{U}{wncy}{m}{n}
\DeclareMathSymbol{\Sh}{\mathord}{mcy}{"58}
\usepackage[T2A,T1]{fontenc}
\usepackage[OT2,T1]{fontenc}

\newtheorem{theorem}{Theorem}[section]
\newtheorem{lemma}[theorem]{Lemma}

\newtheorem{proposition}[theorem]{Proposition}
\newtheorem{corollary}[theorem]{Corollary}
\newtheorem{definition}[theorem]{Definition}

\numberwithin{equation}{section}

\theoremstyle{remark}
\newtheorem{remark}[theorem]{Remark}

\newcommand{\op}[1]{\operatorname{#1}}

\newcommand{\ind}{\operatorname{Ind}}
\newcommand{\bbX}{\mathbf{X}}

\newcommand{\ord}{\mathrm{ord}}

\newcommand{\bx}{\mathbf{x}}
\newcommand{\Z}{\mathbb{Z}}

\newcommand{\Q}{\mathbb{Q}}

\newcommand{\cO}{\mathcal{O}}

\begin{document}
\title[Diophantine approximation and the subspace theorem]{Diophantine approximation and the subspace theorem}

\author[S.~Goel]{Shivani Goel\, \orcidlink{0009-0000-7841-2011}}
\address[Goel]{Chennai Mathematical Institute, H1, SIPCOT IT Park, Kelambakkam, Siruseri, Tamil Nadu 603103, India}
\email{shivanig@cmi.ac.in}

\author[R.~Lunia]{Rashi Lunia\, \orcidlink{0000-0002-2074-7668}}
\address[Lunia]{Max-Planck-Institut F\"ur Mathematik, Vivatsgasse 7, D-53111 Bonn, Germany.}
\email{lunia@mpim-bonn.mpg.de}

\author[A.~Ray]{Anwesh Ray\, \orcidlink{0000-0001-6946-1559}}
\address[Ray]{Chennai Mathematical Institute, H1, SIPCOT IT Park, Kelambakkam, Siruseri, Tamil Nadu 603103, India}
\email{anwesh@cmi.ac.in}

\begin{abstract}
Diophantine approximation explores how well irrational numbers can be approximated by rationals, with foundational results by Dirichlet, Hurwitz, and Liouville culminating in Roth’s theorem. Schmidt’s subspace theorem extends Roth’s result to higher dimensions, with profound implications for Diophantine equations and transcendence theory. This article provides a self-contained and accessible exposition of Roth’s theorem and Schlickewei’s refinement of Schmidt’s subspace theorem, with an emphasis on proofs. The arguments presented are classical and approachable for readers with a background in algebraic number theory, serving as a streamlined, yet condensed reference for these fundamental results.
\end{abstract}

\subjclass[2010]{11J87}
\keywords{Roth's theorem, Schmidt's subspace theorem, diophantine approximation}

\maketitle

\section{Introduction}
\label{section:intro}

\par Diophantine approximation originates from the study of how well irrational numbers can be \emph{approximated} by rational numbers. The first result in this direction was due to Dirichlet, who showed that if $\alpha\in \mathbb{R}$ is an irrational number, then there are infinitely many rational numbers $\frac{p}{q}$ (with $p$, $q$ coprime) such that $|\alpha-\frac{p}{q}|<\frac{1}{q^2}$. Note that this result is clearly false when $\alpha\in \Q$. Dirichlet's result motivates further questions about how well an irrational number may be approximated by rationals. Indeed, one would like to know if better approximations than those given by Dirichlet's theorem do exist. Hurwitz improved upon Dirichlet's result, by showing that the bound $\frac{1}{q^2}$ can be improved to $\frac{1}{\sqrt{5}q^2}$. The constant $\frac{1}{\sqrt{5}}$ is optimal in the sense that replacing $\frac{1}{\sqrt{5}}$ with a smaller constant makes the result false for certain quadratic irrationals $\alpha$. Liouville \cite{Liouville} was the first to provide a significant result in this direction by proving that algebraic numbers of degree greater than 1 cannot be approximated "too well" by rationals. Specifically, he showed that if \(\alpha\) is an algebraic number of degree \(d \geq 2\), then there exists a constant \(C > 0\) such that \(|\alpha - \frac{p}{q}| > \frac{C}{q^d}\) for all rational numbers \(\frac{p}{q}\). However, Liouville's bound is not sharp for algebraic numbers of degree greater than 2, and this led to the eventual development of sharper results due to Thue \cite{Thue}, Siegel \cite{Siegel}, Dyson \cite{Dyson} and Gelfond \cite{Gelfond}, finally culminating in Roth's theorem.  

\par Roth's theorem states that for any \(\varepsilon > 0\), there are only finitely many rational approximations \(\frac{p}{q}\) such that \(|\alpha - \frac{p}{q}| < \frac{1}{q^{2+\varepsilon}}\). Unlike Liouville's theorem, this does not depend on the degree of $\alpha$. Lang \cite{Lang} and Leveque \cite{LeVeque} generalized Roth’s theorem to include approximations involving valuations at places in any number field $K$. In earlier work Ridout \cite{Ridout} had obtained such results over $\Q$. Schmidt \cite{Schmidtnote, Schmidtbook, Schmidtsubspacemainref} further extended the ideas and introduced the \emph{subspace theorem}, which generalizes Roth’s result to higher-dimensional settings. The subspace theorem states that if a set of linear forms satisfies certain non-degeneracy conditions, then the solutions to an inequality involving these forms lie in a finite union of proper subspaces. Schmidt's subspace theorem has had profound implications in Diophantine equations, transcendence theory, and the distribution of rational and integral points on varieties. For further details, we refer to \cite{CorvajaZanniercompositio,BomGubHeightsinDiophantineG, CorvajaZannierbook}. An extension of Schmidt’s theorem allowing for completions over a finite set of places, entirely analogous to Ridout’s and Lang’s generalizations of Roth’s theorem, was later obtained by Schlickewei \cite{Schlickewei}. Faltings \cite{Faltingsmainpaper} subsequently introduced powerful geometric techniques that complemented and extended Schmidt's work, leading to a significant generalization now known as the product theorem. For some of the details pertaining to the proof, the reader may also refer to also the article by van der Put \cite{Vanderput}.  

Building on these developments, Faltings and W\"ustholz \cite{FaltingsWustholz} established a deep and far-reaching extension of the subspace theorem which applies to inequalities governed by forms of arbitrary degree. The central finiteness result in this generalized context asserts that the set of solutions to such inequalities is not merely sparse but is, in fact, confined to a proper algebraic subvariety of the ambient space. This represents a significant strengthening of earlier results, as it shifts the conclusion from a mere quantitative bound on the number of solutions to a strong structural constraint on their distribution. 
\par Quantitative versions of the subspace theorem were obtained by Evertse \cite{Evertse}, and Evertse and Schlickewei \cite{EvertseSchlickeweiQuant}. In these works one is able to control the number of hyperplanes containing the solutions to the subspace equation. Further refinements of these results have been obtained by Evertse and Ferreti \cite{EvertseFerretti}.

This article provides a self-contained yet concise reference for the proofs of Roth’s theorem and Schmidt’s subspace theorem, following the treatments of Bombieri and Gubler \cite[Chapter 6 and 7]{BomGubHeightsinDiophantineG}, Hindry and Silverman \cite[Part D]{HindrySilverman}  and Schmidt \cite{Schmidtbook}. Section \ref{s 2} develops the necessary preliminaries, beginning with absolute values on number fields, Ostrowski’s theorem, places of a number field, and the product formula. We then introduce height functions, establishing key properties of the Weil height and proving fundamental results such as Northcott’s and Kronecker’s theorems. The discussion extends to polynomial heights, where we prove Gauss’s lemma and derive essential height inequalities. We further examine Mahler’s measure, various formulations of Siegel’s lemma, and the properties of the index associated with polynomial functions in multiple variables. Finally, we introduce the generalized Wronskian and present a criterion for establishing the linear independence of polynomials in several variables.

In Section \ref{s 3}, we present a proof of Roth's theorem. To motivate the result, we first introduce and prove Liouville's theorem, which states that for any real algebraic number \(\alpha\) of degree \(n > 1\), there exists a constant \(c(\alpha) > 0\) such that for every rational number \(p/q \in \mathbb{Q}\), we have \(\left|\alpha - \frac{p}{q}\right| > \frac{c(\alpha)}{q^n}\). We then formulate two versions of Roth's theorem for a number field \(K\) with a finite set of places \(S\) and demonstrate their equivalence. Next, we prove \emph{Roth's lemma}, which shows that the index of a polynomial vanishing at a distinguished point $\bar{\beta}=(\beta_1, \dots, \beta_m)$ defined over $K$ is bounded. This point is constructed so that each coordinate $\beta_i$ is close to $\alpha$. Roth's lemma is proved by induction on the number of variables, beginning with the base case for polynomials in one variable and extending to higher dimensions by considering the decomposition of the polynomial into sums of products of lower-dimensional polynomials. On the other hand, we apply Siegel's lemma to construct an auxiliary polynomial with controlled height in a large number of variables that vanishes to a high order at $\bar{\alpha}$. The construction requires that $\bar{\beta}$ is sufficiently close to $\bar{\alpha}$, and thus one obtains a lower bound on the index at $\bar{\beta}$.
The contradiction is then achieved, completing the proof of Roth's theorem.
\par Finally, Section \ref{s 4} is devoted to proving Schlickewei's refinement of Schmidt's subspace theorem over a number field \( K \). Denote by $K_{\mathbb{A}}$ the ring of adeles over $K$. We begin by formulating a version of Minkowski's second theorem adapted to adelic domains contained in $K_{\mathbb{A}}^N$ for a suitable large number $N$. These adelic domains are associated to solutions of the subspace equation and are known as \emph{approximation domains}. Assuming for contradiction that the subspace theorem is false, we construct an infinite family of solutions to the subspace inequality and associate to them approximation domains with adelic volumes that decrease asymptotically. The rank of an approximation domain is defined as the dimension \( k \) of the Euclidean space over \( K \) that embeds into it. The proof proceeds by induction on \( k \), establishing that, outside of finitely many hyperplanes, the adelic approximation domains corresponding to solutions of the subspace inequality have dimension at most \( k \). The induction step relies on the construction of an auxiliary polynomial with bounded height and prescribed vanishing properties, and a generalized form of Roth’s lemma is employed to obtain crucial nonvanishing results.

\subsection*{Acknowledgments}
This article originated from a series of lectures delivered during the two-week workshop \emph{"The subspace theorem and its applications"}, held at the Chennai Mathematical Institute from December 16 to December 28, 2024. An exposition of Roth’s theorem and the subspace theorem were presented by Sinnou David and the third author, respectively in this workshop. The authors express their sincere gratitude to the speakers—Sinnou David, Anup Dixit, Ram Murty, Kumar Murty, Siddhi Pathak and Purusottam Rath—as well as to the organizers, Anup Dixit and Purusottam Rath, for orchestrating this enriching and stimulating experience. We also extend our thanks to the Chennai Mathematical Institute for fostering an environment conducive to intellectual engagement. The second author would like to thank the Max-Planck-Institut f\"ur Mathematik for providing a friendly atmosphere for research. The third author would like to thank Sinnou David for inspiring conversations and Purusottam Rath for introducing him to the broader subject of diophantine approximation. We thank the anonymous referee for the thorough and timely report.

\section{Preliminary notions}\label{s 2}
We begin this section with a brief review of absolute values on a number field \( K \).

\begin{definition}
An \emph{absolute value} (or \emph{norm}) on a field \( K \) is a real-valued function \( ||\cdot|| : K \to \mathbb{R} \) satisfying the following properties for all \( x, y \in K \):  
\begin{itemize}
    \item \( ||x|| \geq 0 \), with equality if and only if \( x = 0 \).  
    \item \( ||x y|| = ||x|| \cdot ||y|| \).  
    \item \( ||x + y|| \leq ||x|| + ||y|| \).  
\end{itemize}  
\end{definition}
An absolute value is called \emph{non-archimedean} if it satisfies the stronger inequality  
\[
||x + y|| \leq \max\{ ||x||, ||y|| \} \quad \text{for all } x, y \in K.
\]  
Otherwise, it is called \emph{archimedean}.  
Every absolute value \( ||\cdot|| \) defines a metric on \( K \) via the distance function \( d(x, y) = ||x - y|| \), thereby endowing \( K \) with a corresponding metric topology. Two absolute values \( ||\cdot||_1 \) and \( ||\cdot||_2 \) on \( K \) are said to be equivalent if they induce the same topology on \( K \).  

Let \( \mathcal{O}_K \) denote the ring of integers of \( K \). The following classification theorem, due to Ostrowski, describes all inequivalent absolute values on \( K \).

\begin{theorem}[Ostrowski's Theorem]
Any non-trivial absolute value on \( K \) is equivalent to either  
\begin{itemize}
    \item a \( \mathfrak{p} \)-adic absolute value for a unique prime \( \mathfrak{p} \) in \( \mathcal{O}_K \), or  
    \item an absolute value arising from a real or complex embedding of \( K \).  
\end{itemize}
\end{theorem}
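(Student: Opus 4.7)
The plan is to reduce the statement to the classical Ostrowski theorem on $\mathbb{Q}$ and then classify the extensions to $K$ by using the finiteness of $[K:\mathbb{Q}]$. For the case $K=\mathbb{Q}$, given a non-trivial absolute value $||\cdot||$, I would split into the Archimedean case (where $||n||>1$ for some positive integer $n$) and the non-Archimedean case (where $||n||\leq 1$ for every $n\in\mathbb{Z}$). In the Archimedean case, expanding an integer $n$ in base $m$ for $m,n>1$ and iterating the triangle inequality yields the symmetric identity $||n||^{1/\log n}=||m||^{1/\log m}$, from which one deduces $||n||=n^c$ for a fixed $c>0$; thus $||\cdot||$ is equivalent to the usual Archimedean absolute value. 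In the non-Archimedean case, the set $I:=\{n\in\mathbb{Z} : ||n||<1\}$ is a proper ideal of $\mathbb{Z}$ which, by multiplicativity of $||\cdot||$, must be prime, hence $I=p\mathbb{Z}$ for a unique prime $p$; comparing $||p^k||$ with $|p^k|_p$ yields equivalence with the $p$-adic absolute value.

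To pass from $\mathbb{Q}$ to $K$, I would restrict $||\cdot||$ to $\mathbb{Q}$, obtaining an absolute value $||\cdot||_0$ which cannot be trivial: if it were, then every $\alpha\in K$, via its monic minimal polynomial over $\mathbb{Z}$ (after scaling) and the triangle inequality, would have $||\alpha||$ uniformly bounded, and multiplicativity would then force $||\cdot||$ to be trivial on $K^\times$, contradicting the hypothesis. If $||\cdot||_0$ is Archimedean, the completion $\widehat{K}$ of $K$ with respect to $||\cdot||$ contains $\mathbb{R}$ as a closed subfield, and since $K$ is finite-dimensional over $\mathbb{Q}$, the completion $\widehat{K}$ is a finite extension of $\mathbb{R}$, hence isomorphic to $\mathbb{R}$ or $\mathbb{C}$. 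The inclusion $K\hookrightarrow\widehat{K}$ then yields the desired real or complex embedding, and $||\cdot||$ is equivalent to the pullback of the standard absolute value. If $||\cdot||_0$ is the $p$-adic absolute value, I would set $\mathfrak{p}:=\{x\in\mathcal{O}_K : ||x||<1\}$ and verify that $\mathfrak{p}$ is a nonzero prime ideal of $\mathcal{O}_K$ lying above $p$, uniquely determined by $||\cdot||$; equivalence with the $\mathfrak{p}$-adic absolute value then follows by comparing values on a uniformizer of the completion $K_\mathfrak{p}$ and using that $\mathcal{O}_K$ is a Dedekind domain.

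The principal technical obstacle appears in the Archimedean extension step: one must know that any field containing $\mathbb{R}$ as a closed subfield, complete with respect to an Archimedean absolute value extending the usual one, and of finite degree over $\mathbb{R}$, is necessarily isomorphic to $\mathbb{R}$ or $\mathbb{C}$. This rests on the equivalence of norms on finite-dimensional real vector spaces together with the classical Frobenius-type classification of finite extensions of $\mathbb{R}$. A secondary point requiring care is that the uniqueness asserted in the statement refers to the prime $\mathfrak{p}$ determined by a given $||\cdot||$, not to the number of inequivalent extensions of a fixed $|\cdot|_p$ on $\mathbb{Q}$; one must therefore verify that the construction $\mathfrak{p}=\{x\in\mathcal{O}_K : ||x||<1\}$ is intrinsic to the equivalence class of $||\cdot||$, which is immediate once one observes that replacing $||\cdot||$ by an equivalent $||\cdot||^s$ (with $s>0$) preserves the inequality $||x||<1$.
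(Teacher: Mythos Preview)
The paper does not actually prove this theorem: its entire proof is the single line ``See \cite[Theorem 3.3]{Narkiewicz}.'' Your sketch is therefore strictly more than what the paper provides, and it follows the standard route (Ostrowski on $\mathbb{Q}$, then classify extensions to $K$ via the restriction $||\cdot||\big|_{\mathbb{Q}}$), which is essentially the argument one finds in Narkiewicz or any comparable reference. The outline is correct; the only places that would need to be written out with more care in a full proof are precisely the two you already flag: in the archimedean step, that any complete absolute-valued field which is finite over $\mathbb{R}$ is $\mathbb{R}$ or $\mathbb{C}$ with the standard absolute value (up to equivalence), and in the non-archimedean step, that the valuation ring of $||\cdot||$ on $K$ coincides with the localization $\mathcal{O}_{K,\mathfrak{p}}$, which forces equivalence with $|\cdot|_{\mathfrak{p}}$.
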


 \begin{proof}
     See \cite[Theorem 3.3]{Narkiewicz}.
 \end{proof}

For \( K = \mathbb{Q} \), the non-trivial absolute values, up to equivalence, consist of the usual (archimedean) absolute value and the \( p \)-adic absolute values for distinct primes \( p \).

We now define the normalized valuations of a number field \( K \). Let \( \mathcal{O}_K \) denote the ring of integers of \( K \), and for any prime ideal \( \mathfrak{p} \subset \mathcal{O}_K \), let \( N_{K/\mathbb{Q}}(\mathfrak{p}) \) denote the absolute norm of \( \mathfrak{p} \). For \( \alpha \in K \), the normalized absolute value at \( \mathfrak{p} \) is given by  
\[
|\alpha|_{\mathfrak{p}} = N_{K/\mathbb{Q}}(\mathfrak{p})^{-\ord_{\mathfrak{p}}(\alpha)},
\]
where \( \ord_{\mathfrak{p}}(\alpha) \) denotes the order of \( \alpha \) at \( \mathfrak{p} \). For an archimedean absolute value corresponding to an embedding \( \sigma: K \hookrightarrow \mathbb{C} \), we define\[
|\alpha|_{\sigma} = |\sigma(\alpha)|,
\]
where \( |\cdot| \) denotes the usual absolute value on \( \mathbb{R} \) or \( \mathbb{C} \) depending on whether \( \sigma \) is real or complex, respectively.

In this manner, we have selected a representative absolute value for each inequivalent class of non-trivial absolute values of \( K \). An equivalence class of non-trivial absolute values is referred to as a \emph{place} \( v \) of \( K \).
Let $$
|\alpha|_{v}=|\alpha|_{\mathfrak{p}}^{1/[K:\Q]}
$$
when $v$ corresponds to the equivalence class of a non-zero prime $\mathfrak{p} \subset \mathcal{O}_K$ and 
$$
|\alpha|_{v}= |\alpha|_{\sigma}^{d_{v}/[K:\Q]}
$$
where $d_{v}=2$ when $v$ is an equivalence class of a complex embedding $\sigma$ of $K$ and $d_{v}=1$ otherwise.

Let $M_K$ be the set of all such places on $K$. Then $\alpha \in M_K \backslash \{0\}$ satisfies the product formula
\begin{equation}\label{product_formula}
\prod_{v \in M_K}|\alpha|_{v}=1.
\end{equation}
 We denote by $M_K^{\infty}$ (resp. $M_K^{f}$) the set of archimedean (resp. non-archimedean) places $v \in M_K$.

\subsection{Heights}
The \emph{natural height} $H$ is defined on the set of non-zero rationals as
$$
H(p/q)= \max\{|p|, |q|\}.
$$
We say that a set $\mathcal{A}$ has the \emph{Northcott property} with respect to a height $H$ if 
for any real $X > 0$, there exist only finitely many elements $a \in \mathcal{A}$ with $H(a) \le X$.
The set of rational numbers has the Northcott property with respect to the natural height. 

The notion of heights can be extended to non-zero algebraic numbers as follows.
\begin{definition}[Weil height]
    Let $\alpha \in \mathbb{\bar{Q}}\backslash \{0\}$. Further, let $K$ be a number field containing $\alpha$. The Weil height of $\alpha$ is defined as 
$$
H(\alpha)=\prod_{v \in M_K}\max\{1, |\alpha|_{v}\}.
$$
\end{definition} 

\begin{lemma} For $\alpha \in \mathbb{\bar{Q}} \backslash \{0\}$,
\begin{itemize}
    \item $H(\alpha)$ is well-defined.
    \item $H(\alpha^{-1})= H(\alpha)$.
    \item If $\alpha$ and $\beta$ are conjugates, then $H(\alpha)=H(\beta)$.
    \item For any $m \in \mathbb{Z}$, $H(\alpha^m)= H(\alpha)^{|m|}$.
\end{itemize}
\end{lemma}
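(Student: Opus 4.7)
The plan is to address each of the four claims by exploiting the chosen normalization of $|\cdot|_v$ together with the product formula. The only genuinely substantive statement is well-definedness, which I would tackle first.

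For well-definedness, suppose $\alpha$ lies in two number fields $K$ and $K'$. Passing to the compositum $L = KK'$, it suffices to show that $H(\alpha)$ computed via places of $K$ agrees with $H(\alpha)$ computed via places of $L$, for any finite extension $K \subseteq L$. Fix a place $v$ of $K$ and let $w$ range over the places of $L$ above $v$. The extension identity $\sum_{w \mid v}[L_w : K_v] = [L : K]$, combined with the behaviour of ramification indices and residue degrees at non-archimedean $v$ (and the analogous count of real and complex embeddings at archimedean $v$), yields the intermediate identity $\prod_{w \mid v}|\alpha|_w = |\alpha|_v$. A short case analysis based on whether $|\alpha|_v \geq 1$ or $|\alpha|_v < 1$ then upgrades this to
\[\prod_{w \mid v} \max\{1, |\alpha|_w\} = \max\{1, |\alpha|_v\},\]
because all the $|\alpha|_w$ lie on the same side of $1$ as $|\alpha|_v$. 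Taking the product over all $v \in M_K$ produces the desired equality of heights.

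For the second claim, fix $K \ni \alpha$ and apply the elementary identity $\max\{1, x^{-1}\} = x^{-1}\max\{1, x\}$ with $x = |\alpha|_v$. Multiplying over $v \in M_K$, the extra factor $\prod_v |\alpha|_v^{-1}$ collapses to $1$ by the product formula, giving $H(\alpha^{-1}) = H(\alpha)$. For the third claim, let $\sigma$ be an embedding sending $\alpha$ to $\beta$ and pass to a common Galois closure; then $\sigma$ induces a bijection on the places of this closure that preserves the normalized absolute values, so reindexing the product and invoking well-definedness yields $H(\beta) = H(\alpha)$. For the fourth, when $m \geq 0$ the identity $|\alpha^m|_v = |\alpha|_v^m$ gives $\max\{1, |\alpha^m|_v\} = \max\{1,|\alpha|_v\}^m$ and hence $H(\alpha^m) = H(\alpha)^m$ directly; for $m < 0$, write $\alpha^m = (\alpha^{-1})^{|m|}$ and combine with the second claim.

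The only step requiring real care is claim (1): one must verify attentively that $\max\{1, \cdot\}$ commutes with the product over $w \mid v$, since the normalization rescales exponents by $1/[K : \mathbb{Q}]$ versus $1/[L : \mathbb{Q}]$ as one passes from $K$ to $L$. Once this bookkeeping is in place, the remaining claims reduce to one-line manipulations with the product formula.
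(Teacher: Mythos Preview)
Your proposal is correct and follows the standard argument. The paper itself does not give a proof of this lemma: it simply declares the result an easy consequence of the definition and refers the reader to Waldschmidt's book, so your sketch is actually more detailed than what appears in the paper, while being fully consistent with the normalization of $|\cdot|_v$ set up there.
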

\begin{proof}
    The proof is an easy consequence of the definition of height, we refer to \cite[chapter 3 sec 3.2]{Waldschmidtbook} for further details.
\end{proof}

\begin{remark}
    The set $\mathbb{\bar{Q}}$ does not have Northcott property with respect to the Weil height. For example, for any $n \in \mathbb{N}$, $H(2^{1/n})=2^{1/n}$ and thus the set of all $\alpha \in \mathbb{\bar{Q}}$ such that $H(\alpha) <2$ is an infinite set.
\end{remark}

\begin{theorem}[Northcott]  
There are finitely many \( \alpha \in \mathbb{\bar{Q}} \) with bounded height and bounded degree. In other words, for any \( M > 0 \), the set \( \{ \alpha \in \mathbb{\bar{Q}} \mid [\mathbb{Q}(\alpha) : \mathbb{Q}] < M \} \) has the Northcott property with respect to the Weil height.  
\end{theorem}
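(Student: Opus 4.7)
The plan is to bound the coefficients of the primitive integer minimal polynomial of $\alpha$ in terms of $H(\alpha)$ and $d = [\Q(\alpha):\Q]$, reducing the statement to the trivial finiteness of integer polynomials of bounded degree and bounded coefficient size. Fix $M, X > 0$ and let $\alpha \in \bar{\Q}$ with $d \leq M$ and $H(\alpha) \leq X$. Write $\alpha = \alpha_1, \ldots, \alpha_d$ for the Galois conjugates of $\alpha$ and let $f(x) = a_d x^d + \cdots + a_0 \in \Z[x]$ be the primitive minimal polynomial of $\alpha$, normalized so that $a_d > 0$.

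The central estimate I would establish is
$$|a_k| \leq \binom{d}{k} H(\alpha)^d \leq 2^M X^M.$$
By Vieta's formulas $a_k = (-1)^{d-k} a_d \, e_{d-k}(\alpha_1, \ldots, \alpha_d)$, so the triangle inequality reduces the bound to the identity
$$H(\alpha)^d = a_d \prod_{i=1}^d \max\{1, |\alpha_i|\}.$$
I would prove this identity by splitting the Weil height into its archimedean and non-archimedean parts. For the archimedean factor, the normalization $|\alpha|_v = |\sigma(\alpha)|^{d_v/d}$ combined with the pointwise identity $\max\{1, y^{d/d_v}\}^{d_v} = \max\{1, y\}^d$ yields, after grouping embeddings $K \hookrightarrow \mathbb{C}$ by archimedean place, the equality $\bigl(\prod_{v \in M_K^\infty} \max\{1, |\alpha|_v\}\bigr)^d = \prod_{i=1}^d \max\{1, |\alpha_i|\}$. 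For the non-archimedean factor, a prime-by-prime computation identifies $\bigl(\prod_{v \in M_K^f} \max\{1, |\alpha|_v\}\bigr)^d$ with the absolute norm $N_{K/\Q}(\mathfrak{d}(\alpha))$ of the denominator ideal $\mathfrak{d}(\alpha) = \prod_{\ord_\mathfrak{p}(\alpha) < 0} \mathfrak{p}^{-\ord_\mathfrak{p}(\alpha)}$, which classically coincides with $a_d$.

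With the coefficient estimate in hand, the conclusion is immediate: only finitely many polynomials in $\Z[x]$ of degree at most $M$ with coefficients of absolute value at most $2^M X^M$ exist, and each has at most $M$ roots. The main obstacle is the non-archimedean identification $a_d = N_{K/\Q}(\mathfrak{d}(\alpha))$; I would verify this by showing that the common denominator of the symmetric functions $e_k(\alpha_1, \ldots, \alpha_d) \in \Q$ is exactly $N_{K/\Q}(\mathfrak{d}(\alpha))$, via localization at each rational prime $p$ and the factorization of $p\mathcal{O}_K$ into primes of $K$ with their ramification indices and residual degrees. All other steps—the archimedean grouping, the Vieta bound, and the final counting—are routine.
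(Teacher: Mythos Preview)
Your proposal is correct and follows essentially the same route as the paper: bound the integer coefficients of the minimal polynomial of $\alpha$ by a function of $d$ and $H(\alpha)$ via Vieta and the Mahler measure identity $H(\alpha)^d = a_d \prod_i \max\{1,|\alpha_i|\}$, then conclude by the finiteness of integer polynomials of bounded degree and bounded coefficients. The paper's proof simply asserts the coefficient bound in one line, deferring the Mahler measure identity to a later proposition, whereas you spell out both the archimedean and non-archimedean contributions explicitly; the content is the same.
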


\begin{proof}  
Let \( X > 0 \) and \( \alpha \in \mathbb{\bar{Q}} \setminus \{0\} \) such that \( H(\alpha) \leq X \). Let \( f(x) = a_n x^n + \ldots + a_0 \in \mathbb{Z}[x] \) be the minimal polynomial of \( \alpha \), and let \( \alpha = \alpha_1, \ldots, \alpha_n \) be its roots. Writing 
\[f(x)=a_n\prod_{i=1}^n(x-\alpha_i)\] we see that 
\[|a_i|=|a_n|\left|\sum_{1 \le j_1 < \dots < j_i \le n} \alpha_{j_1}\dots \alpha_{j_i}\right|\leq |a_n|\binom{n}{i} \op{max}\{|\alpha_1|, \dots, |\alpha_n|\}^i.\]
Noting that for each $i$,
\[|\alpha_i|\leq \prod_{v\in M_K} \op{max}\{1, |\alpha_i|_v\}\leq H(\alpha_i)=H(\alpha),\] we deduce that
\[  
|a_i| \leq |a_n| \binom{n}{i}H(\alpha)^i  \leq X 2^n H(\alpha)^n \leq 2^n X^{n+1}.  
\] 
Since there are finitely many polynomials of degree \( n \) with coefficients bounded by \( 2^n X^{n+1} \), there are finitely many such algebraic numbers.  
\end{proof}
The Kronecker theorem provides a remarkable characterization of algebraic numbers with minimal possible height. Specifically, it asserts that the only algebraic numbers with Weil height exactly equal to \( 1 \) are the roots of unity.

\begin{theorem}[Kronecker]  
For \( \alpha \in \mathbb{\bar{Q}} \), \( H(\alpha) = 1 \) if and only if \( \alpha \) is a root of unity.  
\end{theorem}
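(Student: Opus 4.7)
The plan is to prove the two directions separately, with the forward direction being immediate and the reverse direction relying on Northcott's theorem applied to powers of $\alpha$.

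First I would dispose of the easy direction. If $\alpha$ is a root of unity, then $\alpha^n = 1$ for some positive integer $n$. By the multiplicativity of height on powers stated in the previous lemma, $H(\alpha)^n = H(\alpha^n) = H(1) = 1$, and since $H(\alpha) \geq 1$, this forces $H(\alpha) = 1$.

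For the substantive direction, suppose $\alpha \in \bar{\Q}$ with $H(\alpha) = 1$. The idea is to show that $\alpha$ has finite multiplicative order by applying Northcott's theorem to the infinite sequence of its powers. Let $K = \Q(\alpha)$ and set $d = [K:\Q]$. For every positive integer $m$, the power $\alpha^m$ lies in $K$, so $[\Q(\alpha^m):\Q] \leq d$. Moreover, by the lemma, $H(\alpha^m) = H(\alpha)^m = 1$. Thus the set $\{\alpha^m : m \in \N\}$ is contained in the set $\{\beta \in \bar{\Q} : [\Q(\beta):\Q] \leq d \text{ and } H(\beta) \leq 1\}$, which is finite by Northcott's theorem. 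Consequently there exist distinct positive integers $m_1 > m_2$ with $\alpha^{m_1} = \alpha^{m_2}$, so $\alpha^{m_1 - m_2} = 1$, showing that $\alpha$ is a root of unity.

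There is no real obstacle here, since both the multiplicativity property $H(\alpha^m) = H(\alpha)^{|m|}$ and Northcott's theorem have already been established in the excerpt. The only subtle point worth emphasizing is that the bound on the degrees of the powers $\alpha^m$ is uniform (namely, bounded by $[\Q(\alpha):\Q]$), which is what allows Northcott's theorem to apply. If one wished, one could avoid passing through Northcott and give a direct proof using the resultant or Mahler measure, but the argument above is both shorter and already supported by the machinery developed in Section \ref{s 2}.
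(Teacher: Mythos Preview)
Your proof is correct and follows essentially the same approach as the paper: the forward direction uses multiplicativity of height on powers, and the reverse direction applies Northcott's theorem to the set $\{\alpha^m : m \geq 1\}$, which has uniformly bounded height and degree, to extract a repetition and hence a root-of-unity relation. The arguments are effectively identical.
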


\begin{proof}  
First, suppose \( \alpha \) is a root of unity. Then, there exists a positive integer \( n \) such that \( \alpha^n = 1 \). Since the Weil height satisfies the multiplicative property \( H(\alpha^n) = H(\alpha)^n \), we have  
\[  
H(1) = 1 = H(\alpha^n) = H(\alpha)^n.  
\]  
Taking the \( n \)-th root of both sides yields \( H(\alpha) = 1 \).
\par Conversely, suppose \( H(\alpha) = 1 \). Then $|\alpha|_v\le1$ for every $v\in M_K$. 
Let $d$ be the degree of \( \alpha \) and let $\alpha_1, \dots , \alpha_d$ be an ordering on the Galois conjugates of $\alpha$ over $\Q$ and set $\boldsymbol{\alpha}:=(\alpha_1,\ldots,\alpha_d)$. For $1 \le i \le d$, let \[s_i(x_1, \dots, x_d):=\sum_{1\leq j_1<j_2<\dots<j_i\leq d} x_{j_1}x_{j_2}\dots x_{j_i}\] and for every positive integer $m$, set \[s_i(\boldsymbol{\alpha}^m):=s_i(\alpha_1^m, \dots, \alpha_d^m).\] Since $|\alpha|_v\le1$ for all non-archimedean  
places $v$, $\alpha$ is an algebraic integer. We deduce that $s_i(\boldsymbol{\alpha}^m)\in \Z$ for every $m>0$. Since $|\alpha|_v\leq 1$, for all $i$,
\[\sum_{i=1}^d|s_i(\boldsymbol{\alpha}^m)|\leq \sum_{i=1}^d \binom{d}{i}=2^d.\]In other words, the vector $(s_1(\boldsymbol{\alpha}^m), \dots, s_d(\boldsymbol{\alpha}^m))$ lies in the box $[-2^d,2^d]^d$. By the Pigeon-hole principle, there exists $m\ne n$ such that \[(s_1(\boldsymbol{\alpha}^m), \dots, s_d(\boldsymbol{\alpha}^m))=(s_1(\boldsymbol{\alpha}^n), \dots ,s_d(\boldsymbol{\alpha}^n)).\]This implies that 
\[s_j(\boldsymbol{\alpha}^m)=(\boldsymbol{\alpha}^n) \ \text{for all }\ j=1,\dots, d.\]  
Thus, there is a permutation $\pi\in \op{Aut}\left(\{1, \dots, d\}\right)$ such that
\[\boldsymbol{\alpha}^m=\pi(\boldsymbol{\alpha}^n).\]
Iterating this relation $k$ times, one finds that 
\[\boldsymbol{\alpha}^{m^k}=\pi^k(\boldsymbol{\alpha}^{n^k}).\]
Choose $k\geq 1$ such that $\pi^k=\text{id}$, to deduce that $\boldsymbol{\alpha}^{m^k}=\boldsymbol{\alpha}^{n^k}$. In particular,
$\alpha^{m^k} = \alpha^{n^k}$. 
 Rearranging this equation, we find that \( \alpha^{m^k-n^k} = 1 \), hence \( \alpha \) is a root of unity. This completes the proof. 
\end{proof}

To facilitate certain estimates and inequalities, it is often convenient to work with the logarithm of the Weil height.

\begin{definition}  
The \emph{logarithmic Weil height} of a non-zero algebraic number \( \alpha \) is defined as  
\[  
h(\alpha) = \log H(\alpha).  
\]  
\end{definition}
 
The logarithmic Weil height satisfies several useful inequalities, which make it particularly handy in many applications.
\begin{proposition}  
Let \( \alpha_1, \ldots, \alpha_n \in K \setminus \{0\} \), where \( K \) is a number field. Then,  
\begin{itemize}  
    \item \( h(\alpha_1 \cdots \alpha_n) \leq \sum_{i=1}^n h(\alpha_i) \).  
    \item \( h(\alpha_1 + \cdots + \alpha_n) \leq \sum_{i=1}^n h(\alpha_i) + \log n \).  
\end{itemize}  
\end{proposition}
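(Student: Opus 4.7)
The plan is to prove both inequalities by working place by place and then taking the product over all $v \in M_K$, after which taking logarithms yields the stated bounds. The basic elementary fact to use repeatedly is
\[
\max\{1, t_1 t_2 \cdots t_n\} \leq \prod_{i=1}^n \max\{1, t_i\}
\]
for non-negative reals $t_i$, which follows by discarding the factors with $t_i < 1$.

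For the first inequality, at every place $v \in M_K$ we have $|\alpha_1 \cdots \alpha_n|_v = \prod_i |\alpha_i|_v$ by multiplicativity, so the elementary fact above gives
\[
\max\{1, |\alpha_1 \cdots \alpha_n|_v\} \leq \prod_{i=1}^n \max\{1, |\alpha_i|_v\}.
\]
Taking the product over all $v \in M_K$ yields $H(\alpha_1 \cdots \alpha_n) \leq \prod_i H(\alpha_i)$, and the log form is immediate.

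For the second inequality, I would split according to whether $v$ is non-archimedean or archimedean. If $v \in M_K^f$, the ultrametric inequality combined with the elementary fact gives
\[
\max\{1, |\alpha_1 + \cdots + \alpha_n|_v\} \leq \max\{1, \max_i |\alpha_i|_v\} \leq \prod_{i=1}^n \max\{1, |\alpha_i|_v\}.
\]
If $v \in M_K^\infty$ corresponds to an embedding $\sigma$, then from the triangle inequality for the usual absolute value on $\mathbb{C}$ and the normalization $|\alpha|_v = |\sigma(\alpha)|^{d_v/[K:\mathbb{Q}]}$, one obtains
\[
|\alpha_1 + \cdots + \alpha_n|_v \leq \bigl(n \max_i |\sigma(\alpha_i)|\bigr)^{d_v/[K:\mathbb{Q}]} = n^{d_v/[K:\mathbb{Q}]} \max_i |\alpha_i|_v,
\]
so that
\[
\max\{1, |\alpha_1 + \cdots + \alpha_n|_v\} \leq n^{d_v/[K:\mathbb{Q}]} \prod_{i=1}^n \max\{1, |\alpha_i|_v\}.
\]

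The key closing observation is that $\sum_{v \in M_K^\infty} d_v = [K:\mathbb{Q}]$, since each real embedding contributes $1$ and each complex pair contributes $2$. Multiplying the above inequalities over all $v \in M_K$, the archimedean constants combine to
\[
\prod_{v \in M_K^\infty} n^{d_v/[K:\mathbb{Q}]} = n^{\sum_{v \in M_K^\infty} d_v/[K:\mathbb{Q}]} = n,
\]
giving $H(\alpha_1 + \cdots + \alpha_n) \leq n \prod_i H(\alpha_i)$, equivalently $h(\alpha_1 + \cdots + \alpha_n) \leq \sum_i h(\alpha_i) + \log n$. The only genuinely delicate point is tracking the normalization exponent $d_v/[K:\mathbb{Q}]$ at archimedean places carefully so that the factors $n^{d_v/[K:\mathbb{Q}]}$ multiply out to exactly $n$; once this is done, the argument is entirely mechanical.
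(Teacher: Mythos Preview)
Your proof is correct and follows essentially the same approach that the paper sketches: the paper's own argument simply cites multiplicativity of the height for the first inequality and, for the second, the triangle inequality at archimedean places together with the ultrametric inequality at non-archimedean places, combined with the identity $\prod_{v\in M_K^\infty}|n|_v=n$ (equivalently, your observation that $\sum_{v\in M_K^\infty}d_v=[K:\mathbb{Q}]$). Your write-up supplies exactly the details that the paper leaves to the references.
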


\begin{proof}The first inequality follows from the multiplicative property of the Weil height, as \( H(\alpha_1 \cdots \alpha_n) \leq H(\alpha_1) \cdots H(\alpha_n) \), which translates into an additive inequality when taking logarithms:  
\[  
h(\alpha_1 \cdots \alpha_n) = \log H(\alpha_1 \cdots \alpha_n) \leq \log \left( H(\alpha_1) \cdots H(\alpha_n) \right) = \sum_{i=1}^n \log H(\alpha_i) = \sum_{i=1}^n h(\alpha_i).  
\]
The second inequality follows by using triangle inequality for archimedean places, strong triangle inequality for non-archimedean places and product formula \eqref{product_formula}.
For further details, see \cite[Chapter 1]{BomGubHeightsinDiophantineG} or \cite[Section 3.2]{Waldschmidtbook}.

\end{proof}

The height of points in affine space plays an important role in Diophantine geometry, where it provides a quantitative way to measure the complexity of algebraic points. Let \( \mathbb{A}_K^n \) denote the affine \( n \)-space over \( K \). For a point \( \mathbf{x} = (x_1, \ldots, x_n) \in \mathbb{A}_K^n \setminus \{\mathbf{0}\} \), its height is defined as  
\[
H(\mathbf{x}) := \prod_{v \in M_K} \max_j \{ 1, |x_j|_v \},  
\]  
where the maximum is taken over the coordinates of \( \mathbf{x} \), and \( |x_j|_v \) denotes the \( v \)-adic absolute value of the coordinate \( x_j \) at the place \( v \). The logarithmic height, often more convenient due to its additive properties, is defined as follows:
\[
h(\mathbf{x}) := \sum_{v \in M_K} \max_j \log^+ |x_j|_v,  
\]  
where \( \log^+ t = \max \{ \log t, 0 \} \) for \( t > 0 \) and $\log^+0=0$.

Let \( f(x) = a_n x^n + \ldots + a_0 \) be a polynomial with coefficients in a number field \( K \). The height of the polynomial \( f \) is a measure of the arithmetic complexity of its coefficients and is defined as  
\[H(f):=\prod_{v\in M_K} \op{max}\{1, |f|_v\}\]
and the logarithmic height of $f$ is defined as 
\[
h(f):= \log H(f)= \sum_{v \in M_K} \log^+|f|_v.
\]  
The local absolute value of \( f \) at a place \( v \) is given by  
\[
|f|_v = \max \{ |a_0|_v, \ldots, |a_n|_v \}.
\] 
\noindent In other words, we view a polynomial as a tuple of its coefficients in a suitable affine space and its height coincides with the affine height of that point.

Gauss’s lemma asserts that for non-archimedean places, the height behaves multiplicatively under the product of two polynomials.
\begin{lemma}[Gauss' lemma] \label{GL} 
For polynomials \( f, g \in K[x] \) and a place \( v \in M_K \setminus M_K^{\infty} \) (i.e., a non-archimedean place), we have  
\[
|fg|_v = |f|_v |g|_v.  
\]  
\end{lemma}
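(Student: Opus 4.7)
The plan is to prove the two inequalities $|fg|_v \leq |f|_v\,|g|_v$ and $|fg|_v \geq |f|_v\,|g|_v$ separately. The first is a direct consequence of the ultrametric inequality applied coefficient-by-coefficient; the second requires reduction modulo the prime associated to $v$, and this is where the assumption that $v$ is non-archimedean enters essentially.

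For the easy direction, I would write $f=\sum_i a_i x^i$ and $g=\sum_j b_j x^j$, so that the $k$-th coefficient of $fg$ is $c_k=\sum_{i+j=k} a_i b_j$. Since $v$ is non-archimedean, the strong triangle inequality gives
\[
|c_k|_v \;\leq\; \max_{i+j=k} |a_i|_v\,|b_j|_v \;\leq\; |f|_v\,|g|_v,
\]
and taking the maximum over $k$ yields $|fg|_v \leq |f|_v\,|g|_v$.

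For the reverse inequality, I would normalize the polynomials. Choose $\lambda,\mu\in K^\times$ such that $|\lambda f|_v=|\mu g|_v=1$; since both sides of the desired equality scale the same way under multiplication of $f$ or $g$ by a nonzero scalar, it suffices to treat the case $|f|_v=|g|_v=1$. Let $\mathfrak p$ be the prime of $\mathcal O_K$ corresponding to $v$. The condition $|f|_v=1$ means $f\in\mathcal O_{K,\mathfrak p}[x]$ and at least one coefficient of $f$ is a $\mathfrak p$-adic unit; the same holds for $g$. Hence the reductions $\bar f,\bar g\in(\mathcal O_K/\mathfrak p)[x]$ are both nonzero. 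Since $\mathcal O_K/\mathfrak p$ is a field, in particular an integral domain, the product $\overline{fg}=\bar f\,\bar g$ is nonzero in $(\mathcal O_K/\mathfrak p)[x]$, so $fg$ has at least one coefficient that is a unit. Combined with the already-established bound $|fg|_v\leq 1$, this forces $|fg|_v=1$, completing the proof.

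The only genuine content is the reduction-mod-$\mathfrak p$ step, which crucially exploits that the residue field $\mathcal O_K/\mathfrak p$ is a domain; this is the classical mechanism behind Gauss's lemma. The rest is bookkeeping about how the normalized valuation $|\cdot|_v$ relates to the $\mathfrak p$-adic valuation, which poses no real obstacle since the identity to be proved is homogeneous of the same degree on both sides under the normalization exponent $1/[K:\mathbb Q]$.
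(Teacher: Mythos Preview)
Your proof is correct. Both you and the paper dispatch the inequality $|fg|_v \le |f|_v\,|g|_v$ identically via the ultrametric inequality, and both then normalize to $|f|_v = |g|_v = 1$. For the reverse inequality, however, the paper argues directly with coefficients: it lets $j_0$ be the smallest index with $|a_{j_0}|_v = 1$ and shows, by examining which terms can contribute to the relevant coefficients of $fg$, that the assumption $|fg|_v < 1$ forces $|b_j|_v < 1$ for all $j$, a contradiction. You instead reduce modulo $\mathfrak{p}$ and use that $\mathcal{O}_K/\mathfrak{p}$ is an integral domain, so $\bar f\,\bar g \neq 0$. These are the two classical presentations of Gauss's lemma; your version is more conceptual and makes the role of the primality of $\mathfrak{p}$ explicit, while the paper's coefficient-picking argument is more hands-on and never leaves $K$. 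Both are standard and of comparable length.
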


\begin{proof}  
Let \( f(x) = \sum_{i=0}^{m} a_i x^i \) and \( g(x) = \sum_{i=0}^{n} b_i x^i \), and denote their product by \( fg(x) = \sum_{k=0}^{m+n} c_k x^k \) where  $c_k=\sum_{i+j=k}a_ib_j$.
Since \( v \) is non-archimedean, we know that the non-archimedean triangle inequality gives  
\[
|fg|_v \leq |f|_v |g|_v.  
\]  
Without loss of generality, assume \( |f|_v = 1 \) and \( |g|_v = 1 \). Suppose, for the sake of contradiction, that \( |fg|_v < 1 \). This would imply \( |c_j|_v < 1 \) for all \( j \). Let \( j_0 \) be the smallest index for which \( |a_{j_0}|_v = 1 \). Since \( |f|_v = 1 \), such a \( j_0 \) must exist. Now, consider the terms in the product \( fg \) involving \( a_{j_0} \). Since  \( |c_{j_0}|_v < 1 \) and \( |a_{k}|_v<1 \)  for all \( k<j_0 \), hence $|b_0|_v<1$. If we repeat the process for $c_{j_o+l}$, we get $|b_{l}|_v<1$. Applying a similar argument recursively to the coefficients of \( g \), we deduce that \( |b_j|_v < 1 \) for all \( j \), contradicting the assumption that \( |g|_v = 1 \). Hence, we conclude \( |fg|_v = 1 \), proving the lemma.  
\end{proof}
The notion of height also extends to multivariable polynomials. Let 
\[f(x_1, \ldots, x_m) = \sum_{\mathbf{i} = (i_1, \ldots, i_m)} 
a_{\mathbf{i}}\, x_1^{i_1}\cdots x_m^{i_m}\]
be a polynomial with coefficients in a number field \( K \). 
The \emph{height} of \( f \) is defined as follows:  
\[
H(f) := \prod_{v \in M_K} \op{max}\{1, |f|_v\},
\]
and the \emph{logarithmic height} of \( f \) is defined as  
\[
h(f) := \log H(f) = \sum_{v \in M_K} \log^+ |f|_v,
\]
where
\[
|f|_v := \max_{\mathbf{i}} \{|a_{\mathbf{i}}|_v\}.
\]

\begin{lemma}\label{ht of product poly}
    For polynomials $f_1, \ldots ,f_r \in K[x_1, \ldots, x_m]$ with $\deg_{x_j}f_i \le d_j$ for all $1 \le i \le r$ and $1 \le j \le m$, we have 
    $$
  h(f_1\ldots f_r) \le \sum_{i=1}^rh(f_i)+ (d_1 + \ldots + d_m )\log 2 + m(r-1) \log 2.
    $$ 
\end{lemma}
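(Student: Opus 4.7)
The plan is to split the logarithmic height $h(f_1\cdots f_r) = \sum_{v\in M_K}\log|f_1\cdots f_r|_v$ into contributions from non-archimedean and archimedean places and estimate each separately. For each non-archimedean $v$ I would invoke Gauss's lemma in several variables, which follows from the one-variable statement by iteratively extending the Gauss valuation from $K$ to $K[x_1,\ldots,x_{m-1}][x_m]$ one variable at a time. This yields $|f_1\cdots f_r|_v = \prod_{i=1}^r |f_i|_v$, so the finite places contribute exactly the non-archimedean part of $\sum_i h(f_i)$, with no correction term.

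For an archimedean $v$ corresponding to an embedding $\sigma\colon K\hookrightarrow\mathbb{C}$, I would expand the product coefficient by coefficient. The coefficient of $x_1^{I_1}\cdots x_m^{I_m}$ in $f_1\cdots f_r$ is a sum of products of the form $a^{(1)}_{J^{(1)}}\cdots a^{(r)}_{J^{(r)}}$ over $r$-tuples of multi-indices with $\sum_i J^{(i)}=I$ and $0\le J^{(i)}_h\le d_h$. A coordinate-wise count, freely choosing the first $r-1$ values $J^{(i)}_h\in[0,d_h]$ for each $h$, shows there are at most $\prod_{h=1}^m(d_h+1)^{r-1}$ such tuples. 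The ordinary triangle inequality then gives
\[
|f_1\cdots f_r|_\sigma \,\le\, \prod_{h=1}^m(d_h+1)^{r-1}\prod_{i=1}^r|f_i|_\sigma,
\]
and raising to the exponent $d_v/[K:\mathbb{Q}]$ transfers this inequality to the normalized absolute value $|\cdot|_v$.

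Summing over archimedean $v$ and using $\sum_{v\in M_K^\infty}d_v/[K:\mathbb{Q}]=1$, the combinatorial factor produces a global correction of $(r-1)\sum_{h=1}^m\log(d_h+1)$. Combining with the non-archimedean equality yields
\[
h(f_1\cdots f_r) \,\le\, \sum_{i=1}^r h(f_i) + (r-1)\sum_{h=1}^m\log(d_h+1),
\]
and the elementary bound $\log(d_h+1)\le d_h\log 2$ for $d_h\ge 0$ (immediate by induction on $d_h$) upgrades this to the claimed inequality, with the passage from $r-1$ to $r$ providing comfortable slack. The argument is essentially bookkeeping; no step is a genuine obstacle, the only mildly nontrivial input being the coordinate-wise monomial count that produces the correction factor at archimedean places.
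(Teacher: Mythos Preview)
Your proof is correct and follows the standard route: Gauss's lemma handles the non-archimedean places exactly, the archimedean places pick up the combinatorial factor $\prod_h(d_h+1)^{r-1}$ from counting multi-index decompositions, and the inequality $d_h+1\le 2^{d_h}$ converts this into the stated bound. The paper itself states this lemma without proof, so there is nothing to compare against; your argument would serve perfectly well as the omitted justification.
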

\begin{proof}
 Let $\boldsymbol{\mu}=(\mu_1,\ldots,\mu_m)\in \Z_{\geq 0}^m$ be such that $\mu_j\leq d_j$ for $j=1, \dots, m$. For $\mathbf{x}=(x_1, \dots, x_m)$, we write $\mathbf{x}^{\boldsymbol{\mu}}:=x_1^{\mu_1}\cdots x_m^{\mu_m}$ and $f_i(\mathbf{x})=\sum_{\boldsymbol{\mu}}a_{i,\boldsymbol{\mu}}\mathbf{x}^{\boldsymbol{\mu}}$. Setting \[F(\mathbf{x}):=f_1(\mathbf{x})\cdots f_r(\mathbf{x}),\] we have that
\[F(\mathbf{x})=\sum_{\boldsymbol{\mu}}\left(\sum_{\boldsymbol{\gamma}_1+\ldots+\boldsymbol{\gamma}_r=\boldsymbol{\mu}}a_{1,\boldsymbol{\gamma}_1}\cdots a_{r,\boldsymbol{\gamma}_r}\right)\mathbf{x}^{\boldsymbol{\mu}}.\]
  For $v\in M_K$, we have that
  \[|F|_v=\max_{\boldsymbol{\mu}}\left|\sum_{\boldsymbol{\gamma}_1+\ldots+\boldsymbol{\gamma}_r=\boldsymbol{\mu}}a_{1,\boldsymbol{\gamma}_1}\cdots a_{r,\boldsymbol{\gamma}_r}\right|_v.\]
  Let $N_{\boldsymbol{\mu}}$ be the number of ways in which $\boldsymbol{\mu}$ can be written as a sum $\boldsymbol{\gamma}_1+\dots+\boldsymbol{\gamma}_r=\boldsymbol{\mu}$, and set $N:=\op{max}_{\boldsymbol{\mu}} N_{\boldsymbol{\mu}}$. We find that  \[N_{\boldsymbol{\mu}}=\prod_{j=1}^m\binom{\mu_j+r-1}{\mu_j}\]and therefore,\begin{equation}\label{number of terms}
      N\le \max_{\boldsymbol{\mu}}\prod_{j=1}^m\binom{\mu_j+r-1}{\mu_j}\le \max_{\boldsymbol{\mu}}\prod_{j=1}^m2^{\mu_j+r-1}\le 2^{d_1+\cdots+d_m+m(r-1)}.\end{equation}
  Therefore, by triangle inequality,
  \begin{align*}
      |F|_v&\le N\max_{\boldsymbol{\mu}}\left(\max_{\boldsymbol{\gamma}_1+\ldots+\boldsymbol{\gamma}_r=\boldsymbol{\mu}}|a_{1,\boldsymbol{\gamma}_1}\cdots a_{r,\boldsymbol{\gamma}_r}|_v\right)\\
      & \le N\prod_{i=1}^r\max_{\boldsymbol{\gamma}_i}\{1,|a_{i,\boldsymbol{\gamma}_i}|_v\}\\& \le N\prod_{i=1}^r\max\{1,|f_i|_v\}.
  \end{align*}
 Therefore, we deduce that
  \begin{align*}
      H(f_1\cdots f_r
      )&=\prod_{v\in M_K}\max\{1,|f_1\cdots f_r|_v\}
      \\& \le \prod_{v\in M_K}\left\{N\prod_{i=1}^r\max\{1,|f_i|_v\}\right\}\\&
      \le N \prod_{i=1}^r H(f_i).
  \end{align*}
  Taking logarithms of both sides, one finds that:
  $$
  h(f_1\ldots f_r) \le \sum_{i=1}^rh(f_i)+ \log N \le \sum_{i=1}^rh(f_i)+ (d_1 + \ldots + d_m )\log 2 + m(r-1) \log 2,
    $$ 
    \noindent thus proving the result.
\end{proof}

\begin{remark}\label{height of poly}
    If $f(x_1, \ldots, x_m)$ and $g(y_1, \ldots, y_n)$ are polynomials in different set of variables, then 
    $$
    h(fg)= h(f)+h(g).
    $$
\end{remark}

\begin{proposition}
    For $\mathbf{x}_1, \ldots , \mathbf{x}_m \in \mathbb{A}_K^n\backslash \{\mathbf{0}\}$, we have
    $$
    h(\mathbf{x}_1+ \ldots +\mathbf{x}_m)\le \sum_{i=1}^mh(\mathbf{x}_i)+\log m.
    $$
\end{proposition}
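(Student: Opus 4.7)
The plan is to work directly from the definition $h(\mathbf{x}) = \sum_{v \in M_K} \max_j \log^+ |x_j|_v$, establishing the bound place-by-place and then summing. Write $\mathbf{y} = \mathbf{x}_1 + \cdots + \mathbf{x}_m$, so that the $j$-th coordinate is $y_j = \sum_{i=1}^m x_{i,j}$. For each place $v$, I want to bound $|y_j|_v$ in terms of the $|x_{i,j}|_v$.

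First, I will split into cases according to whether $v$ is archimedean. For a non-archimedean $v$, the strong triangle inequality gives $|y_j|_v \le \max_i |x_{i,j}|_v$ with no extra factor. For an archimedean $v$ coming from an embedding $\sigma$, the ordinary triangle inequality on $\sigma$ together with the normalization $|\cdot|_v = |\cdot|_\sigma^{d_v/[K:\Q]}$ gives $|y_j|_v \le m^{d_v/[K:\Q]} \max_i |x_{i,j}|_v$. Combining both cases, one obtains a uniform bound of the shape
\[
|y_j|_v \le m^{\varepsilon_v d_v/[K:\Q]} \max_i |x_{i,j}|_v,
\]
where $\varepsilon_v = 1$ for $v$ archimedean and $\varepsilon_v = 0$ otherwise. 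Taking $\log^+$ and then the maximum over $j$ — and using $\max_j \max_i \le \sum_i \max_j$ — yields
\[
\max_j \log^+ |y_j|_v \le \varepsilon_v \frac{d_v}{[K:\Q]} \log m + \sum_{i=1}^m \max_j \log^+ |x_{i,j}|_v.
\]

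Next, I sum this inequality over all $v \in M_K$. The second term contributes exactly $\sum_{i=1}^m h(\mathbf{x}_i)$. For the first, I use the standard identity $\sum_{v \in M_K^{\infty}} d_v = [K:\Q]$ (the sum of local degrees over archimedean places equals $r_1 + 2r_2 = [K:\Q]$), so the total contribution is $\log m$. Assembling everything gives $h(\mathbf{y}) \le \sum_{i=1}^m h(\mathbf{x}_i) + \log m$, as desired.

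There is no real obstacle here; the only thing to be careful about is the normalization of the absolute values and ensuring that the fractional exponents $d_v/[K:\Q]$ on the archimedean side assemble cleanly into the single $\log m$ term after summing. Once the identity $\sum_{v \in M_K^\infty} d_v/[K:\Q] = 1$ is invoked, the inequality drops out immediately.
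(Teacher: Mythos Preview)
Your proof is correct and follows essentially the same approach as the paper: split into archimedean and non-archimedean places, apply the (strong) triangle inequality to each coordinate, pass to $\log^+$ and use $\max_j\max_i \le \sum_i \max_j$, and then sum over all places. The only cosmetic difference is that the paper writes the archimedean factor as $|m|_v$ and invokes $\prod_{v\in M_K^\infty}|m|_v=m$, whereas you track the exponent $d_v/[K:\Q]$ explicitly and use $\sum_{v\in M_K^\infty} d_v=[K:\Q]$; these are the same identity.
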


\begin{proof}
    Let $\mathbf{x}_i = (x_{i1}, \ldots, x_{in})$ for $ 1\le i \le m$. Then we have
    $$
    h(\mathbf{x}_1+ \ldots +\mathbf{x}_m) = \sum_{v \in M_K} \max_j \log^+ |x_{1j} + \ldots + x_{mj}|_{v}.
    $$
    If $v$ is non-archimedean, then 
    $$
    |x_{1j} + \ldots + x_{mj}|_{v} \le \max_{i}|x_{ij}|_v.
    $$
    If $v$ is archimedean, then using triangle inequality, we have 
    $$
    |x_{1j} + \ldots + x_{mj}|_{v} \le |m|_v\max_{i}|x_{ij}|_v.
    $$
    Using the fact that 
    $$
    \prod_{v \in M_K^{\infty}}|m|_{v}=m,
    $$ 
    we get 
 \begin{align*}  
  h(\mathbf{x}_1+ \ldots+ \mathbf{x}_m) 
& \le \log m + \sum_{v \in M_K} \max_{i,j} \log^+ |x_{ij}|_v \\
& \le \log m + \sum_{v \in M_K} \sum_{i=1}^n \max_{j} \log^+ |x_{ij}|_v \\
& = \log m + \sum_{i=1}^mh(\mathbf{x}_i)
  \end{align*}
    which proves the result.
\end{proof}

\begin{proposition}\label{propn sum of f1+...+fr}
    Given non-zero polynomials $f_1, \ldots ,f_r \in \Z[x_1, \ldots, x_m]$, we have that
    $$
  h(f_1+\cdots +f_r) \le \max_{1\le i\le r}h(f_i)+ \log r.
    $$
\end{proposition}

\begin{proof}
 Since the coefficients of $f_i$ are integers, for any non-archimedean $v$, we have 
   \[\max\{1,|f_1+\ldots+f_r|_v\}=\max\{1,|f_1|_v\}=\cdots=\max\{1,|f_r|_v\}=1.\]
   this implies that
   \begin{align*}
       H(f_1+\cdots+f_r)&=\prod_{v\in M_\Q}\max\{1,|f_1+\ldots+f_r|_v\}\\&
       \le r\prod_{v\in M_\Q}\left(\max_{1\le i\le r}\{1,|f_i|_v\}\right)\\&\le r\max_{1\le i\le r}\{|f_i|_\infty\} \\ & \le r\max_{1\le i\le r}H(f_i).
   \end{align*}
  Hence, we have  the required result.
\end{proof}
\begin{proposition}[Fundamental inequality]\label{Fundamental inequality}
    Let $S$ be a finite subset of $M_K$. For $\alpha \in K \backslash \{0\}$,
    $$
    -h(\alpha) \le \sum_{v \in S}\log|\alpha|_{v} \le h(\alpha).
    $$
\end{proposition}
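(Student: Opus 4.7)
The plan is to derive both inequalities from two ingredients only: the definition $h(\alpha)=\sum_{v\in M_K}\log^+|\alpha|_v$ and the product formula \eqref{product_formula}, which in logarithmic form reads $\sum_{v\in M_K}\log|\alpha|_v=0$.

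For the upper bound, I would simply observe that $\log t\le \log^+ t$ for every $t>0$. Applying this termwise gives
\[
\sum_{v\in S}\log|\alpha|_v \;\le\; \sum_{v\in S}\log^+|\alpha|_v \;\le\; \sum_{v\in M_K}\log^+|\alpha|_v \;=\; h(\alpha),
\]
the last inequality being valid because each summand $\log^+|\alpha|_v$ is nonnegative, so enlarging the index set of summation from $S$ to $M_K$ can only increase the sum.

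For the lower bound, the key move is to flip the sum over $S$ into a sum over its complement using the product formula. Writing $S^c=M_K\setminus S$, we have
\[
\sum_{v\in S}\log|\alpha|_v \;=\; -\sum_{v\in S^c}\log|\alpha|_v.
\]
Applying the same bound $\log|\alpha|_v\le\log^+|\alpha|_v$ to the places in $S^c$ yields $\sum_{v\in S^c}\log|\alpha|_v\le h(\alpha)$, which after negation gives $\sum_{v\in S}\log|\alpha|_v\ge -h(\alpha)$, as required. (Equivalently, one can deduce the lower bound for $\alpha$ from the upper bound applied to $\alpha^{-1}$, using $H(\alpha^{-1})=H(\alpha)$.)

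There is no substantial obstacle here: the result is essentially a bookkeeping consequence of the product formula. The only point that requires a moment's attention is being careful with signs when passing between $S$ and $S^c$, and remembering that the inequality $\log t\le \log^+t$ is what converts a one-sided bound into a statement controlling the full sum by $h(\alpha)$.
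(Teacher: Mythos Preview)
Your argument is correct and matches the paper's proof: the upper bound is exactly ``by definition'' via $\log t\le\log^+t$, and for the lower bound the paper invokes $h(\alpha^{-1})=h(\alpha)$, which you mention parenthetically as an equivalent route to your product-formula-and-complement computation (indeed, $h(\alpha^{-1})=h(\alpha)$ is itself proved from the product formula, so the two derivations are the same in substance).
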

\begin{proof}
    The right inequality follows by definition while the left inequality additionally uses the fact that $h(\alpha^{-1})=h(\alpha)$.
\end{proof}
Let $\mathbb{P}^n_{K}$ denote the $n$-dimensional projective space with coordinates in $K$.
The height of a point $\mathbf{x} =(x_0: \ldots : x_n) \in \mathbb{P}_K^n$ is defined as
$$
h(\mathbf{x})=\sum_{v \in M_K}\max_j \log|x_j|_{v}.
$$
\begin{remark}
    The definition of height of $\mathbf{x} \in \mathbb{P}_K^n$ is independent of the choice of coordinates. This can be shown by using the product formula \eqref{product_formula}. Moreover, under the usual embedding
  $\mathbb{A}_K^n \hookrightarrow  \mathbb{P}^{n+1}_{K}$ as
       $$
       \mathbf{x}=(x_1, \ldots, x_n) \mapsto (1: x_1: \ldots :x_n)
       $$
we note that $h(\mathbf{x})$ is equal to the height of image of $\mathbf{x}$ under the above embedding. 
   \end{remark}
\subsection{Mahler measure}
Let $\alpha \in \mathbb{\bar{Q}} \backslash \{0\}$ and $f(x) \in \mathbb{Z}[x]$ be its minimal polynomial. Let
$$
f(x)= a_nx^n+a_{n-1}x^{n-1}+ \ldots +a_0 = a_n(x-\alpha_1)\cdots(x-\alpha_n)
$$
where $\alpha_i$'s are the roots of $f$. Then the {\emph {Mahler measure}} of $\alpha$ is defined as
$$
M(\alpha) := |a_n|\prod_{j=1}^n\max\{1, |\alpha_j|\}.
$$
\begin{proposition} 
Let $\alpha \in \mathbb{\bar{Q}}\backslash \{0\}$ and $f(x) \in \mathbb{Z}[x]$ be its minimal polynomial of degree $n$. Then we have 
    $$
    H(\alpha)=M(\alpha)^{1/n}.
    $$
\end{proposition}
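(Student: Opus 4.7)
The plan is to show $H(\alpha)^n = M(\alpha)$ by splitting the product defining $H(\alpha)^n$ into archimedean and non-archimedean parts and matching each to a factor of $M(\alpha)$. Taking $K = \Q(\alpha)$ so that $[K:\Q] = n$, the normalizations $|\alpha|_v = |\alpha|_{\p}^{1/n}$ and $|\alpha|_v = |\sigma(\alpha)|^{d_v/n}$ are tailored so that $\max\{1,|\alpha|_v\}^n$ reduces cleanly: it equals $\max\{1,|\alpha|_\p\}$ at a non-archimedean $v$ corresponding to $\p$, and equals $\max\{1,|\sigma(\alpha)|\}^{d_v}$ at an archimedean $v$ corresponding to $\sigma$ (or to the orbit $\{\sigma,\bar{\sigma}\}$). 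The archimedean places of $K$ are in bijection with real embeddings and with conjugate pairs of complex embeddings $K\hookrightarrow\mathbb{C}$, and running through these recovers the full list of conjugates $\alpha_1,\ldots,\alpha_n$, so that $\prod_{v\in M_K^\infty}\max\{1,|\alpha|_v\}^n = \prod_{j=1}^n \max\{1,|\alpha_j|\}$.

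For the non-archimedean part, I would fix a rational prime $p$ and factor $f(x) = a_n\prod_{\p\mid p} g_\p(x)$ in $\Q_p[x]$, where each $g_\p$ is monic of degree $n_\p = [K_\p:\Q_p]$ and its roots are the $\Q_p$-conjugates of $\alpha$ inside $K_\p$. All these roots share the same $p$-adic valuation $v_p(\alpha) = \ord_\p(\alpha)/e_\p$, so a Newton polygon (or direct Vieta) computation on the monic $g_\p$ yields $|g_\p|_p = \max\{1,|\alpha|_\p\}$, using $n_\p = e_\p f_\p$ to convert $p^{-n_\p v_p(\alpha)}$ into $N(\p)^{-\ord_\p(\alpha)}$. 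Gauss's lemma, applied multiplicatively, then gives $|f|_p = |a_n|_p \prod_{\p\mid p}\max\{1,|\alpha|_\p\}$; since $f\in\Z[x]$ is primitive, $|f|_p = 1$, so $\prod_{\p\mid p}\max\{1,|\alpha|_\p\} = p^{v_p(a_n)}$. Taking the product over all primes $p$ produces $\prod_\p \max\{1,|\alpha|_\p\} = |a_n|$.

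Combining the archimedean and non-archimedean contributions gives $H(\alpha)^n = |a_n|\prod_j \max\{1,|\alpha_j|\} = M(\alpha)$, and hence $H(\alpha) = M(\alpha)^{1/n}$. The step I expect to be the main obstacle is the local identification $|g_\p|_p = \max\{1,|\alpha|_\p\}$: it hinges on the fact that all roots of the monic $g_\p$ share a common $p$-adic valuation and on careful bookkeeping between the valuations $v_p,\ord_\p$ and the invariants $e_\p,f_\p$. Everything else is a routine assembly via Gauss's lemma, primitivity of the minimal polynomial, and the product formula.
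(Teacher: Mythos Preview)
Your argument is correct and complete; the local identification $|g_\p|_p = \max\{1,|\alpha|_\p\}$ goes through exactly as you outline, since all roots of an irreducible $g_\p \in \Q_p[x]$ share the same $p$-adic valuation and the constant term (a product of all roots, with no cancellation) realizes the extremal coefficient when that valuation is negative.

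Your route, however, differs from the paper's. The paper does not work in $K = \Q(\alpha)$ but instead passes to a Galois extension $K/\Q$ containing $\alpha$. There the archimedean contribution is computed by noting that each conjugate $\alpha_j$ appears exactly $[K:\Q]/n$ times in the list $(\sigma(\alpha))_{\sigma \in \Gal(K/\Q)}$, and the non-archimedean contribution $\sum_{v\nmid\infty}\log^+|\alpha|_v = \tfrac{1}{n}\log|a_n|$ is obtained (somewhat tersely) by an appeal to the product formula rather than by any local factorization. Your approach trades the Galois bookkeeping for a direct local computation: the Gauss's-lemma argument on $f = a_n \prod_{\p\mid p} g_\p$ makes the appearance of $|a_n|$ completely explicit and avoids enlarging the field, at the cost of invoking the local factorization of $f$ and the Newton-polygon/Vieta step you flagged. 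The paper's argument is shorter to write but leaves more to the reader at the non-archimedean step; yours is more self-contained there.
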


\begin{proof}
Let $f(x)=a_nx^n+\cdots +a_0$ be the minimal polynomial of $\alpha$ over $\Z$. In other words, $f(x)$ is a non-zero irreducible polynomial with coefficients in $\Z$ such that $f(\alpha)=0$, and moreover, $\op{gcd}(a_n, a_{n-1}, \dots, a_0)=1$. Without loss of generality, we choose a number field $K$ such that $\alpha \in K$ and $K$ is a Galois extension over $\mathbb{Q}$ with Galois group $G$.
    We have 
    $$
    h(\alpha)= \log H(\alpha) = \sum_{v \in M_K} \log^{+}|\alpha|_{v}
    $$
    where $\log^{+}x=\max\{0, \log x\}$ for $x>0$ and $\log^+0=0$. Factor $f(x)$ into a product of linear polynomials 
\[f(x)=a_n \prod_{i=1}^n(x-\alpha_i).\] Given a nonarchimedian valuation $v\in M_K$, we have that $|f|_v=1$ since the coefficients $a_i$ of $f$ are rational integers and $\op{gcd}(\{a_i|\,i=1, \dots, n\})=1$. By Gauss' lemma \ref{GL},
\[1=|f|_v=|a_n|_v\prod_{i=1}^n |(x-\alpha_i)|_v=|a_n|_v\prod_{i=1}^n \op{max}\{1, |\alpha_i|_v\}.\] 
The conjugates of $\alpha$, say $\alpha_j$ $(j=1, \ldots, n)$ appear exactly $[K:\mathbb{Q}]/n$ many times in the list $(\sigma(\alpha))_{\sigma \in G}$. Therefore, we deduce that
  \begin{equation}\label{sigma in G prod = 1}\begin{split}
  |a_n|_v\prod_{\sigma\in G}\max(1,|\sigma(\alpha)|_v)^{n/[K:\mathbb{Q}]}=1\end{split}\end{equation}for any nonarchimedian valuation $v\in M_K$.
  By the product formula, we find that
 \begin{eqnarray*}
 h(\alpha)
 & = &  \sum_{v \in M_K^{\infty}} \log^{+}|\alpha|_{v} + \sum_{v \in M_K \backslash M_K^{\infty}} \log^{+}|\alpha|_{v} \\
& = & \frac{1}{[K:\mathbb{Q}]}\left(\frac{[K:\mathbb{Q}]}{n}\sum_{j=1}^n\log^{+}{|\alpha_j|} 
+ \frac{[K:\mathbb{Q}]}{n} \log{|a_n|} \right)\\ 
& = & \frac{1}{n}\log\left(|a_n|\prod_j\max\{1, |\alpha_j|\}\right) \\
& = & \frac{1}{n}\log M(\alpha),
 \end{eqnarray*}
 where we have invoked \eqref{sigma in G prod = 1} in the second equality. This proves the result.
\end{proof}
\subsection{Siegel's lemma}
Consider the system of homogeneous linear equations
\begin{eqnarray}\label{system}
&a_{11}x_1 + a_{12}x_2 + \ldots + a_{1n}x_n = 0 \nonumber\\
&\vdots  \\
&a_{m1}x_1 + a_{m2}x_2 + \ldots + a_{mn}x_n = 0 \nonumber
\end{eqnarray}
with $a_{ij} \in \mathbb{Z}$, not all zero. If $m < n$, then the system of linear equations \eqref{system} always has a non-trivial solution in $\mathbb{Z}$.

\begin{lemma}[Siegel's lemma]\label{Siegel1}
Consider a system of $M$ linear equations in $N$ variables \eqref{system} with $M<N$ and coefficients $a_{ij} \in \mathbb{Z}$, not all zero. 
Then there is a non-trivial solution 
$(x_1, \ldots, x_N)$ of \eqref{system} in $\mathbb{Z}^N$ with 
$$
|x_i| < (NA)^{M/(N-M)},
$$
where $A=\max_{ij}|a_{ij}|$.
\end{lemma}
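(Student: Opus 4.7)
My plan is to prove Siegel's lemma by a direct application of Dirichlet's box principle to the $\mathbb{Z}$-linear map $T : \mathbb{Z}^N \to \mathbb{Z}^M$ defined by the coefficient matrix $(a_{ij})$. The strategy is to exhibit two distinct lattice points inside a suitable integer box that share the same image under $T$; their difference will then supply a non-zero integer solution of \eqref{system} with controlled coordinates.

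First I would fix a non-negative integer $B$ (to be specified later) and consider the box $\mathcal{B} = \{\mathbf{x} \in \mathbb{Z}^N : 0 \le x_i \le B\}$, which has exactly $(B+1)^N$ elements. For each $\mathbf{x} \in \mathcal{B}$ and each index $i$, the coordinate $y_i = \sum_{j} a_{ij} x_j$ lies in a real interval of length at most $NAB$, so $y_i$ assumes at most $NAB + 1$ distinct integer values. Consequently $|T(\mathcal{B})| \le (NAB + 1)^M$, and the pigeonhole principle produces two distinct $\mathbf{x}', \mathbf{x}'' \in \mathcal{B}$ with $T(\mathbf{x}') = T(\mathbf{x}'')$ as soon as $(B+1)^N > (NAB + 1)^M$.

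Since the hypothesis that not all $a_{ij}$ vanish forces $A \ge 1$, and the case $N \le 1$ is vacuous, one has $NA \ge 2$. Hence $NAB + 1 < NA(B+1)$ holds strictly, and the pigeonhole condition is implied by the cleaner inequality $(B+1)^{N-M} \ge (NA)^M$, i.e.\ $B + 1 \ge (NA)^{M/(N-M)}$.

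The main delicate point, and where I expect to take the most care, is the choice of $B$ that simultaneously yields the strict bound $|x_i| < (NA)^{M/(N-M)}$ appearing in the statement. Setting $T_0 = (NA)^{M/(N-M)}$, I would take $B = \lfloor T_0 \rfloor$ when $T_0 \notin \mathbb{Z}$ and $B = T_0 - 1$ when $T_0 \in \mathbb{Z}$; in both cases $B + 1 \ge T_0$ while $B < T_0$. With this choice the pigeonhole hypothesis is satisfied, and the vector $\mathbf{x} = \mathbf{x}' - \mathbf{x}''$ is a non-zero element of $\mathbb{Z}^N$ lying in the kernel of $T$ with $|x_i| \le B < T_0$, completing the argument.
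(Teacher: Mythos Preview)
Your proof is correct and follows essentially the same pigeonhole argument as the paper: restrict the linear map to the integer box $\{0,\dots,B\}^N$, bound the size of the image by $(NAB+1)^M$, and choose $B$ so that the domain strictly outnumbers the image. Your case split on whether $(NA)^{M/(N-M)}$ is an integer is slightly more careful than the paper's blanket choice $H=\lfloor (NA)^{M/(N-M)}\rfloor$ in securing the strict inequality $|x_i|<(NA)^{M/(N-M)}$, but the method is identical.
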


\begin{proof}
For any $H>0$, the number of points $\mathbf{x} = (x_1, \ldots, x_N)\in \mathbb{Z}^N$ satisfying $0 \le x_i \le H$ is at most $(H+1)^N$. Let $T: \mathbb{R}^N \to \mathbb{R}^M$ be a linear map with 
    $$
    T(x_1 \ldots, x_N)= (a_{11}x_1 + \ldots a_{1N}x_N, \ldots ,a_{M1}x_1 + a_{M2}x_2 + \ldots + a_{MN}x_N). 
    $$
For any real $a$, we set $a^+=\max\{a, 0\}$ and $a^-=\max\{-a, 0\}$. Then $
a=a^+-a^-$ and $|a|=a^++a^-$.
We also set $L_j^+ = \sum_{i=1}^Na_{ij}^+$,  $L_j^- = \sum_{i=1}^Na_{ij}^-$ and $L_j = \sum_{i=1}^N|a_{ij} |$.
For any integer $H>0$, writing $T(\mathbf{x})= \mathbf{y}= (y_1, \ldots, y_M)$ we see that
 whenever  $0 \le x_i \le H$ for all $i$, 
 $$
 -L_j^- H \le y_j \le L_j^+H
 $$ 
 for all $j$.
The number of integers in the box $\prod_{j=1}^M[-L_j^-H, L_j^+H]$ is at most
$$
\prod_{j=1}^M(L_j^+H+L_j^-H + 1)=\prod_{j=1}^M(L_jH + 1).
$$
If $(H+1)^N > \prod_{j=1}^M(L_jH+1)$
then by pigeonhole principle, there exists $\mathbf{x}'\neq  \mathbf{x}'' \in \mathbb{Z}^N$ with $0 \le x_i', x_i'' \le H$ for all $i$ such that $T(\mathbf{x}')=T(\mathbf{x}'')$.
Then $\mathbf{x}=\mathbf{x'}- \mathbf{x}'' \in \ker{T}$ with $|x_i| = |x_i'-x_i''| \le H$.
We choose $H$ to be the natural number
$$
H=\left[(NA)^{M/N-M}\right]
$$
where $A= \max_{ij}|a_{ij}|$ and $[t]$ denotes the greatest integer part of $t$.
Then using the fact that $L_j \le NA$ for all $j$, we see that
$$
(H+1)^N > (H+1)^M (NA)^M
\ge (NAH+1)^M  \ge \prod_{j=1}^M(L_jH + 1).
$$
Thus $\mathbf{x}= (x_1, \ldots, x_N)$ is a non-trivial solution to the system of linear equations \eqref{system} satisfying 
$$
|x_i| < (NA)^{M/(N-M)}.
$$
\end{proof}

Next we give a generalization of Siegel's lemma for number fields in the form that will be used later.
\begin{lemma}[Siegel's lemma for number fields]\label{Siegel lemma}
 Let $K$ be a number field of degree $d$ over $\mathbb{Q}$ and $M,N$ be positive integers such that $N>dM$. Consider a homogeneous system of $M$ linear equations in $N$ variables \eqref{system} with coefficients $a_{ij} \in K$ not all zero. Then, there exists a non-trivial solution 
$\mathbf{x} \in \mathbb{Z}^N$ of this system of linear equations such that 
$$
h(\mathbf{x}) \le \frac{dM}{N-dM}\left(h(B)+\log N+ c(K)\right), 
$$
where $B= (\ldots, a_{ij}, \ldots)$, the vector formed by all the coefficients and $c(K)$ is a positive constant depending only on $K$.
 \end{lemma}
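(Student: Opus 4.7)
The plan is to reduce this $K$-rational Siegel's lemma to the $\mathbb{Z}$-version already established in Lemma \ref{Siegel1}, by expanding each coefficient in a $\mathbb{Z}$-basis of $\mathcal{O}_K$ and thereby trading the $M$ equations over $K$ for $dM$ equations over $\mathbb{Q}$. First I would clear denominators, replacing $a_{ij}$ by $q a_{ij}$ for a suitable $q \in \mathbb{Z}_{>0}$ to ensure $a_{ij} \in \mathcal{O}_K$; the solution set in $\mathbb{Z}^N$ is unchanged, and the multiplicative effect on $h(B)$ is a constant depending only on $K$ that can be absorbed into $c(K)$.

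Next, fix an integral basis $\omega_1, \ldots, \omega_d$ of $\mathcal{O}_K$ and write $a_{ij} = \sum_{k=1}^d t_{ij,k}\,\omega_k$ with $t_{ij,k} \in \mathbb{Z}$. Since the $\omega_k$ are $\mathbb{Q}$-linearly independent, for $\mathbf{x} \in \mathbb{Z}^N$ the equation $\sum_j a_{ij} x_j = 0$ is equivalent to the system
$$\sum_{j=1}^N t_{ij,k}\, x_j = 0, \qquad k = 1, \ldots, d,$$
yielding $dM$ homogeneous equations in $N$ integer variables. The hypothesis $dM < N$ lets me invoke Lemma \ref{Siegel1} to obtain a non-trivial $\mathbf{x} \in \mathbb{Z}^N$ with
$$\max_i |x_i| < (N A')^{dM/(N-dM)}, \qquad A' := \max_{i,j,k} |t_{ij,k}|.$$
Since $\mathbf{x}$ is a non-zero integer vector, one has $h(\mathbf{x}) = \log \max_i |x_i|$, so the required estimate reduces to showing $\log A' \le h(B) + c(K)$.

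The main obstacle is precisely this last comparison, where the field-dependent normalization has to be handled carefully. By inverting the $d \times d$ matrix $\Omega = (\sigma_l(\omega_k))_{l,k}$ of conjugates of the integral basis, one obtains $(t_{ij,k})_k = \Omega^{-1} (\sigma_l(a_{ij}))_l$, whence
$$\max_k |t_{ij,k}| \le \|\Omega^{-1}\|_\infty \cdot \max_l |\sigma_l(a_{ij})|,$$
and the factor $\|\Omega^{-1}\|_\infty$ depends only on $K$. The quantity $\max_l |\sigma_l(a_{ij})|$ is then compared to the Weil height $H(a_{ij}) \le H(B)$ via the normalized absolute values $|a_{ij}|_v = |\sigma_l(a_{ij})|^{d_{v_l}/d}$ at archimedean places, combined with the fact that $|a_{ij}|_v \le 1$ at all finite places since $a_{ij} \in \mathcal{O}_K$ (so only archimedean factors contribute to $H(a_{ij})$). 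Collecting the contributions of the basis change, the denominator clearing, and the archimedean normalization into a single $K$-dependent constant $c(K)$ and taking logarithms gives $\log A' \le h(B) + c(K)$, which combined with the Siegel bound from the previous paragraph yields the claimed inequality.
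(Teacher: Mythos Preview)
Your argument is essentially the paper's: expand each $a_{ij}$ in a $\mathbb{Z}$-basis $\omega_1,\ldots,\omega_d$ of $\mathcal{O}_K$ to obtain $dM$ integer equations, apply Lemma~\ref{Siegel1}, and bound $\max_k|t_{ij,k}|$ by $c_1\max_r|\sigma_r(a_{ij})|$ via the invertibility of the matrix $(\sigma_r(\omega_k))$, then compare with $h(B)$. The only addition is your explicit denominator-clearing step, which the paper omits (tacitly assuming $a_{ij}\in\mathcal{O}_K$).
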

 \begin{proof}
      Let $\omega_1, \ldots, \omega_d$ be a $\mathbb{Z}$-basis of $\mathcal{O}_K$. Then we can write 
    \begin{equation}\label{lin}
    a_{ij}= \sum_{k=1}^d a_{ij}^{(k)}\omega_k, \phantom{mm} a_{ij}^{(k)} \in \mathbb{Z}.
    \end{equation}
     Now for any $1 \le i \le M$,
    \begin{equation*}
        \sum_{j=1}^N a_{ij}x_j=0 \phantom{mm} {\rm ~if ~ and ~ only ~ if ~}
    \phantom{mm} 
    \sum_{j=1}^N\sum_{k=1}^d a_{ij}^{(k)}x_j\omega_k =0.
\end{equation*}
   Since $\omega_1, \ldots, \omega_d$ is a $\mathbb{Z}$-basis, we conclude that for any $1 \le i \le M$, 
    \begin{equation}\label{sys2}
    \sum_{j=1}^N a_{ij}x_j=0 \phantom{mm} {\rm ~if ~ and ~ only ~ if ~}
    \phantom{mm} 
    \sum_{j=1}^N a_{ij}^{(k)}x_j =0, \phantom{mm} {\rm ~for~ all ~}  1 \le k \le d.
    \end{equation}
Thus we have a system of $Md$ linear equations in $N$ variables with $N>Md$. 
For each embedding $ \sigma_r: K \to \mathbb{C}$, $1 \le r \le d$, using \eqref{lin} we have 
$$
\sigma_r(a_{ij}) = \sum_{k=1}^d a_{ij}^{(k)}\sigma_r(\omega_k).
$$
Using the fact that $(\sigma_r(\omega_k))_{1 \le k,r \le d}$ is an invertible matrix, we obtain 
\begin{equation}\label{bd3}
\max_k|a_{ij}^{(k)}| \le c_1\max_{r}|\sigma_r(a_{ij})|
\end{equation}
for a positive constant $c_1$ depending on $K$.
Let $A= \max_{r,i,j}|\sigma_r(a_{ij})|$. 
Using Lemma \ref{Siegel1}
we get a non-trivial solution $\mathbf{x} \in \mathbb{Z}^N$ to the system of linear equations \ref{sys2} satisfying 
$$
|x_i| < (Nc_1A)^{dM/(N-dM)}.
$$
Since $\log{A} \le h(B)$, we get 
$$
h(\mathbf{x}) \le \frac{dM}{N-dM}( h(B) + \log N + c(K)) 
$$
where $c(K):=\log c_1$ only depends on $K$.
 \end{proof}

\subsection{Index}
Let \( K \) be a number field, and let \( P \in K[x_1, \ldots, x_m] \) be a polynomial in \( m \) variables with coefficients in \( K \). For a given variable \( x_h \), the partial degree of \( P \) with respect to \( x_h \) is denoted by \( \deg_{x_h}(P) \), which represents the highest exponent of \( x_h \) appearing in \( P \). The notion of the index \( \text{Ind}_{\bar{\alpha}, \bar{r}}(P) \) provides a measure of the vanishing order of \( P \) at \( \bar{\alpha} \) relative to the weights \( r_1, \ldots, r_m \). We give the definition below.

\begin{definition}
The \emph{index} of a polynomial \( P \in K[x_1, \ldots, x_m] \) at the point \( \bar{\alpha} = (\alpha_1, \ldots, \alpha_m) \in K^m \) with respect to a given \( m \)-tuple of positive integers \(\bar{r} = (r_1, \ldots, r_m) \) is defined as follows: 
\[
\op{Ind}_{\bar{\alpha}, \bar{r}}(P) = \min_{(i_1, \ldots, i_m)} \left\{ \frac{i_1}{r_1} + \ldots + \frac{i_m}{r_m} : \partial_{I} P(\bar{\alpha}) \neq 0 \right\},
\]  
where \( \partial_{I}  \) denotes the differential operator
$$
 \partial_{I} =\frac{1}{i_1!\cdots i_m!}\left(\frac{\partial}{\partial x_1}\right)^{i_1}
 \ldots \left(\frac{\partial}{\partial x_m}\right)^{i_m}
 $$
 corresponding to the multi-index \(I = (i_1, \ldots, i_m) \in \Z_{\ge 0}^m\).

\end{definition}

\begin{remark}
With the above notation, the index \( \op{Ind}_{\bar{\alpha}, \bar{r}}(P) \) has the following properties:
\begin{itemize}
    \item \( \op{Ind}_{\bar{\alpha}, \bar{r}}(P) \geq 0 \). This is immediate from the definition, as all terms \( \frac{i_h}{r_h} \) are non-negative.
    \item \( \op{Ind}_{\bar{\alpha}, \bar{r}}(P) = 0 \) if and only if \( P(\bar{r}) \neq 0 \). Indeed, if \( P(\bar{r}) \neq 0 \), then the partial derivative corresponding to the zero multi-index \( I = (0, \ldots, 0) \) is nonzero, yielding an index of zero.
    \item For any positive integer $t$, \( \ind_{\bar{\alpha}, t\bar{r}}(P) = \ind_{\bar{\alpha}, \bar{r}}(P)/t \). This follows by definition.
    \item If \( \deg_{x_h}(P) \leq r_h \) for all \( 1 \leq h \leq m \), then \( \op{Ind}_{\bar{\alpha}, \bar{r}}(P) \leq m \), with equality when  
    \[
    P(x) = \prod_{h=1}^m (x_h - \alpha_h)^{r_h}.
    \]  
    In this case, \( P \) vanishes to order exactly \( r_h \) along each coordinate direction, leading to an index equal to the sum of the weights.
\end{itemize}
\end{remark}
The next lemma gives a relation between height of a polynomial and its derivative.
\begin{lemma}\label{ht of derivative}
    Let $P\in K[x_1, \ldots, x_m]$ be a polynomial with $\deg_{x_h}P=d_h$ and  $ \partial_{I}  $ be a differential operator with  multi-index $ I = (i_1, \ldots, i_m) $. Then we have
    $$
    h(\partial_{I} P) \le (d_1+ \ldots+ d_m)\log 2 + h(P).
    $$
\end{lemma}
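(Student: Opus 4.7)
The plan is to write $P$ out in terms of monomials, compute $\partial_I P$ explicitly, and then bound $|\partial_I P|_v$ in terms of $|P|_v$ place by place. Write $P = \sum_{J} a_J \mathbf{x}^J$, where $J = (j_1, \ldots, j_m)$ runs over multi-indices with $0 \le j_h \le d_h$. Because of the factorial normalization, the operator $\partial_I$ acts on monomials by
\[
\partial_I(\mathbf{x}^J) = \binom{j_1}{i_1}\cdots\binom{j_m}{i_m}\, \mathbf{x}^{J-I},
\]
so the coefficients of $\partial_I P$ are of the form $\binom{j_1}{i_1}\cdots\binom{j_m}{i_m}\, a_J$ with $j_h \le d_h$.

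For a non-archimedean place $v \in M_K \setminus M_K^\infty$, the binomial coefficients are rational integers, hence have $v$-adic absolute value at most $1$; combined with the ultrametric inequality applied to the sum over $J$, this yields $|\partial_I P|_v \le |P|_v$. For an archimedean place $v \in M_K^\infty$, the trivial bound $\binom{j_h}{i_h} \le 2^{j_h} \le 2^{d_h}$ gives
\[
|\partial_I P|_v \;\le\; 2^{(d_1 + \cdots + d_m)\, d_v/[K:\Q]} \,|P|_v,
\]
because the normalized archimedean absolute value $|\cdot|_v$ raises the usual absolute value to the power $d_v/[K:\Q]$.

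Taking logarithms and summing over all places, the non-archimedean contribution is bounded by $\sum_{v \in M_K^f} \log|P|_v$, while the archimedean contribution is bounded by
\[
(d_1 + \cdots + d_m)\log 2 \cdot \sum_{v \in M_K^\infty}\frac{d_v}{[K:\Q]} \;+\; \sum_{v \in M_K^\infty}\log|P|_v.
\]
Since $\sum_{v \in M_K^\infty} d_v = [K:\Q]$, the weighted sum equals $1$, and the two sums over places combine to $h(P)$. This gives the desired bound $h(\partial_I P) \le (d_1 + \cdots + d_m)\log 2 + h(P)$. The only subtlety to keep straight is the normalization convention for $|\cdot|_v$ at archimedean places, which is precisely why the factor $[K:\Q]$ in the exponent cancels against the sum over archimedean places; there are no real obstacles beyond this bookkeeping.
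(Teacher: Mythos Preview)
Your proof is correct and follows the same approach as the paper: both observe that the coefficients of $\partial_I P$ are products of binomial coefficients $\binom{j_1}{i_1}\cdots\binom{j_m}{i_m}$ with coefficients of $P$, and then bound $\binom{j_h}{i_h}\le 2^{d_h}$. Your version is more detailed about the place-by-place estimate and the archimedean normalization, whereas the paper leaves these as a one-line observation; note also that the ultrametric inequality you invoke is not actually needed, since the map $J\mapsto J-I$ is injective and each coefficient of $\partial_I P$ arises from a single $J$.
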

\begin{proof}
The proof follows by observing that the coefficients of
$$
 \partial_{I}P(x_1, \ldots, x_m) =\frac{1}{i_1!\cdots i_m!}\left(\frac{\partial}{\partial x_1}\right)^{i_1}
 \ldots \left(\frac{\partial}{\partial x_m}\right)^{i_m}P(x_1, \ldots, x_m) 
 $$
consist of products of $\binom{d_1}{i_1} \ldots \binom{d_m}{i_m} $ and coefficients of $P$.
         
    \end{proof}
We now establish several fundamental properties of the index under addition, multiplication, and differentiation of polynomials.

\begin{lemma}\label{prop index}
Let \( P, Q \in K[x_1, \ldots, x_m] \), let \( \bar{r} = (r_1, \ldots, r_m) \) be an \( m \)-tuple of positive integers, and let \( I = (i_1, \ldots, i_m) \in \Z_{\geq 0}^m \) be a multi-index. For any point \( \bar{\alpha} = (\alpha_1, \ldots, \alpha_m) \in K^m \), the following properties hold:
\begin{enumerate}
    \item \emph{Multiplicativity:}  
    \[
    \op{Ind}_{\bar{\alpha}, \bar{r}}(PQ) = \op{Ind}_{\bar{\alpha}, \bar{r}}(P) + \op{Ind}_{\bar{\alpha}, \bar{r}}(Q).
    \]
    
    \item \emph{Subadditivity under addition:}  
    \[
    \op{Ind}_{\bar{\alpha}, \bar{r}}(P + Q) \geq \min \left( \op{Ind}_{\bar{\alpha}, \bar{r}}(P), \op{Ind}_{\bar{\alpha}, \bar{r}}(Q) \right).
    \]
    
    \item \emph{Inequality under differentiation:}  
    \[
    \op{Ind}_{\bar{\alpha}, \bar{r}}(\partial_{I} P) \geq \op{Ind}_{\bar{\alpha}, \bar{r}}(P) - \sum_{h=1}^m \frac{i_h}{r_h}.
    \]
\end{enumerate}
\end{lemma}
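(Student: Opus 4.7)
My plan is to reduce all three properties to elementary facts about $t$-adic order of vanishing in a single auxiliary variable, obtained by a weighted substitution that turns the "fractional" index at $\bar{\alpha}$ into an honest integer order in $t$. The starting observation is to Taylor expand $P$ around $\bar{\alpha}$: writing $P=\sum_I a_I (x-\bar{\alpha})^I$ in multi-index notation, a direct computation from the normalization $\partial_I=\frac{1}{i_1!\cdots i_m!}\prod_h \partial_{x_h}^{i_h}$ shows $a_I=(\partial_I P)(\bar{\alpha})$. Setting $w(I):=\sum_h i_h/r_h$, the definition then reads $\op{Ind}_{\bar{\alpha},\bar{r}}(P)=\min\{w(I):a_I\neq 0\}$.

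Next, let $R:=r_1\cdots r_m$ and $\rho_h:=R/r_h$, all positive integers. I would introduce fresh indeterminates $y_1,\ldots,y_m,t$ and consider $P_t(\bar{y},t):=P(\alpha_1+t^{\rho_1}y_1,\ldots,\alpha_m+t^{\rho_m}y_m)\in K[\bar{y}][t]$. Substituting the Taylor expansion gives $P_t=\sum_I a_I\,y^I\,t^{Rw(I)}$. Grouping by powers of $t$ and using the linear independence of the monomials $y^I$ in $K[\bar{y}]$, I read off $\ord_t(P_t)=R\cdot\op{Ind}_{\bar{\alpha},\bar{r}}(P)$, where $\ord_t$ denotes the $t$-adic order of a polynomial in $K[\bar{y}][t]$.

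Properties (1) and (2) now fall out cleanly. Since $K[\bar{y}]$ is an integral domain, so is $K[\bar{y}][t]$, hence $\ord_t$ is multiplicative on products and superadditive on sums. Applying this to $(PQ)_t=P_t\,Q_t$ and $(P+Q)_t=P_t+Q_t$, then dividing by $R$, yields (1) and (2) respectively. For (3) I switch to a direct combinatorial argument on the Taylor expansion. The identity $\partial_I(x-\bar{\alpha})^J=\prod_h\binom{j_h}{i_h}(x-\bar{\alpha})^{J-I}$, together with the fact that in characteristic zero $\binom{j_h}{i_h}$ is nonzero whenever $j_h\geq i_h$, gives $\partial_I P=\sum_{J\geq I}\binom{J}{I}a_J(x-\bar{\alpha})^{J-I}$, so the Taylor support of $\partial_I P$ at $\bar{\alpha}$ is exactly $\{J-I:J\geq I,\ a_J\neq 0\}$. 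Taking the minimum $w$-weight over this set and using $w(J-I)=w(J)-w(I)$ bounds it below by $\op{Ind}_{\bar{\alpha},\bar{r}}(P)-w(I)$, which is the required inequality.

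The one subtle point to watch is the integral-domain hypothesis needed to make $\ord_t$ multiplicative in (1); this is precisely why I keep the $y_h$ as polynomial indeterminates rather than specializing them to scalars. Everything else is routine bookkeeping with multi-indices, and the characteristic-zero hypothesis (automatic since $K$ is a number field) is used only once, to guarantee that no binomial coefficient appearing in (3) accidentally vanishes.
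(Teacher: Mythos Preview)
The paper does not actually give a proof here: it declares the properties ``an easy consequence of the definition of the index'' and leaves them to the reader. Your argument is correct and fills in those details. The weighted substitution $x_h\mapsto \alpha_h+t^{\rho_h}y_h$ with $\rho_h=R/r_h$ is a clean device: it turns the $\bar r$-weighted index into an honest $t$-adic valuation on the integral domain $K[\bar y][t]$, after which (1) and (2) are just the standard multiplicativity and ultrametric inequality for $\ord_t$. A bare-hands proof of (1) would instead argue that the minimal $w$-weight homogeneous part of $PQ$ is the product of the minimal $w$-weight parts of $P$ and $Q$, which is nonzero since $K[x_1,\ldots,x_m]$ is a domain; your $t$-trick is the same idea with the grading made explicit by the auxiliary variable. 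For (3) your direct computation of the Taylor support of $\partial_I P$ is exactly right, and you correctly flag that characteristic zero is what prevents the binomial coefficients $\binom{j_h}{i_h}$ from killing terms. One tiny edge case you could mention explicitly: if $\partial_I P=0$ (equivalently, no $J\geq I$ lies in the Taylor support of $P$), the index is taken to be $+\infty$ by convention and the inequality is vacuous.
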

\begin{proof}
These properties are an easy consequence of the definition of the index and are left to the reader.    
\end{proof}

\subsection{Generalized Wronskian}
   Let \( K \) be a field of characteristic zero, and let \( x_1, \ldots, x_m \) be a set of algebraically independent variables over \( K \). Consider a family of polynomials \( \varphi_1, \ldots, \varphi_n \in K[x_1, \ldots, x_m] \). To study the linear independence of these polynomials, we define a generalized version of the classical Wronskian determinant, which incorporates partial derivatives with respect to multiple variables.

\begin{definition}
   A \emph{generalized Wronskian determinant} associated with the polynomials \( \varphi_1, \ldots, \varphi_n \) is defined as  
\[
W_{\mathbf{\mu_1}, \ldots, \mathbf{\mu_n}}(x_1, \ldots, x_m) = \det(\partial_{\mathbf{\mu_i}}\varphi_j)_{\substack{i \leq n \\ j \leq n}}
\]
where \( \mathbf{\mu_i} = (\mu_{i1}, \ldots, \mu_{im}) \) is a multi-index for each $1 \le i \le n$, with
$|\mathbf{\mu_i}|=\mu_{i1}+ \ldots +\mu_{im} \le i-1$.
\end{definition}
We now state a criterion for linear independence based on the generalized Wronskian.

\begin{lemma}[Wronskian criterion for linear independence]\label{Wronskian for LI}  
Let \( \varphi_1, \ldots, \varphi_n \in K[x_1, \ldots, x_m] \) be polynomials. Then \( \varphi_1, \ldots, \varphi_n \) are linearly independent over \( K \) if and only if \( W_{\mathbf{\mu_1}, \ldots, \mathbf{\mu_n}}(x_1, \ldots, x_m) \) is not identically zero for some multi-indices $\mathbf{\mu_1}, \ldots , \mathbf{\mu_n}$ with $|\mathbf{\mu_i}|\leq i-1$ for all $i$. 
\end{lemma}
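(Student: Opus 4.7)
I would prove the two directions of the equivalence separately.

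\emph{($\Leftarrow$).} If some generalized Wronskian $W_{\mu_1, \ldots, \mu_n}$ is not identically zero, the $\varphi_j$ must be $K$-linearly independent: contrapositively, any nontrivial $K$-relation $\sum_j c_j \varphi_j = 0$ yields $\sum_j c_j \partial_{\mu_i}\varphi_j = 0$ for every $i$, so the columns of $(\partial_{\mu_i}\varphi_j)$ are $K$-linearly dependent and the determinant vanishes identically.

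\emph{($\Rightarrow$).} I induct on $n$. The base $n = 1$ is clear with $\mu_1 = 0$. For $n \ge 2$, the subfamily $\varphi_1, \ldots, \varphi_{n-1}$ is also linearly independent, so by induction there exist multi-indices $\mu_i'$ with $|\mu_i'| \le i-1$ for which $W' := \det(\partial_{\mu_i'}\varphi_j)_{1 \le i, j \le n-1}$ is not identically zero. Fix $\mu_i := \mu_i'$ for $i < n$, and assume for contradiction that the resulting $n \times n$ generalized Wronskian vanishes identically for every $\mu_n$ with $|\mu_n|\le n-1$. Expanding the determinant along the last row and dividing by $W'$ yields
\[
\partial_{\mu_n}\varphi_n \;=\; \sum_{j=1}^{n-1} c_j\, \partial_{\mu_n}\varphi_j, \qquad |\mu_n| \le n-1,
\]
where $c_j \in K(x_1,\ldots,x_m)$ are obtained (up to signs) as quotients of $(n,j)$-cofactors by $W'$, and in particular do not depend on $\mu_n$. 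Taking $\mu_n = 0$ produces the polynomial identity $\varphi_n = \sum_{j<n} c_j\varphi_j$.

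The heart of the proof is showing that each $c_j$ is in fact a constant, contradicting the linear independence of $\varphi_1, \ldots, \varphi_n$. Differentiating $\varphi_n = \sum_j c_j\varphi_j$ by means of the Leibniz rule $\partial_\mu(fg) = \sum_{\nu+\lambda=\mu}(\partial_\nu f)(\partial_\lambda g)$ and subtracting the assumed relation gives, for every $\mu$ with $|\mu|\le n-1$, the master identity
\[
\sum_{j=1}^{n-1}\sum_{0<\nu\le \mu}(\partial_\nu c_j)(\partial_{\mu-\nu}\varphi_j) \;=\; 0.
\]
A secondary induction on $|\mu|$---combining these master identities with repeated differentiations of the lower-order vanishing identities $\sum_{j<n}(\partial_\sigma c_j)\varphi_j = 0$ already derived---lets one isolate, for every pair $(\sigma, \tau)$ of multi-indices with $|\sigma|\ge 1$ and $|\sigma|+|\tau|\le n-1$, the individual vanishing
\[
\sum_{j=1}^{n-1}(\partial_\sigma c_j)(\partial_\tau\varphi_j) \;=\; 0.
\]
Specializing $\sigma = (0,\ldots,1,\ldots,0)$ (the multi-index with a single $1$ in the $h$-th position) and $\tau = \mu_i'$ (permissible since $|\mu_i'| \le i-1\le n-2$) shows that the vector $(\partial c_j/\partial x_h)_{j<n}$ lies in the kernel of the $(n-1) \times (n-1)$ matrix $(\partial_{\mu_i'}\varphi_j)$, whose determinant $W'$ is nonzero. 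Hence $\partial c_j/\partial x_h = 0$ for every $h$ and every $j$, so each $c_j\in K$---the sought contradiction.

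The principal difficulty is the secondary induction isolating the identities $\sum_{j<n}(\partial_\sigma c_j)(\partial_\tau\varphi_j) = 0$; the Leibniz-expanded master relations entangle many such terms at each order, and disentangling them requires a careful iterative use of lower-order identities together with their differentiations, with the hypothesis $|\mu| \le n-1$ ensuring that enough master identities are available to close the argument.
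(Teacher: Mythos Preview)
Your proof is correct, and it takes a genuinely different route from the paper. The paper reduces to the one-variable case via the Kronecker substitution $(x_1,\ldots,x_m)\mapsto(t,t^d,\ldots,t^{d^{m-1}})$, invokes the classical Wronskian criterion for $\Phi_j(t)=\varphi_j(t,\ldots,t^{d^{m-1}})$, and then uses the chain rule to write the ordinary Wronskian $W(t)$ as a linear combination of specialized generalized Wronskians. Your approach instead runs a direct induction on $n$ in several variables, mimicking the standard one-variable argument via Cramer's rule. The paper's route is shorter because it offloads the work to the one-variable theorem; yours is more self-contained and makes no use of that reduction.

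One remark on your key step: the ``secondary induction'' you describe is more elaborate than necessary. You have, for every $\mu$ with $|\mu|\le n-1$, the rational-function identity $\partial_\mu\varphi_n=\sum_{j<n}c_j\,\partial_\mu\varphi_j$. Fix $h$ and apply $\partial/\partial x_h$ to this identity for any $\mu$ with $|\mu|\le n-2$; using $\tfrac{\partial}{\partial x_h}\partial_\mu=(\mu_h+1)\partial_{\mu+e_h}$ on both $\varphi_n$ and each $\varphi_j$, and then subtracting $(\mu_h+1)$ times the identity at $\mu+e_h$ (legitimate since $|\mu+e_h|\le n-1$), you obtain directly
\[
\sum_{j<n}\Bigl(\tfrac{\partial c_j}{\partial x_h}\Bigr)\,\partial_\mu\varphi_j=0\qquad\text{for all }|\mu|\le n-2.
\]
Specializing $\mu=\mu_i'$ for $i=1,\ldots,n-1$ (allowed since $|\mu_i'|\le n-2$) and using $\det(\partial_{\mu_i'}\varphi_j)=W'\ne 0$ forces $\partial c_j/\partial x_h=0$. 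No master identities or disentangling are needed. This shortcut is exactly the multivariable analogue of the usual one-variable step ``differentiate level $k$, subtract level $k+1$''.
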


\begin{proof}  
Suppose \( \varphi_1, \ldots, \varphi_n \) are linearly dependent over \( K \). Then there exist constants \( c_1, \ldots, c_n \in K \), not all zero, such that  
\[
c_1 \varphi_1 + \ldots + c_n \varphi_n = 0.
\]  
Applying  the operator \( \partial_{\mathbf{\mu}} \) to this relation yields  
\[
c_1 \partial_{\mathbf{\mu}} \varphi_1 + \ldots + c_n \partial_{\mathbf{\mu}} \varphi_n = 0,
\] 
for every choice of \( \mathbf{\mu} \).
 In particular, this holds for all operators  $\partial_{\mathbf{\mu_1}}, \ldots \partial_{\mathbf{\mu_n}}$ used to construct the  generalized Wronskian $ W_{\mathbf{\mu_1}, \ldots, \mathbf{\mu_n}}(x_1, \ldots, x_m)$. Consequently, the columns of the Wronskian matrix are linearly dependent, implying that the determinant of the matrix is identically zero.

    Conversely, suppose \( \varphi_1, \ldots, \varphi_n \) are linearly independent over \( K \). Choose an integer $d>0$ such that $\deg_{x_h}\varphi_i \le d$, for all $1 \le i \le n$ and $1 \le h \le m$. Consider the Kronecker substitution 
    $$
    (x_1, \ldots, x_m) \mapsto (t, t^d, \ldots, t^{d^{m-1}}),
    $$
    where \(t\) is a new variable. This maps monomials in \(x_1, \ldots, x_m\) to powers of \(t\) and is injective for monomials with partial degrees less than \(d\). Thus, the polynomials \(\varphi_1, \ldots, \varphi_n\) are linearly independent over \(K\) if and only if  
\[
\Phi_j(t) = \varphi_j(t, t^d, \ldots, t^{d^{m-1}})
\]  
are linearly independent over \(K\). By a well-known result of Wronski ( see \cite{SMTB} Lemma 6.4.3), $\Phi_1, \ldots ,\Phi_n $ are linearly independent over \(K\) if and only if the Wronskian  
\begin{equation}\label{Wronski}
W(t) = \det\left(\frac{1}{(i-1)!}\frac{d^{i-1}}{dt^{i-1}} \Phi_j\right)_{i, j=1, \ldots, n}
\end{equation}
is not identically zero. Moreover, there exist universal polynomials \(a_{\boldsymbol{\mu}, i}(t; d, m) \in \mathbb{Q}[t]\) such that  
\[
\frac{d^{i-1}}{dt^{i-1}}  \Phi_j = \sum_{|\boldsymbol{\mu}| \leq i-1} a_{\boldsymbol{\mu}, i}(t; d, m) \partial_{\boldsymbol{\mu}} \varphi_j(t, \ldots, t^{d^{m-1}}).
\]  
Substituting this in \eqref{Wronski} we see that \(W(t)\) is a linear combination of generalized Wronskians \(W_{\boldsymbol{\mu}_1, \ldots, \boldsymbol{\mu}_n}(t, t^d, \ldots, t^{d^{m-1}})\) with \(|\boldsymbol{\mu}_i| \leq i-1\). Since \(W(t)\) is not identically zero, at least one of these generalized Wronskians must be non-zero.
\end{proof}


\section{Roth's theorem}\label{s 3}

In this section, we present a proof of Roth's theorem. By way of motivation, we give a quick proof of Liouville's theorem first.
\subsection{Liouville's theorem}
\begin{theorem}[Liouville's Theorem]\label{Liouville theorem}
  Let $\alpha$ be a real algebraic number of degree $n>1.$ Then there exists a constant $c(\alpha)>0$ such that for every $\dfrac{p}{q}\in \mathbb{Q}$
  \[\left|\alpha-\frac{p}{q}\right|>\frac{c(\alpha)}{q^n}.\]
  \end{theorem}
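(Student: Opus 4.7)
The plan is to exploit the fact that evaluating the minimal polynomial $f(x) \in \mathbb{Z}[x]$ of $\alpha$ at $p/q$ produces a nonzero rational number whose denominator is controlled, while at the same time $f(p/q) - f(\alpha) = f(p/q)$ can be estimated via the mean value theorem in terms of $|\alpha - p/q|$.

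More precisely, I would let $f(x) = a_n x^n + a_{n-1}x^{n-1} + \ldots + a_0 \in \mathbb{Z}[x]$ denote the minimal polynomial of $\alpha$, of degree $n > 1$. The first step is the arithmetic lower bound: since $f$ is irreducible of degree $n > 1$, no rational $p/q$ is a root of $f$, so $q^n f(p/q) = a_n p^n + a_{n-1} p^{n-1} q + \ldots + a_0 q^n$ is a nonzero integer. Hence
\[
|f(p/q)| \;\geq\; \frac{1}{q^n}.
\]
The second step is the analytic upper bound: by the mean value theorem applied to $f$ on the closed interval with endpoints $\alpha$ and $p/q$, there exists $\xi$ strictly between them with $f(p/q) = f(p/q) - f(\alpha) = f'(\xi)(p/q - \alpha)$. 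If I set $M = \sup_{|x - \alpha| \le 1} |f'(x)|$, which is finite since $f'$ is continuous on a compact interval, then whenever $|\alpha - p/q| \le 1$ I obtain
\[
|f(p/q)| \;\leq\; M \left|\alpha - \frac{p}{q}\right|.
\]

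Combining the two bounds, in the case $|\alpha - p/q| \le 1$ one immediately gets $|\alpha - p/q| \ge \frac{1}{M q^n}$. To deal with the complementary case $|\alpha - p/q| > 1$ and to ensure a strict inequality, I would simply take $c(\alpha) = \tfrac{1}{2}\min\{1, 1/M\}$: if $|\alpha - p/q| > 1 \ge 1/q^n$ then the bound is trivial, and otherwise the computation above yields $|\alpha - p/q| \ge \frac{1}{Mq^n} > \frac{c(\alpha)}{q^n}$.

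There is no serious obstacle here; the only subtlety worth highlighting is why $f(p/q) \ne 0$, which is exactly where the hypothesis $n > 1$ (and hence the irreducibility of the minimal polynomial together with $p/q \in \mathbb{Q}$) is essential. The strategy is thus a clean marriage of an arithmetic integrality argument with a local analytic (mean value) estimate, and the resulting constant $c(\alpha)$ depends only on $\alpha$ via the $C^1$-norm of $f$ on a neighbourhood of $\alpha$.
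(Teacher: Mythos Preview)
Your proof is correct. Both your argument and the paper's share the same arithmetic core: since $f$ is irreducible of degree $n>1$, the integer $q^n f(p/q)$ is nonzero, giving $|f(p/q)| \ge 1/q^n$. The difference lies in the analytic step converting this into a lower bound for $|\alpha - p/q|$. You use the mean value theorem, bounding $|f(p/q)| = |f'(\xi)|\,|\alpha - p/q|$ by $M|\alpha - p/q|$ on the interval $|x-\alpha|\le 1$; the paper instead factors $f(x) = a_n\prod_i (x-\alpha_i)$ over $\mathbb{C}$ and bounds $\prod_{i\ge 2}|p/q - \alpha_i|$ directly in terms of $\max_i |\alpha_i|$, splitting into the cases $|p/q| \ge 2M$ and $|p/q| < 2M$. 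Your approach is slightly more elementary in that it stays entirely over $\mathbb{R}$ and avoids mentioning the other roots of $f$; the paper's version is more algebraic and makes the dependence of $c(\alpha)$ on the conjugates of $\alpha$ explicit. Either route is standard and entirely adequate here.
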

\begin{proof}
    Let $f(x)\in \mathbb{Z}$ be the minimal polynomial of $\alpha$. Say $f(x)=a_nx^n+\ldots+a_1x+a_0$. Therefore, for any $p/q\in \mathbb{Q}$, we have 
    \[\left|f\left(\frac{p}{q}\right)\right|=\frac{|a_np^n+a_{n-1}p^{n-1}q+\ldots+a_0q^n|}{q^n}\ge \frac{1}{q^n}.\]
    Let $\alpha_1=\alpha$, $\alpha_2$, $\ldots$, $\alpha_n$ be the distinct roots of $f(x)$. Then, we can write 
    \[f(x)=a_n(x-\alpha)\cdots(x-\alpha_n).\]
    This implies 
    \[\left|\alpha-\frac{p}{q}\right| \ge \frac{1/q^n}{|a_n|\prod_{i=2}^n\left|\frac{p}{q}-\alpha_i\right|}.\]
    Let $\displaystyle M=\max_{1\le i\le n}\{|\alpha_i|\}$. If $|p/q|\ge 2M$, then by triangle inequality,
    \[\left|\alpha-\frac{p}{q}\right|\ge M>\frac{M}{q^n}.\]
    On the other hand if $|p/q|<2M$, then $|\alpha_i-p/q|< 3M$ for $2\le i\le n.$ Therefore
    \[\left|\alpha-\frac{p}{q}\right|>\left(\frac{1}{|a_n|(3M)^{n-1}}\right)\frac{1}{q^n}.\]
    Choosing $c(\alpha)=\min\left\{M,\dfrac{1}{|a_n|(3M)^{n-1}}\right\},$ we get the required result.
\end{proof}

\subsection{Equivalent formulations of Roth's theorem}
We state two formulations of Lang's generalization of Roth's theorem over a number field $K$, and show that they are equivalent. Note that when $K=\Q$, one recovers the classical theorem of Roth.

\begin{theorem}[Roth's Theorem]\label{Roth's theorem}
Let $ K $ be a number field and $ S $ be a finite set of places  of $K$. Let $\alpha \in \bar{K}$ and $ \varepsilon >0 $ be fixed. Then there are finitely many $ \beta \in K $ such that 
\begin{align}\label{roth's inequality}
    \prod_{v \in S}\min(1, |\beta - \alpha|_{v}) < \frac{1}{H(\beta)^{2 + \varepsilon}}.
\end{align}
\end{theorem}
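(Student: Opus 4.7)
The plan is to argue by contradiction. Suppose there exist infinitely many $\beta \in K$ satisfying \eqref{roth's inequality}. After standard preliminary reductions—multiplying $\alpha$ by a nonzero element of $\mathcal{O}_K$ to assume $\alpha$ is an algebraic integer, enlarging $K$ to the Galois closure of $K(\alpha)$ and enlarging $S$ accordingly—I would extract from the infinite family a finite sequence $\beta_1, \ldots, \beta_m$ whose Weil heights grow so rapidly that $h(\beta_{i+1}) \ge \lambda\, h(\beta_i)$ for an auxiliary large parameter $\lambda$. Here $m$ is chosen large depending only on $\varepsilon$, $[K:\Q]$, $|S|$, and $\deg(\alpha)$. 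Next I fix a large integer $r_1$ and set $r_i := \lceil r_1 h(\beta_1)/h(\beta_i)\rceil$, so that the weighted heights $r_i h(\beta_i)$ are essentially balanced across $i = 1,\ldots, m$.

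The second step is to build, by Siegel's lemma (Lemma \ref{Siegel lemma}), a nonzero auxiliary polynomial $P \in \mathcal{O}_K[x_1, \ldots, x_m]$ with $\deg_{x_i}P \le r_i$ whose coefficients have controlled height, and which vanishes to high order along the diagonal point $\bar{\alpha} = (\alpha, \ldots, \alpha)$:
\[
\op{Ind}_{\bar\alpha, \bar r}(P) \;\ge\; \tfrac{m}{2} - c_1\sqrt{m}.
\]
This is a pure counting step: the total number of free coefficients of $P$ is $\prod_{i}(r_i+1)$, while the vanishing conditions at $\bar\alpha$ up to weighted order $m/2 - c_1\sqrt m$ impose, by a central-limit estimate on weighted partitions, strictly fewer linear constraints. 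Siegel's lemma then produces $P$ together with a height bound of the form $h(P) \le c_2(r_1+\cdots+r_m)$, where $c_2$ depends only on $K$ and $\alpha$.

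The third step extracts from $P$ a lower bound on the index at the point $\bar{\beta} = (\beta_1, \ldots, \beta_m)$. Expanding any partial derivative $\partial_I P$ in a Taylor series around $\bar\alpha$ and using the hypothesis that $\prod_{v\in S}\min(1,|\beta_i-\alpha|_v)$ is very small together with the height bound on $P$, Proposition \ref{Fundamental inequality}, and the product formula, one obtains
\[
\op{Ind}_{\bar\beta,\bar r}(P) \;\ge\; \tfrac{m}{2} - c_1\sqrt m - \eta,
\]
with $\eta$ arbitrarily small provided $h(\beta_1)$ is large enough relative to $h(P)$ and $\lambda$ large enough relative to the data. Finally, Roth's lemma supplies a matching \emph{upper} bound: once $r_1$ and the gaps $h(\beta_{i+1})/h(\beta_i)$ are sufficiently large compared to $h(P)$, it forces
\[
\op{Ind}_{\bar\beta,\bar r}(P) \;\le\; c_3\, m\cdot 2^{-(m-1)/2},
\]
which is strictly smaller than $m/2 - c_1\sqrt m - \eta$ for $m$ large. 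These two inequalities are incompatible, yielding the desired contradiction. It is also worth recalling that this theorem and its companion formulation with $\alpha$ varying with $v$ are equivalent, so establishing the version above suffices.

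The main obstacle is the careful synchronization of the numerical parameters $m$, $r_1$, $\lambda$ and the gap thresholds so that all of Siegel's lemma, the Taylor-expansion lower bound, and Roth's lemma are simultaneously applicable with compatible constants. One must also absorb into the $\varepsilon$-slack the error terms coming from the archimedean triangle inequality, from places $v\notin S$, and from the degree $[K(\alpha):K]$. Roth's lemma itself—proved by induction on the number of variables via a Wronskian/factorization argument (Lemma \ref{Wronskian for LI})—is the technical heart of the whole proof, and its successful application at the end is what closes the argument.
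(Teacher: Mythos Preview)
Your outline follows the same three--step architecture as the paper: construct an auxiliary polynomial via Siegel's lemma with large index at $\bar\alpha$ and controlled height, push a lower bound for the index at $\bar\beta$ via a Taylor expansion combined with the product formula, and then invoke Roth's lemma for a contradictory upper bound. The paper routes the argument through the equivalent $\zeta$--weighted formulation (Theorem~\ref{alternate version}) before running these steps, which localizes the approximation hypothesis at each $v\in S$ and makes the Taylor estimate cleaner; you propose to work with the product form directly, which is fine but slightly messier.

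Two quantitative claims in your sketch are miscalibrated and would not go through as written. First, the lower bound $\op{Ind}_{\bar\beta,\bar r}(P)\ge m/2 - c_1\sqrt m - \eta$ with $\eta$ arbitrarily small is too strong: the product--formula contradiction in the Taylor step requires the \emph{gap} $\op{Ind}_{\bar\alpha,\bar r}(P)-\op{Ind}_{\bar\beta,\bar r}(P)$ to be of order $m/(2+\delta)$, so one only obtains $\op{Ind}_{\bar\beta,\bar r}(P)\ge \varepsilon m$ for a small fixed $\varepsilon$ (the paper takes $\varepsilon<\delta/22$; see Lemma~\ref{Step 2}). Second, Roth's lemma (Lemma~\ref{Roth's lemma}) does not produce an exponentially decaying bound $c_3 m\cdot 2^{-(m-1)/2}$; it yields $\op{Ind}_{\bar\beta,\bar r}(P)\le 2m\eta$ for a parameter $\eta$ chosen in advance, at the price of the gap condition $r_{j+1}/r_j\le \eta^{2^{m-1}}$. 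With the correct thresholds ($\varepsilon m$ against $2m\eta=m\varepsilon/2$ upon taking $\eta=\varepsilon/4$) the contradiction still closes, so your plan is sound once these two bounds are repaired.
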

We will show that this formulation of Roth's theorem is equivalent to the following statement, which we shall prove at the end of this section.
\begin{theorem}\label{alternate version}
    Let $ K $ be a number field and $S$ be a finite set of places of $ K $. Let $\alpha \in \bar{K}$ and $ \delta >0 $ be fixed. Define a map $\zeta:S\to [0,1]$ such that  $\sum_{v\in S}\zeta(v)=1$. Then there are finitely many $ \beta \in K $ such that 
\begin{align}\label{height inequality}
   |\beta - \alpha|_{v}< \frac{1}{H(\beta)^{(2 + \delta)\zeta({v})}} \phantom{mm} {\rm ~ for ~ all ~} v \in S.
 \end{align}
\end{theorem}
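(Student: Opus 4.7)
The plan is to deduce Theorem \ref{alternate version} directly from Roth's theorem (Theorem \ref{Roth's theorem}) by taking the product of the $v$-adic inequalities over $v \in S$. Since $\zeta$ is a probability weighting on $S$, the weighted exponents $\zeta(v)$ sum to $1$, and the product neatly reproduces the global form of Roth's inequality.

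First, I would observe that every $\beta \in K$ satisfies $H(\beta) = \prod_{v \in M_K} \max\{1, |\beta|_v\} \geq 1$, since each factor is at least $1$. Combined with the fact that $(2+\delta)\zeta(v) \geq 0$, this gives $H(\beta)^{-(2+\delta)\zeta(v)} \leq 1$ for every $v \in S$, so the hypothesis of Theorem \ref{alternate version} forces $|\beta - \alpha|_v < 1$ at every $v \in S$. In particular, $\min(1, |\beta - \alpha|_v) = |\beta - \alpha|_v$ for each such $v$.

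Next, I would multiply the hypothesized inequalities over all $v \in S$ to obtain
$$
\prod_{v \in S} \min(1, |\beta - \alpha|_v) \;=\; \prod_{v \in S} |\beta - \alpha|_v \;<\; \prod_{v \in S} H(\beta)^{-(2+\delta)\zeta(v)} \;=\; H(\beta)^{-(2+\delta)\sum_{v \in S}\zeta(v)} \;=\; H(\beta)^{-(2+\delta)},
$$
using the normalization $\sum_{v \in S}\zeta(v) = 1$. Applying Theorem \ref{Roth's theorem} with $\varepsilon = \delta$ then immediately yields that only finitely many $\beta \in K$ satisfy the given system of inequalities, proving the theorem.

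There is no serious obstacle in this deduction: the only subtlety is the preliminary step of verifying that $|\beta - \alpha|_v < 1$ automatically holds at each $v \in S$, which is what allows the $\min$ in Roth's theorem to be removed when multiplying the inequalities together. All the substantive work lies in establishing Roth's theorem itself, which is carried out in the subsequent subsections via Roth's lemma, Siegel's lemma, and the index machinery developed in Section \ref{s 2}.
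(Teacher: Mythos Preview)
Your deduction of Theorem \ref{alternate version} from Theorem \ref{Roth's theorem} is valid and is precisely the easy direction of Proposition \ref{equivalent propn} in the paper. However, the paper's logical flow runs the other way: the subsequent subsections (auxiliary polynomial, Index Lemma, Roth's lemma) are devoted to proving Theorem \ref{alternate version} \emph{directly}, and Theorem \ref{Roth's theorem} is then obtained as a corollary via the harder direction of Proposition \ref{equivalent propn}. So in the context of this paper your argument is circular---Theorem \ref{Roth's theorem} is not available as an independent input.

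The paper's own proof of Theorem \ref{alternate version} (subsection ``Proof of Roth's Theorem'') proceeds by contradiction: assuming infinitely many $\beta$ satisfy \eqref{height inequality}, one fixes small parameters $\varepsilon,\eta$ and a large $m$, then selects solutions $\beta_1,\dots,\beta_m$ with rapidly growing heights and integers $r_1,\dots,r_m$ so that $r_h h(\beta_h)$ is roughly constant. Lemma \ref{Step 1} (Siegel's lemma) produces a polynomial $P$ of controlled height with large index at $\bar{\alpha}$; Lemma \ref{Step 2} forces $\ind_{\bar{\beta},\bar{r}}(P)\geq \varepsilon m$; and Roth's Lemma \ref{Roth's lemma} gives the opposing bound $\ind_{\bar{\beta},\bar{r}}(P)\leq m\varepsilon/2$, a contradiction. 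Your last paragraph correctly identifies that this machinery carries the real content, but misattributes it: those subsections establish Theorem \ref{alternate version}, not Theorem \ref{Roth's theorem}.
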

\begin{proposition}\label{equivalent propn}
    Theorem \ref{Roth's theorem} and Theorem \ref{alternate version} are equivalent.
\end{proposition}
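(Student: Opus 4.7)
The plan is to prove both directions separately. The direction Theorem \ref{Roth's theorem} $\Rightarrow$ Theorem \ref{alternate version} is straightforward, while the converse requires a pigeonhole and compactness argument to extract, from a hypothetical infinite family of Roth counterexamples, a suitable choice of weight $\zeta$ that contradicts Theorem \ref{alternate version}.

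For the easy direction, I would fix $\zeta$ and $\delta > 0$ and observe that any $\beta$ satisfying \eqref{height inequality} automatically has $|\beta - \alpha|_v < H(\beta)^{-(2+\delta)\zeta(v)} \leq 1$ for each $v$, since $H(\beta) \geq 1$ and $\zeta(v) \geq 0$. Hence $\min(1, |\beta-\alpha|_v) = |\beta-\alpha|_v < H(\beta)^{-(2+\delta)\zeta(v)}$, and multiplying over $v \in S$ while invoking $\sum_v \zeta(v) = 1$ yields $\prod_{v \in S} \min(1, |\beta-\alpha|_v) < H(\beta)^{-(2+\delta)}$. Applying Theorem \ref{Roth's theorem} with $\varepsilon = \delta$ then gives the required finiteness.

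For the converse, I would argue by contradiction, assuming that infinitely many $\beta_n \in K$ satisfy \eqref{roth's inequality}; by Northcott's theorem one may pass to a subsequence with $H(\beta_n) \to \infty$. A first pigeonhole on the $2^{|S|}$ possible subsets $T(\beta) := \{v \in S : |\beta-\alpha|_v < 1\}$ lets me fix $T \subseteq S$ with $T = T(\beta_n)$ for every $n$, so that \eqref{roth's inequality} simplifies to $\prod_{v \in T} |\beta_n-\alpha|_v < H(\beta_n)^{-(2+\varepsilon)}$. Setting $\lambda_v(\beta_n) := -\log|\beta_n-\alpha|_v / \log H(\beta_n) > 0$ for $v \in T$ and $\mu_v(\beta_n) := \lambda_v(\beta_n) / \sum_{w \in T} \lambda_w(\beta_n)$ produces a probability vector in the compact simplex indexed by $T$, and a second Bolzano--Weierstrass extraction yields $\mu_v(\beta_n) \to \zeta^*(v)$ for every $v \in T$, where $\zeta^* \colon T \to [0,1]$ satisfies $\sum_{v \in T} \zeta^*(v) = 1$.

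I would then apply Theorem \ref{alternate version} to $K$ with place set $T$, weight $\zeta^*$, and a fixed $\delta \in (0, \varepsilon)$, which reduces to verifying $\lambda_v(\beta_n) > (2+\delta) \zeta^*(v)$ for all $v \in T$ and all sufficiently large $n$. Writing $\lambda_v(\beta_n) = \mu_v(\beta_n) \Lambda(\beta_n)$ with $\Lambda(\beta_n) = \sum_{w \in T} \lambda_w(\beta_n) > 2 + \varepsilon$, the case $\zeta^*(v) = 0$ is immediate because $\lambda_v(\beta_n) > 0$ on $T$; for $v$ in the finite support of $\zeta^*$, choosing $\eta > 0$ small enough that $\zeta^*(v)(\varepsilon - \delta) > \eta(2+\varepsilon)$ holds for every such $v$ and using $\mu_v(\beta_n) > \zeta^*(v) - \eta$ for $n$ large forces $\mu_v(\beta_n)(2+\varepsilon) > (2+\delta)\zeta^*(v)$. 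Thus infinitely many $\beta_n$ would satisfy \eqref{height inequality} relative to the data $(T, \zeta^*, \delta)$, contradicting Theorem \ref{alternate version}. The main obstacle I anticipate is precisely this bookkeeping around places where $|\beta_n - \alpha|_v \geq 1$ or where $\mu_v(\beta_n) \to 0$: both must be stratified out by the preliminary pigeonhole on $T$ and by the passage to the limit $\zeta^*$, so that Theorem \ref{alternate version} is never invoked in the trivial regime where $H(\beta)^{-(2+\delta)\zeta(v)} = 1$ while $|\beta - \alpha|_v$ meets or exceeds $1$.
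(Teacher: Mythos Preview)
Your proof is correct, but the converse direction uses a genuinely different mechanism from the paper's. The paper avoids limits entirely: it fixes the \emph{finite} grid $\mathcal{E}$ of rational weights $\zeta(v)=a_v/s$ with $a_v\in\{0,\dots,s\}$ and $\sum_v a_v=s$ (here $s=|S|$), and then shows directly that every $\beta$ satisfying \eqref{roth's inequality} also satisfies \eqref{height inequality} for \emph{some} $\zeta\in\mathcal{E}$. Concretely, writing $\min(1,|\beta-\alpha|_v)=H(\beta)^{-(2+\varepsilon)\lambda_v(\beta)}$ with $\lambda_v(\beta)\ge 0$, the inequality \eqref{roth's inequality} forces $\sum_v\lambda_v(\beta)\ge 1$, hence $\sum_v\lfloor 2s\lambda_v(\beta)\rfloor\ge s$, and one can pick integers $0\le b_v\le 2s\lambda_v(\beta)$ with $\sum_v b_v=s$; the weight $\zeta(v)=b_v/s$ then works. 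Finiteness of $\mathcal{E}$ plus Theorem~\ref{alternate version} for each $\zeta\in\mathcal{E}$ finishes the argument.

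Your compactness route---pigeonhole on the support $T$, normalize the $\lambda_v$ to a probability vector, extract a limit $\zeta^*$ by Bolzano--Weierstrass, then reapply Theorem~\ref{alternate version} with a slightly smaller $\delta<\varepsilon$---is a standard alternative and you handle the boundary cases (empty or zero-weight coordinates) correctly by restricting to $T$. The trade-off: the paper's discretization is more elementary and constructive, yielding an explicit finite list of weights against which every counterexample must offend, whereas your argument is nonconstructive but arguably cleaner conceptually, and the $\varepsilon\to\delta$ slack you introduce is exactly what absorbs the passage to the limit.
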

\begin{proof}
    Assume that Theorem \ref{Roth's theorem} is true. Let $\zeta:S\to [0,1]$ be a map such that  $\sum_{v\in S}\zeta(v)=1$. Assume $\beta$ satisfies \eqref{height inequality}, then 
    \[\prod_{v\in S}|\beta-\alpha|_{v}< \frac{1}{H(\beta)^{2 + \varepsilon}}.\]
    Since 
    \[
    \prod_{v \in S}\min(1, |\beta - \alpha|_{v})\leq \prod_{v\in S}|\beta-\alpha|_{v}<\frac{1}{H(\beta)^{2 + \varepsilon}}
    \]
    by Theorem \ref{Roth's theorem} there are finitely  many such $\beta$. 
    Conversely, assume that Theorem \ref{alternate version} is true. Let $s$ be the cardinality of the set $S$. Define $\zeta:S\to [0,1]$ such that for some integer $a_{v}$ with  $0 \le a_{v} \le s$, $\zeta(v)=a_{v}/s$ and $\sum_{v \in S}\zeta({v})=1.$ If $\mathcal{E}$ denotes the set of all such maps $\zeta$, then cardinality of $\mathcal{E}$ is finite. Next, let  $\beta$ be a solution to \eqref{roth's inequality}. Define   $\lambda_{v}(\beta)\in \mathbb{R}^{+}$ such that  
\[\min(1, |\beta - \alpha|_{v}) =\frac{1}{H(\beta)^{(2 + \varepsilon)\lambda_{v}(\beta)}}.\] Therefore, we have 
\[\prod_{v\in S}\min(1, |\beta - \alpha|_{v}) = \frac{1}{H(\beta)^{(2 + \varepsilon)\sum_{v \in S}\lambda_{v}(\beta)}}.\]
Comparing with \eqref{roth's inequality} we see that  $\sum_{v \in S}\lambda_{v}(\beta)\geq 1$. Therefore, we have 
\[\sum_{v \in S}[2s\lambda_{v}(\beta)]\geq \sum_{v \in S}2s\lambda_{v}(\beta)-s\geq s. \]
This gives that for each $v\in S$, there exists $b_{v}(\beta)\in \mathbb{N}$ such that $0\leq  b_{v}(\beta) \leq 2s\lambda_{v}(\beta)$ and $\sum_{v\in S}b_{v}(\beta)=s.$ Then the function $\zeta:S\to [0,1]$ defined as $\zeta({v})=b_{v}(\beta)/s 
$ lies in $\mathcal{E}$. This yields that a $\beta$ satisfying \eqref{roth's inequality} also satisfies \eqref{height inequality} for some $\zeta\in \mathcal{E}.$ From Theorem \ref{alternate version}, for every $\zeta\in \mathcal{E}$, there exist only finitely many $\beta$ satisfying \eqref{height inequality} and there are finitely many $\zeta\in \mathcal{E}.$ So, there are finitely many $\beta$ satisfying \eqref{roth's inequality}. This completes the proof. 
\end{proof}
\subsection{Auxiliary polynomial}

Next we use Siegel's lemma to construct an auxiliary polynomial with controlled height which vanishes to high order at the point $\bar{\alpha}:=(\alpha, \dots, \alpha)\in K^m$.

\begin{lemma}\label{I(m, epsilon) bound}
    Let $\varepsilon\in (0,1)$ and $r_1, \dots, r_m$ be positive integers and let $\mathcal{I}(m, \varepsilon)$ denote the number of tuples $I=\left(i_{1}, \ldots, i_{m}\right) \in \mathbb{Z}^{m}$ such that $0 \leq i_{h} \leq r_{h}$ for all $1 \leq h \leq m$ satisfying:
    \[\sum_{h=1}^{m} \frac{i_{h}}{r_{h}} \leq \frac{m}{2}(1-\varepsilon).\]
Then,  
    \[\# \mathcal{I}(m, \varepsilon)\leq (r_1+1)(r_2+1)\dots (r_m+1)\op{exp}(-\varepsilon^2 m/16).\] 
\end{lemma}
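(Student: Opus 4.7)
The plan is to reformulate the counting problem in probabilistic language and then apply a Chernoff-type exponential Markov bound. I would let $i_1, \ldots, i_m$ be independent random variables with $i_h$ uniformly distributed on $\{0, 1, \ldots, r_h\}$, and set $X_h := \tfrac{1}{2} - \tfrac{i_h}{r_h}$. Each $X_h$ has mean zero and takes values in the interval $[-\tfrac{1}{2}, \tfrac{1}{2}]$. Since the condition $\sum_{h} i_h/r_h \leq m(1-\varepsilon)/2$ is equivalent to $\sum_h X_h \geq m\varepsilon/2$, the cardinality we wish to bound is precisely
\[
\#\mathcal{I}(m,\varepsilon) = \left(\prod_{h=1}^m (r_h+1)\right) \cdot \mathbb{P}\!\left[\sum_{h=1}^m X_h \geq \frac{m\varepsilon}{2}\right],
\]
so it suffices to show that this tail probability is at most $\exp(-\varepsilon^2 m/16)$.

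For the tail bound I would use the exponential Markov (Chernoff) trick: for any $t > 0$,
\[
\mathbb{P}\!\left[\sum_{h=1}^m X_h \geq \frac{m\varepsilon}{2}\right] \leq e^{-tm\varepsilon/2}\prod_{h=1}^m \mathbb{E}\!\left[e^{tX_h}\right].
\]
Each factor $\mathbb{E}[e^{tX_h}]$ is then controlled by the one-variable Hoeffding lemma: by convexity of the exponential,
\[
e^{tx} \leq (\tfrac{1}{2}-x)\,e^{-t/2} + (\tfrac{1}{2}+x)\,e^{t/2} \quad \text{for } x \in [-\tfrac{1}{2}, \tfrac{1}{2}],
\]
so taking expectations and using $\mathbb{E}[X_h]=0$ yields $\mathbb{E}[e^{tX_h}] \leq \cosh(t/2) \leq e^{t^2/8}$, the final step being the usual termwise comparison of the Taylor series of $\cosh$ and $\exp$. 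Substituting back, the tail probability is bounded by $\exp\bigl(m t^2/8 - tm\varepsilon/2\bigr)$.

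Finally, I would pick $t$ to beat the target exponent. Taking $t = \varepsilon$ already gives $m\varepsilon^2/8 - m\varepsilon^2/2 = -3m\varepsilon^2/8$, comfortably below the required $-m\varepsilon^2/16$; the sharper choice $t = 2\varepsilon$ would even yield $-m\varepsilon^2/2$, so the statement is visibly far from tight and no delicate optimisation in $t$ is required. The only mildly technical ingredient in the whole argument is the moment-generating-function estimate $\mathbb{E}[e^{tX_h}] \leq e^{t^2/8}$, which is Hoeffding's lemma for a mean-zero random variable supported in an interval of length one; once that is in hand the rest is purely algebraic, and I expect the main obstacle to be nothing more than keeping the constants straight.
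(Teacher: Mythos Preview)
Your proof is correct and follows the same underlying strategy as the paper: both arguments are Chernoff-type bounds, inserting an exponential factor $e^{t(\sum X_h - m\varepsilon/2)}$ with $X_h = \tfrac{1}{2} - i_h/r_h$ and then bounding the resulting product of one-variable sums. The only difference is in how each factor $\sum_{i=0}^{r_h} e^{t(1/2 - i/r_h)}$ is controlled. The paper takes $t=\varepsilon/4$ and expands $e^s \le 1+s+s^2$ for $|s|\le 1$, then evaluates the sums $\sum i$ and $\sum i^2$ explicitly to reach $(r_h+1)(1+\varepsilon^2/16)\le (r_h+1)e^{\varepsilon^2/16}$; this bare-hands computation is where the exact constant $1/16$ in the statement comes from. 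Your route via Hoeffding's lemma is cleaner and yields a strictly sharper exponent (e.g.\ $-3m\varepsilon^2/8$ at $t=\varepsilon$), which is why you found so much slack. Either argument is fine here; yours has the advantage of avoiding the somewhat ad hoc quadratic truncation, while the paper's has the advantage of being fully self-contained without invoking a named inequality.
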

\begin{proof}
Note that $e^{t} \geq 1$ for all $t\geq 0$. As a result, we find that
\begin{align}\label{bound}
 \# \mathcal{I}(m, \varepsilon)
 &
 =\sum_{\left(i_{1}, \ldots, i_{m}\right) \in \mathcal{I}(m, \varepsilon)} 1 \nonumber \\ 
& \leq \sum_{\left(i_{1}, \ldots, i_{m}\right) \in \mathcal{I}(m, \varepsilon)} \exp \left(\frac{\varepsilon}{4}\left(\frac{m}{2}(1-\varepsilon) -\frac{i_{1}}{r_{1}}-\cdots-\frac{i_{m}}{r_{m}}\right)\right) \nonumber \\
& \leq \sum_{i_{1}=0}^{r_{1}} \cdots \sum_{i_{m}=0}^{r_{m}} \exp \left(\frac{\varepsilon}{4}\left(\frac{m}{2}(1-\varepsilon) -\frac{i_{1}}{r_{1}}-\cdots-\frac{i_{m}}{r_{m}}\right)\right) \nonumber \\
&=\exp \left(-\frac{\varepsilon^{2} m}{8}\right) \sum_{i_{1}=0}^{r_{1}} \cdots \sum_{i_{m}=0}^{r_{m}} \exp \left(\frac{\varepsilon}{4}\left(\frac{m}{2}-\frac{i_{1}}{r_{1}}-\cdots-\frac{i_{m}}{r_{m}}\right)\right) \nonumber \\
&=\exp \left(-\frac{\varepsilon^{2} m}{8}\right) \prod_{h=1}^{m}\left(\sum_{i=0}^{r_{h}} \exp \left(\frac{\varepsilon}{4}\left(\frac{1}{2}-\frac{i}{r_{h}}\right)\right)\right) .
\end{align}

Using the inequality $e^{t} \leq 1+t+t^{2}$, valid for all $|t| \leq 1$, we find that
$$
\begin{aligned}
\sum_{i=0}^{r} \exp \left(\frac{\varepsilon}{4}\left(\frac{1}{2}-\frac{i}{r}\right)\right) & \leq \sum_{i=0}^{r}\left\{1+\frac{\varepsilon}{4}\left(\frac{1}{2}-\frac{i}{r}\right)+\frac{\varepsilon^{2}}{16}\left(\frac{1}{2}-\frac{i}{r}\right)^{2}\right\} \\
& =\sum_{i=0}^{r}\left\{\left(1+\frac{\varepsilon}{8}+\frac{\varepsilon^{2}}{64}\right)-\left(\frac{\varepsilon}{4}+\frac{\varepsilon^{2}}{16}\right) \frac{i}{r}+\frac{\varepsilon^{2}}{16} \frac{i^{2}}{r^{2}}\right\} \\
& =(r+1)\left(1+\frac{\varepsilon^{2}}{192}+\frac{\varepsilon^{2}}{96 r}\right) \\
& \leq(r+1)\left(1+\frac{\varepsilon^{2}}{16}\right), \quad \text { using } r \geq 1.
\end{aligned}
$$
Plugging the above inequality in \ref{bound} and using $1+t \le e^t$, we obtain the following estimates:
$$
\begin{aligned}
\# \mathcal{I}(m, \varepsilon) & \leq \exp \left(-\frac{\varepsilon^{2} m}{8}\right) \prod_{h=1}^{m}\left(\left(r_{h}+1\right)\left(1+\frac{\varepsilon^{2}}{16}\right)\right) \\
& \leq \exp \left(-\frac{\varepsilon^{2} m}{8}\right) \prod_{h=1}^{m}\left(\left(r_{h}+1\right) \exp \left(\frac{\varepsilon^{2}}{16}\right)\right) \\
& =\left(r_{1}+1\right) \cdots\left(r_{m}+1\right) \exp \left(-\frac{\varepsilon^{2} m}{16}\right),
\end{aligned}
$$
which proves the result.
\end{proof}

\begin{lemma}[Auxiliary polynomial]\label{Step 1}
   Let $\alpha\in \bar{\Q}$ be an algebraic integer with $d=[\Q(\alpha):\Q]$ and $\varepsilon>0$. Let $m$ be large enough so that 
   $$\exp(\varepsilon^{2} m / 16)>2d $$ and ${\bar{r}}=(r_{1}, \ldots, r_{m})$ an $m$-tuple of positive integers. Then there exists a polynomial $P \in \mathbb{Z}\left[X_{1}, \ldots, X_{m}\right]$ with $\op{deg}_{X_h} P\leq r_h$ for $h=1, \dots, m$, such that the following conditions are satisfied:

   \begin{enumerate}
       \item $\op{Ind}_{\bar{\alpha}, \bar{r}}(P)\geq \frac{m}{2}(1-\varepsilon)$,
       \item $h(P)\leq (r_1+r_2+\dots+ r_m)C_\alpha$, where $C_\alpha>0$ is a constant depending only on $\alpha$.
   \end{enumerate}
   
\end{lemma}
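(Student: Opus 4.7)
The plan is to construct $P$ by applying Siegel's lemma (Lemma~\ref{Siegel1}) to a system of integer linear equations in the unknown coefficients of $P$. Write $P(X_1,\ldots,X_m)=\sum_{J} a_J X_1^{j_1}\cdots X_m^{j_m}$ with $J=(j_1,\ldots,j_m)$ ranging over $0\le j_h\le r_h$; this introduces $N=\prod_{h=1}^m(r_h+1)$ integer unknowns $a_J$. To force $\op{Ind}_{\bar\alpha,\bar r}(P)\ge \tfrac{m}{2}(1-\varepsilon)$, I impose $\partial_I P(\bar\alpha)=0$ for every multi-index $I=(i_1,\ldots,i_m)$ with $0\le i_h\le r_h$ and $\sum_h i_h/r_h\le \tfrac{m}{2}(1-\varepsilon)$; by Lemma~\ref{I(m, epsilon) bound}, the number of such $I$ is at most $N\exp(-\varepsilon^2 m/16)$.

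Each such relation is linear in the $a_J$, with the coefficient of $a_J$ equal to $\binom{j_1}{i_1}\cdots\binom{j_m}{i_m}\,\alpha^{(j_1-i_1)+\cdots+(j_m-i_m)}\in\Z[\alpha]$. Since $\alpha$ is an algebraic integer, $1,\alpha,\ldots,\alpha^{d-1}$ is a $\Z$-basis of $\Z[\alpha]$; expanding each coefficient in this basis converts every equation over $K$ into $d$ equations over $\Z$. Hence the total number of $\Z$-linear equations is $M\le d\,N\exp(-\varepsilon^2 m/16)<N/2$, the final inequality being the hypothesis $\exp(\varepsilon^2 m/16)>2d$. In particular $M<N$, so Lemma~\ref{Siegel1} applies and produces a nontrivial integer solution $(a_J)_J$ with $|a_J|<(NA)^{M/(N-M)}$, where $A$ is the maximum absolute value of an integer coefficient of the system.

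Next, I estimate $A$. Each binomial satisfies $\binom{j_h}{i_h}\le 2^{r_h}$, contributing at most $2^R$, where $R:=r_1+\cdots+r_m$. Using the monic minimal polynomial of $\alpha$ recursively, every power $\alpha^k$ with $k\le R$ rewrites in the fixed $\Z$-basis with integer coordinates bounded by $C_1^k\le C_1^R$ for a constant $C_1>0$ depending only on $\alpha$. Therefore $A\le (2C_1)^R$ and $\log A\le C_2R$. Since $M<N/2$, the Siegel exponent $M/(N-M)<1$, so $\log\max_J|a_J|<\log N+\log A\le R+C_2R=C\cdot R$ (using $\log(r_h+1)\le r_h$). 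Because the $a_J$ are integers, all non-archimedean local heights $|P|_v$ are $\le 1$, hence $h(P)\le \log\max_J|a_J|\le C\cdot R$, yielding the stated height bound.

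I expect the main obstacle to be the uniform control of coefficient sizes in the third step: rewriting each $K$-linear condition as $d$ $\Z$-linear conditions while keeping the resulting integer coefficients exponential of slope linear in $R$. The hypothesis $\exp(\varepsilon^2 m/16)>2d$ is precisely calibrated so that, after the $d$-fold unfolding, the integer system remains strictly underdetermined, which both permits the application of Siegel's lemma and keeps the exponent $M/(N-M)$ below $1$, so that the height of $P$ scales linearly rather than superlinearly in $R$.
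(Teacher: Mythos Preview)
Your proposal is correct and follows essentially the same approach as the paper: set up the unknown coefficients of $P$, impose the vanishing conditions $\partial_I P(\bar\alpha)=0$ for $\sum i_h/r_h\le \tfrac{m}{2}(1-\varepsilon)$, count these via Lemma~\ref{I(m, epsilon) bound}, and solve by Siegel's lemma with the hypothesis $\exp(\varepsilon^2 m/16)>2d$ ensuring the system is underdetermined and the exponent is below $1$. The only cosmetic difference is that the paper invokes the number-field Siegel lemma (Lemma~\ref{Siegel lemma}) directly, whereas you unfold that step by hand, expanding in the $\Z$-basis $1,\alpha,\ldots,\alpha^{d-1}$ of $\Z[\alpha]$ and applying Lemma~\ref{Siegel1}; the resulting height bounds are the same.
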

\begin{proof}
    We write
    \[P(x_1,\ldots,x_m)=\sum_{j_1=0}^{r_1}\cdots\sum_{j_m=0}^{r_m}c(j_1,\ldots,j_m)x_1^{j_1}\cdots x_m^{j_m}\]
    with $c(j_1,\ldots,j_m)\in \mathbb{Z}$. Note that the 
    height $H(P)$ of $P$ coincides with the maximum of the absolute values of the coefficients $|c(j_1, \dots, j_m)|$ of $P$. Clearly, the number of coefficients is $N:=(r_1+1)\cdots (r_m+1)$. Writing $I=(i_1, \dots, i_m)$ and $\bar{\alpha} = (\alpha, \ldots, \alpha)$, the index condition requires that
    \begin{equation}\label{number of equation}
       \partial_{I}P(\bar{\alpha})=0 \ \text{whenever}\ \sum_{h=1}^m\frac{i_h}{r_h}<\frac{m}{2}(1-\varepsilon).
    \end{equation}
    By Lemma \ref{I(m, epsilon) bound}, the number of such $m$-tuples $I$ is at most 
    \[M:=(r_1+1)\cdots (r_m+1)\exp(-\varepsilon^2m/16).\] 
   It follows that  \[\frac{N}{dM}=\frac{ (r_1+1)\cdots (r_m+1)}{d (r_1+1)\cdots (r_m+1)\cdot e^{-\varepsilon^2m/16}}>2\] if   $e^{\varepsilon^2m/16}>2d$. In particular, we find that
\[\frac{dM}{N-dM}=\frac{1}{\frac{N}{dM}-1}< 1. \]
 From the expression
    \[\partial_I P=\sum_{j_1=0}^{r_1}\cdots\sum_{j_m=0}^{r_m}\binom{j_1}{i_1}\dots \binom{j_m}{i_m} c(j_1,\ldots,j_m)x_1^{j_1-i_1}\cdots x_m^{j_m-i_m},\]
    one finds that 
\[\partial_I P(\bar{\alpha})=\sum_{j_{1}=0}^{r_{1}} \cdots \sum_{j_{m}=0}^{r_{m}}\binom{j_{1}}{i_{1}} \cdots\binom{j_{m}}{i_{m}} \alpha^{\left(j_{1}+\cdots+j_{m}-i_{1}-\cdots-i_{m}\right)} c(j_1,\ldots,j_m).\] Let $B$ be the vector formed by the coefficients of the equations, we find that 
\[h(B)\leq \op{max}_J\left\{ h\left(\binom{j_{1}}{i_{1}} \cdots\binom{j_{m}}{i_{m}} \alpha^{\left(j_{1}+\cdots+j_{m}-i_{1}-\cdots-i_{m}\right)} \right)\right\}\leq (r_1+\dots+r_h)(h(\alpha)+\log 2).\]
Applying Siegel's lemma, there exists a solution for this system of linear equations, namely $\mathbf{x}=(c(J))_J \in \mathbb{Z}^n$  such that 
\[\begin{split} h(\mathbf{x}) \le &\frac{dM}{N-dM}\left(h(B)+\log N+ c(K)\right)\\
< & (r_1+\dots+r_m)(h(\alpha)+\log 2)+\sum_{i=1}^m\log(r_i+1)+c(K),\\
< & (r_1+\ldots+r_m)(h(\alpha)+\log 2 +1 + c(K)) \\
\end{split}\]
where $K:=\Q(\alpha)$ and $c(K)$ is a positive constant depending only on $K$. This in turn implies that 
\[h(\mathbf{x})\leq (r_1+r_2+\dots+ r_m)C_\alpha\] for some constant $C_\alpha>0$ which depends only on $\alpha$.
\end{proof}
\subsection{A lower bound for the index} In this subsection, we prove lower bounds for the index of an auxiliary polynomial at certain points $\beta=(\beta_1, \dots, \beta_m)$ which shall be prescribed later on in our proof.

\begin{lemma}\label{easy boring lemma}
     Let \( P \in \mathbb{Z}[X_1, \dots, X_m] \) with \(\deg_{X_h}(P) \leq r_h\), and let \(\bar{\beta} = (\beta_1, \dots, \beta_m)\) be an \( m \)-tuple of algebraic numbers in a number field \( K \). Then, for any nonnegative integer \( m \)-tuple \( J = (j_1, \dots, j_m) \),  
\[
H(\partial_J P(\beta)) \leq 4^{(r_1+\cdots+r_m)} H(P) \prod_{h=1}^{m} H(\beta_h)^{r_h}.
\]
\end{lemma}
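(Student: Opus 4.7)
The plan is to reduce the Weil-height estimate to a place-by-place bound on $|\partial_J P(\bar{\beta})|_v$, and then recombine using the defining product $H(\alpha) = \prod_{v \in M_K} \max(1, |\alpha|_v)$. The starting point is the explicit expansion
\[
\partial_J P(\bar{\beta}) \;=\; \sum_I \binom{i_1}{j_1}\cdots\binom{i_m}{j_m}\, c_I\, \beta_1^{i_1-j_1}\cdots\beta_m^{i_m-j_m},
\]
where $P = \sum_I c_I X^I$ and $I = (i_1, \ldots, i_m)$ runs over multi-indices with $j_h \leq i_h \leq r_h$ (the binomial being zero otherwise).

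At a non-archimedean place $v$, the ultrametric inequality together with the fact that integer binomial coefficients have $v$-adic absolute value at most $1$ immediately yields
\[
|\partial_J P(\bar{\beta})|_v \;\leq\; |P|_v \prod_{h=1}^{m} \max(1, |\beta_h|_v)^{r_h}.
\]
At an archimedean place $v$ corresponding to an embedding $\sigma$, I would apply the ordinary triangle inequality and factorize the resulting sum coordinate by coordinate:
\[
\sum_I \binom{i_1}{j_1}\cdots\binom{i_m}{j_m}\prod_h |\sigma(\beta_h)|^{i_h-j_h} \;=\; \prod_{h=1}^{m}\sum_{i_h=j_h}^{r_h}\binom{i_h}{j_h}\, |\sigma(\beta_h)|^{i_h-j_h}.
\]
Using $\binom{i_h}{j_h} \leq 2^{r_h}$ and the bound $r_h + 1 \leq 2^{r_h}$ on the number of summands, each inner sum is at most $4^{r_h}\max(1, |\sigma(\beta_h)|)^{r_h}$. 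Raising to the $d_v/[K:\mathbb{Q}]$ power then gives
\[
|\partial_J P(\bar{\beta})|_v \;\leq\; 4^{(r_1+\cdots+r_m)\, d_v/[K:\mathbb{Q}]}\, |P|_v \prod_{h=1}^m \max(1, |\beta_h|_v)^{r_h}.
\]

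To conclude, I would take the product over all places of the $\max(1,\cdot)$ of these bounds, using $\max(1, xyz) \leq \max(1, x)\max(1, y)\max(1, z)$ to pull the outer $\max(1,\cdot)$ through. The archimedean local degrees aggregate via $\sum_{v \in M_K^\infty} d_v/[K:\mathbb{Q}] = 1$, producing the constant $4^{r_1+\cdots+r_m}$ (the non-archimedean places contribute the trivial constant $1$); the identities $\prod_v |P|_v = H(P)$ and $\prod_v \max(1, |\beta_h|_v)^{r_h} = H(\beta_h)^{r_h}$ supply the remaining factors and finish the proof.

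The main technical point I anticipate is calibrating the archimedean combinatorial estimate so that each coordinate contributes exactly the factor $4^{r_h}$ rather than a larger power of two; once that is handled, aggregating the local degrees via the identity $\sum_{v \in M_K^\infty} d_v/[K:\mathbb{Q}] = 1$ is the clean step that converts the pointwise exponents into the single constant $4^{r_1+\cdots+r_m}$, and the rest is routine bookkeeping against the definition of $H$.
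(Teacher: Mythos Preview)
Your proof is correct and follows essentially the same place-by-place strategy as the paper. The only cosmetic difference is that the paper first bounds $H(\partial_J P(\bar\beta))$ in terms of $H(\partial_J P)$ and the number of monomials, and then invokes the separate estimate $H(\partial_J P)\le 2^{r_1+\cdots+r_m}H(P)$ (Lemma~\ref{ht of derivative}) together with $(r_h+1)\le 2^{r_h}$, whereas you absorb the binomial-coefficient contribution directly into the archimedean local bound via the product factorization; both routes yield the same constant $4^{r_1+\cdots+r_m}$.
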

\begin{proof}
    Fixing $J$, we denote for ease of notation $T:=\partial_J P$. 
    We estimate $|T(\beta_1, \dots, \beta_m)|_v$ for $v\in M_K$. First, suppose that $v$ is archimedean, then using triangle inequality we get  
\[
|T(\beta_1, \dots, \beta_m)|_v \leq |(r_1+1) \cdots (r_m+1)|_v |T|_{v} \prod_{h=1}^{m} \max\{|\beta_h|_v,1\}^{r_h}.
\]  
since the number of terms is at most $(r_1+1) \cdots (r_m+1)$. 
On the other hand, if $v$ is non-archimedean, by the strong triangle inequality, we have
\[
|T(\beta_1, \dots, \beta_m)|_v \leq \prod_{h=1}^{m} \max\{|\beta_h|_v,1\}^{r_h}.
\]  
Multiplying over all \( v \in M_K \), we obtain
\[
H(T(\beta_1, \dots, \beta_m)) \leq 
(r_1+1) \cdots (r_m+1)H(T) \prod_{h=1}^{m} H(\beta_h)^{r_h}
\]
Using Lemma \ref{ht of derivative} and the fact that \( (r_h+1) \leq 2^{r_h} \) for all $h$, we get
$$ 
 H(T(\beta_1, \dots, \beta_m)) 
  \leq 4^{r_1+\cdots+r_m} H(P) \prod_{h=1}^{m} H(\beta_h)^{r_h} 
$$
and this completes the proof.
\end{proof}

\begin{lemma}[Index Lemma]\label{Step 2}
    Let \( 0 < \delta < 1 \) and \( 0 < \varepsilon < \delta/22 \). Choose \( m \) sufficiently large so that  
\[
\exp\left(\frac{\varepsilon^2 m}{16}\right) > 2d.
\]
Let $\bar{r}= (r_1, \ldots , r_m)$ be an $m$-tuple of positive integers and \( P \in \mathbb{Z}[x_1,\dots, x_m] \) a polynomial, with \( \deg_{x_h}(P) \leq r_h \) for each \( h \), satisfying the conditions of Lemma \ref{Step 1}. Given a finite set of places \( S \), define a partition of unity \( \zeta: S \to [0,1] \), and let \(\bar{\beta}=(\beta_1, \dots, \beta_m)\in K^m \) be such that
\[
|\beta_h - \alpha|_v \leq \frac{1}{H(\beta_h)^{(2+\delta) \zeta(v)}}
\]
for all \( v \in S \) and $1 \le h \le m $. Suppose that $\bar{\beta}$ satisfies the height bound  
\[
\max_{1\leq h \leq m} H(\beta_h)^{r_h} \leq \left( \min_{1\leq h \leq m} H(\beta_h)^{r_h} \right)^{1+\varepsilon}
\]
and
\(H(\beta_h) \ge (8e^{C_\alpha}H(\alpha))^{8/\alpha+4/11} \) for all $h$, where $C_\alpha>0$ is the constant depending only on $\alpha$ from Lemma \ref{Step 1}.
Then, we have 
\[
\operatorname{Ind}_{\bar{\beta},\bar{r}}(P) \geq \varepsilon m.
\]
\end{lemma}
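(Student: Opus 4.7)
The plan is to argue by contradiction: assume that $\op{Ind}_{\bar\beta,\bar r}(P)<\varepsilon m$, so there exists a multi-index $T=(t_1,\dots,t_m)$ with $\sum_{h} t_h/r_h<\varepsilon m$ such that $Q:=\partial_T P$ satisfies $Q(\bar\beta)\neq 0$. I would begin by writing down the Taylor expansion of $Q$ around $\bar\alpha$,
\[
Q(\bar\beta) \;=\; \sum_{S}\binom{S+T}{T}\,\partial_{S+T}P(\bar\alpha)\,\prod_{h=1}^{m}(\beta_h-\alpha)^{s_h},
\]
where $S=(s_1,\dots,s_m)\in \mathbb{Z}_{\ge 0}^m$ runs over tuples with $s_h+t_h\le r_h$. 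The index hypothesis $\op{Ind}_{\bar\alpha,\bar r}(P)\ge m(1-\varepsilon)/2$ from Lemma \ref{Step 1} then kills every term except those for which $\sum_h s_h/r_h > m/2 - 3\varepsilon m/2$.

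Since $Q(\bar\beta)\in K\setminus\{0\}$, the plan is to derive a contradiction from the fundamental inequality (Proposition \ref{Fundamental inequality}),
\[
-h\bigl(Q(\bar\beta)\bigr) \;\le\; \sum_{v\in S}\log|Q(\bar\beta)|_v,
\]
by forcing the right side to be strictly smaller than the left. For the global height, Lemma \ref{easy boring lemma} together with the bound $h(P)\le (r_1+\dots+r_m)C$ from Lemma \ref{Step 1} yields
\[
h\bigl(Q(\bar\beta)\bigr)\;\le\;(r_1+\dots+r_m)(C+\log 4) + \sum_{h=1}^{m}r_h\, h(\beta_h).
\]

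For the $S$-local contribution, set $B:=\min_h H(\beta_h)^{r_h}$. The height uniformity condition $H(\beta_h)^{r_h}\le B^{1+\varepsilon}$ combined with the hypothesis $|\beta_h-\alpha|_v\le H(\beta_h)^{-(2+\delta)\zeta(v)}$ gives, for every surviving $S$ and every $v\in S$,
\[
\prod_{h=1}^{m}|\beta_h-\alpha|_v^{s_h}\;\le\;B^{-(2+\delta)\zeta(v)\sum_h s_h/r_h}\;\le\;B^{-(2+\delta)\zeta(v)(m/2-3\varepsilon m/2)}.
\]
After bounding the factors $|\partial_{S+T}P(\bar\alpha)|_v$ and the binomial coefficients (both controlled in terms of $h(P)$, $h(\alpha)$ and $r_1+\dots+r_m$) and taking the product over $v\in S$ using $\sum_{v\in S}\zeta(v)=1$, one arrives at
\[
\sum_{v\in S}\log|Q(\bar\beta)|_v \;\le\; (r_1+\dots+r_m)\,C_1 \;-\; (2+\delta)\bigl(\tfrac{m}{2}-\tfrac{3\varepsilon m}{2}\bigr)\log B
\]
for a constant $C_1$ depending only on $\alpha$.

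To close the argument I would normalize both estimates by $m\log B$. Since $r_h h(\beta_h)\le (1+\varepsilon)\log B$ and $h(\beta_h)\ge \log C(\alpha,\delta)$, one has $r_1+\dots+r_m\le m(1+\varepsilon)\log B/\log C(\alpha,\delta)$, and choosing $C(\alpha,\delta)$ sufficiently large in terms of $C$, $C_1$, $h(\alpha)$ and $\varepsilon$ absorbs every error term into a contribution of the form $\eta\, m\log B$ with $\eta$ as small as one wishes. The fundamental inequality, divided by $m\log B$, then reduces to
\[
(2+\delta)\bigl(\tfrac{1}{2}-\tfrac{3\varepsilon}{2}\bigr)\;\le\;1+O(\varepsilon),
\]
which on expansion forces $\delta\le 8\varepsilon+3\varepsilon\delta+O(\varepsilon)$, incompatible with $\varepsilon<\delta/22$ (the factor $22$ leaves comfortable slack, since $\delta>11\varepsilon$ already suffices). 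The hard part will be the bookkeeping in the middle step: isolating every ``junk'' factor (local bounds on $\partial_{S+T}P(\bar\alpha)$, the binomial coefficients, the archimedean triangle-inequality counts, and the prefactor $r_1+\dots+r_m$) and showing that the lower-height threshold $H(\beta_h)\ge C(\alpha,\delta)$ really does render their combined contribution negligible compared with the main gain $(2+\delta)(m/2-3\varepsilon m/2)\log B$ coming from the smallness of $|\beta_h-\alpha|_v$ at $v\in S$.
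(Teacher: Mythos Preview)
Your proposal is correct and follows essentially the same route as the paper: Taylor-expand $\partial_T P$ at $\bar\alpha$, use the index bound from Lemma~\ref{Step 1} to force the surviving $S$ to satisfy $\sum_h s_h/r_h \ge m/2 - 3\varepsilon m/2$, combine the approximation hypothesis with the fundamental inequality against the height bound of Lemma~\ref{easy boring lemma}, and extract a numerical contradiction with $\varepsilon<\delta/22$. The only cosmetic difference is the endgame: the paper packages the junk terms into an explicit upper bound $H(\beta_j)\le B(\alpha)^{8(1+\varepsilon)/\delta}$ for the index $j$ with $r_j=\max_h r_h$ and then invokes $H(\beta_j)\ge C(\alpha,\delta)$, whereas you normalize by $m\log B$ and bound $(r_1+\dots+r_m)/(m\log B)$ via $h(\beta_h)\ge \log C(\alpha,\delta)$; both lead to the same inequality $(2+\delta)(1-3\varepsilon)\le 2(1+\varepsilon)+o(1)$.
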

\begin{proof}
    For ease of notation, we set $\theta:=\op{Ind}_{\bar{\alpha}, \bar{r}}(P)$, let $\delta\in (0,1)$ and $\theta_0\in (0, \theta)$. Set $D:=\min \left\{H\left(\beta_{h}\right)^{r_{h}}\mid 1 \le h \le m \right\}$, and suppose that $J=(j_{1}, \ldots, j_m)$ is a tuple such that $\sum_{h=1}^{m} \frac{j_{h}}{r_{h}} \leq \theta_{0}$. Then we claim that
\begin{equation}\label{claim in the index lemma}
\prod_{v \in S}|\partial_J P\left(\beta_{1}, \ldots, \beta_{m}\right)|_{v} 
\leq (8H(\alpha))^{(r_1+\dots+r_m)} H(P)D^{-(2+\delta)\left(\theta-\theta_{0}\right)}.
\end{equation}
Before proving the claim, we show how the result follows from the above. \par Given $J=\left(j_{1}, \ldots, j_{m}\right)$ such that $\sum_{h=1}^{m} j_{h} / r_{h} \leq \varepsilon m$, we show that $\partial_{J} P\left(\beta_{1}, \ldots, \beta_{m}\right)=0$. 
We set $\theta_0=\varepsilon m$ and note that $\theta\geq \frac{m}{2}(1-\varepsilon)$ by Lemma \ref{Step 1}. Since $\varepsilon<1/22$, one has that $\theta_0< \theta$. 
We recall from Lemma \ref{Step 1} that 
$H(P)\leq e^{C_\alpha(r_1+\dots+r_m)}$ where $C_\alpha$ depends on $\alpha$.
From \eqref{claim in the index lemma},
\begin{equation}\label{exp 1}
\begin{aligned}
\prod_{v \in S}|\partial_{J} P\left(\beta_{1}, \ldots, \beta_{m}\right)|_{v} 
 \leq \frac{(8H(\alpha))^{\left(r_{1}+\cdots+r_{m}\right)} H(P)}{D^{\left(\theta-\theta_{0}\right)(2+\delta)}}
 \leq \frac{B(\alpha)^{\left(r_{1}+\cdots+r_{m}\right)}}{D^{\left(\frac{m}{2}(1-\varepsilon)-\varepsilon m\right)(2+\delta)}}
\end{aligned}
\end{equation}
where $B(\alpha):= 8e^{C_\alpha}H(\alpha)$ is a constant which only depends on $\alpha$.
 On the other hand, Lemma \ref{easy boring lemma} implies that
\begin{equation}\label{exp 2}
\begin{aligned}
H\left(\partial_{J} P(\beta_{1}, \ldots, \beta_{m})\right) & \leq 4^{\left(r_{1}+\cdots+r_{m}\right)} H(P) \prod_{h=1}^{m} H\left(\beta_{h}\right)^{r_{h}} \\
& \leq B(\alpha)^{\left(r_{1}+\cdots+r_{m}\right)} D^{m(1+\varepsilon)}.
\end{aligned}
\end{equation}
Using Lemma \ref{Fundamental inequality} we know that either the derivative $\partial_{J} P\left(\beta_{1}, \ldots, \beta_{m}\right)$ is zero, or else
$$
\prod_{v \in S}|\partial_{J} P\left(\beta_{1}, \ldots, \beta_{m}\right)|_{v} \geq H\left(\partial_{J} P\left(\beta_{1}, \ldots, \beta_{m}\right)\right)^{-1}.
$$
So it suffices to show that our hypotheses contradict the latter.
Assuming $\partial_{J} P\left(\beta_{1}, \ldots, \beta_{m}\right) \neq 0$, taking the product of \eqref{exp 1} and \eqref{exp 2} and collecting terms, we obtain
$$
D^{m((1+\delta / 2)(1-3 \varepsilon)-(1+\varepsilon))} \leq B(\alpha)^{2\left(r_{1}+\cdots+r_{m}\right)} .
$$
Now, since we assumed $\delta<1$ and $\varepsilon<\delta / 22$, we get
$$
(1+\delta / 2)(1-3 \varepsilon)-(1+\varepsilon)> \delta/2 - 11\varepsilon/2 > \delta / 4
$$
and hence
$$
\max _{1 \leq j \leq m}\left\{H\left(\beta_{h}\right)^{r_{h}}\right\} \leq D^{1+\varepsilon} \leq B(\alpha)^{8\left(r_{1}+\cdots+r_{m}\right)(1+\varepsilon)/(m\delta) }.
$$
Selecting $j$ such that $r_{j}=\max _{h=1}^{m} r_{h}$, we deduce that
$$
H\left(\beta_{j}\right) \leq B(\alpha)^{8(1+\varepsilon) / \delta}.
$$
Therefore,  for $H\left(\beta_{j}\right) \leq (8e^{C_\alpha}H(\alpha))^{8 / \delta+4 / 11}$, we obtain the desired contradiction, which proves the result.
\par Thus what remains is to prove the claim \eqref{claim in the index lemma}. For ease of notation, set $T:=\partial_J P$. For $v\in M_K$ and $I=(i_1, \dots, i_m)$, we shall estimate $|\partial_{I} T(\bar{\alpha})|_v$. Noting that it is a sum of at most \( (r_1+1) \cdots (r_m+1) \) terms, each of which are bounded by
\[
|T|_v \max \{|\alpha|_v, 1\}^{r_1+\dots+r_m},
\]  
we deduce that
\begin{equation}\label{Bound del}
|\partial_{i_1, \dots, i_m} T(\alpha, \dots, \alpha)|_v 
\leq |(r_1+1) \cdots (r_m+1)|_v|T|_v(\max \{|\alpha|_v, 1\})^{r_1+\dots+r_m} .
\end{equation}  
On the other hand, since 
\[
\operatorname{Ind}_{\bar{\alpha}, \bar{r}} T = \operatorname{Ind}_{\bar{\alpha}, \bar{r}} \partial_{J} P \geq \operatorname{Ind}_{\bar{\alpha}, \bar{r}} P - \sum_{h=1}^{m} \frac{j_h}{r_h} \geq \theta - \theta_0,
\]  
we find that the Taylor expansion of $T$ at $\bar{\alpha} = (\alpha, \ldots , \alpha)$ is
\[
T(X_1, \dots, X_m) = \sum_{\frac{i_1}{r_1} + \dots + \frac{i_m}{r_m} \geq \theta - \theta_0} \partial_{i_1, \dots, i_m} T(\alpha, \dots, \alpha) (X_1-\alpha)^{i_1} \cdots (X_m-\alpha)^{i_m}.
\]  
Substituting \( X_h = \beta_h \) and using the fact that there are at most \( (r_1+1) \cdots (r_m+1) \) terms, we obtain  
\begin{align*}
|T(\beta_1, \dots, \beta_m)|_v 
&\leq \sum_{\frac{i_1}{r_1} + \dots + \frac{i_m}{r_m} \geq \theta - \theta_0} |\partial_{i_1, \dots, i_m} T(\alpha, \dots, \alpha)|_v |\beta_1 - \alpha|_v^{i_1} \cdots |\beta_m - \alpha|_v^{i_m}\\
&\leq \max_{\frac{i_1}{r_1} + \dots + \frac{i_m}{r_m} \geq \theta - \theta_0} \left(|\partial_{i_1, \dots, i_m} T(\alpha, \dots, \alpha)|_v 
|\beta_1 - \alpha|_v^{i_1} \cdots |\beta_m - \alpha|_v^{i_m}\right)\sum_{\frac{i_1}{r_1} + \dots + \frac{i_m}{r_m} \geq \theta - \theta_0} 1\\
&\leq C_v \max_{\frac{i_1}{r_1} + \dots + \frac{i_m}{r_m} \geq \theta - \theta_0} \left(|\partial_{i_1, \dots, i_m} T(\alpha, \dots, \alpha)|_v 
|\beta_1 - \alpha|_v^{i_1} \cdots |\beta_m - \alpha|_v^{i_m}\right)
\end{align*}
for all $v \in S$, where $C_v= \prod_{1 \le h \le m}(r_h+1)$. By \eqref{Bound del} we get
\begin{align*}
|T(\beta_1, \dots, \beta_m)|_v  
& \le C_v^2 ~|T|_v(\max \{|\alpha|_v, 1\})^{r_1+\dots+r_m}
\max_{\frac{i_1}{r_1} + \dots + \frac{i_m}{r_m} \geq \theta - \theta_0}\left(\prod_{1\le h \le m}|\beta_h - \alpha|_v^{i_h} \right)
\end{align*} 
Using the fact that \( \beta_h \) is close to \( \alpha \), we obtain 
\[
|T(\beta_1, \dots, \beta_m)|_v 
\le C_v^2 ~|T|_v(\max \{|\alpha|_v, 1\})^{r_1 + \ldots + r_m}
\max_{\frac{i_1}{r_1} + \dots + \frac{i_m}{r_m} \geq \theta - \theta_0}\left(\prod_{h=1}^m
H(\beta_h)^{-i_h(2+\delta) \zeta(v)}\right)
\]  
Using  
\[
H(\beta_1)^{i_1} \cdots H(\beta_m)^{i_m} = (H(\beta_1)^{r_1})^{i_1/r_1} \cdots (H(\beta_m)^{r_m})^{i_m/r_m}\geq D^{\theta - \theta_0},
\]  
we conclude  
\[
|T(\beta_1, \dots, \beta_m)|_v 
\leq
C_v^2~ |T|_v(\max \{|\alpha|_v, 1\})^{r_1+\dots+r_m}D^{-(\theta - \theta_0)(2+\delta) \zeta(v)}
\]  
Multiplying over \( v \in S \), and using \( \sum_{v \in S} \zeta(v) = 1 \), we obtain  
$$
\prod_{v \in S} |T(\beta_1, \dots, \beta_m)|_v \leq 
\prod_{1 \le h \le m}(r_h+1)^2~ H(T) H(\alpha)^{(r_1 + \ldots +r_m)}D^{-(\theta - \theta_0)(2+\delta) \zeta(v)}.
$$
Now using Lemma \ref{ht of derivative} and the fact that $r_h+1 \le 2^{r_h}$ for all $h$, we get
$$
\prod_{v \in S} |T(\beta_1, \dots, \beta_m)|_v \leq 
\frac{(8H(\alpha))^{(r_1+\dots+r_m)} H(P)}{D^{(\theta - \theta_0)(2+\delta)}}.
$$
\end{proof}
\subsection{Roth's lemma}
Next we show that index of a polynomial $P$ at a point $\bar{\beta}$ with respect to certain $m$-tuples of positive integers cannot be very large.
\begin{lemma}[Roth's Lemma]\label{Roth's lemma}
Let $P\in\Z[x_1, \ldots, x_m]$ be a non-zero polynomial with $ \deg_{x_h}(P) \le r_h$, where $r_h>0$ for all $h=1,\dots, m$. Let ${\bar{\beta}}=(\beta_1, \ldots, \beta_m) \in \mathbb{ \bar{Q}}^m$ and fix a  real number $0 < \eta \le \frac{1}{2}$. Suppose that
\begin{itemize}
\item the $m$-tuple $\bar{r} = (r_1, \ldots, r_m)$ satisfies 
$$
\frac{r_{j+1}}{r_j} \le \eta^{2^{m-1}};
$$
\item the point $(\beta_1, \ldots, \beta_m)$ satisfies 
$$
\eta^{2^{m-1}} \min_{1 \le h \le m}(r_h h(\beta_h)) \ge h(P) +2mr_1.
$$
Then $\ind_{\bar{\beta}, \bar{r} }(P) \le 2m \eta$.
\end{itemize}  
\end{lemma}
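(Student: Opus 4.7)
The plan is to induct on $m$. For the base case $m=1$, an index exceeding $2\eta$ at $\beta_1$ forces $(x_1-\beta_1)^t \mid P$ for some integer $t > 2\eta r_1$. Writing $P = (x_1-\beta_1)^t Q$ and applying Lemma~\ref{ht of product poly} together with the identity $h(x_1-\beta_1) = h(\beta_1)$ yields $h(P) \ge 2\eta r_1\, h(\beta_1) - O(r_1)$, which contradicts the hypothesis $\eta r_1 h(\beta_1) \ge h(P) + 2r_1$ once the constants are worked out.

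For the inductive step, assume the statement in $m-1$ variables. The key is to exploit the bilinear structure of $P$ across the split $(x_1,\ldots,x_{m-1}) \mid x_m$. Write
\[
P(x_1,\ldots,x_m) \;=\; \sum_{j=1}^{\ell} \phi_j(x_1,\ldots,x_{m-1})\,\psi_j(x_m)
\]
with $\ell$ taken minimal; minimality forces both families $\{\phi_j\}$ and $\{\psi_j\}$ to be linearly independent over $K$. Applying Lemma~\ref{Wronskian for LI} to each family produces multi-indices $\bar{\mu}_1,\ldots,\bar{\mu}_\ell$ (with $|\bar{\mu}_i|\le i-1$) and integers $0\le\nu_1<\cdots<\nu_\ell$ for which the generalized Wronskians $U := \det(\partial_{\bar{\mu}_i}\phi_j)$ and $V := \det\!\bigl(\tfrac{1}{\nu_k!}\partial_{x_m}^{\nu_k}\psi_j\bigr)$ are both nonzero. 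Because $\partial_{\bar{\mu}_i}$ and $\partial_{x_m}^{\nu_k}$ act on disjoint sets of variables, applying them to the sum decomposition of $P$ and then taking a determinant gives the crucial factorization
\[
W(x_1,\ldots,x_m) \;:=\; \det\!\Bigl(\partial_{\bar{\mu}_i}\tfrac{1}{\nu_k!}\partial_{x_m}^{\nu_k}P\Bigr)_{i,k} \;=\; U(x_1,\ldots,x_{m-1})\cdot V(x_m),
\]
a nonzero polynomial that splits across the two variable groups.

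With $W$ in hand, the strategy is a two-sided estimate on $\ind_{\bar{\beta},\bar{r}}(W)$. For the upper bound, Remark~\ref{height of poly} gives $h(U)+h(V)=h(W)$, while expanding $W$ as a determinant of derivatives of $P$ and combining Lemmas~\ref{ht of derivative} and~\ref{ht of product poly} bounds $h(W)$ by roughly $\ell\cdot h(P)$ plus an error of order $r_1$. Applying the induction hypothesis to $U$ (in $m-1$ variables, with weights $(r_1,\ldots,r_{m-1})$) and the base case to $V$ (one variable, weight $r_m$), each with suitably adjusted parameters $\eta',\eta''$ comparable to $\eta$, yields $\ind_{\bar{\beta},\bar{r}}(W) \le 2(m-1)\eta' + 2\eta''$. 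For the lower bound, multilinearity of the determinant together with the differentiation inequality in Lemma~\ref{prop index} gives
\[
\ind_{\bar{\beta},\bar{r}}(W)\;\ge\; \ell\cdot \ind_{\bar{\beta},\bar{r}}(P) \;-\; \sum_{i=1}^{\ell}\sum_{h=1}^{m-1}\frac{\mu_{ih}}{r_h} \;-\; \sum_{i=1}^{\ell}\frac{\nu_i}{r_m},
\]
and the rapid decay $r_{j+1}/r_j \le \eta^{2^{m-1}}$ makes both correction sums negligible compared to $\ell\cdot \ind_{\bar{\beta},\bar{r}}(P)$. Combining the two sides and dividing by $\ell$ delivers the target inequality $\ind_{\bar{\beta},\bar{r}}(P) \le 2m\eta$.

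The hard part will be the bookkeeping: one must choose the auxiliary parameters $\eta',\eta''$ (each a controlled multiple of $\eta$) and reallocate the weights so that the induction hypothesis applies to $U$, which requires verifying a condition of the form $(\eta')^{2^{m-2}}\min_h r_h h(\beta_h) \ge h(U)+2(m-1)r_1$. The doubly-exponential decay rate encoded in $\eta^{2^{m-1}}$ is precisely calibrated so that this telescoping between the successive stages of the induction closes cleanly.
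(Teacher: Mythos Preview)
Your architecture is right --- the minimal decomposition $P=\sum_j \phi_j\psi_j$, the Wronskian factorization $W=U\cdot V$, and the two-sided estimate on $\ind_{\bar\beta,\bar r}(W)$ are exactly what the paper does. But two intertwined steps in your outline do not work as stated, and they are not just bookkeeping.

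First, the $x_m$-derivative correction $\sum_{i=1}^\ell \nu_i/r_m$ is \emph{not} made negligible by the gap hypothesis $r_{j+1}/r_j\le\eta^{2^{m-1}}$: that hypothesis controls $\sum_h \mu_{ih}/r_h$ (since all those denominators are $\ge r_{m-1}$ and $|\bar\mu_i|\le\ell-1\le r_m$), but $\sum_i \nu_i/r_m$ involves only the last weight and can be as large as $\tfrac{\ell(\ell-1)}{2r_m}\sim \ell/2$. So your linear lower bound $\ell\cdot\ind(P)$ minus corrections can be swamped. The fix, as in the paper, is to keep the nonnegativity of each entry's index and bound
\[
\ind(W)\;\ge\;\sum_{j=1}^{\ell}\max\!\Bigl\{\ind(P)-\tfrac{j-1}{r_m},\,0\Bigr\}\;-\;\frac{\ell\,r_m}{r_{m-1}},
\]
which after a short case analysis yields the \emph{quadratic} bound $\ind(W)+\tfrac{\ell r_m}{r_{m-1}}\ge \tfrac{\ell}{2m}\,\ind(P)^2$.

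Second, and matching this, the inductive parameter cannot be merely ``comparable to $\eta$'': one applies the $(m{-}1)$-variable hypothesis to $U$ with the \emph{rescaled} weights $(\ell r_1,\ldots,\ell r_{m-1})$ and parameter $\eta^2$, so that $(\eta^2)^{2^{m-2}}=\eta^{2^{m-1}}$ exactly reproduces the assumed decay; likewise the one-variable case for $V$ uses weight $\ell r_m$. The upper bound then comes out as $\ind(W)\le 2\ell(m-1)\eta^2+\ell\,\eta^{2^{m-1}}$, i.e.\ of order $\ell\eta^2$. Combining with the quadratic lower bound gives $\ind(P)^2\le 4m^2\eta^2$, hence $\ind(P)\le 2m\eta$. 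The squaring in the parameter and the squaring in the lower bound are two halves of the same trick; your linear/linear version does not close.
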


\begin{proof}
We prove the result by using induction on $m$.
Let $\ord_{\bar{\beta}}(P)$ denote the order of vanishing of $P$ at $\bar{\beta}$.
For $m=1$, let
$
P(x)= (x-\beta_1)^lQ(x)
$
for some polynomial $Q(x)$ with $Q(\beta_1)\ne 0$.
Then we have 
$$
l= \ord_{\beta_1}(P)= r_1 \ind_{\beta_1, r_1}(P) .
$$
This implies
$$
r_1\ind_{\beta_1, r_1}(P)h(\beta_1)
 =  l\log M(x-\beta_1)
 \le  \log M(P)
 \le  h(P) + r_1. 
$$
Therefore
$$
\ind_{\beta_1, r_1}(P) \le \frac{h(P)+r_1}{r_1h(\beta_1)} \le \eta^2 \le 2 \eta.
$$
We assume that the result is true for polynomials in at most $m-1$ variables. Let $P$ be a polynomial in $m$ variables so that $r_h \ge 1$ for all $1 \le h \le m$. Consider  a decomposition of $P$ of the form
\[P(x_1,\ldots,x_m)=\sum_{j=1}^{k}\phi_j(x_1,\ldots,x_{m-1})\psi_j(x_m),\]
where $\phi_j$'s and $\psi_j$'s are polynomials with coefficients in $\bar{\mathbb{Q}}$. We pick a decomposition of minimal length, say $k\le r_m+1$.  Since in this decomposition $k$ is minimal, $\phi_1,\ldots,\phi_k$ are linearly independent polynomials over $\bar{\mathbb{Q}}$. Indeed if $\phi_1,\ldots,\phi_k$ are not linearly independent, then there exist $c_1,\ldots, c_k \in \bar{\mathbb{Q}}$, not all zero, such that
\[c_1\phi_1+\ldots +c_k\phi_k=0. \]
Without loss of generality if $c_k\ne 0$, then  
\[P = \sum_{j=1}^{k-1}\phi_j\left(\psi_j-\frac{c_j}{c_k}\psi_k\right).\]
This contradicts the minimality of $k$. In the same way one can show that, $\psi_1,\ldots,\psi_k$ are linearly independent over $\bar{\mathbb{Q}}$.
Applying the Wronskian criterion for linear independence (Theorem \ref{Wronskian for LI}), we see that the Wronskian
$$
U_1(x_m) \coloneq W(\psi_1, \ldots, \psi_k)
=
\det\left(\frac{1}{(i-1)!}\frac{\partial^{i-1}}{\partial x_m^{i-1}}\psi_j(x_m)\right)_{1\le i, j\le k} 
$$
is not identically zero.
Similarly there exist differential operators
$\partial_{\mu_1}, \ldots ,\partial_{\mu_k}$, with 
$$
|\mu_i|= \mu_{i1}+ \ldots + \mu_{i(m-1)} \le i-1 \le k-1 \le r_m
$$ for all $ 1 \le i \le k$, such that the Wronskian
$$
U_2(x_1, \ldots, x_{m-1}) \coloneq
W_{\mu_1, \ldots , \mu_k}(\phi_1, \ldots, \phi_k)= \det(\partial_{\mu_i}\phi_j(x_1, \ldots, x_{m-1}))_{1 \le i, j \leq k} 
$$
is not identically zero.

Multiplying the two determinants we see that
\begin{align*}
    V(x_1,\ldots,x_m)
    & \coloneqq 
    \det\left(\frac{1}{(j-1)!}\frac{\partial^{j-1}}{\partial x_m^{j-1}}\partial_{\mu_i}P\right)_{{1\le i, j\le k}}\\
    &= \det \left(\left(\frac{1}{(j-1)!}\frac{\partial^{j-1}}{\partial x_m^{j-1}}\psi_r\right)_{1\le j,r\le k}
    (\partial_{\mu_i}\phi_r)_{1\le r,i\le k}\right)\\
    &=U_1(x_m)U_2(x_1,\ldots,x_{m-1}).
\end{align*}

Clearly, $\deg_{x_m}(U_1) \le kr_m$ and $\deg_{x_h}(U_2) \le k r_h$ for all $1 \le h \le m-1$. 
Moreover,  each term of $  V(x_1,\ldots,x_m)$ in the first equation above is a product of $k$ polynomials and the partial degree of each polynomial with respect to $x_h$ is at most $r_h$. From Lemma \ref{ht of derivative}, we have
\begin{equation}\label{h(delta P)=}
h\left(\frac{1}{(j-1)!}\frac{\partial^{j-1}}{\partial x_m^{j-1}}\partial_{\mu_i}P\right)  \le  h(P)+(r_1 + \ldots + r_m)\log 2.
\end{equation}
Since the determinant is a sum of $k!$ terms, Proposition \ref{propn sum of f1+...+fr} gives us that:
\begin{equation}\label{h(V) bound}h(V)\leq \log k! + \op{max}_{\pi} \left\{h\left(\prod_{i=1}^k \frac{1}{(\pi(i)-1)!}\frac{\partial^{\pi(i)-1}}{\partial x_m^{\pi(i)-1}}\partial_{\mu_i}P\right)\right\},\end{equation}
where $\pi$ ranges over all permutations of $\{1, \dots, k\}$. Lemma \ref{ht of product poly} then implies that
\[\begin{split}& h\left(\prod_{i=1}^k \frac{1}{(\pi(i)-1)!}\frac{\partial^{\pi(i)-1}}{\partial x_m^{\pi(i)-1}}\partial_{\mu_i}P\right)\\ \leq &\sum_{i=1}^k h\left(\frac{1}{(\pi(i)-1)!}\frac{\partial^{\pi(i)-1}}{\partial x_m^{\pi(i)-1}}\partial_{\mu_i}P\right)+(r_1+\dots+r_m)\log 2+m(k-1)\log 2\\
\leq & k h(P) +k(r_1+\dots+r_m)\log2+(r_1+\dots+r_m)\log 2+m(k-1)\log 2\\
\leq & k h(P)+2k (r_1+\dots+r_m)\log2,\end{split}\]
where the second inequality invokes \eqref{h(delta P)=}. Substitute the above inequality into \eqref{h(V) bound}, we deduce that:
\[h(V)\le \log(k!)+kh(P)+2k(r_1 + \ldots + r_m)\log 2.\]
Since $r_{j+1} \le \eta^{2^{m-1}}r_j$ with $\eta \le 1/2$ and $m \ge 2$ we get
$$
r_1 + \ldots + r_m \le (1+ \eta^{2^{m-1}} +\ldots + \eta^{(m-1)2^{m-1}})r_1 \le \frac43 r_1.
$$
This along with the bound 
$$
\log(k!) \le k \log k \le k(k-1) \le kr_m \le k\frac{r_1}{2} 
$$ implies that
$$ 
h(V)  \le   k\left(h(P)+ \left(\frac83 \log 2 + \frac12\right) r_1\right) 
\le k\left(h(P)+ \frac52 r_1\right).
$$

Our next aim is to show that the hypotheses of Roth's lemma are satisfied for $U_1$ and $U_2$. 
First we show that $U_1$ satisfies the hypotheses of Roth's lemma.
We note that the first condition is trivially satisfied as $U_1$ is a polynomial in one variable with $\deg_{x_m}(U_1) \le kr_m$.
By Remark \ref{height of poly} we have $h(V)=h(U_1)+h(U_2)$. Therefore
$$
h(U_1) +2kr_m \le k\left(h(P)+ \frac52 r_1\right) + k\frac{r_1}{2}
\le k(h(P)+ 3 r_1)\leq k(h(P)+ 2m r_1).
$$
By the assumption on $P$, we have that 
\[h(P)+2m r_1\leq \eta^{2^{m-1}} r_m h(\beta_m),\]
and thus
$$
h(U_1) +2kr_m \le k(\eta^{2^{m-1}}r_mh(\beta_m))= \eta^{2^{m-1}}(kr_m)h(\beta_m).
$$
Applying Roth's lemma for polynomials in one variable, we get 
\begin{equation}\label{U1}
\ind_{\beta_m,r_m}(U_1) = k\ind_{\beta_m,kr_m}(U_1)\le k\eta^{2^{m-1}}.
\end{equation}
Next we shall show that $U_2$ satisfies the hypotheses of Roth's lemma.
It is a polynomial in $m-1$ variables with $\deg_{x_h}(U_2) \le kr_h$ for all $ 1 \le h \le m-1$. Now
$$
\frac{kr_{j+1}}{kr_j}= \frac{r_{j+1}}{r_j} \le \eta^{2^{m-1}}=(\eta^2)^{2^{m-2}}.
$$
Now using $h(U_2) \le h(V)$ and the assumption on $P$, we have
$$
h(U_2) + 2(m-1)kr_1 \le k(h(P)+ 2m r_1) 
\le k\eta^{2^{m-1}} (r_h h(\beta_h)) = (\eta^2)^{2^{m-2}}(kr_h) h(\beta_h)
$$
for all $1 \le h \le m-1$.
Applying Roth's lemma for polynomials in $m-1$ variables we get
$$
\ind_{(\beta_1, \ldots, \beta_{m-1}),(r_1, \ldots , r_{m-1})}(U_2)
= k\ind_{(\beta_1, \ldots, \beta_{m-1}),(kr_1, \ldots , kr_{m-1})}(U_2)\le 2k(m-1)\eta^{2}.
$$
Now using Lemma \ref{prop index} we get
\begin{equation}\label{upper bound W}
\ind_{\bar{\beta},\bar{r}}(W) = \ind_{\bar{\beta},\bar{r}}(U_1) + \ind_{\bar{\beta},\bar{r}}(U_2) \le k\eta^{2^{m-1}}+ 2k(m-1)\eta^{2}.
\end{equation}
The next step would be to relate the index of $W$ with the index of $P$ to finally get a bound on index of $P$. 
Using Lemma \ref{prop index}, for each entry in the matrix corresponding to $W$ we have 

\begin{align*}
\ind_{\bar{\beta}, \bar{r}}\left(\frac{1}{(j-1)!}\frac{\partial^{j-1}}{\partial x_m^{j-1}}\partial_{\mu_h}P\right)
&\ge \ind_{\bar{\beta}, \bar{r}}(P) - \sum_{t=1}^{m-1}\frac{\mu_{ht}}{r_t} - \frac{j-1}{r_m}
\\
&\ge \ind(P)-\frac{\mu_{h1}+\ldots+\mu_{h(m-1)}}{r_{m-1}}-\frac{j-1}{r_m}\\
&\ge \ind(P)-\frac{r_m}{r_{m-1}}-\frac{j-1}{r_m}.
\end{align*}

Since the determinant is a sum of $k!$ terms each of which is a product of $k$ polynomials, using Lemma \ref{prop index} we get,
\begin{align*}
\ind_{\bar{\beta}, \bar{r}}(W)
    &\ge \sum_{j=1}^{k}\min_{\mu_{h1},\ldots , \mu_{h(m-1)}}
\ind_{\bar{\beta}, \bar{r}}\left(\frac{1}{(j-1)!}\frac{\partial^{j-1}}{\partial x_m^{j-1}}\partial_{\mu_h}P\right)\\
    & \ge \sum_{j=1}^k \max\left\{\ind_{\bar{\beta}, \bar{r}}(P)-\frac{r_m}{r_{m-1}}-\frac{j-1}{r_m}, 0\right\}\\
    &\ge \sum_{j=1}^{k}\max\left\{\ind_{\bar{\beta}, \bar{r}}(P)-\frac{j-1}{r_m},0\right\}-\frac{kr_m}{r_{m-1}}.
    \end{align*}
When $\ind_{\bar{\beta}, \bar{r}}(P)>\frac{k-1}{r_m}$, we get
\[\sum_{j=1}^{k}\max\left\{\ind_{\bar{\beta}, \bar{r}}(P)-\frac{j-1}{r_m},0\right\}
= k\ind_{\bar{\beta}, \bar{r}}(P)-\frac{(k-1)k}{2r_m}
\ge \frac{k}{2}\ind_{\bar{\beta}, \bar{r}}(P).\]
On the other hand when 
$\ind_{\bar{\beta}, \bar{r}}(P)\le \frac{k-1}{r_m}$, we take
$N= [r_m\ind_{\bar{\beta}, \bar{r}}(P)]$. This implies 
$$
\sum_{j=1}^{k}\max\left\{\ind_{\bar{\beta}, \bar{r}}(P)-\frac{j-1}{r_m},0\right\}
\ge \sum_{j=1}^{N+1}\left(\ind_{\bar{\beta}, \bar{r}}(P)-\frac{j-1}{r_m}\right)
= (N+1)\left(\ind_{\bar{\beta}, \bar{r}}(P)-\frac{N}{2r_m}\right).
$$
Since $N \le r_m\ind_{\bar{\beta}, \bar{r}}(P) \le N+1$, we get
$$
\sum_{j=1}^{k}\max\left\{\ind_{\bar{\beta}, \bar{r}}(P)-\frac{j-1}{r_m},0\right\}
\ge \frac{r_m}{2}\ind_{\bar{\beta}, \bar{r}}(P)^2
\ge 
\max\left\{\frac{k-1}{2},\frac12\right\}\ind_{\bar{\beta}, \bar{r}}(P)^2  \ge
\frac{k}{4}\ind_{\bar{\beta}, \bar{r}}(P)^2
$$
Therefore using the fact that  $\ind_{\bar{\beta}, \bar{r}}(P) \le m$ and $m \ge 2$, we get
\begin{equation}\label{upper bound P}
    \ind_{\bar{\beta}, \bar{r}}(W)+\frac{kr_m}{r_{m-1}}
    \ge \min\left\{\frac{k}{2}\ind_{\bar{\beta}, \bar{r}}(P),
    \frac{k}{4}\ind_{\bar{\beta}, \bar{r}}(P)^2\right\}
    \ge \frac{k}{2m}\ind_{\bar{\beta}, \bar{r}}(P)^2.
\end{equation}
Using \eqref{upper bound W}, \eqref{upper bound P} and the fact that $r_m \le \eta^2r_{m-1}$  we obtain
$$
\frac{k}{2m}\ind_{\bar{\beta}, \bar{r}}(P)^2 \le k\eta^{2^{m-1}}+ 2k(m-1)\eta^{2} + \frac{kr_m}{r_{m-1}} \le 2km\eta^2.
$$
Hence 
$$
\ind_{\bar{\beta}, \bar{r}}(P) \le 2m\eta.
$$
\end{proof}

\subsection{Proof of Roth's Theorem} We now provide a proof of Theorem \ref{alternate version}, which is equivalent to Roth's theorem. Suppose that there exists  $\delta>0$ such that 
\begin{equation}\label{Roth21}
    |\beta-\alpha|_{v}\leq \frac{1}{H(\beta)^{(2+\delta)\zeta(v)}}
\end{equation}
has infinitely many solutions $\beta \in K$. Let $0 < \delta <1$ and choose $\varepsilon \in (0, \frac{\delta}{22})$. Next we choose $m$ large enough so that 
$$
\exp(\varepsilon^2 m/16) > 2d,
$$
where $d=[\mathbb{Q}(\alpha):\mathbb{Q}]$. Let $\eta = \varepsilon /4$.
Since \eqref{Roth21} has infinitely many solutions in $K$ and using the fact that $K$ has only finitely many elements of bounded height we can find a solution $\beta_1$ of \eqref{Roth21} such that
$$
h(\beta_1) \ge \left({\frac{8}{\alpha}+\frac{4}{11}}\right) \left(\log 8+C_\alpha+h(\alpha)\right) \quad {\rm and }
 \quad h(\beta_1) \ge \frac{(m\log C_\alpha +2)}{\eta^{2^{m-1}}}
 $$
where $C_\alpha$  is the constant appearing in Lemma \ref{Step 1}. Note that $C_\alpha$ depends only on the choice of $\alpha$. 
Next, we successively choose solutions $\beta_2, \ldots , \beta_m$  \label{Roth2} satisfying
$$
\eta^{2^{m-1}}h(\beta_{j+1}) \ge 2 h(\beta_j)
$$
for all $2 \le j \le m-1$.
Since $\eta <1$, this implies that $ h(\beta_j)$ is strictly increasing.
We choose a positive integer $r_1$ such that 
$$
\eta^{2^{m-1}}r_1 h(\beta_1) \ge 2 h(\beta_m).
$$
Now we choose $r_2, \ldots , r_m$ such that 
$$
r_j = \left[\frac{r_1h(\beta_1)}{h(\beta_j)}\right]+1.
$$

Since we chose $m$ that satisfies $\exp(\varepsilon^2 m/16) > 2d$, by Lemma \ref{Step 1} there exists a polynomial $P\in \mathbb{Z}[x_1,\ldots,x_m]$ such that $\deg_{x_h}(P)\le r_h$ for $h=1,\dots, m$ satisfying
$$
\op{Ind}_{\bar{\alpha}, \bar{r}}(P)\geq \frac{m}{2}(1-\varepsilon)
$$
and
$$
h(P) \le (r_1 + \ldots + r_m)\log C_\alpha
$$
where 
$\bar{\alpha} = (\alpha, \ldots, \alpha)$, $\bar{r} = (r_1, \ldots , r_m)$.
The choice of $r_j$ and $\beta_j$ allows us to get 
$$
r_1h(\beta_1) \le r_jh(\beta_j) \le r_1h(\beta_1) + h(\beta_j).
$$
Using the fact that $ h(\beta_j)$ is increasing and by choice of $r_1$, we obtain  
\begin{equation}\label{B1}
r_1h(\beta_1) \le  r_1h(\beta_1) + h(\beta_m) 
\le (1+ \varepsilon)r_1h(\beta_1).
\end{equation}
Therefore the assumptions of Lemma \ref{Step 2} are satisfied and we get 
\begin{equation}\label{B2}
\ind_{\bar{\beta}, \bar{r}}(P) \ge \varepsilon m.
\end{equation}
Again, by our choice of $r_j$ and $\beta_j$ we obtain
\begin{align*}
\frac{r_{j+1}}{r_j} 
&\le 
\left(\frac{r_1h(\beta_1)}{h(\beta_{j+1})}+1\right)\left(\frac{r_1h(\beta_1)}{h(\beta_j)}\right)^{-1}\\
&= 
\frac{h(\beta_j)}{h(\beta_{j+1})}+
\frac{h(\beta_{j})}{r_1 h(\beta_1)}\\
&\le 
\frac{\eta^{2^{m-1}}}{2} + \frac{\eta^{2^{m-1}}}{2} = \eta^{2^{m-1}}.
\end{align*}
Therefore $r_{j+1} \le \eta^{2^{m-1}}r_j$ for all $ 1 \le j \le m-1$.
Now using the bounds on $h(P)$ and $h(\beta_1)$ that we have above, we get
$$
h(P) + 2 m r_1 \le (r_1 + \ldots + r_m)\log C_\alpha + 2 m r_1
\le mr_1(\log C_\alpha + 2) \le \eta^{2^{m-1}}r_1 h(\beta).
$$
Since
$$
\min_{1 \le h \le m}r_jh(\beta_j)=r_1h(\beta_1)
$$
we see that the conditions of Lemma \ref{Roth's lemma} are satisfied. Therefore we get
\begin{equation}\label{B3}
\ind_{\bar{\beta}, \bar{r}}(P)
\le 2 m \eta = \frac{m \varepsilon}{2}.
\end{equation}
This is a contradiction to (\ref{B2}) and hence we get the result.

\section{The subspace theorem}\label{s 4}
\subsection{Geometry of numbers and Minkowski's second theorem}

\par We begin by recalling several fundamental notions from the geometry of numbers, particularly in the context of Euclidean spaces over the ring of adeles of a number field \( K \). These notions are essential for understanding subsequent results, including the application of Minkowski’s second theorem in the adelic setting. 

Let \( G \) be a locally compact topological group equipped with a Haar measure \( \mu_G \). A Haar measure is a Borel measure on \( G \) that is invariant under left translation. Specifically, for every \( y \in G \) and any continuous function \( f: G \to \mathbb{R} \) with compact support, the measure satisfies the invariance property:  
\[
\int_G f(yx) \, d\mu_G(x) = \int_G f(x) \, d\mu_G(x).
\]
This invariance condition characterizes the Haar measure uniquely up to multiplication by a positive scalar. The existence and uniqueness of Haar measure on any locally compact group are classical results that play a central role in harmonic analysis and number theory.

Let \( K \) be a number field of degree \( r = [K : \mathbb{Q}] \). Denote by \( M_K \) the set of places of \( K \). For each place \( v \in M_K \), the completion of \( K \) at \( v \) is denoted by \( K_v \), and we let \( |\cdot|_v \) be a corresponding normalized absolute value. It is well known that \( K_v \) is locally compact for each \( v \). Define  
\[
R_v := \{ \alpha \in K_v \mid |\alpha|_v \leq 1 \}.
\]
The set \( R_v \) serves as a local analogue of the ring of integers for non-archimedean places \( v \). For any measurable subset \( \Omega \subseteq K_v \) and any \( \alpha \in K_v \), the Haar measure \( \mu_v \) on \( K_v \) satisfies the scaling property:
\[
\mu_v(\alpha \Omega) = |\alpha|_v \mu_v(\Omega).
\]
This property highlights how \( \mu_v \) interacts with the multiplicative structure of \( K_v \), making it a crucial tool for studying adelic measures.

The adele ring \( K_{\mathbb{A}} \) of \( K \) is defined as  
\[
K_{\mathbb{A}} := \left\{ x = (x_v)_{v \in M_K} \in \prod_{v \in M_K} K_v \mid x_v \in R_v \text{ for all but finitely many } v \right\}.
\]
The topology on \( K_{\mathbb{A}} \) is \emph{not} the product topology, as the latter would not yield a locally compact space. Instead, \( K_{\mathbb{A}} \) is endowed with a topology that arises from the restricted product of the local fields \( K_v \) with respect to the subsets \( R_v \) at non-archimedean places.

Given any finite set \( S \subset M_K \) that contains all the archimedean places, define  
\[
H_S := \prod_{v \in S} K_v \times \prod_{v \notin S} R_v \subset K_{\mathbb{A}}.
\]
The set \( H_S \) is open in the restricted product topology, and each such \( H_S \) is locally compact. By specifying that \( H_S \) is open for every such \( S \), we endow \( K_{\mathbb{A}} \) with the structure of a locally compact topological ring. This topological structure ensures that \( K \) embeds discretely into \( K_{\mathbb{A}} \) via the diagonal embedding  
\[
\iota: K \hookrightarrow K_{\mathbb{A}}, \quad \iota(x) := (x)_{v \in M_K}.
\]
With respect to this embedding, the quotient \( K_{\mathbb{A}} / K \) is compact, a fundamental result in adelic analysis that underpins much of the theory of heights and Diophantine approximation.

We now describe the normalization of the Haar measure \( \mu_v \) on each local field \( K_v \):
\begin{itemize}
    \item If \( v \) is non-archimedean and lies above a rational prime \( p \), the Haar measure \( \mu_v \) is normalized so that \( \mu_v(R_v) = |D_{K_v / \mathbb{Q}_p}|_p^{1/2} \), where \( D_{K_v / \mathbb{Q}_p} \) denotes the discriminant of \( K_v \) over \( \mathbb{Q}_p \) and \( |\cdot|_p \) denotes the \( p \)-adic norm.
    \item If \( v \) is real, then \( \mu_v \) is taken to be the standard Lebesgue measure on \( \mathbb{R} \).
    \item If \( v \) is complex, then \( \mu_v \) is twice the standard Lebesgue measure on \( \mathbb{C} \).
\end{itemize}
This normalization ensures compatibility of the measures across different local fields, which is crucial for the product formula and for defining a well-behaved measure on the adele ring \( K_{\mathbb{A}} \). Such a choice is motivated by the need to maintain consistency when comparing measures across archimedean and non-archimedean components, facilitating the study of heights, lattices, and fundamental domains in the adelic setting.
For a finite subset $S$ of $M_K$ containing the archimedean primes, the product measure 
\[\mu_S:=\prod_{v\in S} \mu_v\times \prod_{v\notin S}\mu_{v|R_v}\]
is a Haar measure on the open topological subgroup $H_S$ of $K_{\mathbb{A}}$. These measures fit together to give a Haar measure $\mu:=\varprojlim_S \mu_S$ on $K_{\mathbb{A}}$.
\par Now fix a natural number $N$ and denote by $E, E_v, E_{\mathbb{A}}$ the spaces $K^N$, $K_v^N$ and $K_{\mathbb{A}}^N$ respectively. We set $\beta$ to be the Haar measure on $E_{\mathbb{A}}$ which is the $N$-fold product of $\mu$.
\begin{definition}
    For a finite place $v$, a $K_v$ lattice in $E_v$ is an open and compact $R_v$  sub-module of $E_v$. A lattice in a finite dimensional real or complex vector space $V$ is a discrete subgroup $\Lambda\subset V$ such that $V/\Lambda$ is compact.
\end{definition}

There is a natural diagonal inclusion $\iota: E\hookrightarrow E_{\mathbb{A}}$. We let $\Lambda$ be a lattice in $E$ and for each place $v\in M_K$ let $\Lambda_v$ denote the $K_v$-lattice in $E_v$ which is spanned by $E$. It is easy to see that for all but finitely many non-archimedean  places $v$, $\Lambda_v=R_v^N$. 

\par Before stating the adelic version of Minkowski's second theorem, we introduce some further notation. For $\lambda\in \mathbb{R}$ and $x=(x_v)\in E_{\mathbb{A}}$ we let $\lambda x\in E$ be given by 
\[(\lambda x)_v:=\begin{cases}
    \lambda x_v & \text{ if }v\text{ is archimedean,}\\
      x_v & \text{ if }v\text{ is non-archimedean.}\\
\end{cases}\]
For $v|\infty$, consider a non-empty, convex, symmetric, bounded subset $S_v$ contained in $E_v$. By symmetric, we mean that $-s\in S_v$ for all $s\in S_v$. Let $\Lambda$ be an $\cO_K$-lattice in $E$ and for each prime $v$, denote by $\Lambda_v$ the completion of $\Lambda$ at $v$. In other words, $\Lambda_v$ is the $R_v$-submodule of $E_v$ which is spanned by $\Lambda$. Consider the subset 
\[S:=\prod_{v|\infty} S_v\times \prod_{v\in M_K^f} \Lambda_v\] of $E_{\mathbb{A}}$. For $n=1, \dots, N$ the $n$-th successive minimum is defined as follows:
\[\lambda_n :=\op{inf}\{t>0\mid t S\text{ contains }n\text{ linearly independent vectors of }\Lambda\text{ over }K\}.\]
Note that $\lambda S\cap E$ is a discrete compact subset of $E_{\mathbb{A}}$ and hence it is finite. We find that 
\[0<\lambda_1\leq \lambda_2\leq \dots \leq \lambda_N <\infty.\]
\begin{theorem}[Minkowski's second theorem]
   With respect to the notation above,
   \[(\lambda_1\dots \lambda_N)^r \op{vol}(S)\leq 2^{rN}\] where  the volume is computed with respect to the Haar measure on $E_{\mathbb{A}}$. Moreover suppose that for every complex place $v$, we have that $\alpha S_v=S_v$ for all $\alpha\in \mathbb{C}$ with $|\alpha|=1$. Let $r_1$ and $r_2$ denote the number of real and complex places of $K$ respectively. Then we have that:
   \[\frac{2^{rN}\pi^{r_2N}}{(N!)^{r_1}((2N)!)^{r_2}}|D_{K/\Q}|^{-N/2}\leq (\lambda_1\dots \lambda_N)^r \op{vol}(S),\] where $D_{K/\Q}$ is the discriminant of $K$ over $\Q$. 
\end{theorem}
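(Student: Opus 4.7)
The plan is to reduce the statement to classical Minkowski's second theorem applied in the Euclidean space $E_\infty := \prod_{v\mid\infty} E_v \cong \mathbb{R}^{dN}$, viewed as a real vector space of dimension $dN$ via restriction of scalars. Let $\iota_\infty \colon E \hookrightarrow E_\infty$ denote the archimedean embedding and set $\Lambda_\infty := \iota_\infty(\Lambda)$, which is a $\Z$-lattice of full rank $dN$. The crucial initial observation is that for every $x \in \Lambda$ one has $\iota(x)_v \in \Lambda_v$ automatically at every finite $v$, so $\iota(x) \in tS$ reduces to $\iota_\infty(x) \in tS_\infty$ with $S_\infty := \prod_{v\mid\infty} S_v$. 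Consequently, $\lambda_n$ equals the infimum of $t > 0$ for which $tS_\infty$ contains $\iota_\infty(w_1),\ldots,\iota_\infty(w_n)$ for some $K$-linearly independent $w_1,\ldots,w_n \in \Lambda$.

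Next I would establish the volume identity $\op{vol}(S) = \mu_\infty(S_\infty)/\op{covol}_{\mu_\infty}(\Lambda_\infty)$. Unwinding the product structure of $S$ gives $\op{vol}(S) = \mu_\infty(S_\infty) \cdot \prod_{v \in M_K^f} \mu_v(\Lambda_v)$, and the canonical isomorphism $(E_\infty \times \prod_{v \in M_K^f} \Lambda_v)/\Lambda \cong E_{\mathbb{A}}/E$ combined with $\op{vol}(E_{\mathbb{A}}/E) = 1$ (which follows from $\prod_{v \in M_K^f} \mu_v(R_v) = |D_{K/\Q}|^{-1/2}$ balanced against the Minkowski-embedding covolume $|D_{K/\Q}|^{1/2}$ of $\cO_K$ in $K_\infty$ under $\mu_\infty$) yields $\op{covol}_{\mu_\infty}(\Lambda_\infty) \cdot \prod_{v \in M_K^f} \mu_v(\Lambda_v) = 1$.

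For the upper bound, I compare the adelic minima $\lambda_n$ with the classical successive minima $\lambda^{\mathrm{cl}}_1 \leq \cdots \leq \lambda^{\mathrm{cl}}_{dN}$ of $(S_\infty, \Lambda_\infty)$ in $\mathbb{R}^{dN}$, defined via $\Q$-linear independence. A rank-counting argument over $\cO_K$ shows that any $(n-1)d+1$ elements of $\Lambda$ which are $\Q$-linearly independent must $K$-span a subspace of dimension at least $n$, hence contain $n$ that are $K$-linearly independent; this gives $\lambda_n \leq \lambda^{\mathrm{cl}}_{(n-1)d+1}$. Multiplying and using monotonicity yields $(\lambda_1 \cdots \lambda_N)^d \leq \prod_{m=1}^{dN} \lambda^{\mathrm{cl}}_m$, and combining with the classical upper bound $\prod_m \lambda^{\mathrm{cl}}_m \cdot \mu_\infty(S_\infty) \leq 2^{dN} \op{covol}(\Lambda_\infty)$ together with the volume identity produces $(\lambda_1 \cdots \lambda_N)^d \op{vol}(S) \leq 2^{dN}$.

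The main obstacle is the lower bound, where classical Minkowski in $\mathbb{R}^{dN}$ only delivers the inferior constant $2^{dN}/(dN)!$ in place of the sharper $\frac{2^{dN}\pi^{sN}}{(N!)^r((2N)!)^s}$. The refinement exploits the product structure $S_\infty = \prod_{v\mid\infty} S_v$ together with the complex-place symmetry $\alpha S_v = S_v$ for $|\alpha|=1$, which by convexity forces each complex $S_v$ to be rotation-invariant and each real $S_v$ to be symmetric with $0 \in S_v$. Selecting $K$-linearly independent $w_1,\ldots,w_N \in \Lambda$ realizing the adelic successive minima (so that $\iota_\infty(w_n) \in \lambda_n S_\infty$), I consider the body
\[
B := \Big\{ \sum_{n=1}^N c_n \cdot \iota_\infty(w_n)/\lambda_n \in E_\infty : c_n \in K_\infty,\ \sum_{n=1}^N |c_{n,v}|_v \leq 1 \text{ for every } v \mid \infty \Big\}.
\]
The symmetry (which ensures $c_{n,v} S_v \subseteq |c_{n,v}|_v S_v$ for every $c_{n,v} \in K_v$) combined with convexity and $0 \in S_v$ yields the containment $B \subseteq S_\infty$. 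On the other hand, $B$ is the image of a product of local $\ell^1$-balls in $K_\infty^N$ under a $K_\infty$-linear map, whose $\mu_\infty$-volume factors as $(2^N/N!)^r \cdot ((4\pi)^N/(2N)!)^s = \frac{2^{dN}\pi^{sN}}{(N!)^r((2N)!)^s}$. A determinant calculation, using that the Minkowski-embedding covolume of $\cO_K^N$ in $K_\infty^N$ equals $|D_{K/\Q}|^{N/2}$ and that the index $[\Lambda : \sum_n \cO_K w_n] \geq 1$, then produces
\[
\mu_\infty(B) \geq \frac{\op{covol}(\Lambda_\infty)}{(\lambda_1\cdots\lambda_N)^d \cdot |D_{K/\Q}|^{N/2}} \cdot \frac{2^{dN}\pi^{sN}}{(N!)^r((2N)!)^s},
\]
and combining $\mu_\infty(S_\infty) \geq \mu_\infty(B)$ with the volume identity delivers the stated lower bound.
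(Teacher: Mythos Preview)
The paper states this adelic version of Minkowski's second theorem without proof, treating it as a known input from the geometry of numbers (it is, for instance, proved in the appendix on geometry of numbers in Bombieri--Gubler, which the paper follows). There is therefore no in-paper argument to compare against.

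Your proposal is a correct outline of the standard proof. The reduction to the archimedean component $E_\infty \cong \mathbb{R}^{dN}$ via the identity $\op{vol}(S) = \mu_\infty(S_\infty)/\op{covol}_{\mu_\infty}(\Lambda_\infty)$ is the usual first step; your rank-counting bound $\lambda_n \leq \lambda^{\mathrm{cl}}_{(n-1)d+1}$ together with monotonicity of the $\lambda^{\mathrm{cl}}_m$ gives the upper bound cleanly. For the lower bound, the ``generalized octahedron'' $B$ that you construct---using rotation invariance at complex places to guarantee $c\,S_v \subseteq |c|\,S_v$ for every scalar $c \in K_v$---is precisely the device that yields the refined constant $2^{dN}\pi^{sN}/\bigl((N!)^r((2N)!)^s\bigr)$ in place of the cruder $2^{dN}/(dN)!$, and your Jacobian computation via $[\Lambda:\sum_n \cO_K w_n] \geq 1$ is correct. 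Two minor points worth making explicit in a full write-up: first, the symbol $|c_{n,v}|_v$ in the definition of $B$ should denote the \emph{unnormalized} Euclidean absolute value on $K_v$ rather than the height-normalized one introduced earlier in the paper, since your local $\ell^1$-ball volumes $(2^N/N!)^r\,((4\pi)^N/(2N)!)^s$ are computed with the former; second, the successive minima are defined as infima and need not be attained when the $S_v$ are open, so one should either pass to the closures $\overline{S_v}$ or run the lower-bound argument with $\iota_\infty(w_n) \in (\lambda_n+\epsilon)S_\infty$ and let $\epsilon \to 0$ at the end.
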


\subsection{Reformulations of the subspace theorem}
\par We begin by presenting different versions of the subspace theorem and establishing their equivalence. Given a point $ \mathbf{x} := (x_0, \dots, x_n) \in K^{n+1} $, define  
\[
|\mathbf{x}|_v := \max \{ |x_i|_v \mid i = 0, \dots, n \}.
\]  
Recall that the multiplicative height of the associated projective point is then given by  
\[
H(\mathbf{x}) = \prod_{v \in M_K} |\mathbf{x}|_v.
\]  

\begin{theorem}[Subspace Theorem -- Projective Version]\label{proj subspace thm}
With the above notation, let $ S \subset M_K $ be a finite set of places, and for each $ v \in S $, suppose we are given linear forms $ L_{v,0}, \dots, L_{v,n} \in K_v[X_0, \dots, X_n] $ with coefficients in $ K $. Assume that $ L_{v,0}, \dots, L_{v,n}$ are $K_v$-linearly independent. Let $ \varepsilon > 0 $ be a fixed constant. Then, there exist finitely many proper linear subspaces $ T_1, \dots, T_h $ in $ \mathbb{P}_K^n $ such that all solutions $ \mathbf{x} \in \mathbb{P}^n(K) $ of  
\begin{equation} \label{proj subspace equation}  
    \prod_{v \in S} \prod_{i=0}^n \frac{|L_{v,i}(\mathbf{x})|_v}{|\mathbf{x}|_v} < H(\mathbf{x})^{-n-1-\varepsilon}  
\end{equation}  
are contained in $ T_1 \cup \cdots \cup T_h $.  
\end{theorem}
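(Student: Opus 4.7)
The plan is to reduce Theorem \ref{proj subspace thm} to an adelic reformulation amenable to the geometry of numbers, and then to argue by contradiction via an induction on the rank of certain \emph{approximation domains}. First, I would rescale each linear form so that its coefficients lie in a common number field, pass from projective solutions $\mathbf{x}\in \mathbb{P}^n(K)$ to primitive integral representatives in $K^{n+1}$, and restate \eqref{proj subspace equation} in the equivalent form where, for each $v\in S$, one prescribes a system of upper bounds $|L_{v,i}(\mathbf{x})|_v \leq c_{v,i}(\mathbf{x})\,|\mathbf{x}|_v$ with the product $\prod_{v\in S}\prod_i c_{v,i}(\mathbf{x})$ decaying like $H(\mathbf{x})^{-n-1-\varepsilon}$. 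For each such solution this defines an adelic parallelepiped $\Pi(\mathbf{x})\subset K_{\mathbb{A}}^{n+1}$ (the approximation domain), whose adelic volume is controlled directly by $\varepsilon$ through the bound \eqref{proj subspace equation} and the product formula.

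Next, suppose for contradiction that infinitely many solutions $\mathbf{x}^{(1)},\mathbf{x}^{(2)},\ldots$ of \eqref{proj subspace equation} lie outside any finite union of proper linear subspaces. By Minkowski's second theorem applied to $\Pi(\mathbf{x}^{(j)})$ with respect to the lattice $\iota(\mathcal{O}_K^{n+1})\subset K_{\mathbb{A}}^{n+1}$, the successive minima $\lambda_1^{(j)}\leq \cdots \leq \lambda_{n+1}^{(j)}$ satisfy an upper bound $(\lambda_1^{(j)}\cdots \lambda_{n+1}^{(j)})^d\operatorname{vol}(\Pi(\mathbf{x}^{(j)}))\leq 2^{d(n+1)}$, forcing $\lambda_1^{(j)}$ to be very small relative to $\lambda_{n+1}^{(j)}$. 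After passing to a subsequence, one can stabilize the combinatorial data (the ordering of the minima, the $K$-rational subspace spanned by the short lattice directions, the local directions achieving the minima at each $v\in S$, and the relative size of the heights $H(\mathbf{x}^{(j)})$) so that the short vectors in $\Pi(\mathbf{x}^{(j)})$ span a fixed subspace of some dimension $k\leq n$; this $k$ is the \emph{rank} on which the induction is performed.

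The inductive claim is that, for every $k$, outside of finitely many hyperplanes the approximation domains attached to solutions have rank at most $k$. The base case $k=0$ is essentially vacuous (only the zero vector is short), while the inductive step is the heart of the argument: assuming the claim holds up to $k-1$, one takes $m$ solutions $\mathbf{x}^{(1)},\ldots,\mathbf{x}^{(m)}$ of rank exactly $k$ with rapidly increasing and well-separated heights, and uses the generalized Siegel's lemma (Lemma \ref{Siegel lemma}) to construct a nonzero multihomogeneous auxiliary polynomial $P\in K[X^{(1)},\ldots,X^{(m)}]$ of prescribed multidegree $(r_1,\ldots,r_m)$ and controlled height, which vanishes to large index along the product of subspaces cut out by the short lattice directions at each factor. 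By the adelic volume estimate for $\Pi(\mathbf{x}^{(j)})$, the local evaluations $|P(\mathbf{x}^{(1)},\ldots,\mathbf{x}^{(m)})|_v$ are then forced to be very small at every $v\in S$, while the fundamental inequality (Proposition \ref{Fundamental inequality}) bounds them from below unless $P$ vanishes at this point; comparing, as in the proof of Roth's theorem, one deduces a large lower bound for $\operatorname{Ind}_{\bar{\mathbf{x}},\bar r}(P)$.

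The contradiction is then extracted from a generalized form of Roth's lemma (Lemma \ref{Roth's lemma}) applied to $P$: with the heights $H(\mathbf{x}^{(j)})$ and multidegrees $r_j$ chosen to satisfy the geometric decay and height hypotheses of that lemma, the index of $P$ at $\bar{\mathbf{x}}=(\mathbf{x}^{(1)},\ldots,\mathbf{x}^{(m)})$ must be small, contradicting the lower bound obtained from the local analysis. This reduces the dimension of the exceptional locus by one; iterating down to $k=0$ confines all but finitely many solutions to a finite union of proper subspaces $T_1,\ldots,T_h$, proving Theorem \ref{proj subspace thm}. The main obstacle, and the point where the most technical work is needed, is the induction step: one must simultaneously manage the multihomogeneous Siegel construction, the control of local sizes through the approximation domains, and the compatibility of all hypotheses of the generalized Roth's lemma; in particular, choosing the auxiliary multidegree $(r_1,\ldots,r_m)$ and the number $m$ of factors so that the Siegel estimate, the index lower bound, and Roth's lemma all hold simultaneously is the delicate heart of the argument.
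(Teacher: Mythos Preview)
Your proposal captures the correct overall architecture---approximation domains, Minkowski's second theorem, an auxiliary polynomial built via Siegel's lemma, and a generalized Roth lemma yielding the contradiction---and in that sense is on the right track. However, two structural points deserve comment when compared with the paper's argument.

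First, a minor point: the paper does not work directly with the projective inequality \eqref{proj subspace equation}. It first shows (Proposition~\ref{affine equiv proj}) that the projective statement is equivalent to the affine one (Theorem~\ref{affine subspace theorem}), and all the hard work is done on the affine side with $S$-integral primitive representatives. Your passage to ``primitive integral representatives'' is in the same spirit, but you should state the reduction explicitly.

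Second, and more substantively, your description of the induction is not how the paper proceeds, and the discrepancy hides a genuine technical device you are missing. You propose to run the auxiliary-polynomial/Roth argument anew at every rank $k$, with base case $k=0$. The paper instead proves \emph{one} decisive result (Theorem~\ref{theorem 4.14}): once $\operatorname{rank}\Pi(Q_i)\leq n$, in fact $\operatorname{rank}\Pi(Q_i)\leq n-1$ outside finitely many hyperplanes. The auxiliary polynomial here is multihomogeneous in the linear forms $L_v(\bbX_h)$, and the generalized Roth lemma is applied with respect to the linear forms $M_j$ cutting out the hyperplanes $V(Q_j)$, \emph{not} at the points $\bx^{(j)}$ themselves as you wrote. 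The reduction from an arbitrary rank $R<n$ is then achieved not by repeating this machinery, but by passing to the exterior power $\wedge^{n+1-k}(K_{\mathbb{A}}^{n+1})$ and invoking Evertse's lemma to control the new parallelepipeds $\mathcal{S}(Q)$; this transforms the situation back into one where Theorem~\ref{theorem 4.14} applies directly. The exterior-power trick and Evertse's lemma are exactly what replace (and simplify) Schmidt's original use of Mahler's compound bodies and Davenport's lemma; without them, carrying out your direct induction at each rank is substantially harder, and your sketch does not indicate how you would manage it.
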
 

It is easy to see that if the above result holds for $S$ and $S'$ is a subset of $S$, then it does also hold for $S'$. Denote by $\cO_{K,S}$ the ring of $S$-integers of $K$. The set $S$ can be enlarged so that it contains all archimedean places and $\cO_{K,S}$ is a principal ideal domain.

\begin{theorem}[Subspace Theorem -- Affine Version]\label{affine subspace theorem}
    Let $S$ be a finite subset of $M_K$ containing the archimedean places and let $ \varepsilon > 0 $ be a fixed constant. Let $ L_{v,0}, \dots, L_{v,n} \in K_v[X_0, \dots, X_n] $ be linear forms with coefficients in $ K $ and assume that $ L_{v,0}, \dots, L_{v,n}$ are $K_v$-linearly independent. Then there exist finitely many hyperplanes $T_1, \dots, T_h$ of $K^{n+1}$ such that all solutions $\bx\in \cO_{K, S}^{n+1}\backslash \{0\}$ of 
    \begin{equation} \label{affine subspace equation}  
    \prod_{v \in S} \prod_{i=0}^n |L_{v,i}(\mathbf{x})|_v < H(\mathbf{x})^{-\varepsilon}  
\end{equation} 
are contained in $T_1\cup \dots \cup T_h$. 
\end{theorem}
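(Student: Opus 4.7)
The plan is to deduce Theorem \ref{affine subspace theorem} directly from the projective version, Theorem \ref{proj subspace thm}, via a short inequality manipulation that exploits the $S$-integrality hypothesis on $\mathbf{x}$. The essential point is that for $\mathbf{x}\in \cO_{K,S}^{n+1}\setminus\{0\}$, the places outside $S$ contribute trivially to the height, which allows us to bound $\prod_{v\in S}|\mathbf{x}|_v$ from below by $H(\mathbf{x})$ and insert the missing normalizing factors $|\mathbf{x}|_v^{-1}$ into the product appearing in \eqref{affine subspace equation} at an acceptable cost.

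Concretely, I would begin by noting that for every $v\notin S$, $S$-integrality gives $|x_i|_v\leq 1$ for each coordinate, hence $|\mathbf{x}|_v=\max_i|x_i|_v\leq 1$. Multiplying these inequalities yields $\prod_{v\notin S}|\mathbf{x}|_v^{n+1}\leq 1$, and so
\[
H(\mathbf{x})^{n+1}\;=\;\prod_{v\in M_K}|\mathbf{x}|_v^{n+1}\;\leq\;\prod_{v\in S}|\mathbf{x}|_v^{n+1}.
\]
Next, suppose $\mathbf{x}\in\cO_{K,S}^{n+1}\setminus\{0\}$ satisfies \eqref{affine subspace equation}. Dividing both sides by $\prod_{v\in S}|\mathbf{x}|_v^{n+1}$ and applying the displayed inequality gives
\[
\prod_{v\in S}\prod_{i=0}^{n}\frac{|L_{v,i}(\mathbf{x})|_v}{|\mathbf{x}|_v}\;<\;\frac{H(\mathbf{x})^{-\varepsilon}}{\prod_{v\in S}|\mathbf{x}|_v^{n+1}}\;\leq\;H(\mathbf{x})^{-(n+1)-\varepsilon},
\]
which is precisely the projective inequality \eqref{proj subspace equation}. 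Since this bound is invariant under scaling by elements of $K^{\times}$, the associated projective point $[\mathbf{x}]\in \mathbb{P}^n(K)$ is a solution to the hypothesis of Theorem \ref{proj subspace thm}, and hence lies in one of finitely many proper linear subspaces $T_1,\dots,T_h$ of $\mathbb{P}^n_K$.

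To conclude, I would enlarge each $T_i$ (if necessary) to an honest hyperplane of $K^{n+1}$: any proper linear subspace of $\mathbb{P}^n_K$ corresponds to a subspace of $K^{n+1}$ of dimension at most $n$, which sits inside some codimension-one subspace. Pulling back gives finitely many hyperplanes of $K^{n+1}$ containing every solution $\mathbf{x}$ of \eqref{affine subspace equation}. Since the reduction from affine to projective is essentially mechanical, there is no serious obstacle; the only mild subtlety is verifying the direction of the height inequality $\prod_{v\in S}|\mathbf{x}|_v\geq H(\mathbf{x})$, which crucially uses $S$-integrality. The converse implication (deducing Theorem \ref{proj subspace thm} from Theorem \ref{affine subspace theorem}) is more delicate and would use that $S$ has been enlarged so that $\cO_{K,S}$ is a principal ideal domain, in order to pick $S$-integral representatives of projective points with controlled denominators; but this direction is not needed for the statement at hand.
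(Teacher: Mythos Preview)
Your reduction is correct and is, word for word, the first half of the paper's Proposition \ref{affine equiv proj}: for $\mathbf{x}\in\cO_{K,S}^{n+1}\setminus\{0\}$ one has $|\mathbf{x}|_v\leq 1$ for $v\notin S$, hence $\prod_{v\in S}|\mathbf{x}|_v\geq H(\mathbf{x})$, and dividing \eqref{affine subspace equation} through by $\prod_{v\in S}|\mathbf{x}|_v^{n+1}$ yields \eqref{proj subspace equation}. So as a deduction of Theorem \ref{affine subspace theorem} from Theorem \ref{proj subspace thm}, nothing is missing.

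The point to flag is that this is not how the paper proves Theorem \ref{affine subspace theorem}. In the paper the logical flow runs the other way: the affine version is established directly and at length (the entire machinery of approximation domains $\Pi(Q)$, the adelic Minkowski second theorem, the auxiliary polynomial of multidegree $d$, the generalized Roth lemma, Evertse's lemma, and the final rank-reduction argument in the last subsection), and Theorem \ref{proj subspace thm} is then obtained from it via the second half of Proposition \ref{affine equiv proj}. Your proposal therefore assumes as input the very theorem whose proof constitutes the bulk of Section \ref{s 4}; within the paper's own logical structure it would be circular. If instead you are taking Theorem \ref{proj subspace thm} as a black box proved elsewhere, then your argument is complete and buys brevity, whereas the paper's route buys self-containment: it actually carries out the Schmidt--Schlickewei proof rather than quoting it.
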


We show that the two formulations above are equivalent.

\begin{proposition}\label{affine equiv proj}
    The projective subspace theorem \ref{proj subspace thm} is equivalent to the affine subspace theorem \ref{affine subspace theorem}.
\end{proposition}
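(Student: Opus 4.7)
The plan is to pair solutions of each inequality with solutions of the other through a judicious choice of projective representative, and then invoke the corresponding theorem to transfer finiteness.

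For the implication Theorem \ref{proj subspace thm} $\Rightarrow$ Theorem \ref{affine subspace theorem}, fix a solution $\bx\in\cO_{K,S}^{n+1}\setminus\{0\}$ of \eqref{affine subspace equation}. Since $\bx$ is $S$-integral, $|x_i|_v\leq 1$ for every $v\notin S$, hence $|\bx|_v\leq 1$ outside $S$, and the product formula gives $\prod_{v\in S}|\bx|_v\geq H(\bx)$. Dividing \eqref{affine subspace equation} by $\prod_{v\in S}|\bx|_v^{n+1}\geq H(\bx)^{n+1}$ immediately yields
\[
\prod_{v\in S}\prod_{i=0}^{n}\frac{|L_{v,i}(\bx)|_v}{|\bx|_v} \;\leq\; \frac{H(\bx)^{-\varepsilon}}{H(\bx)^{n+1}} \;=\; H(\bx)^{-n-1-\varepsilon},
\]
so the projective class of $\bx$ satisfies \eqref{proj subspace equation}. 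Theorem \ref{proj subspace thm} confines these projective classes to finitely many proper subspaces $T_1\cup\cdots\cup T_h$ of $\mathbb{P}^n_K$, whose preimages in $K^{n+1}$ are the required hyperplanes.

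For the converse, let $\bx\in\mathbb{P}^n(K)$ satisfy \eqref{proj subspace equation} for an initial set $S_0$ and forms $L_{v,i}$. Enlarging $S_0$ to a set $S$ containing all archimedean places and making $\cO_{K,S}$ a principal ideal domain does not weaken \eqref{proj subspace equation}: at each new place $v$ take $L_{v,i}=X_i$, which are linearly independent, and the extra factors $\prod_{i}|x_i|_v/|\bx|_v\leq 1$ only shrink the left-hand side. Because $\cO_{K,S}$ is a PID, any lift of $\bx$ in $K^{n+1}\setminus\{0\}$ generates a principal fractional ideal $(a)$; replacing $\bx$ by $\bx':=a^{-1}\bx\in\cO_{K,S}^{n+1}\setminus\{0\}$ forces its coordinates to generate the unit ideal in $\cO_{K,S}$, whence $|\bx'|_v=1$ for every $v\notin S$, and $\prod_{v\in S}|\bx'|_v = H(\bx')=H(\bx)$. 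Since the left-hand side of \eqref{proj subspace equation} depends only on the projective class of $\bx$, substituting $\bx'$ and clearing the denominator $\prod_{v\in S}|\bx'|_v^{n+1}=H(\bx)^{n+1}$ gives
\[
\prod_{v\in S}\prod_{i=0}^{n}|L_{v,i}(\bx')|_v \;<\; H(\bx)^{-\varepsilon} \;=\; H(\bx')^{-\varepsilon},
\]
which is precisely \eqref{affine subspace equation} for $\bx'$. Theorem \ref{affine subspace theorem} then places $\bx'$ in finitely many hyperplanes of $K^{n+1}$, whose projectivisations in $\mathbb{P}^n_K$ contain the original $\bx$ and form the required finite family.

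The only substantive point is the principal-ideal normalisation used in the second direction, which produces a canonical $S$-integral representative of $\bx$ whose coordinates have trivial content. Once this is in place, the passage between the two inequalities is the single numerical identity $\prod_{v\in S}|\bx'|_v^{n+1}=H(\bx)^{n+1}$ distributed into the $(n+1)$-fold product of local factors, so no further estimation is required.
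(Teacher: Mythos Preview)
Your proof is correct and follows essentially the same route as the paper's: the $S$-integrality bound $\prod_{v\in S}|\bx|_v\ge H(\bx)$ in one direction, and the PID normalisation $\bx'=\delta^{-1}\bx$ to a primitive $S$-integral representative in the other. Your handling of the enlargement of $S$ (inserting coordinate forms $L_{v,i}=X_i$ at the new places so the extra local factors are $\le 1$) makes explicit what the paper records only as the remark that the projective statement for $S$ implies it for any $S'\subset S$; otherwise the two arguments coincide.
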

\begin{proof}
First, we assume the assertion of the projective subspace theorem \ref{proj subspace thm} and derive the affine version \ref{affine subspace theorem} as a consequence. Let \(\mathbf{x}\) be an \(S\)-integral solution to the inequality 
\[
\prod_{v \in S} \prod_{i=0}^n |L_{v,i}(\mathbf{x})|_v < H(\mathbf{x})^{-\varepsilon}.
\]
Then we have
\[
\prod_{v \in S} \prod_{i=0}^n \frac{|L_{v,i}(\mathbf{x})|_v}{|\mathbf{x}|_v} < \frac{H(\mathbf{x})^{-\varepsilon}}{\left(\prod_{v \in S} |\mathbf{x}|_v\right)^{n+1}}.
\]

Since \(\mathbf{x}\) is \(S\)-integral, it satisfies \(|\mathbf{x}|_v \leq 1\) for all \(v \notin S\). Consequently, we find that 
\[
\prod_{v \in S} |\mathbf{x}|_v \geq \prod_{v \in M_K} |\mathbf{x}|_v = H(\mathbf{x}),
\]
where \(M_K\) is the set of all places of \(K\). Therefore, \(\mathbf{x}\) is a solution to the inequality 
\[
\prod_{v \in S} \prod_{i=0}^n \frac{|L_{v,i}(\mathbf{x})|_v}{|\mathbf{x}|_v} < H(\mathbf{x})^{-n-1-\varepsilon}.
\]
By the projective subspace theorem, there exist finitely many hyperplanes \(T_1, \dots, T_h\) such that all solutions \(\mathbf{x}\) lie on \(T_1 \cup \dots \cup T_h\). This establishes the affine subspace theorem \ref{affine subspace theorem}.

Conversely, suppose the affine subspace theorem \ref{affine subspace theorem} holds. If the projective subspace theorem \ref{proj subspace thm} holds for a set \(S\), it also holds for any subset \(S' \subset S\). Thus, we may enlarge \(S\) to ensure the following conditions:
\begin{itemize}
    \item \(S\) contains all archimedean places,
    \item \(\mathcal{O}_{K, S}\) is a principal ideal domain.
\end{itemize}

Let \(\mathbf{x} \in K^{n+1} \setminus \{0\}\) satisfy the projective Diophantine inequality
\begin{equation}\label{proj ineq}
\prod_{v \in S} \prod_{i=0}^n \frac{|L_{v,i}(\mathbf{x})|_v}{|\mathbf{x}|_v} < H(\mathbf{x})^{-n-1-\varepsilon}.
\end{equation}
Define the fractional ideal \(\mathcal{I} = x_0 \mathcal{O}_{K, S} + \dots + x_n \mathcal{O}_{K, S}\), and let \(\delta \in K^\times\) satisfy \(\mathcal{I} = \delta \mathcal{O}_{K, S}\). For any \(y \in \mathcal{O}_{K, S}\) and \(v \notin S\), we have \(|x_i y|_v = |x_i|_v |y|_v \leq |x_i|_v\). Hence, 
\[
|\mathbf{x}|_v = \max_i |x_i|_v = \max_{z \in \mathcal{I}} |z|_v = |\delta|_v.
\]

Define \(\mathbf{x}' = \delta^{-1}\mathbf{x} = (\delta^{-1}x_0, \dots, \delta^{-1}x_n)\). Then \(|\mathbf{x}'|_v = |\delta|_v^{-1} |\mathbf{x}|_v = 1\) for \(v \notin S\), so \(\mathbf{x}' \in \mathcal{O}_{K, S}^{n+1}\) and \(\prod_{v \in S} |\mathbf{x}'|_v = H(\mathbf{x}')\). Since the projective height is invariant under scalar multiplication, \(H(\mathbf{x}') = H(\mathbf{x})\). From (\ref{proj ineq}), we deduce that 
\[
\prod_{v \in S} \prod_{i=0}^n |L_{v,i}(\mathbf{x}')|_v < H(\mathbf{x}')^{-n-1-\varepsilon} \cdot \left(\prod_{v \in S} |\mathbf{x}'|_v\right)^{n+1} = H(\mathbf{x}')^{-\varepsilon}.
\]

Thus, \(\mathbf{x}'\) satisfies the affine subspace inequality and must lie on \(T_1 \cup \dots \cup T_h\), where \(T_1, \dots, T_h\) are finitely many hyperplanes. Since \(\mathbf{x}'\) is a scalar multiple of \(\mathbf{x}\), it follows that \(\mathbf{x} \in T_1 \cup \dots \cup T_h\). This establishes the projective subspace theorem \ref{proj subspace thm}. 
\end{proof}

\subsection{Approximation domains}

We assume that the affine subspace theorem \ref{affine subspace theorem} is false and derive a contradiction. Assume without loss of generality that $S$ is large enough so that $\cO_{K, S}$ is a principal ideal domain. A solution $\bx\in \cO_{K, S}^{n+1}\backslash \{0\}$ to \eqref{affine subspace equation} is said to be \emph{primitive} if $\prod_{v\in S} |\bx|_v=H(\bx)$. The proof of Proposition \ref{affine equiv proj} shows that every solution to the affine subspace equation \eqref{affine subspace equation} is a scalar multiple of a primitive one. Note that if $\bx$ is a primitive solution and $u$ is an $S$-unit, then $u\bx$ is also a primitive solution. Two primitive solutions $\bx_1$ and $\bx_2$ are said to be equivalent if there exists $u\in \cO_{K,S}^\times$ such that $\bx_2=u \bx_1$. We assume therefore by way of contradiction that there are infinitely many equivalence classes of primitive solutions of \eqref{affine subspace equation} no $(n+1)$ of which lie on the same hyperplane. Let $\bx\in \cO_{K,S}^{n+1}$ be a primitive solution, the affine height of $\bx$ is defined as \[h_{\op{aff}}(\bx):=h((1, x))=\sum_{v\in M_K} \op{max}_{i} \log^+|x_i|_v.\] On the other hand, the height of $\bx$ is $h(\bx)=\sum_{v\in M_K} \op{max}_{i} \log|x_i|_v$.

\begin{lemma}\label{affine to projective inequality lemma}
There exists an $S$-unit $u$ such that 
\[h_{\op{aff}}(u\bx)\leq h(\bx)+C_1,\] where $C_1>0$ is a constant depending on $K$ and $S$.
\end{lemma}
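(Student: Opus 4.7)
The plan is to use Dirichlet's $S$-unit theorem to rescale $\bx$ by an $S$-unit $u$ so that all the local sizes $|u\bx|_v$ for $v \in S$ are balanced around the geometric mean $H(\bx)^{1/|S|}$; at that point the affine and projective heights will differ only by a bounded amount.

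First I would unpack what primitivity buys: combining $\prod_{v \in S}|\bx|_v = H(\bx) = \prod_{v \in M_K}|\bx|_v$ with $|\bx|_v \le 1$ for $v \notin S$ (which holds since $\bx \in \cO_{K,S}^{n+1}$) forces $|\bx|_v = 1$ for every $v \notin S$. Since an $S$-unit $u$ also satisfies $|u|_v = 1$ off $S$, both $h_{\op{aff}}(u\bx)$ and $h(u\bx) = h(\bx)$ collapse to sums over $v \in S$. Writing $\ell_v := \log|\bx|_v$, one has $\sum_{v \in S}\ell_v = h(\bx) \ge 0$; set $\bar\ell := h(\bx)/|S|$.

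Next I would invoke the $S$-unit theorem: the logarithmic embedding $\Phi \colon u \mapsto (\log|u|_v)_{v \in S}$ sends $\cO_{K,S}^\times$ onto a lattice $\Lambda_S$ of full rank in the hyperplane $H = \{y \in \mathbb{R}^{|S|} : \sum_v y_v = 0\}$. Let $D = D(K,S)$ denote a covering radius of $\Lambda_S$ in $H$ with respect to the $\ell^\infty$-norm. The vector $z := \bar\ell \cdot \mathbf{1} - \ell$ lies in $H$ because $\sum_v(\bar\ell - \ell_v) = 0$, so there exists $\lambda \in \Lambda_S$ with $\|\lambda - z\|_\infty \le D$. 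Choose $u \in \cO_{K,S}^\times$ with $\Phi(u) = \lambda$. Then for each $v \in S$,
\[
\log|u\bx|_v \;=\; \lambda_v + \ell_v \;=\; (\lambda_v - z_v) + \bar\ell \;\in\; [\bar\ell - D,\; \bar\ell + D].
\]
Because $\bar\ell \ge 0$, we obtain $\log^+|u\bx|_v \le \bar\ell + D$, and summing over the $|S|$ places of $S$ yields
\[
h_{\op{aff}}(u\bx) \;\le\; |S|(\bar\ell + D) \;=\; h(\bx) + |S|D,
\]
so the inequality holds with $C := |S|D$.

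I do not expect a serious obstacle: the argument is just a standard covering-radius bound applied to the $S$-unit lattice. The one point requiring minor honesty is that the covering radius in fact depends on $K$ and on the specific set $S$ (via the $S$-regulator) rather than on $|S|$ alone; however $S$ is fixed throughout the proof of the subspace theorem, so this subtlety is immaterial in the way the lemma will be used.
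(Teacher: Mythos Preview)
Your proof is correct and follows the same approach as the paper: use the covering radius of the $S$-unit lattice in the trace-zero hyperplane of $\mathbb{R}^{|S|}$ to rescale $\bx$ so that the local sizes at $v\in S$ are controlled. The only cosmetic difference is that the paper balances the values $\log|ux_0|_v$ of a single nonzero coordinate from below, while you balance $\log|u\bx|_v$ about the mean $h(\bx)/|S|$; your remark that the constant really depends on the specific set $S$ (via the $S$-regulator) and not merely on $|S|$ is also well taken.
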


\begin{proof}
    Suppose for example that $x_0\neq 0$. Note that $|x_0|_v\leq 1$ for $v\notin S$ since $x_0\in \cO_{K,S}$. We have by the product formula 
    \[\sum_{v\in S} \log|x_0|_v=-\sum_{v\notin S} \log |x_0|_v\geq 0.\] On the other hand, consider the embedding $\lambda: \cO_{K, S}^\times\rightarrow \mathbb{R}^{|S|}$ defined by $\lambda(u):=(\log |u|_v)_{v}$. By Dirichlet's unit theorem, $\Lambda:=\lambda(\cO_{K, S}^\times)$ is a lattice of the space $\{(t_v)_v\mid \sum_{v\in S} t_v=0\}\subset \mathbb{R}^{|S|}$. Write $S=\{v_1, \dots, v_s\}$ where $s=|S|$ and let $t=(t_1, \dots, t_s)\in \mathbb{R}^{s}$ be defined as follows:
    \[t_i=\begin{cases}
        \log|x_0|_{v_i} & \text{ for }i<s;\\
        -\sum_{i=1}^{s-1} \log |x_0|_{v_i} &\text{ for }i=s,\\
    \end{cases}\]
    and note that $\sum_{i=1}^s t_i=0$. Clearly, there exists a lattice point in $\Lambda$ close to $t$. This means that there exists an $S$-unit $u$ such that for all $i$,
    \[\log |u|_{v_i}+t_i\geq -C_1\] for some positive constant $C_1>0$. In particular, this implies that for all $v\in S$,
    \[\log |u x_0|_v=\log|u|_v+\log |x_0|_v\geq -C_1.\]
    Note that for $v\in S$, 
    \[\log|u\bx|_v=\op{max}_i \log |ux_i|_v\] is either $\geq 0$ or $<0$. If this quantity is $\geq 0$ then 
    \[\log|u\bx|_v=\op{max}_i \log^+|ux_i|_v.\] On the other hand, if it is $<0$, then $\op{max}_i \log^+|ux_i|_v=0$ and we have that 
    \[\log|u\bx|_v\geq \log|ux_0|_v\geq -C_1=-C_1+\op{max}_i \log^+|ux_i|_v.\]
    Thus, in both cases, we have that 
    \[\begin{split}h(\bx)=h(u\bx)=& \sum_{v\in S}\op{max}_i \log^+|ux_i|_v \\ 
    \geq &\sum_{v\in S}\left(-C_1+\op{max}_i \log^+|ux_i|_v\right) \\
    = & -|S|C_1+ \sum_{v\in S} \op{max}_i \log^+|ux_i|_v \\
    = & -|S|C_1 + \sum_{v\in M_K} \op{max}_i \log^+|ux_i|_v \\
    = & -C_1+h_{\op{aff}}(u \bx),
 \end{split}\]
 where we relabel $|S|C_1$ with $C_1$. This completes the proof.
\end{proof}

\begin{proposition}\label{generalized northcott}
    Given $B\geq 0$, there are only finitely many equivalence classes of primitive solutions $\bx$ to the affine subspace inequality \eqref{affine subspace equation} for which $h(\bx)\leq B$. 
\end{proposition}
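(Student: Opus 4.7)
The plan is to reduce the statement to an application of Northcott's theorem via Lemma \ref{affine to projective inequality lemma}. The key observation is that the equivalence relation identifies $\bx$ with $u\bx$ for any $S$-unit $u$, so within each equivalence class we are free to choose a representative with small affine height. Once the affine height is controlled, each coordinate will have bounded Weil height, and finiteness follows from Northcott.

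First, let $\bx$ be a primitive solution to \eqref{affine subspace equation} with $h(\bx)\leq B$. By Lemma \ref{affine to projective inequality lemma}, there exists an $S$-unit $u\in \cO_{K,S}^{\times}$ such that
\[
h_{\op{aff}}(u\bx)\leq h(\bx)+C\leq B+C,
\]
where $C$ depends only on $K$ and $|S|$. Since $u\bx$ lies in the same equivalence class as $\bx$ and also belongs to $\cO_{K,S}^{n+1}\setminus\{0\}$, it suffices to show there are only finitely many vectors $\by=u\bx\in \cO_{K,S}^{n+1}\setminus\{0\}$ with $h_{\op{aff}}(\by)\leq B+C$.

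For such a $\by=(y_0,\dots,y_n)$, each coordinate satisfies the trivial inequality
\[
h(y_i)=\sum_{v\in M_K}\log^{+}|y_i|_v \leq \sum_{v\in M_K}\max_j \log^{+}|y_j|_v = h_{\op{aff}}(\by)\leq B+C.
\]
Thus every $y_i$ lies in the fixed number field $K$ of finite degree over $\Q$ and has Weil height bounded by $\exp(B+C)$. By Northcott's theorem, there are only finitely many such elements of $K$, and hence only finitely many admissible tuples $\by$. This bounds the number of equivalence classes, completing the proof.

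No essential obstacle is anticipated: the entire argument is a straightforward combination of the change of representative furnished by Lemma \ref{affine to projective inequality lemma} with the Northcott finiteness property in $K$. The only point requiring mild care is that the representative $u\bx$ remains in $\cO_{K,S}^{n+1}$ (which is immediate since $u\in \cO_{K,S}^{\times}$), so that Northcott can be applied to each of its coordinates in $K$.
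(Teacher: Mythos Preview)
Your proof is correct and follows essentially the same approach as the paper: apply Lemma \ref{affine to projective inequality lemma} to replace $\bx$ by an equivalent representative $u\bx$ with $h_{\op{aff}}(u\bx)\leq B+C$, observe that this bounds the Weil height of each coordinate, and conclude by Northcott's theorem. The only difference is cosmetic---you spell out slightly more detail about why $u\bx$ stays in $\cO_{K,S}^{n+1}$.
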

\begin{proof}
    It follows from Lemma \ref{affine to projective inequality lemma} that there exists $u\in \cO_{K, S}^\times $ such that 
    \[h_{\op{aff}}(u\bx)\leq C_1+B.\]
    Note that \[h_{\op{aff}}(u\bx)=\sum_{v\in M_K} \op{max}_i \log^+|ux_i|_v\geq \op{max}_i\left( \sum_{v\in M_K} \log^+|ux_i|_v\right)=\op{max}_i h(u x_i).\]
    Thus each coordinate $u x_i$ has height at most $(C_1+B)$ and the conclusion follows from Northcott's theorem.
\end{proof}

Before proceeding, let's set up some further notation. Writing 
\[L_v(\bx)=(L_{v, 0}(\bx), \dots, L_{v, n}(\bx))=(L_{v, 0}(x_0, \dots, x_n), \dots, L_{v, n}(x_0, \dots, x_n))\]
we have a linear isomorphism $L_v: K^{n+1}\xrightarrow{\sim} K^{n+1}$. We write $L_{v,i}(\bx)=\sum_{j=0}^n a_{i,j}^{(v)} x_j$, where $a_{i,j}^{(v)}$ are in $K$. Thus we may identify $L_v=(a_{i,j}^{(v)})_{i,j}$. Setting $|L_v|_v:=\op{max}_{i, j} |a_{i,j}^{(v)}|_v$, and $|L_{v,i}|_v:=\op{max}_j |a_{i,j}^{(v)}|_v$, we find that 
\[|L_{v,i}(\bx)|_v=|\sum_{j=0}^n a_{i,j}^{(v)} x_j|_v\leq \begin{cases}(n+1) |L_{v,i}|_v|\bx|_v &\text{ if }v\text{ is archimedean};\\
|L_{v,i}|_v|\bx|_v &\text{ if }v\text{ is non-archimedean};
\end{cases}\]
and therefore we deduce that 
\[|L_v(\bx)|_v\leq \begin{cases}(n+1) |L_{v}|_v|\bx|_v &\text{ if }v\text{ is archimedean};\\
|L_{v}|_v|\bx|_v &\text{ if }v\text{ is non-archimedean}.
\end{cases}\]
In particular, we find that for non-archimedean $v$,
\[\log|L_v(\bx)|_v-\log|\bx|_v\leq \log |L_v|_v\]
and 
\[\log|L_v^{-1}(\bx)|_v-\log|\bx|_v\leq \log |L_v^{-1}|_v.\]
In the second equation above, replace $\bx$ with $L_v(\bx)$ to deduce that 
\[\log|\bx|_v-\log|L_v(\bx)|_v\leq \log |L_v^{-1}|_v.\]
Thus, we have proved the following result. 

\begin{lemma}\label{gamma v lemma}
    For each prime $v\in M_K$, there is a constant $\gamma_v\ge 0$ such that
    \[|\log|L_v(\bx)|_v-\log|\bx|_v|\leq \gamma_v\]
    where $\gamma_v=0$ for all but finitely many places $v\in M_K$.
\end{lemma}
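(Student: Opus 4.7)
The plan is essentially to package the two-sided logarithmic estimates that have already been assembled in the paragraph immediately preceding the statement, and then to supply the small additional observation needed to cover the archimedean places (the preceding calculation was recorded explicitly only in the non-archimedean case).

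For a non-archimedean $v\in S$, the inequalities
\[\log|L_v(\bx)|_v-\log|\bx|_v\leq \log|L_v|_v,\qquad \log|\bx|_v-\log|L_v(\bx)|_v\leq \log|L_v^{-1}|_v\]
are exactly what is produced in the paragraph above the lemma (the second is obtained from the inverse bound for $L_v^{-1}$ by substituting $L_v(\bx)$ for $\bx$, which is legitimate since $L_v\colon K^{n+1}\to K^{n+1}$ is a linear isomorphism with matrix in $K$). Setting
\[\gamma_v:=\max\bigl\{|\log|L_v|_v|,\ |\log|L_v^{-1}|_v|,\ 1\bigr\}\]
(the $1$ just ensures positivity regardless of whether $|L_v|_v$ is above or below $1$) gives the two-sided bound $|\log|L_v(\bx)|_v-\log|\bx|_v|\leq \gamma_v$ required by the lemma.

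For an archimedean $v\in S$, I would run the same argument verbatim, observing that the only change is the harmless factor $(n+1)$ coming from the archimedean triangle inequality in the bound $|L_{v,i}(\bx)|_v\leq (n+1)|L_{v,i}|_v|\bx|_v$ recorded just above the lemma. This yields
\[\log|L_v(\bx)|_v-\log|\bx|_v\leq \log(n+1)+\log|L_v|_v\]
together with the symmetric inequality obtained by applying the same bound to $L_v^{-1}$ and then substituting $L_v(\bx)$ for $\bx$. One then takes
\[\gamma_v:=\log(n+1)+\max\bigl\{|\log|L_v|_v|,\ |\log|L_v^{-1}|_v|,\ 1\bigr\}.\]

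There is no real obstacle here: the matrix norms $|L_v|_v$ and $|L_v^{-1}|_v$ are finite and strictly positive because the linear forms $L_{v,0},\dots,L_{v,n}$ are assumed $K_v$-linearly independent, so $L_v$ is invertible with coefficients in $K$, and the resulting $\gamma_v$ depends only on the fixed data of the forms (and on $n$ at archimedean places), not on $\bx$. The only point that deserves a sentence in the write-up is justifying the substitution $\bx\mapsto L_v(\bx)$ in the inverse estimate, which is immediate from invertibility of $L_v$ on $K_v^{n+1}$.
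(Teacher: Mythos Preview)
Your proposal is correct and follows exactly the paper's approach: the paper derives the two one-sided logarithmic inequalities in the paragraph immediately preceding the lemma (written out explicitly only for non-archimedean $v$), and you correctly supply the trivial modification (the extra $\log(n+1)$) needed at archimedean places, together with the choice of $\gamma_v$.
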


In view of Proposition \ref{generalized northcott} we may assume without loss of generality that $h(\bx)>0$ since there are only finitely many equivalence classes of primitive solutions for which $h(\bx)\leq 0$.
\begin{lemma}\label{affine proj height lemma}
    With respect to the notation above, there is a positive constant $C_2>0$ for which the following assertions hold:
    \begin{enumerate}
        \item every equivalence class of primitive solutions contains an element $\bx$ such that 
        \[h_{\op{aff}}(L_v(\bx))\leq h(\bx)+C_2.\]
        \item If $L_{v,i}(\bx)\neq 0$ then $|\log|L_{v,i}(\bx)|_v|\leq h(\bx)+C_2$.
    \end{enumerate}
\end{lemma}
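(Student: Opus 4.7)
The plan is to first apply Lemma \ref{affine to projective inequality lemma} to replace $\bx$ in each equivalence class by an $S$-unit multiple $u\bx$ whose affine height is controlled, giving $h_{\op{aff}}(\bx) \leq h(\bx) + C_0$ for a constant $C_0$ depending only on $K$ and $|S|$. With such a representative fixed, both claims should reduce to elementary place-by-place absolute-value estimates, exploiting the crucial fact that $L_{v,i}(\bx)$ lies in $K$ (not just $K_v$), so that the product formula applies to it.

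For part (1), I would fix $v \in S$ and bound $h_{\op{aff}}(L_v(\bx)) = \sum_{w \in M_K} \max_i \log^+ |L_{v,i}(\bx)|_w$ one place at a time. Because the coefficients $a_{i,j}^{(v)}$ of $L_v$ lie in $K$, we have $|L_{v,i}(\bx)|_w \leq |L_v|_w \, |\bx|_w$ at non-archimedean $w$ and $|L_{v,i}(\bx)|_w \leq (n+1)\,|L_v|_w \, |\bx|_w$ at archimedean $w$, where $|L_v|_w := \max_{i,j}|a_{i,j}^{(v)}|_w$ equals $1$ for all but finitely many $w$. Summing $\log^+$ across places via $\log^+(ab) \leq \log^+ a + \log^+ b$ then gives $h_{\op{aff}}(L_v(\bx)) \leq h_{\op{aff}}(\bx) + C_v'$, with $C_v'$ a constant depending only on $L_v$ and the number of archimedean places of $K$. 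Combined with $h_{\op{aff}}(\bx) \leq h(\bx) + C_0$ and taking the maximum over the finite set $v \in S$ absorbs both constants into the required $C$.

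For part (2), I would treat the upper and lower bounds on $\log|L_{v,i}(\bx)|_v$ separately. The upper bound follows at once from $|L_{v,i}(\bx)|_v \leq |L_v(\bx)|_v \leq e^{\gamma_v}|\bx|_v$ (Lemma \ref{gamma v lemma}) together with $\log|\bx|_v \leq \log^+|\bx|_v \leq h_{\op{aff}}(\bx) \leq h(\bx) + C_0$. For the lower bound, since $L_{v,i}(\bx) \in K^\times$ the product formula yields
\[-\log|L_{v,i}(\bx)|_v \;=\; \sum_{w \neq v} \log|L_{v,i}(\bx)|_w \;\leq\; \sum_{w \in M_K} \log^+|L_{v,i}(\bx)|_w \;=\; h(L_{v,i}(\bx)),\]
and exactly the place-by-place estimate used in part (1) shows $h(L_{v,i}(\bx)) \leq h(\bx) + C$. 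I do not anticipate any serious obstacle: the only genuine input is the initial selection of representative via Lemma \ref{affine to projective inequality lemma}, after which everything reduces to the standard observation that a linear map with coefficients in $K$ distorts heights by at most an additive constant depending on the map and on $K$.
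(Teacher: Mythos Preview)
Your proposal is correct and follows essentially the same route as the paper. The paper's proof of part (1) invokes Lemma~\ref{gamma v lemma} to assert that $h_{\op{aff}}(L_v(\bx))$ and $h_{\op{aff}}(\bx)$ differ by a bounded amount, then applies Lemma~\ref{affine to projective inequality lemma}; you spell out the underlying place-by-place estimate more explicitly, which is arguably clearer since Lemma~\ref{gamma v lemma} as stated only treats the single place $v$. For part (2) the paper packages your product-formula argument as a direct appeal to the fundamental inequality (Proposition~\ref{Fundamental inequality}) and then observes $h(L_{v,i}(\bx)) \leq h_{\op{aff}}(L_v(\bx))$ to reduce to part (1), which is marginally slicker than re-running the place-by-place bound, but the content is the same.
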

\begin{proof}
    \par From Lemma \ref{gamma v lemma} it follows that $h_{\op{aff}}(L_v(\bx))$ and $h_{\op{aff}}(\bx)$ differ by a bounded quantity. Therefore, there is an element $\bx$ in the equivalence class for which 
    \[h_{\op{aff}}(L_v(\bx))\leq h_{\op{aff}}(\bx)+C_2'\leq h(\bx)+C_2'+C_1, \] where in the second inequality, we have appealed to Lemma \ref{affine to projective inequality lemma}. Setting $C_2:=C_2'+C_1$, we get the statement of part (1). 
    \par By the fundamental inequality (Proposition \ref{Fundamental inequality}) applied to $S:=\{v\}$ and $\alpha:=L_{v, i} (\bx)$, we deduce that
    \[|\log |L_{v, i} (\bx)|_v|\leq \sum_{w\in M_K} \log^+ |L_{v, i} (\bx)|_w\leq h_{\op{aff}}(L_v(\bx))\]
    and by part (1), we have that $h_{\op{aff}}(L_v(\bx))\leq h(\bx)+C_2$. This proves part (2). 
\end{proof}
Part (2) of the above lemma implies that every equivalence class of primitive solutions has a representative for which the point 
\[P^{\bx}:=\left(\frac{\log |L_{v,i}(\bx)|_v}{h(\bx)}\right)_{v,i}\] lies in $[-(1+C_2/h(\bx)), (1+C_2/h(\bx))]^{|S|(n+1)}$. Hence by Proposition \ref{generalized northcott}, for all but finitely many equivalence classes of solutions $\bx$, we have that $P^{\bx}\in [-2,2]^{|S|(n+1)}$. We subdivide $[-2,2]^{|S|(n+1)}$ into boxes of side length $1/N$ for a large value of $N$. By the pigeon hole principal there are infinitely many equivalence classes of solutions which lie in one of these boxes. Let $(c_{v,i})_{v, i}$ be the "north-east" corner point of this box, which is to say that the following inequalities are satisfied:
\begin{equation}\label{bounds for point constucted}(c_{v,i}-1/N)\leq \frac{\log |L_{v,i}(\bx)|_v}{h(\bx)}\leq c_{v,i}\end{equation} for all $v, i$. On the other hand, the inequality \eqref{affine subspace equation} gives 
\[\sum_{v\in S} \sum_{i=0}^n \frac{\log |L_{v,i}(\bx)|_v}{h(\bx)}<-\varepsilon .\]
This implies that
\[\sum_{v\in S}\sum_{i=0}^n (c_{v,i}-1/N)< -\varepsilon\]
i.e., 
\[\sum_{v\in S}\sum_{i=0}^n c_{v,i}\leq \frac{|S|(n+1)}{N}-\varepsilon.\]
Assume that $N\geq \frac{2|S|(n+1)}{\varepsilon}$, and we arrive at the following:
\begin{equation}\label{c v i neg bound}\sum_{v\in S}\sum_{i=0}^n c_{v,i}\leq -\frac{\varepsilon}{2}.\end{equation}
Recall that $K_{\mathbb{A}}$ is the ring of adeles of $K$. For $v\in S$ and $Q\geq 1$, we define the $v$-adic approximation domain:
\[\Pi_v(Q):=\{\mathbf{y}=(y_0, \dots, y_n)\in K_v^{n+1}\mid |L_{v, i}(\mathbf{y})|_v\leq Q^{c_{v,i}}\text{ for }i=0, \dots, n\}.\]
Observe that $\Pi_v(Q)$ is the inverse image 
\[L_v^{-1}\left(\{\mathbf{y}=(y_0, \dots, y_n)\in K_v^{n+1}\mid |y_i|_v\leq Q^{c_{v,i}}\text{ for }i=0, \dots, n\}\right)\]
and thus we see that:
\begin{itemize}
    \item if $v$ is archimedean then $\Pi_v(Q)\subset \mathbb{R}^{n+1}$ or $\mathbb{C}^{n+1}$ is a compact, convex and symmetric subset. 
    \item For non-achimedian $v$, $\Pi_v(Q)$ is a lattice in $K_v^{n+1}$.
\end{itemize}
It follows from \eqref{bounds for point constucted} that 
\[|L_{v,i}(\bx)|_v\leq H(\bx)^{c_{v,i}}\]
and this motivates the following definition.
\begin{definition}
    The approximation domain is defined in $E_{\mathbb{A}}:=K_{\mathbb{A}}^{n+1}$ by 
    \[\Pi(Q):=\prod_{v\in S} \Pi_v(Q)\times \prod_{v\notin S} R_v.\]
\end{definition}

We set $E_K:=K^{n+1
}$ and $\iota: E_K\rightarrow E_{\mathbb{A}}$ be the embedding defined by $\iota(z):=(z)_{v\in M_K}$. We have that $\iota(\bx)\in \Pi(H(\bx))$.
\par We now compute the volume of $\Pi(Q)$. Recall that  $r=[K:\Q]$. For each prime $v\in M_K$ let $\mu_v$ be the Haar measure on $K_v$ normalized as follows. If $v$ is real (resp. complex) archimedean place $\mu_v$ is the Lebesgue measure (resp. twice the Lebesgue measure). For a non-archimedean place $v$, $\mu_v$ is normalized by setting $\mu_v(R_v):=|D_{K_v/\Q_p}|^{1/2}$. Take $\beta_v$ to denote the measure on $K_v^{n+1}$ defined by $\beta_v:=\mu_v^{n+1}$. Let $\beta$ be the product measure $\prod_{v\in M_K} \beta_v$ on $E_{\mathbb{A}}$. The volume of a region in $E_{\mathbb{A}}$ (resp. $K_v^{n+1}$) is defined to be the measure with respect to $\beta$ (resp. $\beta_v$). We set $\Delta_v$ to denote the determinant of the linear transformation $L_v$.
\begin{lemma}\label{local vol comps}
    With respect to notation above, the following assertions hold:
    \begin{enumerate}
        \item For $v\notin S$, we have that $\op{Vol}\left(\Pi_v(Q)\right)=|D_{K_v/\Q_p}|^{\frac{n+1}{2}}$. 
        \item For $v\in S$ a non-archimedean prime, 
        \[\op{Vol}\left(\Pi_v(Q)\right)=|D_{K_v/\Q_p}|^{\frac{n+1}{2}}|\Delta_v|_v^{-r}Q^{r\left(\sum_i c_{v,i}\right)}.\]
        \item If $v$ is real, then 
        \[\op{Vol}\left(\Pi_v(Q)\right)=2^{n+1}|\Delta_v|_v^{-r}Q^{r\left(\sum_i c_{v,i}\right)}.\]
        \item If $v$ is complex, then 
        \[\op{Vol}\left(\Pi_v(Q)\right)=(2\pi)^{n+1}|\Delta_v|_v^{-r}Q^{r\left(\sum_i c_{v,i}\right)}.\]
    \end{enumerate}
\end{lemma}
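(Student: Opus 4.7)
The approach is a direct case-by-case computation using the factorization $\Pi_v(Q)=L_v^{-1}(B_v(Q))$ as the inverse image, under a linear isomorphism, of a product of one-dimensional balls; one then applies the change-of-variables formula for Haar measure.

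For part (1), the definition of $\Pi_v(Q)$ for $v\notin S$ gives $\Pi_v(Q)=R_v^{n+1}$, so immediately $\op{Vol}(\Pi_v(Q))=\mu_v(R_v)^{n+1}=|D_{K_v/\Q_p}|^{(n+1)/2}$ by the chosen normalization of $\mu_v$. For parts (2)--(4), I would write
\[\Pi_v(Q)=L_v^{-1}(B_v(Q)),\qquad B_v(Q):=\prod_{i=0}^n\bigl\{y\in K_v : |y|_v\le Q^{c_{v,i}}\bigr\},\]
and apply the change-of-variables formula under the $K_v$-linear automorphism $L_v$ of determinant $\Delta_v$. Since the Haar measure on $K_v$ scales under dilation by $\alpha\in K_v^\times$ through the module $\|\alpha\|_v=|\alpha|_v^d$ (where $d=[K:\Q]$, precisely the exponent that makes the adelic product formula hold), the Jacobian of $L_v^{-1}$ acting on $K_v^{n+1}$ is $|\Delta_v|_v^{-d}$, and so
\[\op{Vol}(\Pi_v(Q))=|\Delta_v|_v^{-d}\cdot\op{Vol}(B_v(Q)).\]

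It then remains to compute $\op{Vol}(B_v(Q))$ as a product of one-dimensional measures. If $v$ is real, then $\{y\in\mathbb{R}:|y|_v\le Q^{c_{v,i}}\}=[-Q^{dc_{v,i}},Q^{dc_{v,i}}]$ has Lebesgue measure $2Q^{dc_{v,i}}$, producing $\op{Vol}(B_v(Q))=2^{n+1}Q^{d\sum_i c_{v,i}}$. If $v$ is complex, the corresponding set is the Euclidean disk of radius $Q^{dc_{v,i}/2}$, whose area under the normalization $\mu_v=2\cdot\mathrm{Leb}$ is $2\pi Q^{dc_{v,i}}$, yielding $(2\pi)^{n+1}Q^{d\sum_i c_{v,i}}$. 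If $v\in S$ is non-archimedean, the set $\{y\in K_v:|y|_v\le Q^{c_{v,i}}\}$ is a fractional $R_v$-ideal whose module equals $Q^{dc_{v,i}}$ (provided this value lies in the discrete value group of $\|\cdot\|_v$; otherwise one rounds down, a harmless modification absorbed into the later estimates), so by the scaling property its measure is $Q^{dc_{v,i}}\mu_v(R_v)=Q^{dc_{v,i}}|D_{K_v/\Q_p}|^{1/2}$, and the product over $i$ is $|D_{K_v/\Q_p}|^{(n+1)/2}Q^{d\sum_i c_{v,i}}$. Multiplying through by the Jacobian factor $|\Delta_v|_v^{-d}$ yields the formulas of (2), (3), and (4).

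\textbf{Main obstacle.} The only real subtlety is bookkeeping the normalizations: one must keep straight that the Haar-measure scaling on each $K_v$ is by the module $|\alpha|_v^d$ rather than by $|\alpha|_v$ itself---this is the source of every factor of $d$ appearing in the exponents of both $Q$ and $|\Delta_v|_v$---and in the complex case correctly account for the chosen doubling $\mu_v=2\cdot\mathrm{Leb}$.
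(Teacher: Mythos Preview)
Your proposal is correct and follows exactly the same approach as the paper: compute part (1) directly from the normalization $\mu_v(R_v)=|D_{K_v/\Q_p}|^{1/2}$, and for parts (2)--(4) write $\Pi_v(Q)=L_v^{-1}(B_v)$, pull back via the Jacobian $|\Delta_v|_v^{-d}$, and evaluate $\op{Vol}(B_v)$ as a product of one-dimensional ball measures. In fact you supply more detail than the paper, which omits the explicit archimedean computations; your remark about the non-archimedean value group and your emphasis on the module scaling $|\alpha|_v^d$ are both accurate and useful clarifications.
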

\begin{proof}
    For part (1), we note that \[\op{Vol}(\Pi_v(Q))=\mu_v(R_v)^{n+1}=|D_{K_v/\Q_p}|^{\frac{n+1}{2}}.\]
    For part (2), let 
    \[B_v:=\{\mathbf{y}=(y_0, \dots, y_n)\in K_v^{n+1}\mid |y_i|_v\leq Q^{c_{v,i}}\}\]
    and note that 
    \[\op{Vol}(B_v)=Q^{r\left(\sum_i c_{v, i}\right)}\mu_v(R_v)^{n+1}=Q^{r\left(\sum_i c_{v, i}\right)}|D_{K_v/\Q_p}|^{\frac{n+1}{2}}.\] Note that $\Pi_v(Q)=L_v^{-1}(B_v)$ and hence \[\op{Vol}(\Pi_v(Q))=|\Delta_v|_v^{-r}\op{Vol}(B_v)=|\Delta_v|_v^{-r}Q^{r\left(\sum_i c_{v, i}\right)}|D_{K_v/\Q_p}|^{\frac{n+1}{2}}.\]
    Parts (3) and (4) are similar to part (2) and their proofs are omitted.
\end{proof}
\begin{proposition}\label{Vol(Pi(Q))<< Q to the -epsilon/2}
    Let $Q\geq 1$ and $\Pi(Q)$ be the approximation domain associated with $(c_{v,i})$. Then, the volume of $\Pi(Q)$ satisfies the asymptotic 
    \[\op{Vol}(\Pi(Q))\ll Q^{-r\varepsilon/2}.\]
\end{proposition}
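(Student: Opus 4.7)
The plan is to compute $\op{Vol}(\Pi(Q))$ explicitly as a product of the local volumes already computed in Lemma \ref{local vol comps} and then read off the $Q$-dependence, comparing it to the key inequality \eqref{c v i neg bound}. Since the approximation domain splits as a restricted product
\[\Pi(Q) = \prod_{v \in S} \Pi_v(Q) \times \prod_{v \notin S} R_v^{n+1},\]
and $\beta$ is the product of the local Haar measures $\beta_v = \mu_v^{n+1}$, the total volume factors as
\[\op{Vol}(\Pi(Q)) = \prod_{v \in S} \op{Vol}(\Pi_v(Q)) \cdot \prod_{v \notin S} |D_{K_v/\Q_p}|^{(n+1)/2}.\]
The infinite product on the right converges since $D_{K_v/\Q_p} = 1$ for all but finitely many $v$ (the primes dividing the discriminant of $K/\Q$), so it contributes a finite positive constant depending only on $K$ and $n$.

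Next I substitute the explicit formulas from Lemma \ref{local vol comps} for the $v \in S$ factors. Grouping the $Q$-dependence separately from the constants gives
\[\op{Vol}(\Pi(Q)) = C(K, S, n, \{L_v\}) \cdot Q^{d \sum_{v \in S}\sum_{i=0}^n c_{v,i}},\]
where
\[C(K, S, n, \{L_v\}) = \Bigl(\prod_{v \in S \cap M_K^f} |D_{K_v/\Q_p}|^{(n+1)/2}\Bigr) \Bigl(\prod_{v \in S, \, v \text{ real}} 2^{n+1}\Bigr) \Bigl(\prod_{v \in S, \, v \text{ complex}} (2\pi)^{n+1}\Bigr) \Bigl(\prod_{v \in S} |\Delta_v|_v^{-d}\Bigr) \Bigl(\prod_{v \notin S} |D_{K_v/\Q_p}|^{(n+1)/2}\Bigr).\]
Since the finitely many quantities $|\Delta_v|_v$, $|D_{K_v/\Q_p}|$, and the counts of real and complex places are fixed by the data $K, S, n, \{L_v\}$, this $C$ is a finite constant independent of $Q$.

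Finally, by the inequality \eqref{c v i neg bound} established during the construction of the box $(c_{v,i})$, we have $\sum_{v \in S}\sum_{i=0}^n c_{v,i} \le -\varepsilon/2$. Since $Q \ge 1$, this forces $Q^{d \sum_{v,i} c_{v,i}} \le Q^{-d\varepsilon/2}$, and therefore
\[\op{Vol}(\Pi(Q)) \le C \cdot Q^{-d\varepsilon/2} \ll Q^{-d\varepsilon/2},\]
which is the desired bound. There is no real obstacle here; the proof is essentially a bookkeeping calculation whose content has already been packaged into Lemma \ref{local vol comps} and inequality \eqref{c v i neg bound}. The only point requiring a moment's care is confirming that the product over $v \notin S$ of the discriminant factors converges to a finite constant, which follows from the finiteness of the discriminant of $K$.
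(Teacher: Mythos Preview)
Your proof is correct and follows exactly the same approach as the paper: factor the volume via Lemma \ref{local vol comps}, collect the $Q$-dependence as $Q^{d\sum_{v,i} c_{v,i}}$, and apply \eqref{c v i neg bound}. You have simply spelled out in detail what the paper compresses into two lines.
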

\begin{proof}
    It follows from Lemma \ref{local vol comps}
    \[\op{Vol}(\Pi(Q))\ll Q^{r\left(\sum_{v,i} c_{v,i}\right)}.\] Recall that \eqref{c v i neg bound} asserts that $\sum_{v,i} c_{v,i}\leq -\varepsilon/2$ and the result follows.
\end{proof}
Since we assume that for a choice of $\varepsilon>0$, the affine subspace equation \eqref{affine subspace equation} is false, there exists an infinite set of solutions $\mathcal{X}=\{\bx_i\}$ such that no $(n+1)$ solutions lie on the same hyperplane in $K^{n+1}$. 
Setting $Q_i = H(\bx_i)$ we note that $\iota(\bx_i)\in \Pi(Q_i)$. Let $V(Q)$ be the $K$-vector space spanned by the vectors in $\iota^{-1}(\Pi(Q))$. The rank of $\Pi(Q)$ is defined to be $\op{rank}_K V(Q)$. Note that by construction $\bx_i\in V(Q_i)$, where $Q_i:=H(\bx_i)$.
\begin{lemma}\label{rank Pi is <=n}
    For all but finitely many $\bx_i\in \mathcal{X}$, we have that $\op{rank}\Pi(Q_i)\leq n$. 
\end{lemma}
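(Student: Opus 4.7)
The plan is to argue by contradiction: suppose that $\op{rank}\Pi(Q_i) = n+1$ for an infinite subsequence of indices. Since no $(n+1)$ of the $\bx_i$ lie on a common hyperplane, each $\cO_{K,S}^{\times}$-orbit contains at most $n$ elements of $\mathcal{X}$ (as an orbit spans a single $K$-line, which sits inside infinitely many hyperplanes), so there are infinitely many distinct orbits. By Proposition \ref{generalized northcott}, only finitely many orbits admit a representative of bounded height, and hence along this subsequence the heights $Q_i = H(\bx_i)$ are unbounded; passing to a further subsequence, I may assume $Q_i \to \infty$.

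The key tool is the lower-bound direction of Minkowski's second theorem applied to the adelic approximation domain $\Pi(Q_i)$. First, I would verify that the hypotheses are satisfied. At each archimedean place $v \in S$, the set $\Pi_v(Q_i) = L_v^{-1}(B_v)$, where $B_v \subset K_v^{n+1}$ is the polydisc defined by $|y_i|_v \leq Q_i^{c_{v,i}}$. Since $L_v$ is $K_v$-linear, $\Pi_v(Q_i)$ inherits compactness, convexity and central symmetry from $B_v$. Moreover, at each complex place the identity $\alpha B_v = B_v$ for every $\alpha\in\mathbb{C}$ with $|\alpha|=1$, combined with the $\mathbb{C}$-linearity of $L_v$, yields $\alpha \Pi_v(Q_i) = \Pi_v(Q_i)$, giving the rotation-invariance required for the lower-bound direction. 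At non-archimedean places in $S$, $\Pi_v(Q_i)$ is a compact open $R_v$-submodule of $K_v^{n+1}$, which fits into the adelic framework (after absorbing these local submodules into a suitably modified ambient $\cO_K$-lattice).

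Let $\lambda_1 \leq \cdots \leq \lambda_{n+1}$ denote the successive minima of $\Pi(Q_i)$. The assumption $\op{rank}\Pi(Q_i) = n+1$ means that $\Pi(Q_i)$ already contains $n+1$ $K$-linearly independent lattice vectors, whence $\lambda_{n+1} \leq 1$ and therefore $\prod_{j=1}^{n+1} \lambda_j \leq 1$. Invoking the lower bound in Minkowski's second theorem, I obtain
\[ c_K \;\leq\; (\lambda_1 \cdots \lambda_{n+1})^d \, \op{Vol}(\Pi(Q_i)) \;\leq\; \op{Vol}(\Pi(Q_i)), \]
where $c_K > 0$ depends only on $K$ and $n$. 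On the other hand, Proposition \ref{Vol(Pi(Q))<< Q to the -epsilon/2} gives $\op{Vol}(\Pi(Q_i)) \ll Q_i^{-d\varepsilon/2}$, which tends to $0$ as $Q_i \to \infty$. This contradicts the uniform lower bound $c_K$, completing the proof.

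The hard part will be the bookkeeping in the second paragraph: one must pin down the correct adelic lattice so that the rank-$(n{+}1)$ hypothesis really furnishes $n+1$ genuine lattice vectors inside $\Pi(Q_i)$ (rather than merely $K$-rational ones coming from $\iota^{-1}(\Pi(Q_i))$), and so that the lower-bound direction of Minkowski's second theorem applies with a constant $c_K$ that is independent of $i$. Once this technical setup is in hand, the contradiction drops out cleanly from juxtaposing the Minkowski inequality with the decay estimate from Proposition \ref{Vol(Pi(Q))<< Q to the -epsilon/2}.
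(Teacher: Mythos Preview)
Your proposal is correct and follows essentially the same route as the paper: combine the lower bound in Minkowski's second theorem with the volume estimate $\op{Vol}(\Pi(Q))\ll Q^{-d\varepsilon/2}$ from Proposition~\ref{Vol(Pi(Q))<< Q to the -epsilon/2} to force $\lambda_{n+1}>1$ (equivalently, $\op{rank}\Pi(Q)\le n$) once $Q$ is large, and then invoke Proposition~\ref{generalized northcott} to handle the finitely many remaining solutions. The only cosmetic difference is that the paper argues directly, extracting the quantitative bound $\lambda_{n+1}\gg Q^{\varepsilon/(2(n+1))}$ (which it reuses later), whereas you phrase the same computation as a contradiction; your flagged ``bookkeeping'' concern about the adelic lattice is real but is likewise glossed over in the paper.
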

\begin{proof}
    Let $\bx\in \mathcal{X}$, $Q:=H(\bx)$ and set $\Lambda:=\iota(\cO_{K, S}^{n+1})$. Let $\lambda_1, \dots, \lambda_{n+1}$ be the successive minima of $\Pi(Q)$ and let $x^{(1)}, \dots, x^{(n+1)}\in \cO_{K, S}^{n+1}$ be linearly independent points defining the successive minima. This is to say that $\lambda_i \Pi(Q)\cap \Lambda$ is generated by $x^{(1)}, \dots, x^{(i)}$. We find that $V(Q)$ is spanned by $x^{(1)}, \dots, x^{(j)}$ where $j=\op{max}\{i\mid \lambda_i\leq 1\}$. Since $\lambda_{n+1}\geq \lambda_i$, one deduces that 
    \[\lambda_{n+1}^{r(n+1)}\geq (\lambda_1\dots \lambda_{n+1})^r.\]
    On the other hand, by Minkowski's second theorem, we find that 
    \[\frac{2^{r(n+1)}\pi^{r_2(n+1)}}{((n+1)!)^{r_1}(2(n+1)!)^{r_2}} |D_{K/\Q}|^{-\frac{(n+1)}{2}}\leq (\lambda_1\dots \lambda_{n+1})^r \op{Vol}(\Pi(Q)).\]
    Proposition \ref{Vol(Pi(Q))<< Q to the -epsilon/2} asserts that  \[\op{Vol}(\Pi(Q))\ll Q^{-r\varepsilon/2}.\]
    Thus we find that 
    $\lambda_{n+1}\gg Q^{\frac{\varepsilon}{2(n+1)}}$ and therefore when $Q$ is large, $\lambda_{n+1}>1$ and thus, $\op{dim}V(Q)\leq n$ for large enough values of $Q$. It then follows from Proposition \ref{generalized northcott} that the set of $\bx_i \in \mathcal{X}$ for which $\op{rank}\Pi(Q_i)=n+1$ is finite. 
\end{proof}

In view of Lemma \ref{rank Pi is <=n} we assume that for all $\bx_i\in \mathcal{X}$, the vector space $V(Q_i)$ has $K$-dimension $n$ and moreover, that $V(Q_i)\neq V(Q_j)$ for $i\neq j$. Using generalizations of Roth's machinery, we prove the following result.
\begin{theorem}\label{theorem 4.14}
    For all but finitely many $\bx_i\in \mathcal{X}$, we have that $\op{rank}\Pi(Q_i)\leq n-1$.
\end{theorem}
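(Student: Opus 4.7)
\emph{Proof plan.} We argue by contradiction, adapting the multilinear Roth machinery of Section \ref{s 3} to the adelic setting. Suppose, after passing to an infinite subsequence still called $\{\bx_i\}$, that every $\bx_i \in \mathcal{X}$ satisfies $\op{rank}\Pi(Q_i) = n$. Then each $V(Q_i)$ is a hyperplane in $K^{n+1}$, defined by a nonzero linear form $\ell_i \in K^{n+1}$, and the hypothesis that no $n+1$ of the $\bx_i$ are coplanar forces the projective classes $[\ell_i] \in \mathbb{P}^n(K)$ to be pairwise distinct. Tracing through the successive-minima argument of Lemma \ref{rank Pi is <=n}, one sees that the assumption $\op{rank}\Pi(Q_i) = n$ produces, for a suitable normalization of $\ell_i$, a quantitative smallness estimate
\[
\sum_{v \in S}\log|\ell_i(\bx_i)|_v \leq -\kappa \log Q_i
\]
for some constant $\kappa = \kappa(n,\varepsilon,S) > 0$. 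By Proposition \ref{generalized northcott} the heights $Q_i$ are unbounded, so one may further extract a lacunary subsequence $\bx_{i_1},\ldots,\bx_{i_m}$ of length $m$ along which $\log Q_{i_{j+1}} \geq \eta^{-2^{m-1}}\log Q_{i_j}$, where $m$ is a large integer and $\eta \in (0,1/2)$ a small parameter to be chosen.

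\emph{Auxiliary polynomial.} Introduce $m$ groups of variables $X^{(1)},\ldots,X^{(m)}$, each an $(n+1)$-tuple, and choose positive integers $r_1,\ldots,r_m$ so that the products $r_j\log Q_{i_j}$ are nearly equal while $r_{j+1}/r_j \leq \eta^{2^{m-1}}$, matching the lacunarity hypothesis of Lemma \ref{Roth's lemma}. Using the adelic Siegel's lemma (Lemma \ref{Siegel lemma}) applied to $m(n+1)$ variables, we construct a nonzero multi-homogeneous polynomial $P \in \cO_K[X^{(1)},\ldots,X^{(m)}]$ of multi-degree $(r_1,\ldots,r_m)$ whose partial derivatives up to a large index along the product of hyperplanes $\ell_{i_1}(X^{(1)})=\cdots=\ell_{i_m}(X^{(m)})=0$ vanish, and whose logarithmic height satisfies $h(P) \ll r_1+\cdots+r_m$. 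This is the analogue of Lemma \ref{Step 1}, with each algebraic number $\alpha$ replaced by a hyperplane $V(Q_{i_j})$.

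\emph{Index comparison and contradiction.} Taylor-expanding $P$ along the product diagonal and invoking the smallness estimate on $\ell_{i_j}(\bx_{i_j})$ together with the fundamental inequality (Proposition \ref{Fundamental inequality}) produces a lower bound $\op{Ind}_{\bar{\bx},\bar{r}}(P) \geq c_1 m$ at $\bar{\bx}:=(\bx_{i_1},\ldots,\bx_{i_m})$, for some $c_1>0$ depending only on $\varepsilon$ and $n$. This is the adelic hyperplane analogue of the Index Lemma \ref{Step 2}. On the other hand, the height and lacunarity conditions on $P$, the $r_j$, and the $Q_{i_j}$ permit a direct application of Roth's lemma (Lemma \ref{Roth's lemma}) to yield the opposite bound $\op{Ind}_{\bar{\bx},\bar{r}}(P) \leq 2m\eta$. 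Choosing $\eta < c_1/4$ from the outset produces the required contradiction.

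\emph{Principal obstacle.} The core technical difficulty lies in upgrading the classical Index Lemma \ref{Step 2} to the hyperplane setting, since on each factor $X^{(j)}$ the "target" is the hyperplane $V(Q_{i_j})$ rather than a single algebraic number. A clean way is, on each local factor, to perform a $v$-adic change of coordinates sending $V(Q_{i_j})$ to a coordinate hyperplane, carefully track the resulting effect on $h(P)$ and the heights of partial derivatives via Lemma \ref{ht of derivative} and Lemma \ref{gamma v lemma}, and then combine the archimedean smallness bounds with non-archimedean integrality to recover an index increase that survives summation over $v \in S$. The volume estimate $\op{Vol}(\Pi(Q)) \ll Q^{-d\varepsilon/2}$ of Proposition \ref{Vol(Pi(Q))<< Q to the -epsilon/2} enters here, as it governs the aggregate adelic proximity of $\bx_{i_j}$ to $V(Q_{i_j})$ and hence controls the constant $\kappa$. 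Once this hyperplane index lemma is secured uniformly along the subsequence, the remainder of the argument proceeds in formal parallel with the proof of Roth's theorem in Section \ref{s 3}.
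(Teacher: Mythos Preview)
Your outline has the right three-act shape (Siegel construction, index lower bound, Roth upper bound), but the roles of the objects are inverted in a way that makes the argument collapse.

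First, the smallness estimate $\sum_{v\in S}\log|\ell_i(\bx_i)|_v \leq -\kappa\log Q_i$ is vacuous: by construction $\bx_i\in V(Q_i)$ (it is one of the lattice points generating that span), so $\ell_i(\bx_i)=0$ identically. There is no Diophantine approximation between $\bx_i$ and its own hyperplane. The genuine smallness in this problem is $|L_{v,i}(\mathbf{y})|_v\leq Q^{c_{v,i}}$ for \emph{every} $\mathbf{y}$ with $\iota(\mathbf{y})\in\Pi(Q)$, coming from the definition of the approximation domain together with $\sum_{v,i}c_{v,i}\leq -\varepsilon/2$; the forms $L_{v,i}$ are entirely absent from your sketch.

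Second, and more seriously, you cannot build a nonzero $P$ of height $O(r_1+\cdots+r_m)$ that vanishes to order $\geq c_1 m$ along $V(Q_{i_1})\times\cdots\times V(Q_{i_m})$: once the height lower bound $h(V(Q_{i_j}))\gg\varepsilon\log Q_{i_j}$ is available, that is precisely what the generalized Roth lemma (the needed replacement for Lemma~\ref{Roth's lemma} in $(n+1)$ variables per block) forbids. So your Siegel step and your Roth step are in direct conflict before any point is evaluated.

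In the paper the logic runs the other way. The auxiliary polynomial is built with vanishing conditions on its coefficients \emph{in the $L_v$-expansion}, forcing the surviving multi-indices $J$ to have every $(J_i/d)$ within $O(m\eta)$ of $m/(n+1)$. The generalized Roth lemma, fed by a separately proven lower bound $h(V(Q))\gg\varepsilon\log Q$ (absent from your sketch), is used to show that $P$ does \emph{not} vanish identically on $V_1\times\cdots\times V_m$. A non-vanishing lemma then produces a concrete point $x'\in V_1\times\cdots\times V_m$, built as a bounded-integer combination of a basis $y_h^{(1)},\dots,y_h^{(n)}$ of $V_h$ lying in $\iota^{-1}\Pi(Q_h)$, at which some $T=\partial_{I'}P$ is nonzero with $(I'/d)\leq m\eta$. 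The contradiction is obtained by bounding $\sum_{v\in S}\log\max_{J}|L_v(x_1')^{j_1}\cdots L_v(x_m')^{j_m}|_v$ from above via $|L_{v,i}(y_h^{(l)})|_v\leq Q_h^{c_{v,i}}$ and the concentration of $J$, and from below via $T(x')\neq 0$ and the product formula.
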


\subsection{Auxiliary polynomials} Let \( m \geq 1 \) be an integer, and let \( \mathbf{x}_1, \dots, \mathbf{x}_m \in \mathcal{X} \). Assume that for each \( i = 1, \dots, n \), the vector space \( V(Q_i) \) has dimension \( \dim_K V(Q_i) = n \). For \( i = 1, \dots, n \), let \( E_i := K^{n+1} \) represent the ambient space in which \( V(Q_i) \) is contained. We aim to construct an auxiliary polynomial on the Cartesian product \( (K^{n+1})^m = E_1 \times \cdots \times E_m \), ensuring it has controlled height and prescribed vanishing properties.

\par We introduce a multi-vector variable \( \mathbf{X} = (\mathbf{X}_1, \dots, \mathbf{X}_m) \), where each \( \mathbf{X}_h = (X_{h,0}, \dots, X_{h,n}) \) represents the \( h \)-th coordinate vector in \( K^{n+1} \). This notation allows us to describe points in the product space \( (K^{n+1})^m \) succinctly. Given any vector \( \mathbf{u} = (u_0, \dots, u_n) \) with non-negative components, define its norm by  
\[
|\mathbf{u}| := \sum_{i=0}^n u_i.
\]
Now, fix a tuple \( d = (d_1, \dots, d_m) \), where each \( d_h \) is a non-negative integer representing the degree of the desired polynomial in the variables corresponding to the \( h \)-th factor of the product \( (K^{n+1})^m \). Define \( \mathcal{A}(d) \) as the set of integer multi-indices \( I = (\mathbf{i}_1, \dots, \mathbf{i}_m) \), where each  
\[
\mathbf{i}_h = (i_{h,0}, \dots, i_{h,n}) \quad \text{with} \quad i_{h,j} \in \mathbb{Z}_{\geq 0}
\]
is a vector of non-negative integers satisfying the degree condition  
\[
|\mathbf{i}_h| = d_h \quad \text{for all } h = 1, \dots, m.
\]
In other words, \( \mathcal{A}(d) \) consists of all tuples of multi-indices whose total degrees in each set of variables match the specified degrees in \( d \).

For each multi-index \( I \in \mathcal{A}(d) \), define the corresponding monomial  
\[
\mathbf{X}^I := \mathbf{X}_1^{\mathbf{i}_1} \cdots \mathbf{X}_m^{\mathbf{i}_m},
\]
where the notation \( \mathbf{X}_h^{\mathbf{i}_h} \) represents the monomial  
\[
\mathbf{X}_h^{\mathbf{i}_h} := X_{h,0}^{i_{h,0}} \cdots X_{h,n}^{i_{h,n}}.
\]
Thus, \( \mathbf{X}^I \) denotes a product of monomials, one for each coordinate vector \( \mathbf{X}_h \), with the degrees specified by the corresponding multi-index \( \mathbf{i}_h \).

Finally, let \( P(d) \) denote the \( K \)-vector space of multi-homogeneous polynomials of degree \( d \) in the variables \( \mathbf{X} = (\mathbf{X}_1, \dots, \mathbf{X}_m) \). A polynomial \( P \in P(d) \) is a \( K \)-linear combination of the monomials \( \mathbf{X}^I \) with multi-indices \( I \in \mathcal{A}(d) \), meaning  
\[
P(\mathbf{X}) = \sum_{I \in \mathcal{A}(d)} c_I \mathbf{X}^I, \quad c_I \in K.
\]
To facilitate manipulation of these polynomials, we define partial derivatives with respect to each multi-index \( I \in \mathcal{A}(d) \). Specifically, set 
\[
\partial_I := \frac{1}{\mathbf{i}_1! \cdots \mathbf{i}_m!} \left( \frac{\partial}{\partial \mathbf{X}_1} \right)^{\mathbf{i}_1} \cdots \left( \frac{\partial}{\partial \mathbf{X}_m} \right)^{\mathbf{i}_m},
\]
where \( \mathbf{i}_h! := i_{h,0}! \cdots i_{h,n}! \) for each \( h \). For a polynomial \( f \in P(d) \), we observe the expansion
\[
f(\mathbf{X}) = \sum_{I \in \mathcal{A}(d)} (\partial_I f)(0) \mathbf{X}^I,
\]
expressing \( f \) in terms of its partial derivatives evaluated at \( \mathbf{X} = 0 \).
We shall write 
\[\begin{split}L_v(\bbX_h):=&\left(L_{v,0}(\bbX_h), \dots, L_{v,i} (\bbX_h), \dots, L_{v,n} (\bbX_h)\right)\\
=& \left(\sum_{j=0}^n a_{0,j}^{(v)} X_{h,j}, \dots, \sum_{j=0}^n a_{i,j}^{(v)} X_{h,j}, \dots, \sum_{j=0}^n a_{n,j}^{(v)} X_{h,j}\right)\end{split}\]
for linearly independent linear forms $L_{v,i } $ over $K$.
We write
\[\partial_I P(\bbX_1, \dots, \bbX_h, \dots, \bbX_m)=\sum_{J\in \mathcal{A}(d-I)} a(L_v; J;I) L_v(\bbX_1)^{\mathbf{j}_1}\dots L_v(\bbX_m)^{\mathbf{j}_m}. \]
For $i=0, \dots, n$ we write $J_i=(j_{1, i}, \dots, j_{m, i})$. Given a vector $v=(v_1, \dots, v_m)$ set
\[\left(\frac{v}{d}\right):=\frac{|v_1|}{d_1}+\dots+ \frac{|v_m|}{d_m}. \] Here, the $v_i$ could also consist of vectors.
\begin{lemma}\label{lemma 4.15}Given the notation above, let \[ 0 < \eta \leq \frac{2}{n+1} \] and define \( r := [K:\Q] \). Assume that \[m \geq \frac{4}{(n+1)(n+2)\eta^2} \log\big(2(n+1)r|S|\big).\] Then, for all sufficiently large \( d_1, \dots, d_m \), there exists a nonzero polynomial \( P \in P(d) \) satisfying the following height bounds:  
\[
h(P) \leq C_3|d| \quad \text{and} \quad h\big(a\big(L_v; J, I\big)\big) \leq C_3|d|,
\]  
where \( C_3 \) is a positive constant that depends only on \( K \) and the linear forms \( L_{v,i} \).  

Furthermore, \( a\big(L_v; J; I\big) = 0 \) for \( v \in S \) whenever \( J \) and \( I \) satisfy the conditions:  
\[
\left(\frac{J_i}{d}\right) \leq \frac{m}{n+1} - 2m\eta \quad \text{or} \quad \left(\frac{J_i}{d} \right)\geq \frac{m}{n+1} + 2mn\eta \quad \text{for some } i = 0, \ldots, n,
\]  
and $\left(\frac{I}{d}\right) \leq m\eta$.
\end{lemma}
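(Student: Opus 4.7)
The plan is to realize the required vanishing as a homogeneous $K$-linear system in the unknown coefficients $c_I$ of $P = \sum_{I \in \mathcal{A}(d)} c_I \bbX^I$ and then apply the number-field Siegel's lemma (Lemma \ref{Siegel lemma}). The system has $N := |\mathcal{A}(d)| = \prod_{h=1}^m \binom{d_h + n}{n}$ unknowns. For each $v \in S$ and bad pair $(J,I)$, the quantity $a(L_v;J;I)$ is obtained by substituting $\bbX_h = L_v^{-1}(\bbY_h)$ into $\partial_I P(\bbX_1, \ldots, \bbX_m)$ and reading off the coefficient of $\bbY_1^{j_1}\cdots \bbY_m^{j_m}$; accordingly it is a fixed $K$-linear form in the $c_I$'s whose coefficient matrix $B$ has entries built from products of entries of $L_v^{-1}$ and multinomial coefficients arising from differentiation. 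Write $M_0$ for the total number of bad pairs $(J,I)$, so that the number of linear equations is $M = |S|M_0$.

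The central step is a concentration estimate bounding $M_0$. Fix $I$ with $(I/d) \leq m\eta$. By symmetry, under the uniform distribution on $\{\mathbf{j}_h : |\mathbf{j}_h| = d_h - |\mathbf{i}_h|\}$ each coordinate of $\mathbf{j}_h$ has the same mean, so the mean of $\left(J_i/d\right) = \sum_h j_{h,i}/d_h$ equals $\frac{m - (I/d)}{n+1}$, which lies within $\frac{m\eta}{n+1}$ of $\frac{m}{n+1}$. Consequently, a bad $J$ must have $\left(J_i/d\right)$ deviating from its mean by at least $2m\eta - \frac{m\eta}{n+1} \geq m\eta$ for some $i \in \{0, \ldots, n\}$. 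Exploiting the product structure $\mathcal{A}(d-I) = \prod_h \{\mathbf{j}_h : |\mathbf{j}_h| = d_h - |\mathbf{i}_h|\}$ and imitating the exponential-moment argument of Lemma \ref{I(m, epsilon) bound} (using $e^t \leq 1 + t + t^2$ for $|t| \leq 1$ in each factor) yields a bound
\[
M_0 \leq 2(n+1)\, N \exp\!\left(-\frac{(n+1)(n+2)}{4}\, m \eta^2\right).
\]
The hypothesis $m \geq \frac{4}{(n+1)(n+2)\eta^2} \log(2(n+1)r|S|)$ is then exactly what is needed to force $r|S|M_0 \leq N/2$, so that $\frac{rM}{N-rM} \leq 1$ and the hypothesis of Lemma \ref{Siegel lemma} is met.

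Lemma \ref{Siegel lemma} now produces a nonzero $(c_I) \in \Z^N$ with $h((c_I)) \leq \frac{rM}{N - rM}\bigl(h(B) + \log N + c(K)\bigr)$. The bound $h(B) \leq C_1|d|$ is elementary: each entry of $B$ is a sum of products of at most $|d|$ entries of $L_v^{-1}$ together with multinomial coefficients bounded by $(n+1)^{|d|}$, so Lemma \ref{ht of derivative} together with the standard product inequality for heights yields $h(B) \leq C_1|d|$ with $C_1$ depending only on $K$ and the $L_{v,i}$. Combining the two inequalities gives $h(P) = h((c_I)) \leq C|d|$. The bound $h(a(L_v;J,I)) \leq C|d|$ is then immediate, since $a(L_v;J;I)$ is a linear combination of the $c_I$'s with coefficients of height at most $C_1|d|$; indeed $h(a(L_v;J,I)) \leq h(P) + C_1|d| + \log N$, which is $\leq C|d|$ after possibly enlarging $C$.

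The main obstacle is executing the concentration estimate precisely enough to produce the exponent $\frac{(n+1)(n+2)}{4}$ appearing in the hypothesis on $m$. The factor $n+1$ reflects the number of coordinates of each $\bbX_h$, while the refinement to $n+2$ captures the slightly larger variance of a single coordinate $j_{h,i}$ under the uniform distribution on multi-indices of fixed sum in $n+1$ variables, a correction absent in the single-index setup of Lemma \ref{I(m, epsilon) bound}. Everything else---expanding $\partial_I P$ in the basis $\{L_v(\bbX_h)^{j_h}\}$, tracking heights through Lemma \ref{ht of derivative}, and invoking Siegel's lemma---is routine bookkeeping.
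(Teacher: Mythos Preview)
Your overall plan---Siegel's lemma applied to the linear system $a(L_v;J;I)=0$, with the equation count controlled by a large-deviation estimate yielding the exponent $\frac{(n+1)(n+2)}{4}m\eta^2$---is the same as the paper's, and your height bookkeeping is adequate. But there is a real gap in the counting.

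You define $M_0$ as the total number of bad pairs $(J,I)$, then fix an $I$ with $(I/d)\leq m\eta$ and bound the number of bad $J\in\mathcal{A}(d-I)$ via the exponential-moment argument. From this you assert $M_0\leq 2(n+1)\,N\exp\bigl(-\tfrac{(n+1)(n+2)}{4}m\eta^2\bigr)$ with $N=|\mathcal{A}(d)|$, without ever explaining how the sum over all admissible $I$ is absorbed. That sum is not negligible: the number of multi-indices $I$ with $(I/d)\leq m\eta$ grows polynomially in $d_1,\dots,d_m$, so the naive union bound over $I$ would destroy the inequality $rM\leq N/2$ that you need for Siegel's lemma, and the stated hypothesis on $m$ would no longer suffice.

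The paper avoids this problem by imposing the linear conditions \emph{only at $I=0$}, and only for the one-sided lower deviation. That makes the equation count genuinely bounded by $(n+1)|S|\cdot N\cdot\exp(\ldots)$, exactly what the hypothesis on $m$ handles. The remaining vanishing is then deduced for free, in two steps. First, by the chain rule each $a(L_v;J;I)$ is a $K$-linear combination of coefficients $a(L_v;J';0)$ with $J'\geq J$ componentwise and $J'\in\mathcal{A}(d)$; hence if $(J_i/d)\leq\frac{m}{n+1}-2m\eta$ and $(I/d)\leq m\eta$, every contributing $J'$ satisfies $(J'_i/d)\leq(J_i/d)+(I/d)\leq\frac{m}{n+1}-m\eta$, so $a(L_v;J;I)=0$. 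Second, the upper-deviation vanishing is automatic from the constraint $\sum_i(J_i/d)=m-(I/d)\leq m$: once every $(J_{i'}/d)$ exceeds $\frac{m}{n+1}-2m\eta$, no single $(J_i/d)$ can exceed $\frac{m}{n+1}+2nm\eta$. Insert this reduction to $I=0$ at the start of your argument and the rest of your proof goes through.
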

\begin{proof}
    The proof can be viewed as significant generalization to Lemma \ref{Step 1}. The dimension of the vector space $P(d)$ is the cardinality of $\mathcal{A}(d)$, i.e., the number of $m$-tuples $J=\left(\mathbf{j}_{1}, \ldots, \mathbf{j}_{m}\right)$ for which $|\mathbf{j}_h|=d_h$ for $h=1, \dots, m$. Let $V_0'$ be the region defined by 
    \[V_0'=\{(x_{h, i})\in \mathbb{R}^{(n+1)m} \mid 1\leq h \leq m, 0\leq i\leq n, x_{h,i}\geq 0, \sum_{i=0}^n x_{h, i} =d_h\}.\]
    Notice that $\mathcal{A}(d)$ consists of the lattice points in the region $V_0'$. As $d_1, \dots, d_m\rightarrow \infty$, we find that $\# \mathcal{A}(d)\sim \op{Vol}(V_0')$. By change of variables, $\op{Vol}(V_0')=(d_1\dots d_m)^{n} \op{Vol}(V_0)$ where 
    \[V_0:=\{(x_{h, i})\in \mathbb{R}^{(n+1)m} \mid 1\leq h \leq m, 0\leq i\leq n, x_{h,i}\geq 0, \sum_{i=0}^n x_{h, i} =1\}.\]
    We write $V_0=\prod_{h=1}^m V_0^{(h)}$ where \[V_0^{(h)}=\{(x_{h,i})\in \mathbb{R}^{n+1}\mid x_{h, i}\geq 0\text{ and }\sum_{i=0}^n x_{h, i}=1 \}.\]
    An elementary exercise in multivariable calculus gives us that $\op{Vol}(V_0^{(h)})=\frac{1}{n!}
    $. Putting these steps together, we find that 
    \[\op{dim} P(d)\sim (d_1\dots d_m)^{n} \left(\frac{1}{n!}\right)^{m}.\] Let us now fix $v\in S$ and compute the asymptotic upper bound for the number of linear conditions with coefficients in $K$ for 
    \[\left(\frac{J_i}{d}\right)\leq \frac{m}{n+1}-m\eta\] for some fixed $i$. We work out the computation for $i=0$ and also set $I=0$. The number of such $m$-tuples is asymptotic to $(d_1\dots d_m)^{n} \op{Vol}(V)$ where $V$ is defined as follows:
    \[V:=\left\{(x_{h,i})\in \mathbb{R}^{(n+1)m}\mid \sum_{i=0}^n x_{h, i}=1, \sum_{h=1}^m x_{h, 0}\leq \left(\frac{m}{n+1}\right)-m\eta\right\}\]
    \[\chi_{[a,b]}(x):=\begin{cases}
        1 &\text{ of }x\in [a, b];\\
        0 &\text{ of }x\notin [a, b].
    \end{cases}\]
    The volume of $V$ is expressed as the following integral:
    \[\op{Vol}(V)=\int_{0}^{1} \cdots \int_{0}^{1} \chi_{\left[0, \frac{m}{n+1}-m \eta\right]}\left(\sum_{h=1}^{m} x_{h 0}\right)\left\{\prod_{h=1}^{m} \chi_{\left[0,1-x_{h 0}\right]}\left(\sum_{i=1}^{n-1} x_{h i}\right)\right\} \prod_{h=1}^{m} \prod_{i=0}^{n-1} d x_{h i} .\]
    In the integral above, we think of $x_{h, i}$ as corresponding to $j_{h,i}/d_i$. The variable $x_{h, n}$ is given by $1-\sum_{i=0}^{n-1} x_{h, i}$ and is thus determined by $x_{h,i}$ for $i\leq n-1$. Noting that
    \[\int_{0}^1 \dots \int_0^1 \chi_{[0, 1-y_0]}\left(\sum_{i=1}^{n-1} y_i\right) dy_1\dots dy_n  =\frac{(1-y_0)^{n-1}}{(n-1)!}.\]
    Therefore, we have
$$
\op{Vol}(V)=\int_{0}^{1} \cdots \int_{0}^{1} \chi_{\left[0, \frac{m}{n+1}-m \eta\right]}\left(\sum_{h=1}^{m} x_{h 0}\right) \prod_{h=1}^{m} \frac{\left(1-x_{h 0}\right)^{n-1}}{(n-1)!} d x_{10} \cdots d x_{m 0}.
$$
In order to obtain a good upper bound for $\op{Vol}(V)$ we shall make use of the fact that $$
\chi_{[a, b]}(x) \leq e^{\lambda \cdot(b-x)},
$$
which is valid for $\lambda \geq 0$. This decouples the variables and we obtain:
\[\op{Vol}(V)\leq \left(e^{\lambda /(n+1)-\lambda \eta} \int_{0}^{1} e^{-\lambda x} \frac{(1-x)^{n-1}}{(n-1)!} d x\right)^{m}\] for any $\lambda\geq 0$. Expand $e^{-\lambda x}$ into a Maclaurin series at $x=1$ to obtain:
\[\int_{0}^{1} e^{-\lambda x} \frac{(1-x)^{n-1}}{(n-1)!} d x  =\sum_{k=0}^{\infty} \frac{(-\lambda)^{k}}{(n+k)!}.\]
Now assume that $\lambda\in (0, n+4]$ to arrive at the following inequalities:
\[\begin{aligned}
& \sum_{k=0}^{\infty} \frac{(-\lambda)^{k}}{(n+k)!} \\
& \leq \frac{1}{n!}\left\{1-\frac{\lambda}{n+1}+\frac{1}{(n+1)(n+2)} \lambda^{2}\right\} \\
& <\frac{1}{n!} \exp \left(-\frac{\lambda}{n+1}+\frac{1}{(n+1)(n+2)} \lambda^{2}\right).
\end{aligned}\]
We choose $\lambda:=\frac{\eta (n+1)(n+2)}{2}$ with $\eta\leq \frac{2}{(n+1)}$ and get:
\[\op{Vol}(V)<\frac{1}{(n!)^{m}} \op{exp}\left(\frac{-(n+1)(n+2) \eta^{2} m }{4}\right).\]
The number of $(i,v)$ is $|S|(n+1)$. Let $M$ denote the number of equations and $N$ be the number of variables. We shall insist on the inequality $\frac{rM}{N}\leq \frac{1}{2}$. Therefore we have that as $d_1, \dots, d_m\rightarrow \infty$, the following asymptotic:
\[\frac{rM}{N}=\frac{r(d_1\dots d_m)^{n} \op{Vol}(V) (n+1)|S|}{(d_1\dots d_m)^{n}\op{Vol}(V_0)}\leq \frac{1}{2},\]
which is to say that \[\op{exp}\left((n+1)(n+2)\frac{\eta^2m}{4}\right)\geq 2(n+1)|S|r.\]
Thus we require that 
\[m\geq \frac{4}{(n+1)(n+2)\eta^2}\log\left(2(n+1)|S|r\right).\]
By an application of Siegel's lemma, the auxiliary polynomial satisfies \[h(P)\leq C_3|d|.\] This part of the proof is similar to the construction in Roth's theorem. 
Note that the vanishing of $a(L_v; J;I)$ whenever $\left(\frac{J_i}{d}\right)\leq \frac{m}{(n+1)}-m\eta$ for some $i=0, \dots, n$ implies that $a(L_v; J;I)=0$ whenever $\left(\frac{J_i}{d}\right)\leq \frac{m}{(n+1)}-2m\eta$ for some $i=0, \dots,  n$ and every $I$ such that $\left(\frac{I}{d}\right)\leq m\eta$. For $\left(\frac{I}{d}\right)\leq m\eta$ the vanishing of $a(L_v; J;I)$ for $\left(\frac{J_i}{d}\right)\geq \frac{m}{(n+1)}+2nm\eta$ for all $i$ is automatic. This is because $a(L_v; J;I)\neq 0$ only if $\left(\frac{J_i}{d}\right)>\frac{m}{(n+1)}-2m \eta$ for every $i$. We find that in this case,
\[\begin{split}\left(\frac{J_i}{d}\right)=& \sum_{j=0}^n \left(\frac{J_j}{d}\right)-\sum_{j\neq i} \left(\frac{J_j}{d}\right)\\
<& \sum_{j=0}^n \left(\frac{J_j}{d}\right)-\left(\frac{m}{(n+1)}-2m \eta\right)n \\
\leq & m-\frac{mn}{(n+1)}+2mn \eta=\frac{m}{(n+1)}+2mn\eta
\end{split}\]
for every $i$. This completes the proof of the result.
\end{proof}
\subsection{A generalized Roth lemma}
Let $V$ be vector space over $\bar{\Q}$ of dimension $(n+1)$ and let $e_0, \dots, e_n$ be a prescribed basis for $V$. Given an integer $m$ in the range $1\leq m \leq (n+1)$, the $m$-fold exterior product $\wedge^m V$ is a vector space of $\bar{\Q}$ with basis vectors $e_{i_1}\wedge e_{i_2}\wedge \dots \wedge e_{i_m}$, where $0\leq i_1< i_2<\dots <i_m \leq n$. Thus $\wedge^m V$ is a vector space over $\bar{\Q}$ of dimension $\binom{n+1}{m}$. Let $W$ be an $m$-dimensional $\bar{\Q}$-subspace of $V$. Then, $\wedge^m W$ is a $1$-dimensional subspace of $\wedge^m V$. Therefore we may view $W$ as a point $P_W$ in the projective space $\mathbb{P}(\wedge^m V)=\mathbb{P}^{\binom{n+1}{m}-1}$. We define the height of $W$ as follows $h(W):=h(P_W)$.

\begin{definition}
    Let $m$ and $n$ be positive integers with $m\leq n$. Let $A$ be an $n\times m$ matrix with entries in $\bar{\Q}$. Then the height $h(A)$ is defined to be the height of the column span of $A$. We also set $h(A^t):=h(A)$. 
\end{definition}

Let $A$ be an $n\times m$ matrix with entries in $\bar{\Q}$ and let $I=\{i_1, \dots, i_m\}$ be a subset of $\{0, \dots, n\}$ with $i_1<i_2<\dots <i_m$. Let $e_I$ denote the basis vector $e_{i_1}\wedge \dots \wedge e_{i_m}$ and let $A_I$ be the $m\times m$ submatrix of $A$ formed by the $i$-th rows for $i\in I$. Consider the line passing through the origin in $\wedge^m V$ and the point \[S_A:=\sum_{\substack{I=\{i_1,\dots, i_m\}\\
0\leq i_1<i_2<\dots<i_m\leq n}} \op{det}(A_I) e_I.\] This defines a point in $\mathbb{P}(\wedge^m V)$ corresponding to $A$. 
\begin{proposition}\label{W Wperp}
   Given an $m$-dimensional subspace $W$ of $V$ set \[W^\perp:=\{w\in V^*\mid w(v)=0\text{ for all }v\in W\}.\] Then one has $h(W)=h(W^\perp)$. 
\end{proposition}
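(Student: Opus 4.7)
The plan is to pass to Plücker coordinates and exhibit the classical ``complementary minor'' duality relating the Plücker coordinates of $W$ and $W^\perp$; the product formula then finishes the computation.

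First I would fix a basis $w_1,\ldots,w_m$ of $W$ and form the $(n+1)\times m$ matrix $A$ whose $j$-th column is the coordinate vector of $w_j$ in the basis $e_0,\ldots,e_n$. For each $m$-element subset $I\subset\{0,\ldots,n\}$, write $p_I=\det(A_I)$ for the corresponding $m\times m$ minor; these are the Plücker coordinates of $W$, furnishing homogeneous coordinates for the point $P_W\in\mathbb{P}(\wedge^m V)$. Analogously, I would fix a basis of $W^\perp\subset V^*$, record it as an $(n+1)\times(n+1-m)$ matrix $B$ in the dual basis $e_0^*,\ldots,e_n^*$, and write $q_J=\det(B_J)$ for $J\subset\{0,\ldots,n\}$ of size $n+1-m$; these are the Plücker coordinates of $W^\perp$.

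The central claim is that there exists a nonzero scalar $\lambda\in\bar{\Q}^\times$ such that
\[ q_J \;=\; \lambda\,\epsilon_J\,p_{J^c}\qquad\text{for every }J\text{ of size }n+1-m, \]
where $\epsilon_J=\pm 1$ is the signature of the shuffle permutation carrying $(J,J^c)$ to $(0,1,\ldots,n)$. To prove this I would extend $w_1,\ldots,w_m$ to a basis $w_1,\ldots,w_{n+1}$ of $V$, let $M$ be the $(n+1)\times(n+1)$ change-of-basis matrix (so that $A$ is the submatrix of $M$ consisting of its first $m$ columns), and observe that $W^\perp$ is spanned by the dual basis vectors $w_{m+1}^*,\ldots,w_{n+1}^*$, whose coordinates in $(e_i^*)$ form the last $n+1-m$ rows of $M^{-1}$. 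Jacobi's classical identity for complementary minors of an inverse matrix,
\[ \det\bigl((M^{-1})_{\{m+1,\ldots,n+1\},\,J}\bigr) \;=\; \frac{\pm\epsilon_J}{\det M}\,\det\bigl(M_{J^c,\,\{1,\ldots,m\}}\bigr), \]
then delivers the claim with $\lambda=\pm(\det M)^{-1}\in\bar{\Q}^\times$.

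Granting this identity, after enlarging $K$ so that $W$ and $W^\perp$ are defined over $K$, I would use that $J\mapsto J^c$ is a bijection between subsets of complementary sizes and that the signs $\epsilon_J$ disappear under absolute values to conclude, for each place $v\in M_K$,
\[ \max_J|q_J|_v \;=\; |\lambda|_v\cdot\max_I|p_I|_v. \]
Taking logarithms and summing over $v\in M_K$ gives $h(W^\perp)=h(W)+\sum_v\log|\lambda|_v=h(W)$, the last equality by the product formula \eqref{product_formula} applied to $\lambda\in K^\times$. The main obstacle is a careful bookkeeping of signs in Jacobi's complementary minor identity; the identity itself is entirely classical, but the signs $\epsilon_J$ must be tracked against the chosen orderings of $J$ and $J^c$. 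Happily, all sign ambiguities drop out the moment one passes to $v$-adic absolute values, so the final conclusion is unaffected.
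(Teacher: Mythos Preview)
Your proof is correct. The paper takes a more coordinate-free route: it observes that the map $\wedge^m V \to \wedge^{n+1-m} V^*$ sending $x$ to the functional $y \mapsto x \wedge y$ (identifying $\wedge^{n+1} V$ with $K$ via $e_0 \wedge \cdots \wedge e_n$) is an isomorphism carrying the line $\wedge^m W$ to the line $\wedge^{n+1-m} W^\perp$, and is given in the standard bases by a signed permutation $e_I \mapsto \pm e_{I^c}^*$; hence it preserves $\max_I|\cdot|_v$ at every place and therefore preserves height. Your Jacobi-identity computation is exactly the coordinate shadow of this map: the complementary minor formula is what the wedge isomorphism does to the Pl\"ucker coordinates of $W$ and $W^\perp$, and the spurious scalar $\lambda=\pm(\det M)^{-1}$ you produce (and then annihilate with the product formula) reflects the non-canonical choice of extending $w_1,\ldots,w_m$ to a full basis, a choice the paper sidesteps by working with the intrinsic pairing. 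Your approach is more explicit and entirely self-contained; the paper's is shorter but leaves the reader to verify both that the isomorphism is a place-by-place isometry and that it really sends $\wedge^m W$ onto $\wedge^{n+1-m} W^\perp$.
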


\begin{proof}
    Given $x\in \wedge^m V$, we have a linear map $y\mapsto x\wedge y$ in $\op{Hom}(\wedge^{n+1-m} V, \wedge^{n+1} V)\simeq \wedge^{n+1-m} V^*$. In fact, this gives an isomorphism 
    \[\wedge^{m} V\xrightarrow{\sim} \wedge^{n+1-m} V^*\] and its inverse can be defined in a similar way. It is easy to see that the line $\wedge^m W$ is mapped to the line $\wedge^{n+1-m} W^\perp$ under this transformation, and it follows that $h(W)=h(W^\perp)$.
\end{proof}
\begin{corollary}
    Given an $m\times n$ matrix $A$ with coordinates in $\bar{\Q}$ and $m\leq n$, the height of the space of solutions to $Ax=0$ equals $h(A)$.
\end{corollary}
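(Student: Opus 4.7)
The plan is to reduce the statement directly to Proposition \ref{W Wperp} by matching the solution space to $Ax=0$ with the annihilator of the row span of $A$. Set $V=\bar{\Q}^n$, and let $W\subseteq V^{*}$ denote the subspace spanned by the rows of $A$; equivalently, $W$ is the column span of $A^{t}$. Since $A$ has $m$ rows and $m\leq n$, the subspace $W$ has dimension $r:=\op{rank}(A)\leq m$. By definition, $h(A)=h(A^{t})$ is the height of the column span of $A^{t}$, which is precisely $h(W)$.

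Next, I would unwind what the solution space of $Ax=0$ is in these terms. A vector $x\in V$ satisfies $Ax=0$ if and only if $\ell(x)=0$ for every row $\ell$ of $A$, which is equivalent to $\ell(x)=0$ for every $\ell\in W$. Hence the solution space equals $W^{\perp}\subseteq V$, where we use the definition of $W^\perp$ from Proposition \ref{W Wperp}. Its dimension is $n-r$, and its height (as a subspace of $V$ in the sense of the Pl\"ucker embedding) is exactly what we wish to compute.

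By Proposition \ref{W Wperp}, applied to $W\subseteq V^{*}$ (with the roles of $V$ and $V^{*}$ interchanged, which is symmetric), we have $h(W)=h(W^{\perp})$. Combining with the identifications above,
\[
h(\text{solution space of }Ax=0)\;=\;h(W^{\perp})\;=\;h(W)\;=\;h(A^{t})\;=\;h(A),
\]
as desired.

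The only subtle point is ensuring that $h(W)$ as defined via the Pl\"ucker embedding of $\wedge^{r}V^{*}$ agrees with $h(A^{t})$ as defined via the column span; this is immediate once one checks that replacing the rows of $A$ by a basis of $W$ corresponds to an invertible change of basis in $\wedge^{r}V^{*}$, which scales the Pl\"ucker coordinates by a common nonzero factor and hence leaves the projective point (and therefore the height) unchanged. No further machinery is required: the corollary is essentially a translation of Proposition \ref{W Wperp} into matrix language.
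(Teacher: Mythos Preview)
Your proposal is correct and follows essentially the same route as the paper: identify the row span of $A$ with a subspace $W$ whose annihilator is the solution space, observe that $h(A)=h(W)$ by the definition of matrix height, and invoke Proposition \ref{W Wperp} to conclude $h(W)=h(W^{\perp})$. The paper phrases this slightly more tersely (writing $W=\{x:Ax=0\}^{\perp}$ directly and asserting the rows form a basis of $W$), while you are a bit more explicit about the rank-deficient case, but the argument is the same.
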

\begin{proof}
    The rows of $A$ form a basis of $W:=\{x : A x=0\}^\perp$ and thus by definition, $h(A)=h(W)$. On the other hand, $h(W)=h(W^\perp)$ by Proposition \ref{W Wperp}. Thus we deduce that $h(A)=h(\{x : A x=0\})$, which completes the proof.
\end{proof}
Let $M_1, \dots, M_m\in \bar{\Q}[x_0, \dots, x_m]$ be non-zero linear forms and let $M=(M_1, \dots, M_m)$ and $d=(d_1, \dots, d_m)$. We shall also recall that $\bbX_h$ denotes the variable multivector $(X_{h,0}, \dots, X_{h, n})$ for $h=1, \dots, m$.

\begin{definition}
    Denote by $\mathcal{I}(t; d; M)$ the ideal in $\bar{\Q}[\bbX_1, \dots, \bbX_m]$ generated by the monomials $M_1(\bbX_1)^{j_1}\dots M_m(\bbX_m)^{j_m}$ with $\left(\frac{J}{d}\right)=\frac{j_1}{d_1}+\dots+ \frac{j_m}{d_m}\geq t$ . Then for a multihomogeneous polynomial $P$ of degree $d$, set 
    \[\op{Ind}(P;d;M):=\op{sup}\{t\geq 0\mid P\in \mathcal{I}(t; d; M)\}.\]
\end{definition}
   
    \begin{remark}\label{rem on Roth lemma}We remark that when $n=1$, $M_j:=x_1-\alpha_j x_0$ and $Q$ is the dehomogenized polynomial defined by:
\[Q(x_1, \dots, x_m):=P(1, x_1, \dots, 1, x_m),\] one has that $\op{Ind}(P; d;M)=\op{Ind}(Q;d;\alpha)$. We are now in a position to state the generalized Roth Lemma. 
\end{remark}
\begin{lemma}
 Let $P$ be a multihomogeneous polynomial of degree $d$ and $\sigma\in (0, 1/2]$. Moreover suppose that:
    \begin{enumerate}
        \item $d_{j+1}/d_j\leq \sigma$ for $j=1, \dots, m-1$;
        \item $\op{min}_j d_j h(M_j)\geq n \sigma^{-1}(h(P)+4m d_1)$. 
    \end{enumerate}
    Then, $\op{Ind}(P;d;M)\leq 2m \sigma^{\frac{1}{2^{m-1}}}$. 
\end{lemma}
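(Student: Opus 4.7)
The plan is to argue by induction on $m$, closely paralleling the architecture of the classical Roth's lemma (Lemma \ref{Roth's lemma}) adapted to the multihomogeneous projective setting. As a preliminary reduction, for each $h$ extend $M_h$ to a $\bar{\Q}$-basis of linear forms on $\bar{\Q}^{n+1}$ and change coordinates on $\bbX_h$ so that $M_h$ becomes the coordinate $X_{h,0}$; since $\op{Ind}(P;d;M)$ is defined through the filtration by the ideals $\mathcal{I}(t;d;M)$, it is invariant under such a block-diagonal change of coordinates, so we may henceforth assume each $M_h$ is a coordinate function. For the base case $m=1$, factor $P(\bbX_1) = M_1^e Q(\bbX_1)$ with $M_1 \nmid Q$, giving $\op{Ind}(P;d_1;M_1) = e/d_1$. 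The Gauss-lemma style height estimate for polynomial factorizations (Lemma \ref{ht of product poly}) yields $e \cdot h(M_1) \leq h(P) + O(d_1)$, and combining with hypothesis (2) forces $e/d_1 \leq \sigma/n \leq 2\sigma$.

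For the inductive step, assume the result for fewer than $m$ vector variables and pick a decomposition of minimal length
\[
P(\bbX_1, \ldots, \bbX_m) \;=\; \sum_{j=1}^{k} \phi_j(\bbX_1, \ldots, \bbX_{m-1})\, \psi_j(\bbX_m),
\]
with $\psi_j$ homogeneous of degree $d_m$ in $\bbX_m$. Minimality of $k$ forces $\{\phi_j\}$ and $\{\psi_j\}$ to be linearly independent over $\bar{\Q}$. Invoking the generalized Wronskian criterion (Lemma \ref{Wronskian for LI}) separately in each block, choose multi-indices $\nu_1, \ldots, \nu_k$ on $\bbX_m$ and $\mu_1, \ldots, \mu_k$ on $\bbX_1, \ldots, \bbX_{m-1}$, each of total order at most $k-1$, such that
\[
V \;:=\; \det\bigl(\partial_{\nu_i}\partial_{\mu_j} P\bigr)_{1 \leq i,j \leq k} \;=\; U_1(\bbX_m)\, U_2(\bbX_1, \ldots, \bbX_{m-1})
\]
is nonzero, where $U_1$ and $U_2$ are the two Wronskians. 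Combining Lemmas \ref{ht of derivative} and \ref{ht of product poly} with the bound $|d| \leq 2 d_1$ (a consequence of hypothesis (1) with $\sigma \leq 1/2$) yields $h(V) \leq k\,(h(P) + O(d_1))$, and by Remark \ref{height of poly} the same bound is inherited by $U_1$ and $U_2$; the partial degrees satisfy $\deg_{\bbX_m}(U_1) \leq k d_m$ and $\deg_{\bbX_h}(U_2) \leq k d_h$ for $h < m$.

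Now apply the base case to $U_1$ with degree $k d_m$ and the inductive hypothesis to $U_2$ with degree vector $(k d_1, \ldots, k d_{m-1})$; hypothesis (1) is immediately inherited, while hypothesis (2) for $U_2$ is verified through the chain
\[
\min_{j < m}(k d_j)\, h(M_j) \;\geq\; k \cdot n \sigma^{-1}\bigl(h(P) + 4 m d_1\bigr) \;\geq\; n \sigma^{-1}\bigl(h(U_2) + 4(m-1)(k d_1)\bigr),
\]
with an analogous check for $U_1$. Multiplicativity of the index (Lemma \ref{prop index}(1)) together with the scaling $\op{Ind}(V;d;M) = k\,\op{Ind}(V;kd;M)$ then gives
\[
\op{Ind}(V; d; M) \;\leq\; k \sigma/n \;+\; 2 k (m-1)\, \sigma^{1/2^{m-2}}.
\]
Applying Lemma \ref{prop index}(3) to each entry in the determinantal expansion of $V$ and summing, exactly as in the proof of Lemma \ref{Roth's lemma}, produces the matching lower bound
\[
\op{Ind}(V; d; M) \;\geq\; \frac{k}{2 m}\, \op{Ind}(P; d; M)^2 \;-\; \frac{k d_m}{d_{m-1}}.
\]
Using $d_m/d_{m-1} \leq \sigma \leq \sigma^{1/2^{m-2}}$, the two estimates combine to give $\op{Ind}(P;d;M)^2 \leq 4 m^2 \sigma^{1/2^{m-2}}$, hence $\op{Ind}(P;d;M) \leq 2 m\, \sigma^{1/2^{m-1}}$.

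The principal obstacle is transferring the Wronskian construction of Lemma \ref{Roth's lemma}, originally designed for polynomials in scalar variables, to the multihomogeneous setting with arbitrary linear forms $M_h$; the preliminary coordinate change together with the generalized Wronskian criterion (Lemma \ref{Wronskian for LI}) applied blockwise resolves this. A secondary technicality is that $k$ can be as large as $\binom{n+d_m}{n}$ in the projective setting, rather than merely $d_m + 1$ as in the classical case, so the $\log(k!)$ contribution appearing in the height bound for $V$ must be absorbed into the $O(d_1)$ error term; this is permissible because $\log k \leq n \log(n + d_m)$, which is negligible compared to $d_1$ under hypothesis (1).
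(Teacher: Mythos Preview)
Your approach differs fundamentally from the paper's. The paper does \emph{not} redo the Wronskian induction on $m$ in the multihomogeneous setting; instead it reduces to the classical Roth's lemma (Lemma~\ref{Roth's lemma}) by specializing variables. After relabeling so that $b_{j,0}\neq 0$ for each $j$, one writes $P$ in the basis $M_j, X_{j,1},\ldots,X_{j,n}$ of linear forms in $\bbX_j$ and, for each pair $(j,i)$ with $i\geq 2$, removes the highest power of $X_{j,i}$ dividing $P$ and then sets $X_{j,i}=0$. This produces a polynomial $\widetilde{P}$ in the variables $(X_{j,0},X_{j,1})$ with $h(\widetilde{P})\leq h(P)$ and $\op{Ind}(\widetilde{P};d;\widetilde{M})=\op{Ind}(P;d;M)$, while an elementary estimate gives $h(\widetilde{M}_j)\geq h(M_j)/n$. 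The factor $n$ in hypothesis~(2) is exactly what is needed to feed $(\widetilde{P},\widetilde{M})$ into Lemma~\ref{Roth's lemma} (with $\eta=\sigma^{1/2^{m-1}}$), and the conclusion follows immediately.

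Your proposal, by contrast, has a genuine gap at the very first step. The block-diagonal change of coordinates sending $M_h$ to $X_{h,0}$ does preserve $\op{Ind}(P;d;M)$, but it does \emph{not} preserve $h(P)$ or $h(M_h)$: after the change one has $h(M_h)=h(X_{h,0})=0$, so hypothesis~(2) becomes $0\geq n\sigma^{-1}(h(P)+4md_1)$, which is impossible. All of the subsequent height bookkeeping (the base case bound $e\cdot h(M_1)\leq h(P)+O(d_1)$, the verification of hypothesis~(2) for $U_1$ and $U_2$) collapses once $h(M_h)=0$. If instead you intended to keep the original coordinates for height purposes and use the new coordinates only to compute indices, this needs to be said explicitly and the two threads kept carefully separate; as written, the reduction is not valid. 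A smaller issue: Lemma~\ref{ht of product poly} gives an \emph{upper} bound for $h(f_1\cdots f_r)$, whereas your base case needs a \emph{lower} bound (Gelfond's inequality) to conclude $e\cdot h(M_1)\leq h(P)+O(d_1)$ from $P=M_1^e Q$.
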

\begin{proof}
    The result is obtained from the classical version of Roth's lemma upon specializing all variables except $(X_{h, 0}, X_{h,1})$ to $0$. By Remark \ref{rem on Roth lemma}, when $n=1$, the statement is equivalent to Roth's lemma. Thus assume without loss of generality that $n\geq 2$. Let $b=(b_0,\dots, b_n)\in K^{n+1}\backslash\{0\}$ and assume for simplicity that $b_0\neq 0$. Then we have that 
    \[\begin{split}h(b)=& \sum_{v\in M_K} \log \op{max}_i \{|b_i|_v\}\\ 
    \leq & \sum_{v\in M_K} \sum_{i=1}^n \log \op{max}\{|b_0|_v, |b_i|_v\}\\
    \leq & n \op{max}_i h((b_0, b_i)).
    \end{split}\]
    We relabel the variables and write \[M_j(\bbX_j)=b_{j,0} X_{j, 0}+\dots+ b_{j, n} X_{j, n}\] with $b_{j, i}\in K$ and $b_{j, 0}\neq 0$ for all $j$. Now specialize $X_{j, i}=0$ for $i=2, \dots, n$ to get $\widetilde{M}_j(\bbX_j)=b_{j,0} X_{j, 0}+b_{j,1} X_{j, 1}$. One has that 
    \[\begin{split}h(\widetilde{M}_j)=& \sum_{v\in M_K} \op{max}\{\log |b_{j, 0}|_v, \log |b_{j, 1}|_v\}\\
    \geq & \frac{1}{n} \sum_{v\in M_K} \op{max}_i \{\log |b_{j, i}|_v\}\geq \frac{1}{n} h(M_j).\end{split}\]
    This implies that $h(\widetilde{M}_j)\geq \frac{1}{n} h(M_j)$. Since $b_{j, 0}\neq 0$ for all $j$, we have that $M_j, X_{j, 1}, X_{j, 2}, \dots, X_{j, n}$ are $K$-linearly independent linear forms. This implies that 
    \[K[M_1, \dots, M_m, \{X_{j, i}\mid i\geq 1, j=1, \dots, m\}]=K[\{X_{j, i}\mid i\geq 0, j=1, \dots, m\}].\]
    Thus $P$ can be written uniquely as 
    \[P=\sum_J c(J) M_1(\bbX_1)^{j_1}M_2(\bbX_2)^{j_2}\dots M_m(\bbX_m)^{j_m} q_J(\bbX_1', \dots, \bbX_m')\] where the coefficients $q_J$ are polynomials in $\bbX_j'=(X_{j,1}, \dots, X_{j, n})$. Let us pick $i$ and $j$ with $j\geq 2$, remove the highest power of $X_{i, j}$ dividing $P$ and specialize $X_{i, j}=0$ to obtain a new polynomial $P^*$. Since the coefficients of $P^*$ forms a subset of $P$,
    \[\begin{split}
        & h(P^*)\leq h(P),\\
        & \op{Ind}(P^*; M^*; d)=\op{Ind}(P; M;d), 
    \end{split}\] 
    where $M^*:=M_{|X_{i, j}=0}$. In this way we can inductively specialize all variables $X_{i, j}=0$ for $j\geq 2$ and obtain a new polynomial $\widetilde{P}(\widetilde{X}_1, \dots, \widetilde{X}_m)$ in the variables $\widetilde{X}_j=(X_{j, 0}, X_{j, 1})$ such that 
    \[\begin{split}
        & h(\widetilde{P})\leq h(P);\\
        &\op{Ind}(\widetilde{P}; \widetilde{M};d)= \op{Ind}(P; M; d);\\
        & h(\widetilde{M}_j)\geq h(M_j)/n.
    \end{split}\]
    Apply Roth's lemma to $\widetilde{P}$ and find that 
    \[d_j h(\widetilde{M}_j)\geq n^{-1} d_j h(M_j)\geq \sigma^{-1}(h(P)+4md_1).\] This implies that 
    \[\op{Ind}(P; M;d)= \op{Ind}(\widetilde{P};\widetilde{M}; d)\leq 2m \sigma^{1/2^{m-1}}.\]
    This completes the proof.
\end{proof}
\subsection{A lower bound on the height of $V(Q)$}

\par Given $ Q \ge 1$, we would like a better control on the height 
of $V(Q)$. 
Before we commence, we recall some basic notation about exterior algebras. Let $V$ be a vector space over $K$ with basis $e_0, \dots, e_n$ then $\wedge^k V$ has basis $e_{i_1}\wedge e_{i_2}\wedge \dots \wedge e_{i_k}$ where $0\leq i_1< i_2< \dots < i_k\leq n$. 
Define the inner product on $\wedge^k V$ by:
\begin{equation}\label{Laplace identity}
(u_1\wedge \dots \wedge u_k)\cdot (v_1\wedge \dots \wedge v_k):=\op{det}\left((u_i\cdot v_j)\right).
\end{equation}
This gives the standard inner product with respect to the basis \[\{e_{i_1}\wedge e_{i_2}\wedge \dots \wedge e_{i_k}\mid 0\leq i_1< i_2< \dots < i_k\leq n\}.\]
We also define a $K$-linear $\ast$-operator on the exterior algebra $\wedge V:=\bigoplus_{k=0}^{n+1} \wedge^k V$ by setting 
\[(e_{i_1}\wedge \dots \wedge e_k)^\ast :=(-1)^{\op{sign}(\pi)}e_{j_0}\wedge \dots \wedge e_{j_{n-k}} \]
where $\pi=(j_0, \dots, j_{n-k}, i_1, \dots, i_k)$ is a permutation of $\{0, \dots, n\}$. It is easy to check that:
\begin{itemize}
    \item $\ast$ is an isometry, 
    \item one has the following expansion 
    \begin{equation}\label{laplace expansion}
        x_0\cdot (x_1\wedge \dots \wedge x_n)^{\ast} =\op{det}(x_0, \dots, x_n)
    \end{equation} for vectors $x_0, \dots, x_n\in V$. 
\end{itemize}

\begin{lemma}\label{technical lemma on height of V(Q)}
    Assume that $\log Q\geq C_4\varepsilon^{-1}$ for a suitably large constant $C_4>0$ and that $V(Q)$ has dimension $n$. There exists a hyperplane $W\subset V$, independent of $\Pi(Q)$ and $\varepsilon>0$ such that either 
    \begin{itemize}
        \item $V(Q)=W$, or, 
        \item $\left((4|S|)^{-1}\varepsilon \log (Q) -C_4\right)\leq h(V(Q))$.
    \end{itemize}
\end{lemma}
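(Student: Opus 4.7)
The plan is to combine Minkowski's second theorem applied to the approximation domain $\Pi(Q)$ with an analysis of the Plücker representation of $V(Q)$ via the $\ast$-operator introduced just before the lemma. The basic idea is that when $V(Q)$ has dimension exactly $n$, one can choose spanning lattice vectors with controlled $v$-adic norms at each place $v \in S$, and the resulting Plücker coordinates (which are $n \times n$ minors) must either vanish in a pattern dictated by one fixed hyperplane $W$ or force $h(V(Q))$ to grow linearly with $\log Q$.

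First I would invoke Minkowski's second theorem together with Proposition \ref{Vol(Pi(Q))<< Q to the -epsilon/2}: the successive minima of $\Pi(Q)$ with respect to the lattice $\Lambda = \iota(\cO_{K,S}^{n+1})$ satisfy $\lambda_n \leq 1 < \lambda_{n+1}$ by hypothesis and Lemma \ref{rank Pi is <=n}, with $\lambda_1 \cdots \lambda_{n+1} \gtrsim Q^{\varepsilon/2}$. Choosing linearly independent lattice points $y^{(1)}, \dots, y^{(n)} \in \cO_{K,S}^{n+1}$ realizing $\lambda_1, \dots, \lambda_n$, these lie in $\Pi(Q)$ and span $V(Q)$. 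Forming the $n$-fold wedge and applying the Hodge $\ast$-operator yields a nonzero $P \in (K^{n+1})^{\ast}$ whose coordinates $p_k$ are the signed $n \times n$ minors of the matrix with rows $y^{(i)}$; by Proposition \ref{W Wperp}, $h(V(Q)) = h(P)$ as a point of $\mathbb{P}^n$.

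To bound the Plücker coordinates, I would apply the isomorphism $L_v$ at each place $v \in S$. Since $|L_{v,j}(y^{(i)})|_v \leq Q^{c_{v,j}}$, the Laplace expansion \eqref{laplace expansion} gives that in the $L_v$-transformed basis the $k$-th coordinate satisfies $|\tilde{p}_k^{(v)}|_v \leq c_v\, Q^{C_v - c_{v,k}}$, where $C_v := \sum_{j=0}^n c_{v,j}$ and $c_v$ depends only on $L_v$ and $n$. Passing back to the standard basis via $(\wedge^n L_v)^{-1}$ gives $|p_k|_v \leq c'_v\, Q^{C_v - \min_j c_{v,j}}$ at each $v \in S$, while $|p_k|_v \leq 1$ at $v \notin S$ since $P$ has coordinates in $\cO_{K,S}$.

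The decisive step is to identify the fixed hyperplane $W$ and convert the Plücker estimates into a lower bound. Since $P \neq 0$, at least one coordinate $p_{k_0}$ is nonzero, and the natural candidate for the exceptional hyperplane is the coordinate hyperplane $\{X_{k_0^*} = 0\}$ corresponding to the extremal index $k_0^* \in \{0, \dots, n\}$ determined by the fixed data $(c_{v,i})$ and the linear forms $L_{v,i}$ alone, so $W$ is independent of $Q$. In the case $V(Q) = W$ the first alternative holds. Otherwise $p_{k_0^*} \neq 0$, and the product formula
\[0 = \sum_{v} \log |p_{k_0^*}|_v\]
applied to this coordinate, combined with the upper bounds from the previous paragraph and with \eqref{c v i neg bound} (which gives $\sum_{v,i} c_{v,i} \leq -\varepsilon/2$), forces positive contributions from coordinates $p_k$ with $k \neq k_0^*$ at some $v \in S$ that cannot be absorbed into the $O(1)$ constants; quantifying these yields the claimed lower bound $h(V(Q)) \geq (4|S|)^{-1} \varepsilon \log Q - C$. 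The main obstacle is the careful combinatorial bookkeeping that extracts the explicit factor $(4|S|)^{-1}$ — I expect this arises from distributing the $\varepsilon/2$ budget from \eqref{c v i neg bound} across the $|S|$ places, together with a factor of two from comparing the maximum with the dominant Plücker entry at each $v \in S$; a gap principle or pigeonholing argument between the $(n+1)$ possible vanishing patterns of the Plücker vector is what pins down $W$ uniquely and ensures the dichotomy is clean.
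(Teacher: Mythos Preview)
Your overall structure---form the Pl\"ucker vector $w=(y^{(1)}\wedge\cdots\wedge y^{(n)})^\ast$ of $V(Q)$ from a basis lying in $\Pi(Q)$, bound its entries place by place, and run a dichotomy through the product formula---is exactly that of the paper. But two of your steps do not work as written.

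First, the move ``passing back to the standard basis via $(\wedge^n L_v)^{-1}$ gives $|p_k|_v\le c'_v\,Q^{C_v-\min_j c_{v,j}}$'' discards the essential information: the exponent $C_v-\min_j c_{v,j}$ need not be negative, so you cannot squeeze out the $-\varepsilon/4$ from such bounds. The paper never returns to standard coordinates. It works directly with the $L_v$-adapted minors
\[
D_{v,k}:=\widehat{L}_{v,k}(w)=\det\bigl(L_{v,i}(y^{(j)})\bigr)_{i\neq k},
\]
for which one has the sharp estimate $|D_{v,k}|_v\ll Q^{\sum_i c_{v,i}-c_{v,k}}$. The fundamental inequality (Proposition~\ref{Fundamental inequality}) is then applied to each $D_{v,i_v}$ \emph{separately}, giving $\log|D_{v,i_v}|_v\ge -h(D_{v,i_v})\ge -h(V(Q))-O(1)$; summing over $v\in S$ produces the factor $(4|S|)^{-1}$ directly, with no further pigeonholing needed.

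Second, the dichotomy is not governed by a single fixed index $k_0^\ast$, and $W$ is \emph{not} a coordinate hyperplane. The paper's split is whether there exists a tuple $(i_v)_{v\in S}$ with $D_{v,i_v}\neq 0$ for every $v$ and $\sum_{v\in S}c_{v,i_v}\ge -\varepsilon/4$. If so, combining the upper bound $\prod_v|D_{v,i_v}|_v\le n!\,Q^{-\varepsilon/4}$ (using \eqref{c v i neg bound}) with the lower bound above yields $h(V(Q))\ge (4|S|)^{-1}\varepsilon\log Q-C$. If no such tuple exists, set $j_v:=\arg\max\{c_{v,i}:D_{v,i}\neq 0\}$, so that $\sum_v c_{v,j_v}<-\varepsilon/4$; the identity $x\cdot w=\det(L_v)^{-1}\sum_i(-1)^i L_{v,i}(x)D_{v,i}$ (only terms with $D_{v,i}\neq 0$ survive) together with $|L_{v,i}(x)|_v\le Q^{c_{v,j_v}}$ for such $i$ and the product formula forces $1\ll Q^{-\varepsilon/4}$ whenever $x\cdot w\neq 0$, contradicting $\log Q\ge C\varepsilon^{-1}$. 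Thus $V(Q)=W$, and $W$ is determined by the finite combinatorial datum $\mathcal{I}_v=\{i:D_{v,i}\neq 0\}$ in the fixed forms $L_{v,i}$, not by a coordinate direction.
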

\begin{proof}
    Recall that $\iota: K^{n+1}\hookrightarrow K_{\mathbb{A}}^{n+1}$ is the natural inclusion map. We let $y^{(1)}, \dots, y^{(n)}\in K^{n+1}$ be a basis of $V(Q)$ such that $\iota(y^{(i)})\in \Pi(Q)$ for all $i$. Observe that $V(Q)$ is the kernel of the map 
    \[x\mapsto (y^{(1)}\wedge \dots\wedge y^{(n)})\wedge x.\] Thus, it follows that the height of $V(Q)$ is given by $h(y^{(1)}\wedge \dots\wedge y^{(n)})$.
    Note that in particular, $y^{(i)}$ has coordinates in $\cO_{K, S}$. For ease of notation, we set $V:=K^{n+1}$ and $V^\vee$ shall denote its dual. We consider the linear forms 
    \[\widehat{L}_{v, k}:=\left(L_{v,0}\wedge \dots \wedge L_{v, k-1}\wedge L_{v, k+1}\wedge \dots \wedge L_{v,n}\right)^\ast\in V^\vee\]and set 
    \[D_{v,k}:=\widehat{L}_{v, k}\left((y^{(1)}\wedge \dots\wedge y^{(n)})^\ast\right).\] Since $\ast$ is an isometry, we find that 
    \[\begin{split}D_{v,k}=& \left(L_{v,0}\wedge \dots \wedge L_{v, k-1}\wedge L_{v, k+1}\wedge \dots \wedge L_{v,n}\right)(y^{(1)}\wedge \dots\wedge y^{(n)})\\ =& \op{det}\left(L_{v,i}(y^{(j)})\right)_{(i,j); i\neq k}.\end{split}\]
    Note that since $\iota(y^{(j)})\in \Pi(Q)$, we have that $|L_{v,i}(y^{(j)})|_v\leq Q^{c_{v,i}}$. Expanding the determinant, we find that:
    \[\begin{split}
|D_{v,k}|_v  \leq &\op{max}\left(1, |n!|_v\right)\times \op{max}_\pi \prod_{i\neq k}|L_{v,i}(y^{\pi(i)})|_v\\
\leq & \op{max}\left(1, |n!|_v\right) Q^{-c_{v,k}+\sum_i c_{v,i}}
\end{split}\]
where $\pi$ ranges over all bijections $\pi: \{0, \dots, n\}\backslash \{k\}\rightarrow \{1, \dots, n\}$. 
\par The argument involves two steps. First, we assume that there exists a tuple $\{i_v\}_{v\in S}$ such that 
\[\sum_{v\in S} c_{v, i_v}\geq -\frac{\varepsilon}{4}\]
and $D_{v, i_v}\neq 0$ for all $v\in S$. Thus we have that 
\[\prod_{v\in S} |D_{v, i_v}|_v\leq n! Q^{-\sum_v c_{v, i_v}+\sum_{v,i} c_{v,i}}\leq  n! Q^{\varepsilon/4-\varepsilon/2}=n! Q^{-\varepsilon/4}.\]
We have that 
\[\begin{split}\sum_{v\in S} \log |D_{v, i_v}|_v \geq & -\sum_{v\in S} h(D_{v, i_v}) \\ 
= & -\sum_{v\in S} h\left(\widehat{L}_{v, k}\left((y^{(1)}\wedge \dots\wedge y^{(n)})^\ast\right)\right)\\
\geq & -|S|h(V(Q))-C_4',
\end{split}\]
for some constant $C_4'>0$ which arises from the coefficients of the forms $\widehat{L}_{v,k}$. Putting these inequalities together, one finds that 
\[-|S|h(V(Q))-C_4'\leq -\frac{\varepsilon}{4} \log(Q)+\log(n!).\] Setting $C_4'':=|S|^{-1}\left(C_4'+\log(n!)\right)$, we find that $\left((4|S|)^{-1}\varepsilon \log (Q) -C_4''\right)\leq h(V(Q))$.
\par Next, we show that there is a hyperplane $W$ such that $V(Q)=W$ if there does not exist an $S$-tuple $\{i_v\}_{v\in S}$ such that 
\begin{equation}\label{Scv}
\sum_{v\in S} c_{v, i_v}\geq -\frac{\varepsilon}{4}
\end{equation}
and $D_{v, i_v}\neq 0$ for all $v\in S$. Setting $\mathcal{I}_v:=\{i\in \{0,\dots, n\}\mid D_{v, i}\neq 0\}$ the system of equations 
\[\widehat{L}_{v, i}(z)=0\text{ for }i\in \{0, \dots, n\}\backslash \mathcal{I}_v\]
has a nontrivial solution $w=(y^{(1)}\wedge \dots \wedge y^{(n)})^\ast$. Setting $W:=\{x\in V\mid x\cdot w=0\}$, we show that if $\log(Q)$ is sufficiently large then $V(Q)$ is contained in $W$. Since $V(Q)$ has dimension $n$ and $W$ is a hyperplane, it follows then that $V(Q)=W$. As a consequence of \eqref{laplace expansion}, we have the following algebraic identity:
\[x\cdot w=\op{det}(L_v)^{-1}\sum_{i=0}^n (-1)^i L_{v,i}(x)\widehat{L}_{v,i}(w)\] for any $x$ and $w$. Note that $\widehat{L}_{v,i}(w)=0$ for $i\notin \mathcal{I}_v$ and hence, 
\[x\cdot w=\op{det}(L_v)^{-1}\sum_{i\in \mathcal{I}_v} (-1)^i L_{v,i}(x)\widehat{L}_{v,i}(w).\] Now for each $v\in S$, choose $j_v$ such that 
\[c_{v,j_v}=\op{max}_{i\in \mathcal{I}_v} c_{v,i}.\] Note that if $x\in V(Q)\backslash W$ with $\iota(x)\in \Pi(Q)$, then $|L_{v,i}(x)|_v\leq Q^{c_{v,i}}\leq Q^{c_{v, j_v}}$. Thus we get the bound:
\[|x\cdot w|\ll Q^{c_{v, j_v}}.\] On the other hand, since we assume the non-existence of an $|S|$-tuple satisfying \eqref{Scv}, we have that 
\[\sum_{v\in S} c_{v, j_v}<-\varepsilon/4.\] By the product formula, 
\[\begin{split}1=& \prod_{v\in M_K} |x\cdot w|_v
=\prod_{v\in S} |x\cdot w|_v\times \prod_{v\notin S} |x\cdot w|_v,\\
\leq & \prod_{v\in S} |x\cdot w|_v\times \prod_{v\notin S} |w|_v\ll Q^{\sum_{v\in S} c_{v, j_v}}\ll Q^{-\varepsilon/4}. 
\end{split}\]
Therefore, we find that $\log Q\ll \varepsilon^{-1}$. We choose $C_4\geq C_4''$ large so that $\log Q< C_4\varepsilon^{-1}$ and this gives a contradiction. Thus, we find that $V(Q)=W$. 
\end{proof}

\section{Nonvanishing results}

Let
\[
0<\eta\leq \frac{1}{n+1},
\]
choose an integer \(m\) such that
\[
m\geq 
\frac{4}{(n+1)(n+2)\eta^2}
\log\left(2(n+1)r|S|\right)
\]
and set
\[
\sigma:=\left(\frac{\eta}{4}\right)^{2^{m-1}}.
\]
Having
fixed \(\eta\), \(m\), and \(\sigma\), we impose lower bounds on the parameters
\(Q_i\), in particular on \(Q_1\). The points $\mathbf{x}_1,\ldots,\mathbf{x}_m$ are then chosen from the infinite set of solutions so that their associated
parameters \(Q_1,\ldots,Q_m\) satisfy these bounds. 

\par Assume that for $i=1, \dots, m$, $V_i:=V(Q_i)$ has dimension $n$ and $V_i\neq W$ where $W$ is the exceptional space given to us in Lemma \ref{technical lemma on height of V(Q)}. Recall that it is assumed that $V_i\neq V_j$ for $i\neq j$. Furthermore, we assume that $\log (Q_1)\geq C_5$ and $\log (Q_{j+1})\geq 2\sigma^{-1} \log(Q_j)$ for every $j$ and a constant $C_5>\op{max}\{C_3,C_4\}$ which will be chosen to be large. We write $V_j$ as the zero set of the linear form $M_j$, where 
\[M_j(x_0, \dots, x_n):=b_{j,0} x_0+\dots +b_{j, n} x_n\] and note that 
\[h(V_j)=h(M_j)=h\left((b_{j,0}, \dots, b_{j, n})\right).\]
Given a large constant $D$ which is yet to be determined, we set $d_j:=\lfloor D/\log Q_j\rfloor$, we get a polynomial $P$ of multidegree $d=(d_1, \dots, d_m)$, satisfying the following conditions. Writing
\[\partial_I P(\bbX_1, \dots, \bbX_m)=\sum_J a(L_v; J; I) L_v(\bbX_1)^{\mathbf{j}_1}\dots L_v(\bbX_m)^{\mathbf{j}_m},\] Lemma \ref{lemma 4.15} then implies that:
\begin{enumerate}
    \item $h(P)\leq C_5|d|$, $h(a(L_v; J;I))\leq C_5|d|$;
    \item $a(L_v; J; I)=0$ if 
    \[\begin{split}
        & \left(J_i/d\right)\leq \frac{m}{n+1}-2m \eta\text{, or,}\\
         & \left(J_i/d\right)\geq \frac{m}{n+1}+2nm \eta
    \end{split}\]
    and $\left(I/d\right)\leq m\eta$.
\end{enumerate}
We take $M=(M_1, \dots, M_m)$ and verify that the hypotheses of the generalized Roth lemma are satisfied. 
    \par First we verify that $d_{j+1}/d_j<\sigma$ where we recall that $d_j:=\lfloor D/\log Q_j\rfloor$. This follows for large enough values of $D$, since it is assumed that $\log(Q_{j+1})\geq 2\sigma^{-1} \log Q_j$. Second, we are to verify that 
    \[d_j h(M_j)\geq n \sigma^{-1} (h(P)+4md_1).\]
    Since $h(M_j)=h(V_j)$ and $V_j\neq W$, it follows that 
    \[d_j h(M_j)\geq \left\lfloor\frac{D}{\log Q_j}\right\rfloor\left((4r|S|)^{-1}\varepsilon \log(Q_j)-C_5\right) \geq C_5\varepsilon D.\]
   This implies
   \begin{align*}
n \sigma^{-1}\left(h(P)+4 m d_{1}\right) 
& \leq n \sigma^{-1}\left(C_5|d|+4 m d_{1}\right) \\
&=n\sigma^{-1}\left(C_5\left(d_{1}+\ldots+d_{m}\right)+4 m d_{1}\right)\\
& \leq n \sigma^{-1}(C_5+4 m)\left(\frac{D}{\log Q_{1}}\right)\left(1+\sigma+\sigma^{2}+\cdots+\sigma^{m-1}\right) \\
& \ll \frac{n\sigma^{-1} m D}{\log \left(Q_{1}\right)} .
\end{align*}

Since $\log \left(Q_{1}\right)$ is as large as we want, $\dfrac{\sigma^{-1} m n D}{\log \left(Q_{1}\right)}$ is small compared to $\varepsilon D$.
Thus the generalized Roth Lemma implies that
$$
\ind(P ; d ; M)  \leq 2 m \sigma^{1 / 2^{m- 1}} 
 \leq \frac{m \eta}{2}.
$$
In particular, $P$ does not vanish identically on $V_{1} \times \cdots \times V_{m}$.
We want to prove that $\partial_{I} P$ is non-vanishing at a certain point in $V_{1}  \times \cdots \times V_{m}$ of small height for some indices $I$.
\begin{lemma}
    Let $K$ be a field of characteristic $0$ and $f \in K\left[x_{1}, \cdots, x_{N}\right]$ such that $f \neq 0$ and $\operatorname{deg}_{x_{j}} f \leqslant e_{i}$. Also let $B>0$.
Then, there exist integers $z_{j}$ and $i_{j}$ such that
$
\left|z_{j}\right| \leq B \text { and }
$ $0 \leqslant i_{j} \leqslant e_{j} / B$ such that

$$
\partial_{\mathbf{i}}f\left(z_{1}, \ldots, z_{N}\right) \neq 0, \quad \mathbf{i}=\left(i_{1}, \ldots, i_{N}\right).
$$
\end{lemma}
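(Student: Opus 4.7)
The plan is to prove the statement by induction on the number of variables $N$. The argument is essentially an averaging/pigeonhole bound on the total order of vanishing in the single-variable case, lifted to higher dimensions by standard manipulations with divided-power differential operators.

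\smallskip

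\emph{Base case} ($N=1$). For $f\in K[x]$ with $\deg f\leq e_1$ and $f\neq 0$, let $m_z:=\op{ord}_{x=z}(f)$ for each integer $z$ with $|z|\leq B$. Since distinct integers give disjoint contributions to the factorization of $f$ over $\bar{K}$, we have $\sum_{z\in\Z,\,|z|\leq B} m_z \leq \deg f\leq e_1$. The number of such integers $z$ is at least $2\lfloor B\rfloor+1>B$, so by averaging some integer $z_1$ satisfies $m_{z_1}\leq e_1/B$. Taking $i_1:=m_{z_1}$, which is an integer in $[0,e_1/B]$, we have $\partial_{i_1}f(z_1)=\tfrac{1}{i_1!}f^{(i_1)}(z_1)\neq 0$ by definition of the order of vanishing.

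\smallskip

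\emph{Inductive step.} Assume the result for $N-1$ variables. Expand $f$ as a polynomial in $x_N$:
\[
f(x_1,\ldots,x_N)=\sum_{k=0}^{e_N} g_k(x_1,\ldots,x_{N-1})\,x_N^k,
\]
with $\deg_{x_j}g_k\leq e_j$ for $j<N$. Choose $k_0$ such that $g_{k_0}\neq 0$, and apply the inductive hypothesis to $g_{k_0}$ (with the same parameter $B$) to produce integers $z_1,\ldots,z_{N-1}$ with $|z_j|\leq B$ and $i_1,\ldots,i_{N-1}$ with $0\leq i_j\leq e_j/B$ satisfying
\[
\partial_{(i_1,\ldots,i_{N-1})}g_{k_0}(z_1,\ldots,z_{N-1})\neq 0.
\]
Because differential operators in disjoint sets of variables commute, define the univariate polynomial
\[
\tilde f(x_N):=\partial_{(i_1,\ldots,i_{N-1},0)}f(z_1,\ldots,z_{N-1},x_N)=\sum_{k=0}^{e_N}\bigl(\partial_{(i_1,\ldots,i_{N-1})}g_k\bigr)(z_1,\ldots,z_{N-1})\,x_N^k.
\]
Its coefficient of $x_N^{k_0}$ is nonzero, so $\tilde f\neq 0$ and $\deg\tilde f\leq e_N$. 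Applying the base case to $\tilde f$ yields an integer $z_N$ with $|z_N|\leq B$ and an integer $i_N$ with $0\leq i_N\leq e_N/B$ such that $\partial_{i_N}\tilde f(z_N)\neq 0$. Since this quantity equals $\partial_{(i_1,\ldots,i_N)}f(z_1,\ldots,z_N)$, the tuple $((z_1,\ldots,z_N),(i_1,\ldots,i_N))$ satisfies all the required conditions.

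\smallskip

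\emph{Main obstacle.} There is no serious analytic difficulty: the whole lemma is a pigeonhole statement once one observes that the total order of vanishing of a degree-$e_1$ polynomial on $\geq B$ integer points cannot exceed $e_1$. The only point requiring care is to arrange the induction so that the multi-index supplied by the inductive hypothesis is compatible with the final nonvanishing conclusion; this is why one first isolates a nonzero coefficient polynomial $g_{k_0}$, applies induction to \emph{that}, and only then reduces the remaining $x_N$-direction to the base case by specializing the $(x_1,\ldots,x_{N-1})$-variables. Using characteristic zero (which ensures $\partial_{\mathbf i}$ as defined in the paper makes sense and does not annihilate $x^{\mathbf j}$ whenever $\mathbf j\geq \mathbf i$) is what permits the identification of $\op{ord}_z(f)$ with the smallest index $i$ such that $\partial_i f(z)\neq 0$.
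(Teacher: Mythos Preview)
Your proof is correct and follows essentially the same route as the paper: induction on $N$, with the one-variable case handled by bounding the total vanishing order at the integer points in $[-B,B]$ by the degree, and the inductive step by expanding in $x_N$, applying the hypothesis to a nonzero coefficient polynomial, and then reducing to the base case. Your inductive step is in fact slightly more careful than the paper's written argument, since you explicitly select a nonzero coefficient $g_{k_0}$ rather than the constant term.
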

\begin{proof}
   We  argue by induction on $N$. For $N=1$, suppose that $\frac{\partial^{i}}{\partial x^{i}} f(x)$ vanishes at all $|z| \leqslant B$ $(z \in \mathbb{Z})$ and $i \leq e_{1} / B$. This implies $\prod_{|b| \leqslant B}(x-b)^{\left\lfloor e_{1} / B\right\rfloor+1} $ divides  $f(x)$. Therefore, we have 
$$
\begin{aligned}
 \operatorname{deg} f 
 &\geqslant \operatorname{deg}\left(\prod_{|b| \leqslant B}(x-b)^{\left\lfloor e_{1} / B\right\rfloor+1}\right) \\
&=\sum_{|b| \leqslant B}\left(\left\lfloor e_{1} / B\right\rfloor+1\right) \\
& \geqslant\left(e_{1} / B\right)(2 B+1)>e_{1} 
\end{aligned}
$$
This is  a contradiction. 
Assume $N \geqslant 2$ and the conclusion holds for $(N-1)$. We write

$$
\begin{aligned}
\tilde{f}(x_N) = f\left(x_{1}, \ldots, x_{N-1}, x_{N}\right)=A_{0}+A_{1} x_N+\cdots+ & A_{d} x_{N}^d,
\end{aligned}
$$
where $d \leq e_{N}$ and $A_i\in K[x_1,\ldots,x_{N-1}]$.
Pick $\quad g\left(x_{1}, \ldots, x_{N-1}\right):=A_{0}$
and apply induction to show that there exist $z^{\prime}=\left(z_{1}, . ., z_{N-1}\right) \in \mathbb{Z}^{N-1}$ and $i^{\prime}=\left(i_{1}, \ldots, i_{N-1}\right)$ such that $\quad\left(\partial_{i^{\prime}} A_{0}\right)\left(z^{\prime}\right) \neq 0$  with 
$\left|z_{j}\right| \leqslant B$ and $i_{j} \leqslant e_j / B$ for  $j=1, \cdots, N-1.$ Therefore, 

$$
g_2(x):  =\partial_{i^{\prime}} f(x) =  \left(\partial_{i^{\prime}} A_0\right)\left(z^{\prime}\right)+\left(\partial_{i^{\prime}} A_{1}\right)\left(z^{\prime}\right) x+\cdots  +\left(\partial_{i^{\prime}} A_{d}\right)\left(z^{\prime}\right)x^{d}
$$
is a non-zero polynomial in $1$-variable. By induction hypothesis, there exists $z_N \in \mathbb{Z}$ and $i_N$ such that 
$$
\left(\frac{\partial^{i_{N}}}{\partial x^{i_{N}}}  g_2\right)\left(z_{N}\right) \neq 0
$$
with $|z_N| \le B$ and $|i_N| \leqslant e_{N} / B$. Hence the proof.
\end{proof}
For $h=1, \ldots, m$, let $y_{h}^{(1)}, \ldots, y_{h}^{(n)} \in E_{h}=K^{n+1}$ be linearly independent points such that

$$
L\left(y_{h}^{(l)}\right) \in \prod(Q_{h}).
$$

This is possible since $\prod\left(Q_{h}\right)$ has rank $n$. We write $\mathbf{z}_{h}=\left(z_{h1}, \ldots, z_{hn}\right)$, $\mathbf{Z}=\left(\mathbf{z}_{1}, \ldots, \mathbf{z}_{m}\right)$ and $y_{h}=\left(y_{h}^{(1)}, \ldots, y_{h}^{(n)}\right).$ Consider the polynomial

$$
R(\mathbf{Z}):=\left(\partial_{I} P\right)\left(\mathbf{z}_{1} y_{1}, \mathbf{z}_{2} y_{2}, \cdots, \mathbf{z}_{m} y_{m}\right).
$$
 Therefore, $R(\mathbf{Z})$ has degree at most $d_{h}$ in the block of variables $\mathbf{z}_{h}$. By the non-vanishing lemma, there exists 
$\mathbf{Z}^{\prime}=(z_{hl}^{\prime})$ and $j_{hl} \ge 0$
such that $z_{hl} \in \mathbb{Z}$ with $\left|z_{hl}^{\prime}\right| \leqslant B$ and 
$0 \leqslant j_{hl} \leqslant d_{h} / B$, and $\left(\partial_{J} R\right)\left(\mathbf{Z}^{\prime}\right) \neq 0$.
Note that $\partial_{J} R$ can be related to $\partial_{I^{\prime}} P$. Let

$$
x^{\prime}  =\left(\mathbb{x}_{1}^{\prime}, \ldots, \mathbb{x}_{m}^{\prime}\right)  
=\left(\sum_{l=1}^{n} z_{1 l}^{\prime} y_{1}^{(l)}, \cdots, \sum_{l=1}^{n} z_{m l}^{\prime} y_{m}^{(l)}\right) .
$$

Thus $(\partial _JR)(\mathbf{Z}^{\prime})$ is a linear combination of $(\partial_{I^{\prime}} P)(x^{\prime})$
where $I^{\prime}=I+I^{*}$ and $I^{*}=\left(i_{1}^{*}, \ldots, i_{m}^{*}\right)$ 
with $\quad\left|i_{h}^{*}\right| \leqslant \frac{n d_{h}}{B}$. Choose $B=\frac{2n}{\eta}$,
note that 

$$
\left(\frac{|I^{\prime}|}{d}\right)
=\left(\frac{|I|}{d}\right)+\left(\frac{|I^{*}|}{d}\right)
\leqslant \frac{m \eta}{2}+\sum_{h=1}^{m} \frac{\left|i_{h}^{*}\right|}{d_{h}}
 \leqslant \frac{m \eta}{2}+\frac{m \eta}{2}=m\eta. 
$$

Since $\partial_{J} R\left(\mathbf{Z}^{\prime}\right) \neq 0$ and a linear combination of $\left(\partial_{I^{\prime}} P\right)\left(x^{\prime}\right)$ with $(I^{\prime}/d) \leq m \eta$, hence there exists   $I^{\prime}$ such that

$$
\left(\partial_{I^{\prime}} P\right)\left(x^{\prime}\right) \neq 0.
$$

Set $T(x):=\left(\partial_{I^{\prime}} P\right)(x)$. For $n$ fixed, $m$ will be large. Choose $\eta\in\left(\left.0,\frac{1}{n+1}\right.\right]$ and 

$$
m \geqslant \frac{4}{(n+1)(n+2) \eta^{2}} \log (2(n+1) r|s|),
$$
where $r = [K:\mathbb{Q}]$. Let 
$
\sigma=(\eta / 4)^{2^{m-1}}$ and choose $Q_{h}$ for $ h=1, \cdots, m$, such that 
\begin{itemize}
    \item $\operatorname{rank} \prod\left(Q_{h}\right)=n$
    \item $\log \left(Q_{1}\right) \geqslant C(\varepsilon \sigma)^{-1} m$
    \item $\log \left(Q_{h+1}\right) \geqslant 2 \sigma^{-1} \log \left(Q_{h}\right)$.
\end{itemize}  
Let $y_{h}^{(l)}$, for $l=1, \ldots, n$ be a basis for $V\left(Q_{h}\right)$ such that $i\left(y_{h}^{(l)}\right)$ is a basis of $\prod\left(Q_{h}\right)$.
Then, there exists a non-zero polynomial $T(x) \in K[x]$ of degree at most $d=\left(d_{1}, \ldots ,d_{m}\right)$ and $z_{hl} \in \mathbb{Z}$ with $\left|z_{hl}\right| \leqslant\frac{2n}{\eta}$ such that
\begin{enumerate}
    \item  $\quad T\left(x^{\prime}\right) \neq 0$.
\item $\quad h(T)\ll d_1.$
\item For each \(v\in S\), write
\[
T(\mathbf{X}_1,\ldots,\mathbf{X}_m)
=
\sum_J a(L_v;J)
L_v(\mathbf{X}_1)^{\mathbf{j}_1}
\cdots
L_v(\mathbf{X}_m)^{\mathbf{j}_m}.
\]
Then, whenever \(a(L_v;J)\neq 0\), we have
\[
\frac{m}{n+1}-2m\eta
\leq
\left(\frac{J_i}{d}\right)
\leq
\frac{m}{n+1}+2mn\eta
\qquad
(0\leq i\leq n).
\]
\end{enumerate}
\subsection{Proof of Theorem \ref{theorem 4.14}}
We have $$
T(x)=\sum_{J} a\left(L_{v}, J\right) L_{v}\left(x_{1}\right)^{\mathbf{j}_{1}} \cdots L_{v}\left(x_{m}\right)^{\mathbf{j}_{m}}.
$$
We obtain upper and lower bounds for
$$
A:=\sum_{v \in S} \log \max _{a\left(L_{v}, J\right) \neq 0}\left|L_{v}\left(x_{1}\right)^{\mathbf{j}_{1}} \cdots L_{v}\left(x_{m}\right)^{\mathbf{j}_{m}}\right|_{v} .
$$
\subsubsection{Upper bound for $A$}
Recall that $d_{h}=\left\lfloor\frac{D}{\log Q_{h}}\right\rfloor.$ This implies

\begin{equation}\label{i}
  d_{h} \log Q_{h}=D+o(D),  
  \end{equation}
  \begin{equation}\label{ii}
      \left|z_{hl}\right| \leqslant \frac{2n}{\eta}, 
\end{equation} and
\begin{equation}\label{iii}
\left|L_{v,i}\left(y_{h}^{(l)}\right)\right|_{v} \leq Q_{h}^{c_{v, i}}. 
\end{equation}
Since $i\left(y_{h}^{(l)}\right) \in \prod\left(Q_{h}\right)$.
Also, we have
\begin{equation}\label{iv}
 \sum_{v \in S} \sum_{i=0}^{n} c_{v, i}<-\varepsilon / 2.     
\end{equation}
From the above inequalities, we have
$$
\begin{aligned}
\log \left|L_{v}\left(\mathbb{x}_{1}^{\prime}\right)^{\mathbf{j}_{1}} \cdots L_{v}\left(\mathbb{x}_{m}^{\prime}\right)^{\mathbf{j}_{m}}\right|_{v} 
& =\log \left(\prod_{h=1}^{m} \prod_{i=0}^{n}\left|L_{v,i}\left(\mathbb{x}_{h}^{\prime}\right)\right|_{v}^{{j}_{h_i}}\right) \\
& =\sum_{k=1}^{m} \sum_{i=0}^{n} j_{h_{i}} \log \left|L_{v,i}\left(\mathbb{x}_{h}^{\prime}\right)\right|_{v}.
\end{aligned}$$
Since, we have
$$
\begin{aligned}
L_{v,i}\left(\mathbb{x}_{h}^{\prime}\right)&=L_{v,i}\left(\sum_{l=1}^{n} z_{hl} y_{h}^{(l)}\right) \\
& =\sum_{l=1}^{n} z_{hl} L_{v,i}\left(y_{h}^{(l)}\right) . \end{aligned}$$
Therefore, this gives
$$
\begin{aligned}
\left|L_{v,i}\left(\mathbb{x}_{h}^{\prime}\right)\right|_{v}&=\left|\sum_{l=1}^{n} z_{hl} L_{v,i}\left(y_{h}^{(l)}\right)\right|_{v}. \end{aligned}$$
Taking the log on both sides, we have 
$$
\begin{aligned}
\log\left|L_{v,i}\left(\mathbb{x}_{h}^{\prime}\right)\right|_{v}
& \leqslant \max _{h,l}\log|z_{h l}|_{v}+ \max_{h,l}\log \left|L_{v,i}\left(y_{h}^{(l)}\right)\right|_{v}\\ & \leqslant  
\max _{h, l} \log \left|z_{h l}\right|_{v}+c_{v,i} \log \left(Q_{h}\right),
\end{aligned}
$$

where the last inequality comes from $\eqref{iii}$. Therefore, we have 

$$
\begin{aligned}
 \log \left|L_{v}\left(\mathbb{x}_{1}^{\prime}\right)^{\mathbf{j}_{1}} \cdots L_{v}\left(\mathbb{x}_{m}^{\prime}\right)^{\mathbf{j}_{m}}\right|_{v}  
& \leqslant \sum_{h=1}^{m} \sum_{i=0}^{n} j_{h_i}\left(\max _{h, l} \log \left|z_{h, l}\right|_{v}+c_{v, i} \log \left(Q_{h}\right)\right) \\
& \leqslant 
 \sum_{h=1}^{m} \sum_{i=0}^{n} \frac{j_{h_i}}{d_{h}} c_{v,i} d_{h} \log \left(Q_{h}\right)  + O \left(\sum_{h = 1}^{m} \sum_{i = 0}^{n} \frac { j_{ h_i } } { d_{ h } } d_{ h } \left(\frac{2 n}{\eta}\right)\right).
\end{aligned}
$$
Since
$$
\left|\left(\frac{J_{i}} {d}\right)-\frac{m}{n+1}\right| \leq 2 m n \eta 
$$
we get 
$$
 \log \left|L_{v}\left(\mathbb{x}_{1}^{\prime}\right)^{\mathbf{j}_{1}} \cdots L_{v}\left(\mathbb{x}_{m}^{\prime}\right)^{\mathbf{j}_{m}}\right|_{v} 
 \leqslant \sum_{h=1}^{m} \sum_{i=0}^{n} \frac{j_{h_i}}{d_{h}} c_{v_{i}} d_{h} \log \left(Q_{h}\right) 
+O\left(m\left(\frac{m}{n+1}+2 m n \eta\right) \log \left(\frac{2 n}{\eta}\right) d_{1}\right)
$$
Summing over all places in $S$ we get 
$$
\sum_{v \in S} \log \left|L_{v}\left(\mathbb{x}_{1}^{\prime}\right)^{\mathbf{j}_{1}} \cdots L_{v}\left(\mathbb{x}_{m}^{\prime}\right)^{\mathbf{j}_{m}}\right|_{v}  
 \leqslant\left(\sum_{v \in S} \sum_{i=0}^{n} c_{v i}\right) \frac{m}{(n+1)} D+O(m \eta D)
 +O\left(\log \left(\frac{1 }{ \eta}\right) \frac{D}{\log Q_{1}} \frac{m^{2}}{(n+1)}\right),
$$
where the first error term in last inequality comes from $\eqref{ii}$. We choose \(Q_1\) so that
\[
\log Q_1\gg \frac{m}{\eta^2}.
\]
Hence the last error term is \(O(m\eta D)\). Dividing by \(mD\), and using \eqref{iv}, we obtain
\[
\frac{1}{mD}
\sum_{v\in S}
\log
\left|
L_v(\mathbf{x}'_1)^{\mathbf{j}_1}
\cdots
L_v(\mathbf{x}'_m)^{\mathbf{j}_m}
\right|_v
\leq
-\frac{\epsilon}{n+1}+O(\eta).
\]
Choosing \(\eta\) sufficiently small in terms of \(\epsilon\), we get
\begin{equation}\label{UB}
\frac{1}{m D} \sum_{v \in S} \log \max _{a\left(L_{v };J\right) \neq 0}\left|L_{v}\left(\mathbb{x}_{1}^{\prime}\right)^{\mathbf{j}_{1}} \cdots L_{v}\left(\mathbb{x}_{m}^{\prime}\right)^{\mathbf{j}_{m}}\right|_{v}
\leq -\frac{\varepsilon}{4(n+1)}.
\end{equation}

\subsubsection{Lower bound of A}
For a non-archimedean valuation $v$, we have
$$
  \sum_{v \in S} \log \left|T\left(x^{\prime}\right)\right|_{v}
 \leq \sum_{v \in S} \log \max _{J}\left|a\left(L_{v} ; J\right)\right|_{v} 
 +\sum_{v \in S} \log \max _{a\left(L_{v} ; J\right) \neq 0}\left|L_{v}\left(\mathbb{x}_{1}^{\prime}\right)^{\mathbf{j}_{1}} \cdots L_{v}\left(\mathbb{x}_{m}^{\prime}\right)^{\mathbf{j}_{m}}\right|_{v} 
$$
Therefore, we write
$$
\sum_{v \in S} \log\max _{a\left(L_{v} ; J\right) \neq 0} \left|L_{v}\left(\mathbb{x}_{1}^{\prime}\right)^{\mathbf{j}_{1}} \cdots L_{v}\left(\mathbb{x}_{m}^{\prime}\right)^{\mathbf{j}_{m}}\right|_{v}  \geqslant \sum_{v \in S} \log \left|T\left(x^{\prime}\right)\right|_{v}-\sum_{v \in S} \log \max _{J}\left|a\left(L_{v}; J\right)\right|_{v}.
$$
Since $T\left(x^{\prime}\right) \neq 0$ and 
$\prod_{v\in M_{K}}\left|T\left(x^{\prime}\right)\right|_{v}=1$, we have
\[\sum_{v \in S}\log \left|T\left(x^{\prime}\right)\right|_{v}=-\sum_{v \not\in S} \log \left|T\left(x^{\prime}\right)\right|_v.\]
Write
\[
T(X_1,\ldots,X_m)=\sum_I b_I X^I,
\]
where \(I\) ranges over the multi-indices occurring in \(T\). We set
\[
|T|_v:=\max_I |b_I|_v.
\] Since \(T\) has multidegree at most \(d=(d_1,\ldots,d_m)\) and nonarchimedian $v$, we have
\[
|T(x')|_v
\leq
|T|_v\,\max\{1,|x'|_v\}^{|d|},
\]
where
\[
|d|:=d_1+\cdots+d_m.
\]
In particular,
\[
\log |T(x')|_v
\leq
\log |T|_v
+
|d|\log^+|x'|_v.
\]
This yields
\[
\begin{aligned}
\sum_{v \notin S} \log |T(x')|_v
\leq
\sum_{v \notin S} \log |T|_v
+
|d|\sum_{v \notin S} \log^+ |x'|_v
\end{aligned}
\]
where
\[
|x'|_v
=
\max_h\{|x'_h|_v\}
=
\max_h
\left\{
\left|\sum_l z_{hl}y_h^{(l)}\right|_v
\right\}.
\]
Note that $i(y_{h}^{(l)}) \in \prod\left(Q_{h}\right)$. So, in particular
$|y_{h}^{(l)}|_{v} \leqslant 1 $ for all $v \notin S.$
This implies $\left|x^{\prime}\right|_{v} \leqslant 1 $ for  all $ v \notin S$ and so $\sum_{v \notin S} \log |x'|_{v} \leqslant 0 .$ Therefore, we have 
\begin{align*}
\sum_{v \notin S} \log \left|T\left(x^{\prime}\right)\right|_{v} \leq \sum_{v \notin S} \log |T|_{v}.
\end{align*}
This can be rewritten as 
\begin{equation}\label{inequality(a)}
   \sum_{v \in S} \log \left|T\left(x^{\prime}\right)\right|_{v} \geqslant-\sum_{v \notin S} \log |T|_{v}. 
\end{equation}
On the other hand, we have 

\begin{equation} \label{inequality(b)}
\sum_{v \in S} \log\max _{J}\left|a\left(L_{v}, J\right)\right|_{v} =\sum_{v \in S} \log |T|_{v}+O\left(d_{1}\right).
\end{equation}
Putting \eqref{inequality(a)} and \eqref{inequality(b)} together, we have
\begin{align*}
    &\frac{1}{mD}\sum_{v \in S} \log \max _{a\left(L_{v} ; J\right) \neq 0} \left|L_{v}\left(\mathbb{x}_{1}^{\prime}\right)^{\mathbf{j}_{1}} \cdots L_{v}\left(\mathbb{x}_{m}^{\prime}\right)^{\mathbf{j}_{m}}\right|_{v}  \\
& \geqslant -\frac{1}{mD}\sum_{v \notin S} \log |T|_{v}-\frac{1}{mD}\sum_{v \in S} \log |T|_{v}+O\left(\frac{d_{1}}{mD}\right).
\end{align*}
Choose $
d_{1}=\left\lfloor\frac{D}{\log Q_{1}}\right\rfloor \sim \frac{D}{\log Q_{1}},$
This implies 
$$
\frac{d_{1}}{m D} \sim \frac{1}{m \log Q_{1}}.
$$
It is  small when  $Q_1 \gg 1$.
Combining the two sums, we obtain
\[
-\sum_{v\notin S}\log |T|_v
-
\sum_{v\in S}\log |T|_v
=
-\sum_{v\in M_K}\log |T|_v.
\]
Since
\[
\log |T|_v\leq \log^+ |T|_v
\]
for every \(v\in M_K\), it follows that
\[
-\sum_{v\in M_K}\log |T|_v
\geq
-\sum_{v\in M_K}\log^+ |T|_v
=
-h(T).
\]
Therefore
\[
-\frac{1}{mD}\sum_{v\notin S}\log |T|_v
-
\frac{1}{mD}\sum_{v\in S}\log |T|_v
\geq
-\frac{h(T)}{mD}.
\]
Recall that $ h(T) \ll d_{1}$, so 
$$
-\frac{h(T)}{m D}=O\left(\frac{d_{1}}{m D}\right)=O\left(\frac{1}{m \log Q_{1}}\right). 
$$
Hence we get
$$
\begin{aligned}
& \frac{1}{m D} \sum_{v \in S} \log \max _{a\left(L_{v} ; J\right) \neq 0}\left|L_{v}\left(\mathbb{x}_{1}^{\prime}\right)^{\mathbf{j}_{1}} \cdots L_{v}\left(\mathbb{x}_{m}^{\prime}\right)^{\mathbf{j}_{m}}\right|_{v} \gg \frac{-1}{m \log Q_{1}}
\end{aligned}
$$
which is arbitrarily close to $0$ for large $Q_1$. Also the upper bound of this term is $ \frac{-\varepsilon}{4(n+1)}$ by \eqref{UB}. Hence we get a contradiction. 
\subsection{Evertse's lemma}

A significant simplification of Schmidt's original proof was later achieved by Evertse, who introduced an elegant lemma that circumvented the need for Schmidt’s reliance on Mahler’s results on successive minima of compound convex bodies and Davenport’s lemma on stretched parallelopipeds.
\begin{lemma}[Evertse's lemma]\label{evertse's lemma}
Let \( K \) be a number field, and let \( S \) be a finite set of places of \( K \) that includes all the archimedean places. Suppose \( \mathbf{x}^{(1)}, \ldots, \mathbf{x}^{(n+1)} \) form a basis for a \( K \)-vector space \( V \). For each \( v \in S \), let \( L_{v 0}, \ldots, L_{v n} \) be linearly independent linear forms on \( V \) with coefficients in \( K_v \). Further, let \( \mu_{v, j} \) be real numbers such that  
\[
0 < \mu_{v, 1} \leq \mu_{v, 2} \leq \cdots \leq \mu_{v, n+1}, \quad \text{and} \quad \left|L_{v k}\left(\mathbf{x}^{(j)}\right)\right|_v \leq \mu_{v, j}
\]  
for all \( k \) and \( j \). Then, there exist vectors  
\[
\mathbf{v}^{(1)} = \mathbf{x}^{(1)}, \quad \mathbf{v}^{(i)} = \sum_{j=1}^{i-1} \xi_{i j} \mathbf{x}^{(j)} + \mathbf{x}^{(i)} \quad \text{with} \quad \xi_{i j} \in \cO_{K, S} \quad (1 \leq j < i \leq n+1),
\]  
along with bijective maps \( \pi_v : \{1, \ldots, n+1\} \to \{0, \ldots, n\} \), and a positive constant \( C_6\) such that for any \( v \in S \) and \( i, j \in \{1, \ldots, n+1\} \), the following inequality holds:  
\[
\left|L_{v, \pi_v(i)}\left(\mathbf{v}^{(j)}\right)\right|_v \leq 
\begin{cases} 
C_6 \min \left(\mu_{v, i}, \mu_{v, j}\right) & \text{if } v \text{ is archimedean}, \\ 
\min \left(\mu_{v, i}, \mu_{v, j}\right) & \text{if } v \text{ is non-archimedean}.
\end{cases}
\]  

The constant \( C_6\) depends only on \( K \), \( S \) and the set of linear forms \( L_{v i} \).
\end{lemma}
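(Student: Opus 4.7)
The plan is, at each place $v \in S$ independently, to choose a bijection $\pi_v$ specifying a pivot ordering of the forms adapted to the basis $\{\mathbf{x}^{(j)}\}$, and then to build the vectors $\mathbf{v}^{(i)}$ inductively by triangular $\cO_{K,S}$-linear combinations whose coefficients are obtained from the local $v$-adic triangularization data via strong approximation.

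First, for every $v \in S$ I would fix $\pi_v$ as follows. The matrix $A_v := \big(L_{vk}(\mathbf{x}^{(j)})\big)_{k,j}$ is invertible, since the $L_{vk}$ are $K_v$-linearly independent and the $\mathbf{x}^{(j)}$ span $V$. Running a partial-pivoting procedure on $A_v$, choose $\pi_v$ so that every leading principal submatrix $B_v^{(i)} := \big(L_{v,\pi_v(l)}(\mathbf{x}^{(j)})\big)_{1 \leq l, j \leq i}$ is non-singular, with the diagonal pivot controlled below:
\[
|L_{v,\pi_v(i)}(\mathbf{x}^{(i)})|_v \;\geq\; \gamma_v^{-1}\, \frac{|\det B_v^{(i)}|_v}{|\det B_v^{(i-1)}|_v},
\]
where $\gamma_v = 1$ for non-archimedean $v$ (an immediate consequence of the ultrametric Leibniz expansion) and $\gamma_v$ is a combinatorial constant depending only on $n$ for archimedean $v$ (by the Hadamard inequality).

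Next, set $\mathbf{v}^{(1)} := \mathbf{x}^{(1)}$ and proceed by induction on $i$. Given $\mathbf{v}^{(1)},\ldots,\mathbf{v}^{(i-1)}$, I would seek $\xi_{ij} \in \cO_{K,S}$ (for $j < i$) such that, with $\mathbf{v}^{(i)} := \mathbf{x}^{(i)} + \sum_{j<i}\xi_{ij}\mathbf{x}^{(j)}$,
\[
|L_{v,\pi_v(l)}(\mathbf{v}^{(i)})|_v \;\leq\; C\, \mu_{vl}\qquad\text{for every } v \in S \text{ and every } l < i.
\]
For each $v \in S$, the invertibility of $B_v^{(i-1)}$ yields unique $v$-adic coefficients $\xi_{ij}^{(v)} \in K_v$ achieving exact cancellation $L_{v,\pi_v(l)}\big(\mathbf{x}^{(i)}+\sum_j \xi_{ij}^{(v)}\mathbf{x}^{(j)}\big) = 0$ for all $l < i$; by Cramer's rule combined with the pivot estimate above, these local solutions have $v$-adic size controlled in terms of ratios of the $\mu_{vj}$. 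Invoking strong approximation in $K$ with respect to $S$, I would then produce a single $\xi_{ij}\in\cO_{K,S}$ lying within a prescribed tolerance of $\xi_{ij}^{(v)}$ at each $v \in S$; the residual defect at each $v$ stays below the target bound $C\mu_{vl}$.

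Finally, the inequality $|L_{v,\pi_v(i)}(\mathbf{v}^{(j)})|_v \leq C\min(\mu_{vi},\mu_{vj})$ is verified by splitting cases. When $j \geq i$, the bound $C\mu_{vi}$ is either the content of the inductive step (for $j > i$) or, for $j = i$, a direct consequence of $|L_{v,\pi_v(i)}(\mathbf{x}^{(i)})|_v \leq \mu_{vi}$ combined with the triangle inequality applied to $\sum_{l<i}\xi_{il}\mathbf{x}^{(l)}$, using $\mu_{vl} \leq \mu_{vi}$ for $l < i$ and the boundedness of $|\xi_{il}|_v$. When $j < i$, expanding $\mathbf{v}^{(j)} = \mathbf{x}^{(j)} + \sum_{l<j}\xi_{jl}\mathbf{x}^{(l)}$ and using $|L_{v,\pi_v(i)}(\mathbf{x}^{(l)})|_v \leq \mu_{vl} \leq \mu_{vj}$ for $l \leq j$ yields the bound $C\mu_{vj}$; in the non-archimedean setting the ultrametric inequality removes the combinatorial factor and gives the cleaner bound with constant $1$. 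The main obstacle is the joint approximation step: producing a single $\cO_{K,S}$-element that simultaneously matches the locally prescribed $\xi_{ij}^{(v)}$ at all $v \in S$ within the required tolerance, with a rounding loss that can be absorbed into the uniform constant $C$.
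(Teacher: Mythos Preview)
Your proposal has a genuine gap in the approximation step, and it comes from correcting $\mathbf{x}^{(i)}$ in the basis $\{\mathbf{x}^{(j)}\}$ rather than in the inductively constructed basis $\{\mathbf{v}^{(j)}\}$. After you solve for the local coefficients $\xi_{ij}^{(v)}$ and replace them by a global $\xi_{ij}\in\cO_{K,S}$, the residual at $v$ is
\[
L_{v,\pi_v(l)}(\mathbf{v}^{(i)}) \;=\; \sum_{j<i}\bigl(\xi_{ij}-\xi_{ij}^{(v)}\bigr)\,L_{v,\pi_v(l)}(\mathbf{x}^{(j)}),
\]
and the only available bound on the factors is $|L_{v,\pi_v(l)}(\mathbf{x}^{(j)})|_v\le\mu_{vj}$, not $\mu_{vl}$. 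To force this below $C\mu_{vl}$ for every $l<i$ you would need approximation tolerance $|\xi_{ij}-\xi_{ij}^{(v)}|_v\lesssim\mu_{v1}/\mu_{v,i-1}$, which can be arbitrarily small. But $\cO_{K,S}$ is a lattice in $\prod_{v\in S}K_v$ with a \emph{fixed} fundamental domain, so the tolerance you can achieve depends only on $K$ and $S$, never on the $\mu_{vj}$. The same issue reappears in your final paragraph: the claim that $|\xi_{jl}|_v$ is ``bounded'' uniformly in the data is not justified (you yourself note the Cramer solutions are controlled only ``in terms of ratios of the $\mu_{vj}$''), and for non-archimedean $v$ there is no reason for $|\xi_{jl}|_v\le 1$, so the ultrametric inequality alone does not give the constant $1$.

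The paper sidesteps this by inducting on the dimension $n$ and writing the new vector as $\mathbf{v}^{(n+1)}=\mathbf{x}^{(n+1)}+\sum_{j\le n}\xi'_j\,\mathbf{v}^{(j)}$, i.e.\ correcting in the $\mathbf{v}$-basis. By the inductive hypothesis one already has $|L_{v,\pi_v(i)}(\mathbf{v}^{(j)})|_v\le C'\min(\mu_{vi},\mu_{vj})\le C'\mu_{vi}$, so the residual $\sum_j(\xi'_j+\gamma_{vj})L_{v,\pi_v(i)}(\mathbf{v}^{(j)})$ is bounded by (approximation error)$\,\times\,C'\mu_{vi}$, and now a \emph{data-independent} tolerance suffices---exactly what the lattice argument provides (with $A_v=1$ at the non-archimedean places of $S$). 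The choice of $\pi_v(n+1)$ is also different from your pivoting: it is the index $k$ maximizing $|\alpha_{vk}|_v$ for a null-vector $(\alpha_{vk})$ of $(L_{vk}|_{V'})_k$, which makes $L_{v,\pi_v(n+1)}$ a bounded combination of the remaining forms on $V'$ and handles the row $i=n+1$ of the target inequality for free.
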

\begin{proof}
    The proof proceeds by induction on \( n \). The base case \( n = 0 \) is straightforward, as the result holds trivially when there is only one vector. Now assume \( n \geq 1 \) and suppose that the lemma holds for \( n-1 \); we aim to establish it for \( n \). 

Fix a place \( v \in S \), and consider the \( K \)-vector subspace \( V' \subseteq V \) spanned by the first \( n \) basis vectors \( \mathbf{x}^{(1)}, \ldots, \mathbf{x}^{(n)} \). 
The linear system  
\[
\sum_{k=0}^{n} L_{v k}\left(\mathbf{x}^{(j)}\right) \alpha_{v k} = 0 \quad \text{for } j = 1, \ldots, n  
\]  
has a non-trivial solution \( (\alpha_{v 0}, \ldots, \alpha_{v n}) \in K_v^{n+1} \). Let \( \pi_v(n+1) \) denote an index where the maximum norm is attained, i.e.,  
\[
|\alpha_{v, \pi_v(n+1)}|_v = \max_{i} |\alpha_{v i}|_v.  
\]  
Since the solution is non-trivial, we have \( \alpha_{v, \pi_v(n+1)} \neq 0 \). Define the ratios  
\[
\beta_{v i} = -\frac{\alpha_{v i}}{\alpha_{v, \pi_v(n+1)}} \quad \text{for } i \neq \pi_v(n+1).  
\]  
Substituting these ratios into the system yields  
\begin{equation}\label{E1}
L_{v, \pi_v(n+1)}(\mathbf{x}^{(j)}) = \sum_{k \neq \pi_v(n+1)} \beta_{v k} L_{v k}(\mathbf{x}^{(j)}) \quad \text{with } |\beta_{v k}|_v \leq 1. 
\end{equation}

The restrictions to $V^{\prime}$ of the linear forms $L_{v k}$,
$k \neq \pi_{v}(n+1)$, are linearly independent over $K_{v}$. Applying the inductive hypothesis to these restricted forms, there exist bijections  
\[
\pi_v : \{1, \ldots, n\} \to \{0, \ldots, n\} \setminus \{\pi_v(n+1)\},  
\]  
and vectors  
\[
\mathbf{v}^{(1)} = \mathbf{x}^{(1)}, \quad \mathbf{v}^{(i)} = \sum_{j=1}^{i-1} \xi_{i j} \mathbf{x}^{(j)} + \mathbf{x}^{(i)} \quad \text{for } i = 2, \ldots, n,  
\]  
with coefficients \( \xi_{i j} \in \cO_{K,S} \), such that  
\[
|L_{v, \pi_v(i)}(\mathbf{v}^{(j)})|_v \leq  
\begin{cases}  
C_6' \min(\mu_{v, i}, \mu_{v, j}) & \text{if } v \text{ is archimedean}, \\  
\min(\mu_{v, i}, \mu_{v, j}) & \text{if } v \text{ is non-archimedean}.  
\end{cases}  
\]  

Now, to extend this construction to include \( \mathbf{v}^{(n+1)} \), we need coefficients \( \xi_{n+1, j}' \in \cO_{K,S} \) such that  
\[
\mathbf{v}^{(n+1)} = \mathbf{x}^{(n+1)} + \sum_{j=1}^{n} \xi_{n+1, j}' \mathbf{v}^{(j)}  
\]  
satisfies the desired bounds. Specifically, for each \( i \in \{1, \ldots, n\} \) and \( v \in S \), we require  
\[
|L_{v, \pi_v(i)}(\mathbf{v}^{(n+1)})|_v \leq  
\begin{cases}  
C_6 \mu_{v, i} & \text{if } v \text{ is archimedean}, \\  
\mu_{v, i} & \text{if } v \text{ is non-archimedean}.  
\end{cases}  
\]  
For $i=n+1$, we use \eqref{E1} to get the result.
To achieve this, note that \( L_{v, \pi_v(i)} \) for \( i \neq n+1 \) remains linearly independent on \( V' \), which is generated by \( \mathbf{v}^{(1)}, \ldots, \mathbf{v}^{(n)} \). Thus, we can solve the linear system  
\[
L_{v, \pi_v(i)}(\mathbf{x}^{(n+1)}) = \sum_{j=1}^{n} \gamma_{v j} L_{v, \pi_v(i)}(\mathbf{v}^{(j)}) \quad \text{for } i \neq n+1,  
\]  
with \( \gamma_{v j} \in K_v \). Since \( \cO_{K,S} \) forms a discrete lattice in \( \prod_{v \in S} K_v \) with a bounded fundamental domain, there exists a vector \( (\xi_{n+1, 1}', \ldots, \xi_{n+1, n}') \in \cO_{K, S}^n \) such that  
\[
|\xi_{n+1, j}' + \gamma_{v j}|_v \leq A_v,  
\]  
where \( A_v \) is bounded in terms of \( K \) and \( S \). For non-archimedean places \( v \), we may choose \( A_v = 1 \) using a scaling argument with a suitable nonzero element \( m \in \cO_{K,S} \). This ensures the desired bound on \( |L_{v, \pi_v(i)}(\mathbf{v}^{(n+1)})|_v \), completing the proof.
\end{proof}

\subsection{Proof of the subspace theorem}

\par Let $\Pi(Q)$ denote an approximation domain associated with the forms \( L_{v,i} \) and the parameters \( c_{v,i} \). Assume that the rank \( R := \operatorname{rank}(\Pi(Q)) \) satisfies \( 1 \leq R \leq n \). The value of \( R \) is determined by the inequalities \( \lambda_{R} \leq 1 < \lambda_{R+1} \). Moreover, as noted earlier in the proof of Lemma \ref{rank Pi is <=n}, an application of Minkowski's second theorem gives us that 
\[
\lambda_{n+1} \gg Q^{\varepsilon / (2n+2)}.
\]
We now introduce \( k \) as the smallest integer within the range \( [R, n] \) for which the ratio \( \lambda_{k} / \lambda_{k+1} \) is minimized. To estimate this minimum ratio, observe that the product of all successive ratios from \( j = R \) to \( j = n \) can be expressed as  
\[
\prod_{j=R}^{n} \frac{\lambda_{j}}{\lambda_{j+1}} = \frac{\lambda_{R}}{\lambda_{n+1}}.
\]  
Taking the geometric mean of these ratios yields  
\[
\left( \prod_{j=R}^{n} \frac{\lambda_{j}}{\lambda_{j+1}} \right)^{\frac{1}{n+1-R}} = \left( \frac{\lambda_{R}}{\lambda_{n+1}} \right)^{\frac{1}{n+1-R}}.
\]  
Since \( k \) is chosen such that \( \lambda_{k} / \lambda_{k+1} \) is minimal, it follows that  
\[
\frac{\lambda_{k}}{\lambda_{k+1}} \leq \left( \frac{\lambda_{R}}{\lambda_{n+1}} \right)^{\frac{1}{n+1-R}}.
\]  
By applying the lower bound \( \lambda_{n+1} \gg Q^{\varepsilon / (2n+2)} \), we deduce that  
\[
\frac{\lambda_{R}}{\lambda_{n+1}} \ll Q^{-\varepsilon / (2n+2)},
\]  
which implies  
\begin{equation}\label{lambdak/lambdak+1}
\frac{\lambda_{k}}{\lambda_{k+1}} \ll Q^{-\varepsilon / \{2(n+1)n\}}.
\end{equation}
Setting
$$
\varepsilon_{v}= \begin{cases}{\left[K_{v}: \mathbb{R}\right] /[K: \mathbb{Q}]} & \text { if } v \text { is archimedean } \\ 0 & \text { otherwise,}\end{cases}
$$
we then define $\mu_{v, j}=\lambda_{j}^{\varepsilon_{v}}$ for $j=1, \ldots, n+1$. Since $\sum_{v \mid \infty} \varepsilon_{v}=1$, it is clear that
$\prod_{v \mid \infty} \mu_{v, j}=\lambda_{j}$ and $\mu_{v, j}=1$ if $v \nmid \infty, v \in S$. Let \( \mathbf{x}^{(j)} \in O_{K, S}^{n+1} \) for \( j = 1, \ldots, n+1 \) be linearly independent points, chosen so that their images under the map \( \iota \left( \mathbf{x}^{(j)} \right) \) correspond to the successive minima of the domain \( \Pi(Q) \). Then, for each \( v \in S \), \( i = 0, \ldots, n \), and \( j = 1, \ldots, n+1 \), the following inequality holds:  
\[
\left| L_{v,i} \left( \mathbf{x}^{(j)} \right) \right|_{v} \leq \mu_{v,j} Q^{c_{v,i}},
\]  
.

Applying Evertse's lemma (Lemma \ref{evertse's lemma}) in this context, we deduce the existence of vectors \( \mathbf{v}^{(i)} \in O_{K, S}^{n+1} \) for \( i = 1, \ldots, n+1 \), defined recursively by  
\[
\mathbf{v}^{(1)} = \mathbf{x}^{(1)}, \quad \mathbf{v}^{(i)} = \sum_{j=1}^{i-1} \xi_{ij} \mathbf{x}^{(j)} + \mathbf{x}^{(i)},
\]  
where \( \xi_{ij} \in K \) are coefficients. For each \( v \in S \), there exists a bijection \( \pi_v: \{1, \ldots, n+1\} \to \{0, \ldots, n\} \) that establishes a correspondence between the indices of the vectors \( \mathbf{v}^{(j)} \) and the linear forms \( L_{v, \pi_v(i)} \) at the place \( v \). This bijection enables us to express bounds for the values of these forms evaluated on the vectors \( \mathbf{v}^{(j)} \) as follows:  
\[
\left| L_{v, \pi_v(i)} \left( \mathbf{v}^{(j)} \right) \right|_v \leq  
\begin{cases}  
C_6 \min \left( \mu_{v,i}, \mu_{v,j} \right) Q^{c_{v, \pi_v(i)}} & \text{if } v \mid \infty, \\  
Q^{c_{v, \pi_v(i)}} & \text{if } v \nmid \infty,  
\end{cases}  
\]  
for all \( i, j = 1, \ldots, n+1 \).

We introduce the following notation to handle exterior products of the vectors and forms:
\begin{itemize}
    \item \( \mathbf{i} = (i_1, \ldots, i_{n+1-k}) \) with indices ordered such that \( i_1 < i_2 < \cdots < i_{n+1-k} \), 
    \item \( L_{v, \pi_v(\mathbf{i})} = L_{v, \pi_v(i_1)} \wedge L_{v, \pi_v(i_2)} \wedge \cdots \wedge L_{v, \pi_v(i_{n+1-k})} \),
    \item \( \mathbf{v}^{(\mathbf{i})} = \mathbf{v}^{(i_1)} \wedge \mathbf{v}^{(i_2)} \wedge \cdots \wedge \mathbf{v}^{(i_{n+1-k})} \),
    \item \( c_{v, \pi_v(\mathbf{i})} = c_{v, \pi_v(i_1)} + c_{v, \pi_v(i_2)} + \cdots + c_{v, \pi_v(i_{n+1-k})} \),
    \item \( \lambda_{\mathbf{i}} = \prod_{u=1}^{n+1-k} \lambda_{i_u}, \quad \mu_{v, \mathbf{i}} = \prod_{u=1}^{n+1-k} \mu_{v, i_u}. \)
\end{itemize}

The set of linear forms \( L_{v, \pi_{v}(\mathbf{i})} \) is linearly independent, as are the points \( \mathbf{v}^{(\mathbf{i})} \). Using \eqref{Laplace identity}, it follows directly that for any \( \mathbf{i}, \mathbf{j} \) and any \( v \in S \), we have  
\begin{equation}\label{random eqn 1}
\left|L_{v, \pi_{v}(\mathbf{i})}\left(\mathbf{v}^{(\mathbf{j})}\right)\right|_{v} \leq  
\begin{cases}  
C_7 \mu_{v, \mathbf{i}} Q^{c_{v, \pi_{v}(\mathbf{i})}} & \text{if } v \mid \infty \\  
Q^{c_{v, \pi_{v}(\mathbf{i})}} & \text{if } v \nmid \infty,
\end{cases}
\end{equation}
for some constant $C_7>0$. Additionally, when \( \mathbf{i} = (k+1, k+2, \ldots, n+1) \) but \( \mathbf{j} \neq (k+1, k+2, \ldots, n+1) \), applying Evertse’s lemma allows for a sharper bound:  
\begin{equation}\label{random eqn 2}
\left|L_{v, \pi_{v}(\mathbf{i})}\left(\mathbf{v}^{(\mathbf{j})}\right)\right|_{v} \leq  
C_7 \cdot \left( \frac{\mu_{v, k}}{\mu_{v, k+1}} \right) \cdot \mu_{v, \mathbf{i}} Q^{c_{v, \pi_{v}(\mathbf{i})}}  
\quad \text{if } v \mid \infty  
\end{equation}
since in this case, the index \( j_1 \) satisfies \( j_1 \leq k \) and so
\begin{equation*}
\mu_{v, \mathbf{j}} \leq \mu_{v, k} \mu_{v, k+2} \cdots \mu_{v, n+1} = \left( \frac{\mu_{v, k}}{\mu_{v, k+1}} \right) \mu_{v, \mathbf{i}}.
\end{equation*}

We now proceed to establish the following result:  
\begin{lemma}Let \( \mathcal{S}(Q) \) denote a symmetric convex domain in \( \wedge^{n+1-k}(K_{\mathbb{A}}^{n+1}) \), specified by the conditions  
\[
\begin{aligned}  
\left|L_{v, \pi_{v}(\mathbf{i})}(\mathbf{X})\right|_{v} & \leq C_8 \mu_{v, \mathbf{i}} Q^{c_{v, \pi_{v}(\mathbf{i})}} \quad & & \text{if } v \mid \infty, \mathbf{i} \neq (k+1, \ldots, n+1), \\  
\left|L_{v, \pi_{v}(\mathbf{i})}(\mathbf{X})\right|_{v} & \leq C_8 \left( \frac{\mu_{v, k}}{\mu_{v, k+1}} \right) \mu_{v, \mathbf{i}} Q^{c_{v, \pi_{v}(\mathbf{i})}} & & \text{if } v \mid \infty, \mathbf{i} = (k+1, \ldots, n+1), \\  
\left|L_{v, \pi_{v}(\mathbf{i})}(\mathbf{X})\right|_{v} & \leq Q^{c_{v, \pi_{v}(\mathbf{i})}} & & \text{if } v \in S, v \nmid \infty, \\  
|\mathbf{X}|_{v} & \leq 1 & & \text{if } v \notin S 
\end{aligned}
\]
for some constant $C_8>\op{max}\{C_6, C_7\}$.
Denote by \( \lambda_j(\mathcal{S}(Q)) \) the successive minima of \( \mathcal{S}(Q) \). Then there exists a constant $C_9>0$ for which the following inequalities hold:  
\[
\lambda_j(\mathcal{S}(Q)) \leq 1 \quad \text{for } j < \binom{n+1}{k},  
\]
\[
\lambda_j(\mathcal{S}(Q)) > C_9 Q^{\varepsilon / (2n(n+1))} \quad \text{for } j = \binom{n+1}{k}.
\]
\end{lemma}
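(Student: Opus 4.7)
The plan is to establish the two bounds separately, invoking Minkowski's second theorem in the adelic setting and the structure of the vectors $\mathbf{v}^{(j)}$ supplied by Evertse's lemma.

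For the upper bound $\lambda_j(\mathcal{S}(Q)) \le 1$ when $j < \binom{n+1}{k}$, I would exhibit that many linearly independent lattice points inside $\mathcal{S}(Q)$. For each strictly increasing multi-index $\mathbf{j} = (j_1, \dots, j_{n+1-k})$ with $\mathbf{j} \ne (k+1, \dots, n+1)$, form the wedge $\mathbf{v}^{(\mathbf{j})} := \mathbf{v}^{(j_1)} \wedge \cdots \wedge \mathbf{v}^{(j_{n+1-k})}$. Since the $\mathbf{v}^{(i)}$ form a basis of $K^{n+1}$ with coordinates in $\cO_{K,S}$, these $\binom{n+1}{k}-1$ wedges are linearly independent in $\wedge^{n+1-k}(\cO_{K,S}^{n+1})$. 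The constraints $|\iota(\mathbf{v}^{(\mathbf{j})})|_v \le 1$ at $v \notin S$ follow from $S$-integrality; the constraints for $\mathbf{i} \ne (k+1, \dots, n+1)$ as well as at all non-archimedean $v \in S$ are supplied by \eqref{random eqn 1}; and the sharper archimedean bound \eqref{random eqn 2} provides exactly the right constraint when $\mathbf{i} = (k+1, \dots, n+1)$, which applies precisely because $\mathbf{j} \ne (k+1, \dots, n+1)$.

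For the lower bound with $N := \binom{n+1}{k}$, I would apply Minkowski's second theorem to $\mathcal{S}(Q)$, obtaining
\[
(\lambda_1(\mathcal{S}(Q)) \cdots \lambda_N(\mathcal{S}(Q)))^d \op{Vol}(\mathcal{S}(Q)) \gg_{K,N} 1.
\]
Combined with the first part, this reduces the problem to showing $\op{Vol}(\mathcal{S}(Q)) \ll Q^{-d\varepsilon/(2n(n+1))}$. I would then decompose the volume as a product over places: at $v \notin S$ the contribution is a constant; at each $v \in S$ the local body is a parallelepiped defined by the $N$ linearly independent forms $L_{v,\pi_v(\mathbf{i})}$ on $\wedge^{n+1-k}(K_v^{n+1})$, and its local volume equals, up to a determinantal constant, the product of the right-hand sides of the defining inequalities. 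Aggregating, three sources of parameter dependence appear: first, the $Q$-powered factors $Q^{\sum_{v\in S,\mathbf{i}} c_{v,\pi_v(\mathbf{i})}}$, which by the combinatorial identity $\sum_{\mathbf{i}} c_{v,\pi_v(\mathbf{i})} = \binom{n}{k}\sum_i c_{v,i}$ (each $\pi_v(i)$ appears in $\binom{n}{k}$ of the size-$(n+1-k)$ tuples $\mathbf{i}$) together with \eqref{c v i neg bound} contribute a negative power of $Q$; second, the archimedean factors $\prod_{v\mid\infty}\prod_{\mathbf{i}}\mu_{v\mathbf{i}}$, which via $\mu_{vj}=\lambda_j^{\varepsilon_v}$ and $\sum_{v\mid\infty}\varepsilon_v=1$ telescope into powers of the $\lambda_j$'s of $\Pi(Q)$, controlled through Minkowski applied to $\Pi(Q)$ combined with the bound $\op{Vol}(\Pi(Q)) \ll Q^{-d\varepsilon/2}$; and third, the distinguished compression factor $\prod_{v\mid\infty}(\mu_{vk}/\mu_{v,k+1}) = \lambda_k/\lambda_{k+1}$, which by \eqref{lambdak/lambdak+1} is bounded by $Q^{-\varepsilon/(2n(n+1))}$ and supplies the critical savings beyond what a naive exterior-power construction would give.

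The main obstacle is the bookkeeping in the volume estimate: tracking the Haar measure normalizations so that the local degree factors $d_v$, the exponents $\varepsilon_v = d_v/d$, and the non-archimedean discriminant contributions combine correctly, and in particular ensuring that the factors from the telescoped $\mu_{v\mathbf{i}}$ product (source two) are exactly absorbed by the Minkowski bound applied to $\Pi(Q)$, so that the compression factor $\lambda_k/\lambda_{k+1}$ remains as the sole net $Q$-decay driving the desired exponent $-d\varepsilon/(2n(n+1))$.
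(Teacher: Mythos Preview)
Your proposal is correct and follows the same route as the paper. One refinement on the bookkeeping you flag: the cancellation between sources one and two works not via \eqref{c v i neg bound} and the bound $\op{Vol}(\Pi(Q))\ll Q^{-d\varepsilon/2}$ as separate inequalities, but by identifying $Q^{\binom{n}{k}\sum_{v,i}c_{v,i}}$ with $(\op{Vol}(\Pi(Q))^{1/d})^{\binom{n}{k}}$ up to constants via the exact local computations of Lemma~\ref{local vol comps}, so that Minkowski's upper bound on $\Pi(Q)$ then gives $(\lambda_1\cdots\lambda_{n+1}\cdot\op{Vol}(\Pi(Q))^{1/d})^{\binom{n}{k}}\ll 1$ directly, leaving $\lambda_k/\lambda_{k+1}$ as the only surviving $Q$-dependence.
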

\begin{proof}
The inequality \( \lambda_j(\mathcal{S}(Q)) \leq 1 \) for \( j < \binom{n+1}{k} \) is evident from equations \eqref{random eqn 1} and \eqref{random eqn 2}, as they ensure the existence of linearly independent points within \( \mathcal{S}(Q) \). For the second part of the lemma, we invoke Minkowski’s second theorem, which yields  
\begin{equation}\label{ME1}
1 \ll \left( \prod_{j=1}^{\binom{n+1}{k}} \lambda_j(\mathcal{S}(Q)) \right) 
({\rm vol}(\mathcal{S}(Q)))^{1/r} \ll 1. 
\end{equation}
By Minkowski's second theorem, 
$$
{\rm vol}(\mathcal{S}(Q))^{1/d} \ll \prod_{v \in S} \left( \frac{\mu_{v, k}}{\mu_{v, k+1}} \prod_{\mathbf{i}} \mu_{v, \mathbf{i}} Q^{c_{v, \pi_{v}(\mathbf{i})}} \right)  
\ll \frac{\lambda_k}{\lambda_{k+1}} \left( \prod_{\mathbf{i}} \lambda_{\mathbf{i}} \right) Q^{\sum_{v \mathbf{i}} c_{v, \pi_{v}(\mathbf{i})}},  
$$
and consequently,
\[
\ll \frac{\lambda_k}{\lambda_{k+1}} \left( \lambda_1 \cdots \lambda_{n+1} (
{\rm vol}(\Pi(Q)))^{1/r} \right)^{\binom{n}{k}} \ll \frac{\lambda_k}{\lambda_{k+1}} \ll Q^{-\varepsilon / (2n(n+1))}.
\]
The result then follows by using the left inequality of \eqref{ME1}.
\end{proof}
Let $\bx_i$ be a family of solutions to the subspace inequality and set $Q_i:=H(\bx_i)$ such that $Q_i\rightarrow \infty$ and $\operatorname{rank}\left(\Pi\left(Q_i\right)\right)=R$, where $1 \leq R<n$. We recall that $R<n$ by Theorem \ref{theorem 4.14}. Recall that $k$ is the smallest integer in the range $[R, n]$ for which the ration $\lambda_k/\lambda_{k+1}$ is minimized. Without loss of generality (by going to a subfamily) assume that $k$ is constant for all $\bx_i$. By an argument similar to that of Lemma \ref{affine proj height lemma}, there is a constant $C_{10}>0$ such that
$$
Q_{i}^{-C_{10}} \leq \mu_{v, \mathbf{i}} \leq Q_{i}^{C_{10}}
$$
for every $v \mid \infty$. By passing once again to a suitable subfamily, we may assume that for any small positive number \(\gamma > 0\), there exist bounded real numbers \(d_{v \mathbf{i}}(v \mid \infty)\) such that  
\[  
-\gamma \varepsilon_{v} + d_{v \mathbf{i}} < \frac{\log \left(C_{11} \mu_{v, \mathbf{i}}\right)}{\log \left(Q_i\right)} \leq d_{v \mathbf{i}} \quad \text{if } \mathbf{i} \neq (k+1, \ldots, n+1)  
\]  
and  
\[  
-\gamma \varepsilon_{v} + d_{v \mathbf{i}} < \frac{\log \left(C_{11}\left(\frac{\mu_{v, k}}{\mu_{v, k+1}}\right) \mu_{v, \mathbf{i}}\right)}{\log \left(Q_i\right)} \leq d_{v \mathbf{i}} \quad \text{if } \mathbf{i} = (k+1, \ldots, n+1) 
\]
for some constant $C_{11}>0$. Moreover, if \(v \in S\) and \(v \nmid \infty\), we define \(d_{v \mathbf{i}} = 0\). Let \(\Pi_{k}(Q_i)\) denote the parallelopiped in \(\wedge^{n+1-k}(K_{\mathbb{A}}^{n+1})\) specified by the inequalities  
\[  
\left|L_{v, \pi_{v}(\mathbf{i})}(\mathbf{X})\right|_{v} \leq Q_i^{c_{v, \pi_{v}(\mathbf{i})} + d_{v \mathbf{i}}} \quad \text{for } v \in S, \quad \text{and} \quad |\mathbf{X}|_{v} \leq 1 \quad \text{for } v \notin S.  
\]  
It is then clear that \(\mathcal{S}(Q_i) \subset \Pi_{k}(Q_i)\), and in particular, the inequality  
\[  
\lambda_{j}\left(\mathcal{S}(Q_i)\right) \geq \lambda_{j}\left(\Pi_{k}(Q_i)\right)  
\]  
holds for every \(j\). On the other hand, the Lemma \ref{local vol comps} implies that
$$
{\rm vol}\left(\Pi_{k}\left(Q_{i}\right)\right)^{1 / d} \ll 
{\rm vol}\left(\mathcal{S}\left(Q_{i}\right)\right)^{1 / d} Q_i^{\binom{n+1}{k} \gamma}
$$
Thus, choosing \(\gamma = \varepsilon / \left\{ 4n(n+1)\binom{n+1}{k} \right\}\) as a specific example, and applying Minkowski's second theorem, we deduce that for sufficiently large \(Q_i\), the following inequalities hold:  
\[  
\lambda_{j}\left(\Pi_{k}(Q_i)\right) \leq 1 \quad \text{if } j < \binom{n+1}{k},  
\]  
and  
\[  
\lambda_{j}\left(\Pi_{k}(Q_i)\right) > C_{12} Q_i^{\varepsilon / \{4n(n+1)\}} \quad \text{if } j = \binom{n+1}{k}  
\]  
for some $C_{12} >0$.
It follows that  
\[  
\operatorname{rank}\left(\Pi_{k}(Q_i)\right) = \binom{n+1}{k} - 1  
\]  
once \(Q_i\) is large enough, which we may assume without loss of generality. Furthermore, we have  
\[
\begin{aligned}  
\sum_{v \in S} \sum_{\mathbf{i}} d_{v \mathbf{i}} & \leq \sum_{v \mid \infty} \left( \frac{1}{\log(Q_i)} \left( \log \frac{\mu_{v, k}}{\mu_{v, k+1}} + \sum_{\mathbf{i}} \log \left(C_{11} \mu_{v, \mathbf{i}}\right) \right) + \gamma \varepsilon_{v} \right) \\  
& = \frac{1}{\log(Q_i)} \left( \log \frac{\lambda_{k}}{\lambda_{k+1}} + \log \left( \prod_{\mathbf{i}} \lambda_{\mathbf{i}} \right) + O(1) \right) + \gamma \\  
& = \frac{1}{\log(Q_i)} \left( \log \frac{\lambda_{k}}{\lambda_{k+1}} + \binom{n}{k} \log \left( \lambda_{1} \cdots \lambda_{n+1} \right) + O(1) \right) + \gamma.  
\end{aligned}  
\] Therefore, using \eqref{lambdak/lambdak+1}, we get
$$
\sum_{v \in S} \sum_{\mathbf{i}} d_{v \mathbf{i}} \leq-\frac{\varepsilon}{2(n+1) n}-\binom{n}{k} \sum_{v, i} c_{v, i}+\gamma+O\left(\frac{1}{\log Q_i}\right)
$$

Now we note that
$$
\sum_{\mathbf{i}} \sum_{v \in S}\left(c_{v, \pi_{v}(\mathbf{i})}+d_{v \mathbf{i}}\right)= \sum_{v \in S}\sum_{\mathbf{i}} c_{v, \pi_{v}(\mathbf{i})}+\sum_{v \mathbf{i}} d_{v \mathbf{i}}=\binom{n}{k} \sum_{v,i }c_{v, \pi_{v}(i)}+\sum_{v \mathbf{i}} d_{v \mathbf{i}}=\binom{n}{k} \sum_{v,i } c_{v, i}+\sum_{v \mathbf{i}} d_{v \mathbf{i}}
$$

Thus the above and our choice of $\gamma$ gives
$$
\sum_{\mathbf{i}} \sum_{v \in S}\left(c_{v, \pi_{v}(\mathbf{i})}+d_{v \mathbf{i}}\right) \leq-\frac{\varepsilon}{2(n+1) n}\left(1-\frac{1}{2\binom{n+1}{k}}\right)+O\left(\frac{1}{\log Q_i}\right)
$$
which is negative for $Q_i$ sufficiently large.
Now we are able to apply Theorem \ref{theorem 4.14} to this situation, concluding that the vector spaces $V\left(\Pi_{k}\left(Q_i\right)\right)$ form a finite set, which in turn implies that the associated spaces $W_{k}\left(Q_i\right)$, defined as the $K$-vector space generated by the vectors corresponding to the first $k$ successive minima of $\Pi_{k}\left(Q_i\right)$, also form a finite set. Since $k \geq \operatorname{rank}\left(\Pi\left(Q_i\right)\right)$, we conclude that $\mathbf{x}_{i}$ is a linear combination of the vectors in $K^{n+1}$, denoted by $\mathbf{x}^{(1)}, \ldots, \mathbf{x}^{(k)}$, determining the first $k$ successive minima. By construction, this holds also for $\mathbf{v}^{(1)}, \ldots, \mathbf{v}^{(k)}$ and hence the points $\bx_i$ belong to finitely many proper subspaces of $K^{n+1}$. This completes the proof of the affine subspace theorem \ref{affine subspace theorem}.
    
\bibliographystyle{alpha}
\bibliography{references}
\end{document}